\documentclass[11pt]{article}
\usepackage{amsthm}
\usepackage{amsmath}
\usepackage{amssymb}
\usepackage{enumitem}
\usepackage{bbold} %for \mathbb{1}
\usepackage{xfakebold} %for \setBold
\usepackage{bm}
\usepackage{dsfont}
\usepackage{hyperref}
\usepackage{cases}
\usepackage{empheq}
\usepackage{mathrsfs}
\usepackage{todonotes} %to use \todo{}
\usepackage{float}
\usepackage{hyperref}
\definecolor{darkgreen}{rgb}{0.1,0.5,0.1}
\hypersetup{colorlinks,linkcolor={red},citecolor={darkgreen},urlcolor={red}}

\usepackage{graphicx}
\usepackage{tikz}
\usetikzlibrary{arrows,positioning,fit,shapes,calc}

\oddsidemargin 0.125truein
\textwidth 6.25truein 
\evensidemargin 0.125truein
\topmargin 0.125truein
\textheight 8.50truein
\setcounter{page}{1}

 % Reduce space surrounding equal signs (=) in equation arrays.
\setlength{\arraycolsep}{2pt}

\newtheorem{Thm}{Theorem}
\newtheorem{Lem}[Thm]{Lemma}
\newtheorem{Def}[Thm]{Definition}
\newtheorem{Pro}[Thm]{Proposition}
\newtheorem{Cor}[Thm]{Corollary}
\newtheorem{Rem}[Thm]{Remark}

% Set up counter for list environments

% Handy abbreviations.

% Math environments

\newcommand{\beqa}{\begin{eqnarray}}
\newcommand{\eeqa}[1]{\label{#1}\end{eqnarray}}
\newcommand{\beq}{\begin{equation}}
\newcommand{\eeq}[1]{\label{#1}\end{equation}}

\newcommand{\Div}{{\bf \mathrm{ div \,}}}

\newcommand{\curlvec}{\operatorname{\bf curl\,}}

\newcommand{\Tr}{\mathop{\rm Tr}\nolimits}

\newcommand{\redA}{\hat{\mathcal{A}}_\perp}%reduced hat maxwell
%H div 0 domains

% Abbreviate definitions of greek symbols
\newcommand{\beps}{{\setBold[0.5]\varepsilon}}
\newcommand{\bmu}{{\setBold[0.5]\mu}}
\newcommand{\bbD}{{\setBold[0.3]\mathfrak{D}}}

%\newcommand{\beps}{{\widetilde{\varepsilon}}}
%\newcommand{\bmu}{{\widetilde{\mu}}}
%\newcommand{\bbD}{\widetilde{\mathfrak{D}}}

%symbol for a measure variation
\newcommand{\rmd}{{\mathrm{ d}}}

\newcommand{\rmi}{{\mathrm{ i}}}
%Abreviate symbols for mathematical sets
\newcommand{\R}{{\mathbb{ R}}}
\newcommand{\N}{{\mathbb{ N}}}
\newcommand{\bbR}{{\mathbb{R}}}
\newcommand{\bbC}{{\mathbb{C}}}

% Abbreviate definitions of greek symbols
\newcommand{\bk}{\mathbf{k}}
\newcommand{\bx}{\mathbf{x}}
\newcommand{\by}{\mathbf{y}}
\newcommand{\bU}{\mathbf{U}}
\newcommand{\bL}{\mathbf{L}}
\newcommand{\bu}{\mathbf{u}}
\newcommand{\bv}{\mathbf{v}}

\newcommand{\bX}{\mathbf{X}}
\newcommand{\bY}{\mathbf{Y}}

\newcommand{\bE}{\mathbf{E}}
\newcommand{\bH}{\mathbf{H}}

\newcommand{\bF}{\mathbf{F}}
\newcommand{\bV}{\mathbf{V}}

\newcommand{\bn}{\mathbf{n}}
\newcommand{\bw}{\mathbf{w}}

\newcommand{\bbE}{\mathbb{E}}

\newcommand{\bbM}{\mathbb{M}}
\newcommand{\bbU}{\mathbb{U}}
\newcommand{\bbG}{\mathbb{G}}
\newcommand{\bbV}{\mathbb{V}}

\newcommand{\calQ}{{\mathcal{Q}}}

%%%Abreviate 

%\usepackage{showlabels}

\usepackage{authblk}
\author[1]{Philippe Briet}
\author[2]{Maxence Cassier}
\author[3]{Thomas Ourmières-Bonafos}
\author[2]{Michele Zaccaron}
\affil[1]{Université de Toulon, Aix Marseille Univ, CNRS, CPT, Toulon, France}
\affil[2]{Aix Marseille Univ, CNRS, Centrale Med, Institut Fresnel, Marseille, France}
\affil[3]{Aix-Marseille Univ, CNRS, I2M, Marseille, France}

\begin{document}
\vspace{-1in}
\title{Geometric spectral properties of electromagnetic waveguides}
\maketitle

%\section{}

\begin{abstract}
Consider a reference homogeneous and isotropic electromagnetic waveguide with a simply connected cross-section  embedded in a perfect conductor. In this setting, when the waveguide is straight, the spectrum of the associated self-adjoint Maxwell operator with a  \emph{ constant twist } (which may be zero) lies on the real line and is symmetric with respect to zero and exhibits a spectral gap around the origin. Moreover,  the spectrum is purely essential, and contains 0 which is an eigenvalue of infinite multiplicity. \\
In this work, we present new results on the effects of geometric deformations, specifically bending and twisting, on the spectrum of the Maxwell operator. More precisely, we provide, on the one hand, sufficient conditions on the asymptotic behavior of curvature and twist that ensure the preservation of the essential spectrum of the reference waveguide. Our approach relies on a Birman–Schwinger-type principle, which may be of independent interest and applicable in other contexts.  On the other hand,  we give sufficient conditions (involving in particular the geometrical shape of the cross-section of the waveguide) so that the geometrical deformation creates discrete spectrum (namely isolated eigenvalues of finite multiplicity) within the gap of the essential spectrum. In addition, we give some results on the localization of these discrete eigenvalues. The sufficient condition involving the cross-section is then studied both analytically and numerically. Finally, we examine its stability under shape deformations of the cross-section, focusing in particular on the case of a waveguide with a rectangular cross-section.
\end{abstract}

{\noindent \bf Keywords:} Electromagnetic waveguides, Maxwell's equations, geometric spectral theory, Birman-Schwinger principle, trapped modes, shape derivatives.

\tableofcontents

\section{Introduction and main results}

\subsection{Motivations and State of the art}

%Depending on the wave-propagation phenomena we are interested in,  wa
Due to their geometrical properties, waveguides appear as key guiding structures to propagate waves in physics and engineering.  They have numerous applications in quantum and classical physics. For instance, in mesoscopic physics,  the so-called quantum waveguides model is used to modelize the transport of electrons in quantum wires such as carbon nanotubes (see e.g. \cite{Exner-2015}) and in engineering, waveguides are intensively used  in telecommunication for the propagation of acoustic or electromagnetic waves (see e.g. \cite{Fah-2000,Mah-91}).

In the context of quantum waveguides, one is interested in the spectrum of the Laplace operator posed in the waveguide with adequate  boundary conditions which ensure its self-adjointness. An important question in this setting is the effect of a geometrical perturbation  of the waveguide (such as bending, twisting,...) on the  structure of the spectrum.  In particular, does the geometrical perturbation create discrete spectrum  or prevent its existence? The discrete spectrum corresponds to isolated eigenvalues of finite multiplicity whose associated  eigenfunctions (the so-called bounded states)  are functions ``confined  in space'' within the waveguide.
In the papers \cite{ESS89,ES89,GJ92,Krej04}, the two-dimensional setting is investigated for waveguides {\it straight at infinity} and the quantum particle confined within the waveguide is subject to Dirichlet boundary conditions. In this setting,  the essential spectrum of the Hamiltonian is a half-line whose bottom is given by the first eigenvalue of the Dirichlet Laplacian posed in the cross-section. Moreover, provided the base curve has non-zero curvature, discrete spectrum appears below the threshold of the essential spectrum. In dimension three, because of a more intricate geometry, the situation is more complex: the base curve may have curvature and the full waveguide may be twisted (think for instance about a constantly rotating cross-section $\omega$ around a given base curve). We mention the work \cite{DE95} where non-twisted waveguides are investigated, \cite{EKK08,BKRS09,BHK14,BMP18} where only twisted ({\it i.e.} non-curved) waveguides are studied and \cite{BMPP18} where a combination of both effects are investigated. Roughly speaking, the general idea is that provided the waveguide is {\it straight} at infinity and the twisting is {\it constant at infinity} the essential spectrum is a half-line but there is an interplay between twisting and bending: bending tends to create eigenvalues below the threshold of the essential spectrum while  certain  twisting prevents them to appear. We also mention  \cite{BMT07,BDK20} for observation of the above phenomena with  other type of boundary conditions than the Dirichlet boundary condition.\\

In the papers \cite{BBKOB22,BKOB23,LTROB24}, the authors investigate the relativistic counterpart of the Dirichlet Laplacian, that is the Dirac operator with infinite mass boundary conditions posed in two-dimensional waveguides. Despite the natural technicity implied by the vector-valued Dirac system, it turns out the most significant result is that the essential spectrum has a gap around the origin whose size is given by a transverse operator and that eigenvalues (and their associated bound states) can appear in the gap provided the base curve is curved.\\

In the context of classical waves, the bound states (or the so-called trapped modes in this domain, see e.g. \cite{BM07,Pagn-2013,CHE-20}) have been observed experimentally for instance for electromagnetic \cite{Anino-06} or water waves \cite{Cob-11}... 
The interest of trapped modes  associated to discrete eigenvalues lies in the fact that if one excites the system in the vicinity of the associated frequency then it produces field exaltation within the waveguide. From the numerical point of view, it is interesting to know the localization of the discrete spectrum due to the fact that the associated time-harmonic equation are not well-posed at these particular frequencies, causing numerical instability.
%Moreover, as mentioned in \cite{GJ92}  (see section IV. B), trapped modes associated to embedded eigenvalues strongly modify the scattering properties of the waveguide at the vicinity of frequencies close to their associated eigenvalues (below the bottom of the essential spectrum) and therefore can be detected experimentally using scattering experiments. Indeed, the scattering sensitivity of the system around a frequency associated to a trapped mode has been used as a mechanism  to  cloak  obstacles in acoustic waveguides in \cite{CHE-20}.

Because of potential applications in telecommunication,  in this paper  we focus on  electromagnetic waveguides  embedded in a perfect conductor. Numerous theoretical works have been undertaken in this direction and let us mention \cite{Cess-98,daulio,filonov19,filonov21}. However, most of these works are concerned with straight elctromagnetic waveguides, namely $\R \times \omega$ where $\omega$ denotes the two-dimensional cross-section of the waveguide. Our goal is to  partially fill this gap. In the physics literature we mention  the  work \cite{GJ92} (see section IV.A.) where the authors present a way to construct a trapped mode by bending an electromagnetic waveguide with a rectangular cross-section  $\omega$. Their argument is based on the construction of a bound state for the Dirichlet Laplace operator on a two-dimensional waveguide alongside a geometrical reasoning which holds only for rectangular cross-sections.
 We recall their argument in details in \S \ref{sec-rec-waveguide}. However, the authors mention that they were not able to extend the general result obtained for the scalar case of the quantum waveguide to the vectorial case of Maxwell's equations and  more precisely, in the electromagnetic setting, they ``\emph{believe that in more complicated
geometries the existence and nature of bound states depend
more delicately on the shape and curvature than in
the scalar case}''. One goal of this paper is to answer partially to this affirmation.
%Let us mention a numerical contribution on the modeling of twisted optical fibers (i.e. open electromagnetic waveguide where the crossection is $\omega$ is \bbR^2$)

In the present work, the base curve is more general and one studies the effect of twisting and bending. However, we investigate the spectrum of the Maxwell operator for homogenous isotropic waveguides with constant permittivity and permeability. More precisely, for waveguides {\it straight} and with  {\it constant twist} at infinity, we are able to identify the essential spectrum which has a gap around zero. Additionally, in the case of non-twisted waveguides, we provide a sufficient geometric condition on the cross-section of the waveguide to ensure the existence of discrete eigenvalues in this gap. We would like to emphasize that it is the first time such questions are addressed and investigated  in the context of electromagnetism. 
In particular, the vectorial nature  and the functional framework of Maxwell's equations complicate significantly the analysis, which is not a just an extension of the scalar case and requires the development of new techniques and reasoning adapted to  this case. 
We hope this work will inspire future studies in this direction.
\subsection{Description of the mathematical setting}

\subsubsection{Maxwell's equations and operator reformulation of the evolution problem}

Let $\Omega$ be a non-empty open set  of $\mathbb{R}^3$.
We assume that the electromagnetic material filling  $\Omega$ is a non-dispersive dielectric  material (possibly anisotropic and inhomogeneous) whose  permittivity  $\varepsilon$ and  permeability   $\mu$ are $3\times3\ $ real symmetric matrix-valued functions satisfying the following  conditions: there exists $0<c_\varepsilon \leq C_\varepsilon$ and $0<c_\mu \leq C_\mu$  positive numbers such that
\begin{equation} 	\label{unif:elliptic:eu}
0<c_\varepsilon |\zeta|^2 \leq \varepsilon(\bx) \zeta \cdot \bar\zeta \leq C_\varepsilon |\zeta|^2, \quad 0<c_\mu |\zeta|^2 \leq \mu(\bx) \zeta \cdot \bar\zeta \leq C_\mu |\zeta|^2 \qquad \forall \bx \in \Omega, \forall \zeta \in \bbC^3.
\end{equation}

We denote respectively by  ${\bf E }$ and ${\bf H}$ the electric and magnetic fields.
We assume that in the presence of a source current density ${\bf J}$, the evolution of  $({\bf E}, {\bf H})$ is governed in $\Omega$ by the macroscopic time-dependent Maxwell's equations
(in the following, the notation $\curlvec$ refers to the usual three-dimensional curl operator)
 \begin{align}\label{eq.Maxwell}
\begin{aligned}
	\varepsilon \partial_t {\bf E}(\bx, t) -\curlvec {\bf H}(\bx, t ) &= -\bf{J}(\bx, t),  \\  \mu \partial_t {\bf H}(\bx, t) +\curlvec {\bf E}(\bx, t) &= 0,
 \end{aligned}
  &&
\begin{aligned}
	&\forall (\bx,t )\in \Omega \times \bbR^{+,*}  \\   &\forall (\bx,t )\in \Omega \times \bbR^{+,*}. 
\end{aligned}
\end{align}
% with $\varepsilon$ and $\mu$ constants (in case of non-dispersive media)

% In this setting  the constitutive laws take a   simple form in which the inductions ${\bf  D}$ and ${\bf  B}$ are linearly proportional to electromagnetic fields ${\bf  E}$ and ${\bf  H}$.

\noindent Then, the equations \eqref{eq.Maxwell} have to be completed by  a  boundary condition at the boundary $\partial \Omega$ of $\Omega$. We use here the physical assumption that $\Omega$ is embedded in a perfect conductor which imposes the following  transmission condition on the   tangential  component of the electrical field:
\begin{equation}\label{eq.bc}
{\bf E}(\bx, t)  \times  \bn =0,\quad   \ \forall (\bx,t )\in \partial \Omega \times \bbR^{+,*}.
\end{equation}
Note that Condition \eqref{eq.bc} is  defined   by \eqref{eq.Hcurl0}.
For a more regular open set $\Omega$ , e.g. when $\Omega$ is bounded and Lipschitz (see Remark \ref{remark:normalcurl}),  the vector $\bn$ (where it is defined) has to be understood has the unit outward normal vector to $\partial \Omega$ and   $(\bE \times \bn)|_{\partial \Omega} = 0$  can be interpreted as a vanishing tangential trace condition  of the electric field  in the Sobolev space $\bH^{-\frac12}(\partial \Omega)$.\\[4pt]
\noindent Finally, the problem has to be supplemented by  initial conditions on the  fields
\begin{equation}\label{eq.Ic}
{\bf E}(\bx, 0)={\bf E}_0(\bx) \ \mbox{ and } \ {\bf H}(\bx, 0)= {\bf H}_0(\bx).
\end{equation}

The evolution system  \eqref{eq.Maxwell}, \eqref{eq.bc} and \eqref{eq.Ic} can be reformulated as a conservative Schr\"odinger type evolution equation
\begin{equation}\label{eq.schro}
       \partial_t \bU + i\mathcal{A}_{\varepsilon, \mu}\bU = \bF, \quad \forall \,   t >0  \quad \mbox{ with } \
            \bU(0) = \bU_0
       \end{equation}
%where $\mathcal{A}_{\varepsilon, \mu}$ is an unbounded operator defined
in the Hilbert space  $$\bL^2_{\varepsilon, \mu}(\Omega):=\bL^2_{\varepsilon}(\Omega) \oplus \bL^2_{\mu}(\Omega),$$  where the  weighted  $L^2-$space $\bL^2_\Sigma(\Omega)=L^2(\Omega)^3$ is here endowed with the following  inner product
\begin{equation*}
(\bu,\bv)_{\bL^2_\Sigma(\Omega)}=\int_\Omega \Sigma \bu \cdot \overline{\bv} \, \rmd \bx, \quad \forall u, v\in \bL^2_\Sigma(\Omega),
\end{equation*}
where the  weight $\Sigma= \varepsilon$ or  $\Sigma= \mu$ is a  $3\times3\ $ real symmetric matrix-valued functions  satisfying \eqref{unif:elliptic:eu}.
The Hilbert space $\bL^2_{\varepsilon, \mu}(\Omega)$ is thus equipped with the inner product defined by  
\begin{equation} \label{eq.energy}
\hspace{-0.2cm}(\bU,\bU')_{\bL^2_{\varepsilon, \mu}(\Omega)}=
\int_\Omega \left(  \varepsilon \bE_1 \cdot \overline{\bE}_2  + \mu \bH_1 \cdot\overline{\bH}_2  \right) \rmd \bx,  \,  \forall \bU=(\bE, \bH)^{\top}, \, \bU'=(\bE', \bH')^{\top} \in  \bL^2_{\varepsilon, \mu}(\Omega).
\end{equation}
We point out that the Hilbert-norm associated to the above inner product is indeed (up to a factor $1/2$) the electromagnetic energy in $\Omega$.
%$\bU=({\bf E},{\bf H})^\top \in \mathcal{H}=\bL^2(\R^3)^2$ with $\bL^2(\R^3)=L^2(\R^3)^3$.

%, $\bU_0 = ({\bf E}_0,{\bf H}_0)^\top$, $\bF = (\varepsilon^{-1}{\bf J},0)^\top$ and $\mathcal{A}$ the so-called Maxwell operator that we now introduce.

%From now on, 
%We denote by 
%\begin{itemize}
%\item $\bL^2_\sigma(\Omega)$ is $L^2(\Omega)^6$ endowed with the following inner product
%\begin{equation*}
%(\bu,\bv)_{\bL^2_\sigma(\Omega)}=\int_\Omega \sigma \bu \cdot \overline{\bv} \, \rmd \bx.
%\end{equation*}
%\item $\bL_{\varepsilon, \mu}(\Omega):=\bL^2_{\varepsilon}(\Omega) \oplus \bL^2_{\mu}(\Omega)$. Thus
%\begin{equation} \label{eq.energy}
%\left< \begin{pmatrix}
%\b\mathbf{e}_1 \\
%\bH_1
%\end{pmatrix},
%\begin{pmatrix}
%\b\mathbf{e}_2 \\
%\bH_2
%\end{pmatrix} \right>_{\bL_{\varepsilon, \mu}(\Omega)} :=
%\int_\Omega \left(  \varepsilon \b\mathbf{e}_1 \cdot \overline{\bE}_2  + \mu \bH_1 %\cdot\overline{\bH}_2  \right) \rmd \bx.
%\end{equation}
%Observe that due to the hypotheses \eqref{unif:elliptic:eu} the above inner product is equivalent to the standard one of $L^2(\Omega)^6$.
%\item $\bL_0(\Omega):=\bL_{\varepsilon_0,\mu_0}(\Omega)$.
%\end{itemize}
\noindent In order to define the \emph{Maxwell} operator $\mathcal{A}_{\varepsilon, \mu}$, which plays the role of the Hamiltonian in \eqref{eq.schro}, we follow the approach used in \cite{birsol}.
For all the precise definition of the Hilbert spaces introduced we refer to Section \ref{notation:sect}.
We first define the \emph{maximal} $\curlvec$ operator acting in $\bL^2(\Omega)$ defined on the domain $H(\curlvec, \Omega)$.
Then we define the \emph{minimal} $\curlvec$ operator acting in $\bL^2(\Omega)$ defined on the domain $H_0(\curlvec, \Omega)$. 
These two operators are densely defined, closed and mutually adjoint in $\bL^2(\Omega)$ (see e.g. the proof of Lemma \ref{lem:55} for more details or \cite{birsol}), therefore the following block operator  $\mathcal{A}_{\varepsilon, \mu}: D(\mathcal{A}_{\varepsilon, \mu}):= H_0(\curlvec,\Omega)\times H(\curlvec,\Omega) \subset \bL_{\varepsilon,\mu}^2(\Omega) \to \bL_{\varepsilon,\mu}^2(\Omega)$ is self-adjoint:
\begin{equation}\label{eqn:defmaxgen}
  \mathcal{A}_{\varepsilon, \mu} :=   \rmi
    \begin{pmatrix}
        0 & \varepsilon^{-1} \curlvec \\
        -\mu^{-1}\curlvec & 0
    \end{pmatrix}.
\end{equation}
\begin{Rem} Here, the vector space $H_0(\curlvec,\Omega)$ is the vector space of vector fields $\bE \in H(\curlvec,\Omega)$ satisfying the boundary condition $\bE\times \bn|_{\partial\Omega} = 0$ which has to be understood as in \cite{birsol}. Namely, we say that $\bE\times \bn|_{\partial\Omega} = 0$ if one has 
\[
   (\curlvec \bE, \bV)_{\bL^2(\Omega)} = (\bE,\curlvec\bV)_{\bL^2(\Omega)}, \, \ \forall \ \bV \in H(\curlvec,\Omega).
\]
This definition of $\bE\times \bn|_{\partial\Omega} = 0$ coincides with the usual definition of null tangential trace if the Lipschitz  domain $\Omega$ is  bounded or is the complement of Lipschitz bounded domain \cite{Monk,Boy-12,Kirsch-15}.
\end{Rem}
%Note that by \cite{birsol}, as defined, the operator $\mathcal{A}_{\varepsilon,\mu}$ is self-adjoint. 
Finally the source term $\bF$ in  \eqref{eq.schro} is given by $\bF=(- \varepsilon^{-1} \bf{J}, 0)^{\top}\in \bL^2_{\varepsilon, \mu}(\Omega)$. Let us mention that for initial condition $\bU_0 \in D(\mathcal{A}_{\varepsilon, \mu})$ and source term $\bF\in \mathscr{C}^1(\bbR^+,\bL^2_{\varepsilon, \mu}(\Omega) )$, we know from the Hille-Yosida theorem (see e.g. \cite{Arendt-11} section I.3) that the evolution problem  \eqref{eq.schro} admits a unique solution $\bU \in \mathscr{C}^1\big([0, +\infty), \bL^2_{\varepsilon, \mu}(\Omega)\big)\cap\mathscr{C}^0\big([0, +\infty), D(\mathcal{A}_{\varepsilon, \mu})\big) $. \\[4pt]
\noindent In the next section, we describe the geometrical construction of  a general waveguide $\Omega$.

%he electromagnetic energy associated to this medium is given by the quadratic form associated to the inner product \eqref{eq.energy}.

%Problem \eqref{eq.Maxwell}, with the assumptions that the current density ${\bf J}=0$, can be reformulated as follows:
%\begin{equation*}
%\partial_t \bU + i \mathcal{A}\bU=0,
%\end{equation*}
%where $\bU=({\bf E},{\bf H})$ with ${\bf E}$ and ${\bf H}$ being the electric and magnetic fields respectively.

%The domain  $D(\mathcal{A})$ is endowed with the following equivalent graph norm defined for all $\bU=( \b\mathbf{e}_1, \bH_1), \bV=( \b\mathbf{e}_2, \bH_2) \in D(\mathcal{A})$
%\begin{equation}\label{eq.energy-anisotropic-medium}
%\langle\bU,\bV \rangle_{D(\mathcal{A})}=\left\langle
%\begin{pmatrix}
%\b\mathbf{e}_1 \\
%\bH_1
%\end{pmatrix},
%\begin{pmatrix}
%\b\mathbf{e}_2 \\
%\bH_2
%\end{pmatrix} 
%\right\rangle_{\bL_{\varepsilon, \mu}(\Omega)}
%+\left\langle
%\begin{pmatrix}
%\varepsilon^{-1}\Curl \bH_1\\
%-\mu^{-1}\Curl \b\mathbf{e}_1
%\end{pmatrix},
%\begin{pmatrix}
%\varepsilon^{-1}\Curl \bH_2\\
%-\mu^{-1}\Curl \b\mathbf{e}_2
%\end{pmatrix} 
%\right\rangle_{\bL_{\varepsilon, \mu}(\Omega)}.
%\end{equation}

\subsubsection{Construction of waveguides} \label{section:waveguides}

For $d \in \mathbb{N}$, in the Euclidean space $\R^d$, a waveguide is defined as a tubular neighborhood of a {\it base curve} constructed thanks to a Lipschitz domain $\omega \subset \R^{d-1}$, called the {\it cross-section}, and one of the simplest example is the {\it straight} waveguide $\R \times \omega$.
In this paper, for physical reasons, we focus on the case $d=3$.

Let $\omega$ be a bounded simply connected and Lipschitz domain of $\mathbb{R}^2$. We will refer to $\omega$ as the {\it cross-section} of the waveguide and the coordinates on $\omega$ will be denoted $\mathbf{y} = (y_2,y_3)\in \omega$. By {\it straight waveguide}, we mean the Cartesian product $\Omega_0 := \mathbb{R}\times \omega$ (see Figure \ref{fig:1}).

\begin{figure}[b]
\centering
\includegraphics[width=0.4\textwidth]{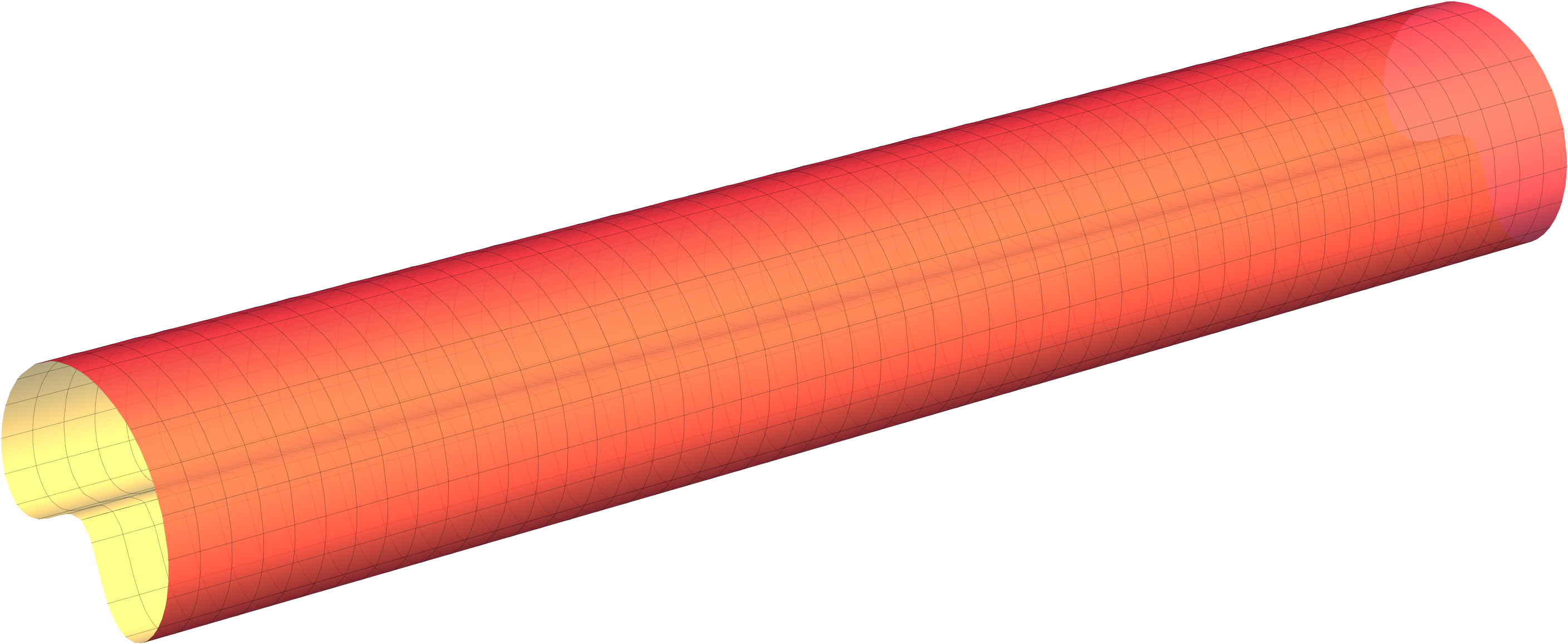}
\caption{An example of a straight waveguide.}
\label{fig:1}
\end{figure}
Let  $\Gamma \subset \mathbb{R}^3$ be a curve with a smooth (at least $\mathscr{C}^2$) injective arc-length parametrization $\gamma: \mathbb{R} \to \mathbb{R}^3$ and whose curvature $\kappa: s\in \mathbb{R} \mapsto |\gamma''(s)|$ is a bounded  function on $\mathbb{R}$.
We need now to define an appropriate moving frame  along $\Gamma$.  The most well-known example is the Frenet frame (see, e.g., \cite{doca}, Section 1.5). However, this frame has two main drawbacks. First, its definition requires that the curvature $\kappa$ does not vanish, which makes it unsuitable for defining waveguides that are straight  or locally straight. Second, it assumes that the arc-length parametrization $\gamma$ is at least $\mathscr{C}^3$. To overcome these limitations, we instead equip the curve $\Gamma$ with a $\mathscr{C}^1$-smooth relatively adapted parallel frame (we refer to \cite{bishop} for the introduction of such frames), which generalizes the Frenet frame and does not require any of these restrictive conditions. Such a frame is  defined by setting, for $s\in \R$, $\mathbf{e}_1(s) := \gamma'(s)$ and choosing two vectors $\mathbf{e}_2(s),\mathbf{e}_3(s)$ which are relatively parallel, that is, verifying for all $s\in \mathbb{R}$:
\begin{equation} \label{frame:evolution}
    \left\{
        \begin{array}{lcl}
            \displaystyle\frac{d \mathbf{e}_1}{ds}(s) & = & k_1(s) \mathbf{e}_2(s) + k_2(s) \mathbf{e}_3(s),  \vspace{3pt} \\
            \vspace{3pt}
            \displaystyle\frac{d \mathbf{e}_2}{ds}(s) & = & - k_1(s) \mathbf{e}_1(s),\\
            \displaystyle\frac{d \mathbf{e}_3}{ds}(s) & = & -k_2(s) \mathbf{e}_1(s).
        \end{array}
    \right.
\end{equation}
Here $k_1, k_2$ are continuous functions on $\R$ satisfying for all $s\in \R$, $k_1(s)^2 + k_2(s)^2 = \kappa(s)^2$. %Such a %frame can always be constructed as mentioned in \cite{bishop} %(see Appendix \ref{Def-relatively-parallelf{appendix:relatparallel} and Proposition \ref{prop:relatparallelframe} for a detailed construction).
More precisely, such a $\mathscr{C}^1$-smooth relatively adapted parallel frame is uniquely defined on $\bbR$ by $\mathbf{e}_1(s) := \gamma'(s)$ and  \eqref{frame:evolution} after fixing initial conditions (for instance at $s=0$): $(\mathbf{e}_2(0),\mathbf{e}_3(0))=(\mathbf{e}_{2,0}, \mathbf{e}_{3,0})$ where the initial condition is chosen such that $(\gamma'(0),\mathbf{e}_{2,0}, \mathbf{e}_{3,0})$  is a positively oriented orthonormal basis of $\R^3$. We refer to  Proposition \ref{appendix:relatparallel} and its proof for more details and also for the construction of the  continuous functions $k_1$ and $k_2$.

\noindent Let $\theta: \mathbb{R}\to \mathbb{R}$ be a smooth map (at least $\mathscr{C}^1$) and introduce the following vectors
\begin{equation}\label{eqn:defetheta}
\mathbf{e}_1^\theta(s):=\mathbf{e}_1(s) \quad \mbox { and } \quad  \begin{pmatrix}\mathbf{e}_2^\theta(s) \\ \mathbf{e}_3^\theta(s) \end{pmatrix}:=
\begin{pmatrix}
\cos(\theta(s)) \, \mathbf{e}_2(s)-\sin\big(\theta(s)\big) \, \mathbf{e}_3(s)\\
\sin\big(\theta(s)\big) \, \mathbf{e}_2(s)  +\cos\big(\theta(s)\big)\,  \mathbf{e}_3(s)
\end{pmatrix}.
\end{equation}
Remark that $(\mathbf{e}_1^\theta,\mathbf{e}_2^\theta,\mathbf{e}_3^\theta)$ is nothing but a rotation in the transverse plane of angle $\theta$ of the relatively adapted parallel frame and will encode the twist of the waveguide.
In particular we have that
\begin{equation} \label{frame:evolution:theta}
    \left\{
        \begin{array}{lcl}
            \displaystyle\frac{d \mathbf{e}_1^\theta}{ds}(s) & = & k_1^\theta(s) \mathbf{e}_2^\theta(s) + k_2^\theta(s)\mathbf{e}_3^\theta(s),  \vspace{3pt} \\
            \vspace{3pt}
            \displaystyle\frac{d \mathbf{e}_2^\theta}{ds}(s) & = & - k_1^\theta(s)\mathbf{e}_1^\theta(s) -\theta'(s) \mathbf{e}_3^\theta(s),\\
            \displaystyle\frac{d \mathbf{e}_3^\theta}{ds}(s) & = & -k_2^\theta(s) \mathbf{e}_1^\theta(s)+\theta'(s) \mathbf{e}_2^\theta(s).
        \end{array}
    \right.
\end{equation}
Here, we used the notation
\begin{equation}\label{eqn:defktheta}
    \mathbf{k^\theta}(s) := \begin{pmatrix}k_1^\theta(s)\\k_2^\theta(s)\end{pmatrix} = \begin{pmatrix}
\cos(\theta(s)) & -\sin(\theta(s))\\
\sin(\theta(s)) & \cos(\theta(s))
\end{pmatrix}\begin{pmatrix}k_1(s)\\k_2(s)\end{pmatrix}.
\end{equation}
Thus, one notices that $| \mathbf{k^\theta}(s)|=\kappa(s)$.\\

\begin{Rem}
For the particular case (not assumed in this paper) where $\gamma$ is $\mathscr{C}^3$ and the curvature $\kappa$ is positive function, the Frenet frame $(\mathbf{T}(s), \mathbf{N}(s), \mathbf{B}(s))$ is defined  by
$$
\mathbf{T}(s) := \gamma'(s), \, \mathbf{N}(s) := \frac{\gamma''(s)}{|\gamma''(s)|}, \quad
\mathbf{B}(s) := \mathbf{T}(s) \times \mathbf{N}(s), \, \forall s \in \mathbb{R}.
$$
This frame is of class $\mathscr{C}^1$ and satisfies the following evolution system
\begin{equation} \label{framefrenet:evolution}
    \left\{
        \begin{array}{lcl}
            \displaystyle\frac{d \mathbf{T}_1}{ds}(s) & = & \kappa(s) \, \mathbf{N}(s) ,  \vspace{3pt} \\
            \vspace{3pt}
            \displaystyle\frac{d \mathbf{N}}{ds}(s) & = & - \kappa(s) \, \mathbf{N}(s) + \tau(s) \, \mathbf{B}(s),\\
            \displaystyle\frac{d \mathbf{B}}{ds}(s) & = &  -\tau(s) \, \mathbf{N}(s).
        \end{array}
    \right.
\end{equation}
where $\tau$ is the torsion defined by $\tau  : s\in \R \mapsto \det(\gamma'(s),\gamma''(s),\gamma^{(3)}(s)) \, \big(\kappa(s)\big)^{-2}$. 
Using \eqref{frame:evolution}, \eqref{eqn:defetheta}, \eqref{frame:evolution:theta}, \eqref{eqn:defktheta}, and \eqref{framefrenet:evolution}, the reader may verify that, starting from the initial condition $(\mathbf{e}_2(0), \mathbf{e}_3(0)) = (\mathbf{N}(0), \mathbf{B}(0))$, the unique $\mathscr{C}^1$ relative adapted frame associated with these initial conditions (see Proposition \ref{ex:un:RAPF}) is obtained with the functions $k_1$ and $k_2$ given for all real $s$  by $k_1(s) = \kappa(s) \cos(\theta(s))$, $k_2(s) = -\kappa(s) \sin(\theta(s))$,
with $\theta'(s) = -\tau(s)$ and $\theta(0)=0$.
Then, one recovers the Frenet frame if one choose the same angle $\theta$  in  \eqref{eqn:defktheta}. Namely, with these functions $k_1$, $k_2$ and $\theta$, one has:
$(\mathbf{e}_1^\theta(s), \mathbf{e}_2^\theta(s), \mathbf{e}_3^\theta(s)) = (\mathbf{T}(s), \mathbf{N}(s), \mathbf{B}(s)),$ for all real $s$.

\end{Rem}

In order to define curved and twisted waveguides, we consider the $\mathscr{C}^1$-smooth map
\begin{equation} \label{diffeo:Phi}
\Phi: \Omega_0 \to \mathbb{R}^3, \quad (s,y_2,y_3) \mapsto \gamma(s) + y_2\mathbf{e}_2^{\theta}(s) + y_3 \mathbf{e}_3^{\theta}(s).
\end{equation}
We now give sufficient conditions for $\Phi$ to be a $\mathscr{C}^1$-diffeomorphism. To this aim, set ${b:= \sup_{\mathbf{y} \in \omega} |\mathbf{y} |}$ and assume that
\begin{equation}\label{assumpt:a:kappa}
b \|\kappa\|_{L^\infty(\R)} <1.
\end{equation}
Note also that $J_\Phi(s,\by)$, the Jacobian matrix of $\Phi$ at $(s,\mathbf{y})\in \Omega_0$ is obtained thanks to the relations
\begin{equation}\label{eqn:defjacobian!}
\left\{
\begin{array}{lcl}\partial_s \Phi(s,\by) &=& \left[ 1-\mathbf{k^\theta} \cdot \mathbf{y} \right] \mathbf{e}_1^{\theta}(s)+ y_3  \, \theta'(s)\, \mathbf{e}_2^{\theta}(s)- y_2\, \theta'(s)\, \mathbf{e}_3^{\theta}(s), \\
\partial_{y_2} \Phi(s,\by) &=&\mathbf{e}_2^{\theta}(s), \\
\partial_{y_3} \Phi(s,\by) &=& \mathbf{e}_3^{\theta}(s).
\end{array}
\right.
\end{equation}
In particular, one has
\[
    \det(J_\Phi(s,\by)) = 1-\mathbf{k^\theta}\cdot \mathbf{y}.
\]
It yields, the following uniform bounds
\begin{equation} \label{unif:bounds:ak}
1-b \|\kappa\|_{L^\infty(\R)} \leq \det(J_\Phi(s,\by))  \leq 1+b \|\kappa\|_{L^\infty(\R)} \quad \text{ for all } (s,\by) \in \R \times \omega.
\end{equation}
With assumption \eqref{assumpt:a:kappa} it implies that $\Phi$ is a local $\mathscr{C}^1$-diffeomorphism and we further assume that $\Phi$ is injective (see Remark  \ref{Rem-Thm-Hadamard-Cacciopoli} for equivalent geomerical assumption for this condition). Therefore $\Phi$ is a $\mathscr{C}^1$-diffeomorphism from $\Omega_0$ to $\Omega := \Phi(\Omega_0)$ and note that there holds $\partial\Omega=\partial\Phi(\Omega_0)= \Phi(\partial \Omega_0)$.

\begin{Rem}\label{Rem-Thm-Hadamard-Cacciopoli} 
Assuming \eqref{assumpt:a:kappa} implies that  $\Phi$, defined by \eqref{diffeo:Phi}, is  a  local  $\mathscr{C}^1$-diffeomorphism from   $\overline{\Omega}_0=\bbR \times \overline{\omega}$ (the closure of  of $\Omega_0$) to $ \Phi(\overline{\Omega_0})$. Under this hypothesis on the local invertibility of $\Phi$,  the  injectivity  of $\Phi$ ensures that $\Phi$ is a  $\mathscr{C}^1$-diffeomorphism from $\overline{\Omega_0}$ to $ \Phi(\overline{\Omega_0})$. We point here out that one can replace in an equivalent way the injectivity assumption by  two   geometrical  conditions. The first one is the simple connectedness of the waveguide $\Phi(\overline{\Omega_0})$ and the second one is that the curve $\gamma$ satisfies $|\gamma(s)|\to +\infty$ when $|s|\to \infty$. More precisely, 
$\overline{\Omega_0}$ is simply connected (since $\omega$ is simply connected) and simple connectedness is   preserved by homeomorphism. Thus,  a necessary condition to obtain a $\mathscr{C}^1$-diffeomorphism is that $ \Phi(\overline{\Omega_0})$ is  simply connected.  Indeed, by the  Hadamard-Cacciopoli Thoerem (see e.g. Theorem 0.1 of \cite{De-Marco-94}),  the simple connectedness of $ \Phi(\overline{\Omega_0})$ and the  fact that $\Phi $ is proper (in other words that  the inverse image of any compact subset of $\mathcal{Y}= \Phi(\overline{\Omega_0})$ is a compact subset of $\mathcal{X}=\overline{\Omega_0}$) is equivalent (under the local invertibility condition) to the  property that $\Phi$ is $\mathscr{C}^1$-diffeomorphism  from $\mathcal{X}$ to $\mathcal{Y}$.
The fact that  $\Phi$ is proper from $\mathcal{X}$ to $\mathcal{Y}$ is equivalent to the fact that  if $|(s,{\bf y})|\to +\infty$ in $\mathcal{X}$, then $|\Phi(s,{\bf y})|\to +\infty$. Thus, as the waveguides $\mathcal{X}$ and $\mathcal{Y}$ are bounded in two directions, this last property is  also equivalent   that the curve $\gamma$ satisfies $|\gamma(s)|\to +\infty$ when $|s|\to \infty$.
 \end{Rem}

We call $\Omega$ a {\it waveguide} (see the left panel of Figure \ref{fig:2}). 
\begin{figure}
\centering
\includegraphics[width=0.9\textwidth]{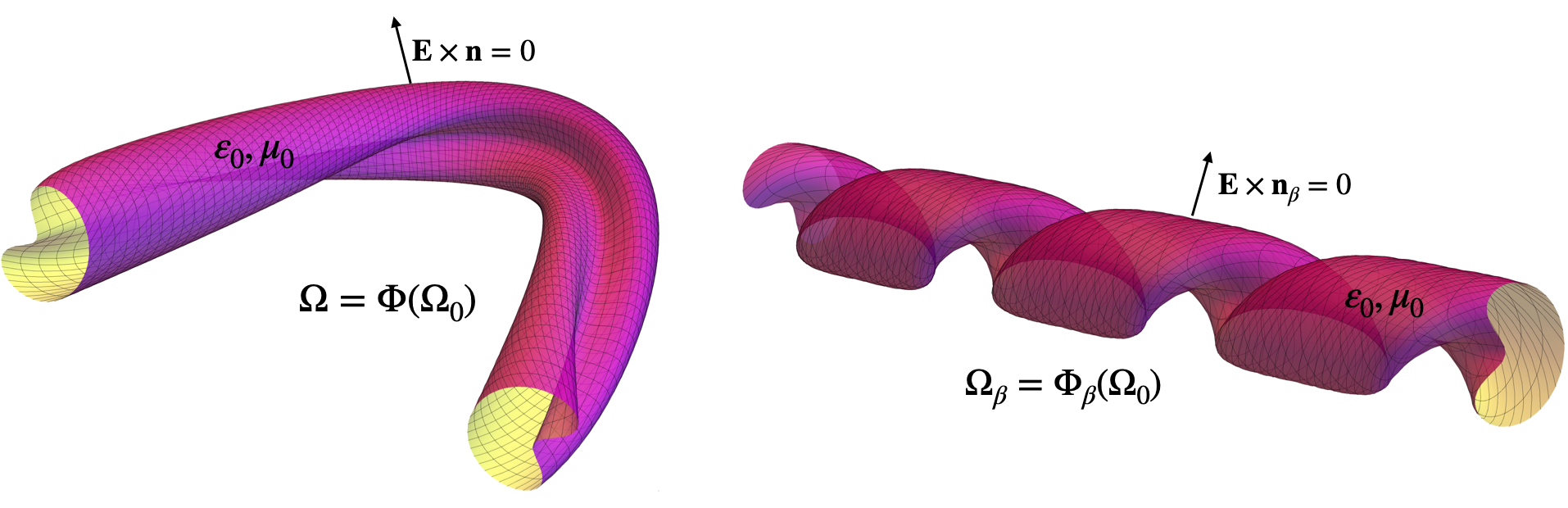}
\caption{On the left, a general perturbed waveguide with bending and twisting; on the right a constantly twisted waveguide.}
\label{fig:2}
\end{figure}
A particular case is that of a constantly twisted waveguide $\Omega_\beta$, constructed from a generating curve which is a straight line and with a constant twist $\beta \in \mathbb{R}$ (see the right panel of Figure \ref{fig:2}). Rigorously, this waveguide is the image of $\Omega_0$ through the following diffeomorphism:
\begin{equation} \label{phiB:def}
\Phi_\beta: \Omega_0 \to \R^3, \quad \Phi_\beta(s,y_2,y_3) = \begin{pmatrix}
s\\
y_2 \cos(\beta s) + y_3 \sin(\beta s) \\
-y_2 \sin(\beta s) + y_3 \cos(\beta s)
\end{pmatrix}.
\end{equation}
In other words, one has $\Omega_{\beta}=\Phi_{\beta}(\Omega_0)$ and one denotes by $\mathcal{A}_{\beta}$ the self-adjoint Maxwell operator defined by (\ref{eqn:defmaxgen}) on the  simply connected Lipschitz domain $\Omega_{\beta}$.

\begin{Rem}
We observe that the definition of $\Phi_\beta$ is of the form \eqref{diffeo:Phi}. Indeed, in the case of a \emph{constantly twisted} waveguide, the unit  tangent vector 
$\gamma'$ is constant and up to a translation (without loss of generality) one can assume that $\gamma(0)=0$ so that $\gamma(s)=s\, \gamma'(0)$.
As the curvature  $\kappa=|\gamma''|=0$,  the coefficient $k_1=k_2=0$  in the ODE system \eqref{eqn:defetheta}.  As a result, a relatively adapted parallel frame remains constant along the curve and takes the form $(\gamma'(0),{\bf e}_2,{\bf e}_3)$  where the vectors ${\bf e}_2$ and ${\bf e}_3$   are determined by the initial condition, subject to the requirement that $(\gamma'(0),{\bf e}_2,{\bf e}_3)$ forms a positively oriented orthonormal basis of  $\bbR^3$. Furthermore, as the twist has a constant value $\beta\geq 0$, $\theta'=\beta$ and choosing  $\theta(s)=\beta \, s$ in \eqref{eqn:defetheta} allows to rewrite the expression    \eqref{diffeo:Phi}  (rewritten  in the basis  $(\gamma'(s),{\bf e}_2,{\bf e}_3)$) as \eqref{phiB:def}.

\end{Rem}

Throughout the paper we assume that any waveguide is asymptotically converging to a constantly twisted one, meaning that the curvature goes to zero at infinity and that the twist converges to a constant $\beta\in \R$:
\begin{equation} \label{relaxed:hyp}
\lim_{|s| \to \infty} \kappa(s)=0 \quad \text{and} \quad \lim_{|s| \to \infty} \left( \theta'(s) - \beta\right)= 0.
\end{equation}

\begin{Rem} \label{remark:arclength}
We remark that, from a practical point of view, while we built the waveguide by means of a curve parametrized by arc-length,  the vast majority of parametrized curves do not possess an explicit expression for the arc-length parameter, as it often involves inverting a function whose inverse cannot be written in closed form via known functions.
However, this is not a problem, since any (injective) parametrization can also be used because it does not change the geometry of the curve (and other geometrical invariant quantities such as the curvature). 
We can still build the unit tangent to the curve, by normalizing the first-order derivative of the parametrization, and from that a relative adapted parallel frame. See e.g. \S \ref{sec:proofcor9}.
%: the important quantities we are interested in, such as the support and the geometry of the curve, are geometric invariants  which do not depend on the fictitious speed coming from the parametrization of the curve. 
%Indeed, if one is given a parametrization which is not arc-length, the relatively adapted parallel frame to the curve can still be built starting from the unit tangent, which is now equal to the normalized first-order derivative of the parametrization. Moreover, as one can see e.g. in \S \ref{sec:proofcor9}, other important quantities such as the curvature at any point of the curve can still be calculated.
\end{Rem}

\subsection{Main results}
Our main results concern the spectrum of the Maxwell operator in a perfectly conducting waveguide filled by a homogeneous and isotropic dielectric medium (e.g the vacuum). Namely, for $\varepsilon_0,\mu_0 > 0$, we consider the operator \begin{equation}\mathcal{A} := \mathcal{A}_{\varepsilon_0 \mathds{1}_3,\mu_0\mathds{1}_3}
\label{eqn:defAMaxwell}
\end{equation}
posed in the waveguide $\Omega$ defined in \eqref{eqn:defmaxgen}. Note its spectrum is symmetric with respect to the origin (see Proposition \ref{prop:symspecmaxwell} in Appendix \ref{appendix:specMaxwell}).\\

Our first result is about the structure of the essential spectrum of the operator $\mathcal{A}$.
\begin{Thm} \label{Main1}
Under assumptions \eqref{relaxed:hyp}, there exists $a_\beta>0$ such that
\begin{equation*}
\sigma_{\mathrm{ess}}(\mathcal{A})=\sigma_{\mathrm{ess}}(\mathcal{A}_{\beta})=(-\infty,-a_\beta] \cup \{0\} \cup [a_\beta,+\infty).
\end{equation*}
Moreover, in the particular case where  $\beta=0$ in  \eqref{relaxed:hyp}, one has 
\[
    a_0 = \sqrt{\lambda_2^N(\omega)}\, c, \  \mbox{ where  }c:=(\varepsilon_0 \mu_0)^{-\frac{1}{2}} \mbox{ is the speed of light in the medium}
\]
and $\lambda_2^N(\omega)>0$ is the first non-trivial eigenvalue of the Neumann Laplacian on $\omega$. 
\end{Thm}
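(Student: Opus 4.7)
The statement has three ingredients: (i) stability of the essential spectrum under the asymptotic hypothesis \eqref{relaxed:hyp}, (ii) an explicit description of $\sigma_{\mathrm{ess}}(\mathcal{A}_\beta)$, and (iii) the identification of $a_0$ when $\beta=0$. I would attack them in this order.

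\textbf{Step 1 (reduction to the reference operator).} The plan is to transport both $\mathcal{A}$ (on $\Omega$) and $\mathcal{A}_\beta$ (on $\Omega_\beta$) back to the fixed straight waveguide $\Omega_0 = \R\times\omega$ via the diffeomorphisms $\Phi$ and $\Phi_\beta$ introduced in \eqref{diffeo:Phi}--\eqref{phiB:def}, composed with a unitary rotation of the vector components from the ambient Cartesian frame to the rotating frame $(\mathbf{e}_1^\theta,\mathbf{e}_2^\theta,\mathbf{e}_3^\theta)$. This produces two self-adjoint operators $\widetilde{\mathcal{A}},\widetilde{\mathcal{A}}_\beta$ on the \emph{same} Hilbert space over $\Omega_0$ (with weights modified by $\det J_\Phi = 1 - \mathbf{k}^\theta\cdot\mathbf{y}$). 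The transported coefficients depend on the geometry only through $\kappa$, $\theta'-\beta$ and $\theta-\beta s$ (the last entering only through rotation angles), so \eqref{relaxed:hyp} forces every symbol-level difference between $\widetilde{\mathcal{A}}$ and $\widetilde{\mathcal{A}}_\beta$ to vanish at infinity. I would then invoke the Birman--Schwinger-type principle announced in the abstract: after an appropriate spectral projection avoiding the infinite-dimensional zero eigenspace, the resolvent difference is compact and Weyl's theorem yields $\sigma_{\mathrm{ess}}(\widetilde{\mathcal{A}}) = \sigma_{\mathrm{ess}}(\widetilde{\mathcal{A}}_\beta)$. Because $0$ is an eigenvalue of infinite multiplicity of both operators (gradients of $H^1_0$-functions lie in the kernel and the cross-section is simply connected), it belongs to both essential spectra, so the equality extends to all of $\sigma_{\mathrm{ess}}$.

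\textbf{Step 2 (fiber decomposition of $\mathcal{A}_\beta$).} In the rotating frame, the pullback $\widetilde{\mathcal{A}}_\beta$ becomes $s$-translation invariant because $\theta(s)=\beta s$ cancels the $s$-dependence of the orthonormal frame. I would apply the Fourier transform in $s$ to obtain a direct integral
\[
\widetilde{\mathcal{A}}_\beta \;\cong\; \int_\R^\oplus A_\beta(\xi)\,\mathrm{d}\xi,
\]
where each fiber $A_\beta(\xi)$ is a self-adjoint Maxwell-type operator on $\omega$ with first-order terms in $\mathbf{y}$ generated by the twist $\beta$. The essential spectrum of $\widetilde{\mathcal{A}}_\beta$ is then the closure of $\bigcup_{\xi\in\R}\sigma(A_\beta(\xi))$. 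The symmetry with respect to $0$ (Proposition \ref{prop:symspecmaxwell}) and the existence of the infinite-dimensional kernel transfer to each fiber, while a straightforward min--max bound on $A_\beta(\xi)$ shows that the nonzero part of the spectrum stays uniformly away from $0$, producing the constant $a_\beta>0$ and the structure $(-\infty,-a_\beta]\cup\{0\}\cup[a_\beta,+\infty)$.

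\textbf{Step 3 (identification of $a_0$).} For $\beta=0$ the fiber $A_0(\xi)$ is the constant-coefficient Maxwell operator on $\omega$ with $\partial_s$ replaced by $\rmi\xi$ and PEC boundary conditions. The Helmholtz decomposition on the simply connected $\omega$ splits the fiber space into (a) a kernel of pure gradients, (b) a TE sector diagonalised by the Neumann eigenbasis $\{\lambda_k^N(\omega)\}_{k\ge 1}$ of $-\Delta$ on $\omega$, and (c) a TM sector diagonalised by the Dirichlet eigenbasis $\{\lambda_k^D(\omega)\}_{k\ge 1}$. A short direct computation shows that the nonzero eigenvalues of $A_0(\xi)$ are
\[
\pm c\sqrt{\xi^2 + \lambda_k^N(\omega)}\quad (k\ge 2), \qquad \pm c\sqrt{\xi^2 + \lambda_k^D(\omega)}\quad (k\ge 1),
\]
the $k=1$ Neumann eigenvalue $\lambda_1^N=0$ contributing to the kernel. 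Taking the union over $\xi\in\R$ and using the classical Szegő--P\'olya inequality $\lambda_2^N(\omega) \le \lambda_1^D(\omega)$ for simply connected planar domains, the infimum of the positive spectrum is attained at $\xi=0$ and equals $c\sqrt{\lambda_2^N(\omega)}$, which is the claimed $a_0$.

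\textbf{Main obstacle.} Step 1 is the delicate one. Because $\mathcal{A}$ has an infinite-dimensional kernel at $0$ and because the perturbation $\widetilde{\mathcal{A}}-\widetilde{\mathcal{A}}_\beta$ is only vanishing at infinity (not compactly supported), a naive compact-resolvent-difference argument fails; one must work away from $0$ via a spectral projection and use the Birman--Schwinger-type principle advertised in the abstract to extract the necessary compactness. I expect formulating this principle in a way that handles both the vectorial nature of Maxwell and the geometric nonlocality of the perturbation to be the core technical step.
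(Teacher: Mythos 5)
Your Steps 2 and 3 differ from the paper only in that you propose to re-derive, via the Fourier-in-$s$ fiber decomposition and the TE/TM splitting, what the paper obtains directly by citing \cite[Theorem 1.3]{filonov19}. That route is mathematically sound (the Friedlander inequality $\lambda_2^N(\omega)\le\lambda_1^D(\omega)$, which you attribute to Szeg\H{o}--P\'olya, is the right ingredient to conclude that the infimum of the positive spectrum is $c\sqrt{\lambda_2^N(\omega)}$), but you are essentially unpacking a black box that the authors treat as known. Where the paper does real work is in the first half: they first prove the statement for $\mathcal{A}_\beta$ via Filonov applied after the Piola transform (Proposition~\ref{ess:spectr:beta}), then show $\sigma_{\mathrm{ess}}(\mathcal{A})=\sigma_{\mathrm{ess}}(\mathcal{A}_\beta)$.

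The genuine gap is in your Step 1, and it is exactly at the place you flag as ``the core technical step.'' You propose to project away from the zero eigenspace and then conclude from compactness of a resolvent difference plus Weyl's theorem. This does not go through cleanly: the transported operators $\tilde{\mathcal{A}}$ and $\tilde{\mathcal{A}}_\beta$ both act in $\bL_0^2(\Omega_0)$ but their kernels are $\bbV\overline{\mathcal{G}(\Omega_0)}$ and $\bbV_\beta\overline{\mathcal{G}(\Omega_0)}$, i.e.\ two \emph{different} infinite-dimensional subspaces (obtained by multiplying by $\bbG^{-1/2}$ versus $\bbG_\beta^{-1/2}$), so the reduced operators live in different subspaces and a direct comparison of their resolvents is not available; and for the full operators, the resolvent difference contains the term $-\frac{1}{\lambda_0}(\tilde P_0 - \tilde P_{0,\beta})$, the difference of two infinite-rank orthogonal projectors, whose compactness is not at all obvious and would itself require an argument of the same order of difficulty as the one you are trying to avoid. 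The paper sidesteps this entirely: it never takes a naive resolvent difference. Instead it derives the exact factorization $\tilde{R}(\lambda)=\bbV\mathcal{I}_\bbG^{-1}\mathcal{I}_\beta\bbV_\beta^*\,\tilde{R}_\beta(\lambda)\,\calQ_\beta(\lambda)^{-1}\,\mathcal{I}_\beta\bbV_\beta^*\bbV\mathcal{I}_\bbG^{-1}$ (Lemma~\ref{lemma:resid1}), rewrites $\calQ_\beta(\lambda)=\calQ_\beta(\lambda_0)(I+\mathcal{K}_\beta(\lambda))$ with $\mathcal{K}_\beta(\lambda)$ built from the \emph{reduced} resolvent $\check{\tilde R}_\beta$ sandwiched with a localizing multiplication operator vanishing at infinity (Proposition~\ref{lemmaK}), proves compactness of $\mathcal{K}_\beta$ by approximation with compactly supported perturbations (Lemmas~\ref{lemma:Kcompact}--\ref{lemma:KK'compact}), and finally applies the \emph{Meromorphic Fredholm Theorem} to extract finite-rank residues and hence discrete spectrum rather than essential spectrum in the gap. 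That combination --- factorization plus meromorphic Fredholm, rather than compact-resolvent-difference plus Weyl --- is precisely the idea that your sketch does not supply.
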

\begin{Rem}\label{rem:defa_betanotwist} 
 Theorem  \ref{Main1} states that under the assumption \eqref{relaxed:hyp}, the Maxwell operators $\mathcal{A}$ and $\mathcal{A}_{\beta}$ (defined on the constantly twisted $\Omega_{\beta}$) have the same essential spectrum.
The number $a_\beta$ is indeed defined by applying \cite[Theorem 9.8.]{filonov19} on  $\mathcal{A}_{\beta}$ after a geometrical unitary  transformation (the so-called Piola transform, see Section \ref{subsec:piola}) which straigthens the waveguide. For more details, we refer to  Section \ref{sec-const-twist}. In the particular case  $\beta =0$, namely, when the waveguide $\Omega_{\beta}$ is straight, the value $a_0$ is given  (see \cite[Theorem 9.8.]{filonov19}) by
$
    a_0 =\sqrt{\lambda_2^N(\omega)}\, c, 
$
where $\lambda_2^N(\omega)$ is the first non-trivial eigenvalue of the Neumann Laplacian $L^{N}_{\omega}$ on $\omega$. More precisely,   the   operator $L^{N}_{\omega}: {D}(L^{N}_{\omega}) \subset L^2(\omega)  \mapsto L^2(\omega)  $ is  defined  by
$$
L^{N}_{\omega} :=-\Delta ,\mbox{ where } D({L}^{N}_{\omega})=\big\{u \in H^1(\omega): \Delta u \in L^2(\omega) \mbox{ and }\partial_{\bn} u=0   \big\}, 
$$ 
(where $\bn$ stands here for the outward normal to $\omega$). $L^{N}_{\omega}$ is  positive semidefinite, self-adjoint and has a compact resolvent.  Hence, its spectrum consist of a sequence of  positive  eigenvalues $(\lambda_n^N)_{n\geq 1}$ of finite multiplicity, tending to $+\infty$ and listed  (with their  multiplicity) as follows 
$0=\lambda_1^{N}(\omega) < \lambda_2^{N}(\omega) \leq  \lambda_3^{N}(\omega) \leq  \ldots  $ 
(The eigenvalue $\lambda_1^{N}(\omega)=0$ is  here simple since $\omega$ is  simply connected).

 %Thus its spectrum is discrete and can be ordered in ascending order  $\lambda_2^N(\omega)$ is  the second smallest eigenvalue (counted with multiplicity and the first positive)  
 In this particular case, the value of $a_0$ can also be obtained by applying  the  so-called transverse electric and transverse magnetic  decomposition to the Maxwell operator $\mathcal{A}_0$ (which holds since the waveguide $\Omega_0$ is straight and the associated medium is homogeneous). Roughly speaking, this decomposition  allows ``to reduce" the analysis of the spectrum $\mathcal{A}_0$ in $\operatorname{ker}(\mathcal{A}_0)^{\perp}$  to the analysis of the direct sum of two operators:  the so-called  transverse electric and transverse magnetic Maxwell operators whose spectra are purely essential and given  respectively in terms of the (purely essential) spectrum  of two ``scalar operators" : the Neumann and Dirichlet Laplacians defined in the separable geometry $\Omega_0=\bbR\times \omega$,
see e.g. \cite[Chapter IX, section 4]{daulio}.  
\end{Rem}

The proof of Theorem \ref{Main1} is inspired by the approach developed by Weder in \cite[Ch.3 \S 3]{weder} in the context of  scattering theory in electromagnetic stratified media. In this setting,  the resolvent of the  perturbed stratified medium is rewritten  through a factorization involving the resolvent of the unperturbed medium and a Fredholm operator.  
In other words, he establishes a Birman-Schwinger principle for stratified electromagnetic media.

We adapt this approach, with many technical adjustments, in the context of electromagnetic waveguides subjected to geometrical perturbation (twisting, bending, \dots) and thus we are able to see the Maxwell operator $\mathcal{A}$ posed in the waveguide $\Omega$ as a perturbation of the Maxwell operator $\mathcal{A}_{\beta}$ posed in a constantly twisted waveguide $\Omega_{\beta}$ (see the right panel of Figure \ref{fig:2}) by proving a Birman-Schwinger type principle (see Proposition \ref{lemmaK}) between their two resolvents. In particular, as a byproduct of this analysis, we show that they have the same essential spectra via a Meromorphic Fredholm  theorem.\\

Another corollary of this Birman-Schwinger principle is a localisation of possible discrete eigenvalues in the spectral gap $(-a_\beta,a_\beta)$ (see Proposition \ref{prop:loceivalue}). Moreover, we provide a geometric condition for which the existence of discrete eigenvalues is ensured in the particular case of non-twisted waveguides.

To state it, we must introduce the following vector quantities. Let $\psi$ be a normalized  eigenfunction associated with $\lambda_2^N(\omega)$. 
Recall the definition of $\mathbf{k^\theta}$ from \eqref{eqn:defktheta}, and  we  set $\mathbf{k} := (k_1,k_2)^\top$.
We define the vectors $\bX,\bY^\theta, \bY \in \R^2$ as
\begin{equation}\label{eqn:defX}
    \bX := \int_{\partial\omega}\bn(\by)|\psi(\by)|^2 \rmd \sigma(\by),\quad \bY^\theta := \int_{\R}\mathbf{k^\theta}(s) \, \mathrm{d}s, \quad \bY:= \int_{\R} \mathbf{k}(s)\, \mathrm{d}s,
\end{equation}
where $\bn$ is the unit outward normal vector to $\partial\omega$, and $\rmd \sigma$ is the $1$-dimensional Hausdorff measure on $\partial\omega$. 
We point out that in the definition \eqref{eqn:defX},  $ \bY$ coincides with $\bY^{\theta}$ for  constant functions $\theta$ of the form $\theta=2m \pi$ with $m\in \mathbb{Z}$. 

Now, we are in a good position to state the main theorem regarding the existence of eigenvalues.
\begin{Thm}\label{thm:discspec}
Suppose $\theta'\equiv 0$, $\displaystyle \lim_{|s|\to \infty} \kappa(s) =0$ and that $\kappa \in L^1(\mathbb{R})$. If there holds
\begin{equation}
 {\bX \cdot \bY^\theta} >2 \lambda_2^N(\omega) \ \frac{ b^2 \|\kappa\|_{L^\infty(\R)}}{1-b\|\kappa\|_{L^\infty(\R)}}\|\kappa\|_{L^1(\R)}
    \label{eqn:ineqdiscspec}
\end{equation}
then 
\begin{equation*}
\sigma_{\mathrm{dis}}(\mathcal{A}) \neq \emptyset
\end{equation*}
and $\sigma_{\mathrm{dis}}(\mathcal{A})$ is symmetric with respect to zero.
\end{Thm}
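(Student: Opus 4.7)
The strategy is a variational argument on the squared operator $\mathcal{A}^2$. By Theorem~\ref{Main1} with $\beta=0$,
\[
    \sigma_{\mathrm{ess}}(\mathcal{A}^2) \;=\; \{0\} \cup [a_0^2, +\infty),\qquad a_0^2 = c^2\,\lambda_2^N(\omega),
\]
so it suffices to exhibit $\bU \in D(\mathcal{A})\cap \ker(\mathcal{A})^{\perp}$ with $\|\mathcal{A}\bU\|^2 < a_0^2\,\|\bU\|^2$. Indeed, the min-max principle applied to $\mathcal{A}^2$ restricted to $\ker(\mathcal{A})^{\perp}$ then produces an eigenvalue of $\mathcal{A}^2$ in $(0,a_0^2)$; taking square roots and invoking the spectral symmetry of Proposition~\ref{prop:symspecmaxwell} yields two discrete eigenvalues of $\mathcal{A}$ lying symmetrically in $(-a_0,0)\cup(0,a_0)$.

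To exploit the straight reference geometry, I would pull back $\mathcal{A}$ to $\Omega_0$ via the Piola transform of Section~\ref{subsec:piola}, which produces a unitarily equivalent self-adjoint operator $\widetilde{\mathcal{A}}$ together with a splitting
\[
    \|\widetilde{\mathcal{A}}\bU\|^2 - a_0^2\|\bU\|^2 \;=\; \bigl(\|\mathcal{A}_0\bU\|^2 - a_0^2\|\bU\|^2\bigr) \;+\; \mathcal{R}[\bU],
\]
where $\mathcal{A}_0$ is the straight non-twisted Maxwell operator and $\mathcal{R}[\bU]$ is polynomial in the entries of $\mathbf{k^\theta}$ and of the inverse Jacobian $J_\Phi^{-1}$ from \eqref{eqn:defjacobian!}, with coefficients uniformly controlled through \eqref{unif:bounds:ak}. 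The hypothesis $\theta'\equiv 0$ erases every torsion-type term, so that only $\kappa$ appears in $\mathcal{R}$.

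The trial function is modelled on the cutoff TE-mode of the straight vacuum waveguide. For a small parameter $\lambda>0$, fix $\chi \in \mathscr{C}_c^\infty(\R)$ with $\|\chi\|_{L^2(\R)}=1$ and set $\chi_\lambda(s) := \sqrt{\lambda}\,\chi(\lambda s)$. Let $\phi \in H^1(\omega)$ solve $\Delta_{\by}\phi = \psi$ in $\omega$ with $\partial_{\bn}\phi|_{\partial\omega}=0$, which is solvable since $\int_\omega \psi\,\mathrm{d}\by=0$. Define
\[
    \bU_\lambda := (0,\bH_\lambda),\qquad \bH_\lambda(s,\by) := \chi_\lambda(s)\,\psi(\by)\,\mathbf{e}_1 \;-\; \chi_\lambda'(s)\,\nabla_{\by}\phi(\by),
\]
where $\mathbf{e}_1$ denotes the longitudinal unit vector on $\Omega_0$. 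Then $\nabla\cdot\bH_\lambda = 0$ and $\bH_\lambda\cdot\bn|_{\partial\Omega_0}=0$, so $\bU_\lambda \perp \ker(\mathcal{A}_0) = \ker(\widetilde{\mathcal{A}})$; moreover $\|\bU_\lambda\|^2 = \mu_0\|\psi\|_{L^2(\omega)}^2 + O(\lambda^2)$ and, invoking $-\Delta_{\by}\psi = \lambda_2^N(\omega)\psi$ with its Neumann condition, a direct computation gives $\|\mathcal{A}_0\bU_\lambda\|^2 - a_0^2\|\bU_\lambda\|^2 = O(\lambda^2)$.

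The decisive step is the leading-order expansion of $\mathcal{R}[\bU_\lambda]$. The unique $O(1)$ contribution comes from pairing $\chi_\lambda\psi\,\mathbf{e}_1$ with the first-order $\mathbf{k^\theta}$-correction to the curl-curl operator induced by the Piola transform; integrating the resulting cross-section integral by parts and using $-\Delta_{\by}\psi = \lambda_2^N(\omega)\psi$ with $\partial_{\bn}\psi|_{\partial\omega}=0$ converts the bulk into the boundary integral defining $\bX$, while Fubini in $s$ produces $\bY^\theta$; the leading term equals a negative multiple of $\bX\cdot\bY^\theta$. All other pieces of $\mathcal{R}[\bU_\lambda]$ are quadratic in $\mathbf{k^\theta}$ or paired with $\chi_\lambda'$, $\chi_\lambda''$ and are bounded in absolute value by a positive constant times
\[
    2\,\lambda_2^N(\omega)\,\frac{b^2\|\kappa\|_{L^\infty(\R)}}{1-b\|\kappa\|_{L^\infty(\R)}}\,\|\kappa\|_{L^1(\R)},
\]
through Cauchy--Schwarz, \eqref{unif:bounds:ak}, and the $L^1$-hypothesis on $\kappa$ which controls the long $s$-range. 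Condition \eqref{eqn:ineqdiscspec} thus makes $\|\widetilde{\mathcal{A}}\bU_\lambda\|^2 - a_0^2\|\bU_\lambda\|^2$ strictly negative for $\lambda$ small enough, closing the argument. The hard part will be precisely this expansion: many $\chi_\lambda'$- and $\chi_\lambda''$-paired terms couple non-trivially with $\mathbf{k^\theta}$, and one must orchestrate exactly the right integrations by parts on $\omega$ so that the Neumann eigenvalue equation collapses the bulk into the boundary integral defining $\bX$, while simultaneously producing $\bY^\theta$ from the $s$-integration.
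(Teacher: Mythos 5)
Your strategy (variational argument on the square of the operator, pulled back to $\Omega_0$ via the Piola transform, restricted to the orthogonal complement of the kernel) is exactly the one the paper follows in Proposition~\ref{prop:existtrialfunc}, but your execution has two concrete gaps that both affect where the quantities in \eqref{eqn:ineqdiscspec} come from.

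\emph{Gap 1: the trial function is not in the right orthogonal complement.} After the Piola transform, the relevant kernel is $\ker(\hat{\mathcal{A}})=\overline{\mathcal{G}(\Omega_0)}$ and its orthogonal complement in $\bL^2_{\beps,\bmu}(\Omega_0)$ is $\mathcal{J}_{\beps,\bmu}(\Omega_0)$ (Proposition~\ref{prop:kerneldescription}), so the solenoidal constraint for your $(0,\bH_\lambda)$ reads $\Div(\bmu\,\bH_\lambda)=0$ with $(\bmu\,\bH_\lambda)\cdot\bn|_{\partial\Omega_0}=0$, where $\bmu=\mu_0\bbG^{-1}$ is a non-trivial matrix once $\kappa\not\equiv 0$. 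The Euclidean conditions $\Div\bH_\lambda=0$, $\bH_\lambda\cdot\bn=0$ that your construction achieves do \emph{not} put $\bU_\lambda$ in $\mathcal{J}_{\beps,\bmu}(\Omega_0)$. (Also, $\ker(\mathcal{A}_0)$ and $\ker(\widetilde{\mathcal{A}})$ are different subspaces of $\bL_0^2(\Omega_0)$, since $\widetilde{\mathcal{A}}=\bbV\hat{\mathcal{A}}\bbV^*$ and $\bbV\neq I$; they merely have the same image under the maps relating the different Hilbert spaces.) The paper fixes this by a gradient correction: it first builds an electric trial field $\bF_n$ and then solves the Dirichlet problem $\Div(\beps\nabla u_n)=\Div(\beps\bF_n)$ to set $\bE_n=\bF_n-\nabla u_n$, so that $(\bE_n,0)^\top\in\mathcal{J}_{\beps,\bmu}(\Omega_0)$. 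Crucially, the energy $\|\nabla u_n\|^2_{\bL^2_\beps}$ of this correction is bounded in Lemma~\ref{lem:projsolenoidaleortho} by $\varepsilon_0\lambda_2^N(\omega)\,b^2\|\kappa\|_{L^\infty}\|\kappa\|_{L^1}/(1-b\|\kappa\|_{L^\infty})$, and this is precisely the source of the right-hand side of \eqref{eqn:ineqdiscspec}. Without this projection step, neither the solenoidal constraint nor the competing term in \eqref{eqn:ineqdiscspec} appear.

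\emph{Gap 2: the longitudinal cutoff scaling gives the wrong leading order.} With $\chi_\lambda(s)=\sqrt{\lambda}\,\chi(\lambda s)$ you have $|\chi_\lambda(\cdot)|^2\to 0$ pointwise, and a substitution shows
\[
    \int_{\R}|\chi_\lambda(s)|^2\,\mathbf{k^\theta}(s)\,\rmd s=\int_\R|\chi(u)|^2\,\mathbf{k^\theta}(u/\lambda)\,\rmd u\;\longrightarrow\;0\quad\text{as }\lambda\to 0,
\]
by dominated convergence using $\kappa(s)\to 0$ as $|s|\to\infty$. So the contribution you identify as the ``$O(1)$'' term producing $\bX\cdot\bY^\theta$ actually vanishes in the limit, and no term in your expansion converges to $\bX\cdot\bY^\theta$. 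The paper's cutoff $\varphi_n$ of \eqref{def:phin} is equal to $1$ on $[-n,n]$, has $\|\varphi_n'\|^2_{L^2}=2/n\to 0$, and satisfies $\varphi_n\to 1$ pointwise while $|\varphi_n|\leq 1$; then $\int_\R|\varphi_n|^2\mathbf{k^\theta}\,\rmd s\to\bY^\theta$ by dominated convergence, using $\kappa\in L^1(\R)$. This is the mechanism that makes the $\bY^\theta$ factor appear. Your normalized scaling cannot produce it — as $\lambda\to 0$ the gradient term is fine, but the bending term dies; as $\lambda\to\infty$ the bending term is fine, but the gradient term blows up.

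A further small point: the paper first dispatches the case where $0$ is not an isolated point of $\sigma(\mathcal{A})$ (then $\sigma_{\mathrm{dis}}(\mathcal{A})$ is trivially infinite) and only then assumes $0$ is isolated so that $\sigma_{\mathrm{ess}}((\redA)^2)=[a_0^2,+\infty)$ with nothing near the origin; your sketch quietly assumes the latter. Finally, your choice to take a magnetic trial function rather than the paper's electric one is not itself an error — by the electric/magnetic duality it should be possible to run the argument either way — but the explicit computation in \eqref{eqn:exprexplicitq} is set up for the $(\bE,0)$ case with the diagonal structure of $\bmu^{-1}$, so mirroring it for $(0,\bH)$ would require redoing the pointwise bounds with $\beps^{-1}$ in place of $\bmu^{-1}$.
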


\begin{Rem} Let us comment the geometric hypothesis of Theorem \ref{thm:discspec}. The conditions $\bX \neq 0$ and $\bY^\theta\neq 0$ are necessary in order to satisfy inequality \eqref{eqn:ineqdiscspec}. Note that these conditions concern the cross-section and the base curve, respectively. It turns out that domains $\omega$ with too many symmetries  verify $\bX = 0$ (see Proposition \ref{prop:symm-X0}). This last point can also  be  checked   with the analytic expressions of the eigenelements of the Neumann Laplacian in simple   geometries  such as disks, rectangles or the equilateral triangle. In \S \ref{subsecisos}, we exhibit domains $\omega$ (namely isosceles right triangles) for which $\bX$ is explicit and $\bX \neq 0$. By perturbation theory, in \S \ref{subsecrectangle} we also prove that there exists infinitely many  perturbations of a rectangular domain such that $\bX\neq 0$.

The hypothesis $\theta' = 0$ means that $\theta$ is constant and that the waveguide is not twisted.
Thus, for planar curves (see \S \ref{subsecisoY}), the condition $\bY^\theta \neq 0$ is  satisfied if and only if the two-dimensional (signed) curvature associated with the planar curve has a non-zero integral over $\mathbb{R}$:
this is clearly the case, for instance, when the curvature has a constant sign (either non-negative or non-positive) and is not identically zero.
\medskip

In Theorem \ref{thm:discspec}, as $\theta$ is constant, the choice of $\theta$ plays a significant role in inequality \eqref{eqn:ineqdiscspec}: it gives a degree of freedom on the choice of the frame $(\mathbf{e}_1^\theta,\mathbf{e}_2^\theta,\mathbf{e}_3^\theta)$ in order to maximize the quantity on the left-hand side of \eqref{eqn:ineqdiscspec}.
Roughly speaking, as the Maxwell's equations are vectorial, it selects ``the best orientation for bending".
Indeed, note that in this case, thanks to \eqref{eqn:defktheta}, one has $\mathbf{k^\theta} = \mathfrak{R}_\theta \mathbf{k}$ for some rotation matrix $\mathfrak{R}_\theta$ of constant angle $\theta$. In particular, as $\bY^{\theta}=\mathfrak{R}_\theta \bY$, there holds
\begin{equation}\label{eqn:defvectorY}
	\bX \cdot \bY^\theta = \bX \cdot (\mathfrak{R}_\theta \bY).
\end{equation}
We emphasize that the geometric conditions $\bX \neq 0$ and $\bY \neq 0$ are independent from each other. They rely  on the choice of the cross-section $\omega$, and the base curve $\Gamma$ along with its relatively adapted parallel frame used to construct the waveguide $\Omega$, respectively.
Thus, one can always choose $\theta$ in such a way that the quantity \eqref{eqn:defvectorY} is maximised, namely there exists $\theta_\star \in [0,2\pi)$ such that  $\bX \cdot (\mathfrak{R}_{\theta_\star} \bY) = |\bX| |\bY|$. 
This specific choice of $\theta_\star$ can be interpreted as an optimal way to construct the waveguide, given a cross-section $\omega$ such that $\bX \neq 0$ and a (relatively adapted parallel) framed base curve $\Gamma$ for which $\bY \neq 0$ by  prescribing how to attach $\omega$ along $\Gamma$ in order to best satisfy inequality~\eqref{eqn:ineqdiscspec}.
\end{Rem}
An important consequence of Theorem \ref{thm:discspec} is the possibility of building a waveguide for which bound states exist (see \S \ref{subsubsub:delta} for such a construction). It yields the  following corollary.

\begin{Cor} \label{cor:vrai:exist}
Let $\Gamma$ be  a base curve with an associated relatively adapted parallel frame for which $\bY \neq 0$. 
Let $\omega \subset \R^2$ be a Lipschitz bounded cross-section such that there exists a normalized eigenfunction associated to $\lambda_2^N(\omega)$ for which $\bX \neq 0$. Then there exists a unique $\theta_\star \in [0,2\pi)$ such that $\bX \cdot (\mathfrak{R}_{\theta_\star} \bY) = |\bX| |\bY|$.
Suppose then that the map $\Phi$ constructed as in \eqref{diffeo:Phi} starting from this framed curve $\Gamma$, this cross-section $\omega$ and this constant angle $\theta=\theta_\star$ is a global $\mathscr{C}^1$-diffeomorphism, and all the geometric and regularity assumptions stated in §\ref{section:waveguides} are satisfied.\\
\noindent In addition, suppose that the curvature  $\kappa \in L^1(\mathbb{R})$ satisfies $\kappa(s) \to 0$ as $|s| \to \infty$. \\ 
\noindent Then, there exists $\delta_\star>0$ such that for all $\delta\in (0, \delta_\star)$, the slightly curved waveguides $\Omega_\delta$ (defined by \eqref{eq.defOmegadelta}) all have non-empty discrete spectrum.
\end{Cor}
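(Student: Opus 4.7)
The plan is to apply Theorem \ref{thm:discspec} to the slightly curved waveguide $\Omega_\delta$, built as in \S\ref{section:waveguides} from the same cross-section $\omega$ and constant twist angle $\theta=\theta_\star$, but with the rescaled curvature functions $(k_1^{(\delta)},k_2^{(\delta)}):=\delta(k_1,k_2)$, so that the scalar curvature becomes $\delta\kappa$. The uniqueness of $\theta_\star\in[0,2\pi)$ is immediate: writing $\bY=|\bY|(\cos\alpha,\sin\alpha)^\top$ and $\bX=|\bX|(\cos\eta,\sin\eta)^\top$, the map $\theta\mapsto\bX\cdot(\mathfrak{R}_\theta\bY)=|\bX||\bY|\cos(\eta-\alpha-\theta)$ attains its unique maximum $|\bX||\bY|$ on $[0,2\pi)$.

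The key point is the scaling of the quantities entering \eqref{eqn:ineqdiscspec}. The cross-section $\omega$ is unchanged, so $\bX$, $\lambda_2^N(\omega)$ and $b$ do not depend on $\delta$. On the other hand $\bY_\delta=\delta\bY$, and by the optimal choice of $\theta_\star$ the left-hand side of \eqref{eqn:ineqdiscspec} for $\Omega_\delta$ reads
\[
\bX\cdot\bY_\delta^{\theta_\star}=\delta\,\bX\cdot(\mathfrak{R}_{\theta_\star}\bY)=\delta\,|\bX||\bY|,
\]
while the right-hand side equals $2\lambda_2^N(\omega)\,b^2\delta^2\|\kappa\|_{L^\infty}\|\kappa\|_{L^1}/(1-\delta b\|\kappa\|_{L^\infty})$, which is $O(\delta^2)$ as $\delta\to 0^+$. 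Since $|\bX||\bY|>0$, there exists $\delta_\star>0$ such that \eqref{eqn:ineqdiscspec} holds for every $\delta\in(0,\delta_\star)$, at which point Theorem \ref{thm:discspec} yields $\sigma_{\mathrm{dis}}(\mathcal{A}_\delta)\neq\emptyset$.

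The remaining task is to verify, after possibly shrinking $\delta_\star$, that $\Omega_\delta$ is a well-defined waveguide in the sense of \S\ref{section:waveguides}. The bound $b\|\delta\kappa\|_{L^\infty(\R)}<1$ is automatic for small $\delta$, so $\Phi_\delta$ is a local $\mathscr{C}^1$-diffeomorphism, and the hypotheses $\delta\kappa\in L^1(\R)$ with $\delta\kappa(s)\to 0$ at infinity transfer from $\kappa$. Moreover, since $|d\mathbf{e}_1^{(\delta)}/ds|=\delta\kappa\in L^1(\R)$, the unit tangent $\mathbf{e}_1^{(\delta)}$ is Cauchy at $\pm\infty$ and admits unit limits $v_\pm$; consequently $\gamma_\delta$ is asymptotically affine and $|\gamma_\delta(s)|\to\infty$ as $|s|\to\infty$, which is the properness condition of Remark \ref{Rem-Thm-Hadamard-Cacciopoli}.

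The main obstacle is therefore proving the global injectivity of $\Phi_\delta$ (equivalently, the simple connectedness of $\Phi_\delta(\overline{\Omega_0})$) needed to invoke Hadamard--Caccioppoli. This does not follow automatically from the assumed global injectivity of $\Phi=\Phi_1$, but should be obtained by a perturbative argument around the straight case: as $\delta\to 0^+$, the curve $\gamma_\delta$ is a $\mathscr{C}^1$-small perturbation of a straight line (since $\delta\kappa$ is small uniformly and in $L^1$), so $\Phi_\delta$ is uniformly close in $\mathscr{C}^1_{\mathrm{loc}}$ to the affine map $\Phi_0$ parametrizing a straight waveguide; combining this local control with the asymptotic linear behaviour of $\gamma_\delta$ should yield a uniform-in-$\delta$ modulus of injectivity for $\Phi_\delta$ and hence the desired global diffeomorphism property for $\delta$ sufficiently small.
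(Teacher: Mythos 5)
Your proposal captures the correct high-level strategy (scale down the curvature, observe that the left-hand side of \eqref{eqn:ineqdiscspec} dominates the right-hand side for small $\delta$, invoke Theorem~\ref{thm:discspec}), and your uniqueness argument for $\theta_\star$ is fine. However, there are two problems.

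First, you redefine $\Omega_\delta$. You rescale the curvature coefficients pointwise, $(k_1,k_2)\mapsto\delta(k_1,k_2)$, whereas the corollary refers to the specific family \eqref{eq.defOmegadelta}, which is built from the dilated curve $\gamma_\delta(s):=\delta^{-1}\gamma(\delta s)$. That construction gives $\kappa_\delta(s)=\delta\kappa(\delta s)$, so $\|\kappa_\delta\|_{L^1(\R)}=\|\kappa\|_{L^1(\R)}$ and, crucially, $\bY_\delta=\bY$ (the reparametrization absorbs the $\delta$-factor under the integral). With the paper's rescaling the left-hand side $\bX\cdot\bY_\delta^{\theta_\star}=|\bX||\bY|$ is $\delta$-\emph{independent} while the right-hand side is $O(\delta)$; your version makes both sides small ($O(\delta)$ vs.\ $O(\delta^2)$). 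Your scaling still closes the inequality, but it is not the family the corollary is about.

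Second, and more seriously, the choice of rescaling is exactly what resolves the issue you yourself flag as ``the main obstacle.'' With $\gamma_\delta(s)=\delta^{-1}\gamma(\delta s)$ and $\mathbf{e}_j^\delta(s)=\mathbf{e}_j(\delta s)$, one has the identity $\Phi_\delta(s,\by)=\delta^{-1}\Phi(\delta s,\delta\by)$. Global injectivity of $\Phi_\delta$ then follows immediately from the assumed global injectivity of $\Phi=\Phi_1$, and the properness and local-diffeomorphism conditions of \S\ref{section:waveguides} are inherited trivially (since $b\|\kappa_\delta\|_{L^\infty}=\delta b\|\kappa\|_{L^\infty}\leq b\|\kappa\|_{L^\infty}<1$). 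Your rescaling destroys this scaling identity: modifying $(k_1,k_2)$ pointwise produces a \emph{new} curve $\gamma^{(\delta)}$ obtained by integrating \eqref{frame:evolution}, with no simple relation to $\Phi_1$. You acknowledge the injectivity of the resulting $\Phi_\delta$ ``does not follow automatically'' and that a perturbative argument ``should yield'' it, but this sketch is not a proof: $\mathscr{C}^1_{\mathrm{loc}}$-closeness to the straight map plus asymptotic linearity does not by itself rule out far-apart self-intersections without a quantitative argument, and you supply none. In short, you have introduced an injectivity problem that the paper's scaling is specifically designed to avoid, and you do not close it.
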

In view of Corollary \ref{cor:vrai:exist}, in order to obtain a non-empty discrete spectrum for the Maxwell operator in an admissible  (in the sense of \S  \ref{section:waveguides}) waveguide $\Omega$, it is sufficient to have $\bX,\bY \neq 0$. This last requirement is addressed by the following corollary. 
\begin{Cor} \label{cor:infiniteWG}
There exists an infinite number of admissible waveguides $\Omega$ having $\bX,\bY \neq 0$ and a non-empty discrete spectrum.
\end{Cor}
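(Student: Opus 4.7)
The plan is to combine the explicit cross-section and base-curve examples developed earlier in the paper with the smallness/scaling flexibility of Corollary \ref{cor:vrai:exist}, producing a one-parameter family of admissible waveguides with non-empty discrete spectrum. The starting point is that Corollary \ref{cor:vrai:exist} reduces the task to exhibiting (i) a cross-section $\omega$ with $\bX\neq 0$, (ii) a framed planar base curve $\Gamma$ with $\bY\neq 0$, and (iii) smallness of the curvature guaranteeing the diffeomorphism and integrability hypotheses.

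First, for the cross-section, I would invoke the examples already established in \S \ref{subsecisos} and \S \ref{subsecrectangle}: the isosceles right triangle provides one explicit Lipschitz $\omega$ with $\bX\neq 0$, and the rectangular perturbation analysis produces an infinite family of such cross-sections. Either source supplies at least one admissible $\omega$; the perturbed rectangles would in fact already give infinitely many distinct waveguides if combined with a single base curve.

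Second, I would build a one-parameter family of planar base curves with $\bY\neq 0$ by scaling. Fix once and for all a smooth, compactly supported signed curvature $\kappa_0:\R\to\R$ with $\int_{\R}\kappa_0(s)\,\rmd s\neq 0$, and for $\delta\in(0,\delta_0)$ let $\gamma_\delta$ denote the arc-length parametrized planar curve (embedded in $\R^3$) whose signed curvature is $\delta\kappa_0$. By the characterization recalled in \S \ref{subsecisoY}, $\bY_\delta\neq 0$ for every $\delta$. Since $\kappa_0$ is compactly supported, $\kappa_\delta\in L^1(\R)\cap L^\infty(\R)$, vanishes at infinity, and for $\delta$ small one has $b\|\kappa_\delta\|_{L^\infty(\R)}<1$ and $|\gamma_\delta(s)|\to+\infty$. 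As $\gamma_\delta$ is then a $\mathscr{C}^1$-small perturbation of a straight line, it is injective, and by Remark \ref{Rem-Thm-Hadamard-Cacciopoli} the associated map $\Phi$ is a global $\mathscr{C}^1$-diffeomorphism.

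Third, I would conclude by applying Corollary \ref{cor:vrai:exist}: fix any cross-section $\omega$ from Step 1 and the corresponding optimal twist angle $\theta_\star$; then for every sufficiently small $\delta>0$ the waveguide built from $(\gamma_\delta,\omega,\theta_\star)$ is admissible and has non-empty discrete spectrum. Infinitude follows by letting $\delta$ range over a countable sequence $\delta_n\downarrow 0$ with pairwise distinct values: the resulting waveguides are pairwise non-isometric, since their curvature functions differ in $L^\infty$ norm (in fact pointwise). The main obstacle I anticipate is not any single hypothesis — each is ensured individually — but ensuring that \emph{all} the geometric assumptions of \S \ref{section:waveguides} (local invertibility, global injectivity, asymptotic flatness, $L^1$-integrability) hold simultaneously along the chosen family; the scaling $\kappa\mapsto \delta\kappa_0$ is the right device for this, precisely because it trivializes the invertibility and smallness conditions for small $\delta$ while preserving $\bY_\delta\neq 0$ throughout the family.
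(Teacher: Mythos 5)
Your overall strategy --- pick a cross-section with $\bX \neq 0$, a planar base curve with $\bY \neq 0$, and invoke Corollary~\ref{cor:vrai:exist} --- matches the paper's, which uses the isosceles right triangle of \S\ref{subsecisos} and the parabola $\gamma(t)=(t,t^2,0)^\top$. But there is a mismatch in how you invoke Corollary~\ref{cor:vrai:exist}. That corollary, applied to a \emph{single} input $(\Gamma,\omega,\theta_\star)$, produces the infinite family $\{\Omega_\delta\}_{\delta\in(0,\delta_\star)}$ through its own internal rescaling $\gamma_\delta(s)=\delta^{-1}\gamma(\delta s)$ (which has curvature $\delta\kappa(\delta s)$, so $\|\kappa_\delta\|_{L^1}$ and $\bY$ are \emph{preserved} under this scaling). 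Your auxiliary family has curvature $\delta\kappa_0(s)$ --- a dilation of the curvature only, not the arc-length --- which is a genuinely different one-parameter family. Applying Corollary~\ref{cor:vrai:exist} to each $(\gamma_\delta,\omega,\theta_\star)$ therefore does \emph{not} yield that the input waveguide $\Phi(\R\times\omega)$ associated to $\gamma_\delta$ has discrete spectrum; it yields discrete spectrum only for the internally rescaled subfamily associated to that input. So the step ``then for every sufficiently small $\delta>0$ the waveguide built from $(\gamma_\delta,\omega,\theta_\star)$ is admissible and has non-empty discrete spectrum'' is not a consequence of Corollary~\ref{cor:vrai:exist} as stated.

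The gap is easily closed in either of two ways, and the first shows your extra family is unnecessary. Option one (the paper's route): apply Corollary~\ref{cor:vrai:exist} once to a single admissible curve (e.g.\ your $\gamma_1$, or the paper's parabola) and read off the infinite family $\{\Omega_\delta\}_{\delta\in(0,\delta_\star)}$ that the corollary already delivers; these are pairwise distinct admissible waveguides with $\bX,\bY\neq 0$ and non-empty discrete spectrum. Option two: keep your family and bypass Corollary~\ref{cor:vrai:exist}, applying Theorem~\ref{thm:discspec} directly to each $\gamma_\delta$. With $\kappa=\delta\kappa_0$, the left-hand side of \eqref{eqn:ineqdiscspec} equals $\delta\,|\bX|\,|\bY_0|$ (after rotating by $\theta_\star$), which is $O(\delta)$, while the right-hand side picks up a factor $\delta$ from each of $\|\kappa\|_{L^\infty}$ and $\|\kappa\|_{L^1}$ and is thus $O(\delta^2)$; hence the inequality holds for all small $\delta$, and injectivity/properness of $\Phi$ also hold since the total turning angle $\delta\int\kappa_0$ is small. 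Either patch makes your argument close and equivalent in content to the paper's proof.
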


\subsection{The very specific case of waveguides generated by a planar curve and a rectangular cross-section}\label{sec-rec-waveguide}

In  this section, we present some results of the article \cite{GJ92} on the existence of eigenvalues in the spectrum of the Maxwell operator $\mathcal{A}$  for particular  waveguides generated by a planar curve $\gamma$ and  a rectangular cross-section $\omega$ (see Figure \ref{fig:rec}). The construction of such eigenvalues is based on the existence of discrete spectrum for the Dirichlet Laplacian $L^D$ in a two-dimensional bent waveguide \cite{GJ92}.   We further develop here their arguments. 
\begin{figure}[h!]
\centering
\includegraphics[width=0.8\textwidth]{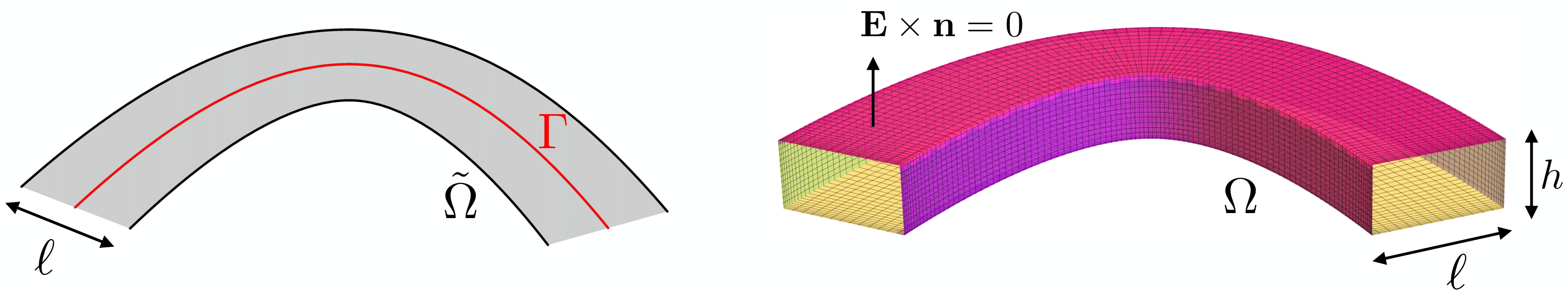}
\caption{Waveguide $\Omega$ generated via a planar curve $\gamma$ and a rectangular cross-section $\omega=(-\ell/2, \ell/2 )\times (-h/2,h/2)$.}
\label{fig:rec}
\end{figure}

Let $({\bf e}_{c,1},{\bf e}_{c,2},{\bf e}_{c,3})$ stand for the canonical basis of $\mathbb{R}^3$. The waveguides considered in   \cite{GJ92} fit the geometrical framework of \S\ref{section:waveguides}.  More precisely, in addition to the assumptions made in this section, the base curve $\Gamma$ is assumed to be planar and  contained (without a loss of generality) in the plane $y_3=0$ (see Figure \ref{fig:rec}). 
Therefore, one can choose the following $\mathscr{C}^1$-smooth, relatively adapted parallel frame along the curve $\Gamma$:
$$
s \in \bbR \mapsto \left(\mathbf{e}_1(s), \mathbf{e}_2(s), \mathbf{e}_3(s)\right) = \left(\gamma'(s), \mathbf{n}(s), \mathbf{e}_{c,3}\right),
$$
where $\mathbf{n}(s)$ denotes the unit  normal vector to the curve at $\gamma(s)$, obtained by rotating the unit tangent vector $\gamma'(s)$ in the plane $y_3=0$ by  $\pi/2$ in the counterclockwise direction. Note that the obtained frame is a positively oriented orthonormal basis of $\bbR^3$ for each $s \in \R$. 
For this particular relatively adapted parallel frame, the function $k_2$  in \eqref{frame:evolution} is zero, and the function $k_1$ is precisely the signed curvature of $\Gamma$ in two dimensions, which satisfies $|k_1|=\kappa$.  
Then, one assumes that there is no twist, i.e. that $\theta'=0$ and   even  that $\theta=0$ so that the frame  $(\mathbf{e}_1^\theta,\mathbf{e}_2^\theta,\mathbf{e}_3^\theta)$ coincides with $(\mathbf{e}_1,\mathbf{e}_2,\mathbf{e}_3)$. The rectangular cross-section is
 defined by $\omega=(-\ell/2, \ell/2 )\times (-h/2,h/2)$ and the waveguide $\Omega$ is given   via \eqref{diffeo:Phi} by $\Omega=\Phi(\bbR\times \omega)$.\medskip
 
In this setting, the Dirichlet Laplacian  $L^D_{\widetilde{\Omega}}$  defined in the two-dimensional waveguide $\widetilde{\Omega}=\Omega\cap \{y_3=0\}$   has an essential spectrum given by $\sigma_{\mathrm{ess}}(L^D_{\widetilde{\Omega}})=[ \pi^2/\ell^2, +\infty)$. Moreover,  as soon as the curvature $\kappa=|\gamma''|\neq 0$, its discrete spectrum  $\sigma_{\mathrm{dis}}(L^D_{\widetilde{\Omega}})$ is non-empty (cf. \cite{GJ92,DE95, ES89,Krej04}), and therefore
\begin{equation}\label{eq.Dirichlet}
\exists  \lambda_{\mathrm{2D}} \in \bbR \ \mbox{ with }  \ 0<\lambda_{\mathrm{2D}}<\frac{\pi^2}{\ell^2} \ \mbox{ and }  \ u\in H^1_0(\widetilde{\Omega})\setminus \{0\}\ \text{ such that } \  L_{\widetilde{\Omega}}^Du= -\Delta u=\lambda_{\mathrm{2D}} u.
\end{equation}
Then,  one  defines  an electric field $\bE$ and a magnetic field $\bH$  in terms of  the eigenpair $(\lambda_{\mathrm{2D}},u)$ as follows: for any $(s,y_2,y_3)\in \bbR\times \omega$,
$$
\bE(\Phi(s,y_2,y_3))= \rmi  \sqrt{\lambda_{\mathrm{2D}}} \, c \  u(\Phi(s,y_2,0))  \, {\bf e}_{c,3} \quad  \mbox{ and } \quad  \bH(\Phi(s,y_2,y_3))=   \mu_0^{-1} \nabla u(\Phi(s,y_2,0)) \times  \, {\bf e}_{c,3}.
$$
Thus, using \eqref{eq.Dirichlet}, a simple computation yields 
\begin{equation}\label{eq.eigenmaxwellrec}
- \rmi \mu_0^{-1} \curlvec  \bE= \sqrt{\lambda_{\mathrm{2D}} } \, c \ \bH \ \mbox{ and }  \ \rmi \, \varepsilon_0^{-1} \curlvec  \bH=\sqrt{\lambda_{\mathrm{2D}} } \, c \ \bE\   \mbox{ in } \ \Omega.
\end{equation}
Hence, in particular, one has $\bU=(\bE, \, \bH)\in  H(\curlvec,\Omega)^2$. Moreover, one notices that  $ {\bf e}_{c,3} \times {\bf n}=0 $ on the top and bottom part of the boundary $\partial \Omega \cap ( \{ y_3=-h/2\} \cup \{ y_3=h/2\})$,   and since $  u\in H^1_0(\widetilde{\Omega})$, the electric field $\bE$ vanishes  on the other part of the boundary.  Thus,  by construction, one has   $\bE \times {\bf n}=0 $ on $\partial \Omega$.  This can be rigorously proved  in the sense of  the definition \eqref{eq.Hcurl0} of $H_0(\curlvec,\Omega)$ by integrating by parts in $\widetilde{\Omega}$  using on the  one hand that   $\Omega=\widetilde{\Omega} \times (-h/2,h/2)$ (i.e the geometrical separability of the domain which holds only for this particular type of waveguides) and  on the other hand that $u\in H^1_0(\widetilde{\Omega})$ (where $H^1_0(\widetilde{\Omega})$ is defined via the two-dimensional analogue of \eqref{eqn:def_H10}).
Thus,  $\bE\in  H_0(\curlvec,\Omega)$ and $\bU\in D(\mathcal{A})=  H_0(\curlvec,\Omega) \times H(\curlvec,\Omega)$.  This implies, together with \eqref{eq.eigenmaxwellrec}, that $\bU$ is an eigenfunction of $\mathcal{A}$ for the eigenvalue $\sqrt{\lambda_{\mathrm{2D}}} \, c$. Hence, one deduces by Proposition \ref{prop.eigensym} that $\pm \sqrt{ \lambda_{\mathrm{2D}}}\, c\in \sigma_p(\mathcal{A})$. However, these eigenvalues could be discrete or embedded in the essential spectrum. More precisely, using Theorem  \ref{Main1}, one has 
$$\sigma_{\mathrm{ess}}(\mathcal{A})=\big(-\infty,- \sqrt{\lambda_2^N(\omega)}\, c \big] \cup \{0\} \cup \big[ \sqrt{\lambda_2^N(\omega)}\, c,+\infty\big)  \ \mbox{ with }  \ \lambda_2^N(\omega)=\frac{\pi^2}{\max(h, \ell)^2} . $$ 
Therefore, for a fixed  width $\ell>0$ of the two-dimensional waveguide $\widetilde{\Omega}$, one gets the following scenarios:
\begin{enumerate}
 \item if $ h \leq  \ell$, one has by \eqref{eq.Dirichlet}
$$0<  \sqrt{\lambda_{\mathrm{2D}}} \, c < \frac{ c \, \pi}{\ell } =\sqrt{\lambda_2^N(\omega)}\, c \quad  \mbox{ and therefore }  \pm  \sqrt{\lambda_{\mathrm{2D}}} \, c  \in \sigma_\mathrm{dis}(\mathcal{A});$$
\item if $\ell  <h$, then $  \displaystyle \sqrt{\lambda_2^N(\omega)} \, c=\frac{\pi \,c }{h }$ there are two sub-cases:
\begin{itemize}
\item  if $\displaystyle h<\frac{\pi}{\sqrt{\lambda_{\mathrm{2D}}}}$ then $\displaystyle   \pm \sqrt{ \lambda_{\mathrm{2D}}} \, c \in \sigma_\mathrm{dis}(\mathcal{A})$;
\item If $ \displaystyle \frac{\pi}{\sqrt{\lambda_{\operatorname{2D}}}}\leq h $ then  $\displaystyle \pm \sqrt{ \lambda_{\mathrm{2D}}} \, c \mbox{ is  an eigenvalue  embedded in $\sigma_{\mathrm{ess}}(\mathcal{A}).$ } $
\end{itemize}
\end{enumerate}
%In the context of $2D-$thin waveguide, i.e. when $\ell$ is small,  by bending  the two-dimensional waveguide $\widetilde{\Omega}$, one can create  several eigenvalues in the discrete spectrum. More precisely \textcolor{red}{if the curvature $|\kappa|$ admits a unique non degenerate maximum at $s_0\in \mathbb{R}$}, then for any integer $N>0$ there exists $\ell_N>0$ such that for all $\ell\in (0, \ell_N)$, $ \sigma_\mathrm{dis}(-\Delta^D_{\widetilde{\Omega}})$ contain at least  $N$  distinct simple eigenvalues. We denote by $$0<\lambda_{\mathrm{2D}, N}< \lambda_{\mathrm{2D},N-1}<  \ldots< \lambda_{\mathrm{2D}, 1} < \frac{\pi^2}{\ell^2} $$ these eigenvalues. Thus choosing $h$ adequately, one can create as described above $2N$ discrete  eigenvalues  of $\mathcal{A}$ (if $h\leq \ell$ or if $\ell<h<\pi/\sqrt{\lambda_{\operatorname{2D},1}}$) or $2N$ embedded eigenvalue of $\mathcal{A}$ if ($h<\pi/\sqrt{\lambda_{\operatorname{2D},n}}$). For any $i \in \{ 2,\ldots, n\}$,  one can also choose $\pi/\sqrt{\lambda_{\mathrm{2D}},i}<h<\pi/ \sqrt{\lambda_{\mathrm{2D},i-1}}$ to create $2i$ discrete eigenvalue of $\mathcal{A}$ and $2 (n-i)$ embedded eigenvalues in $\sigma_{\mathrm{ess}}(\mathcal{A})$.

Thus,  this example  illustrates that the conditions of  Theorem \ref{thm:discspec} are just sufficient  conditions  and not necessary conditions for the existence of discrete spectrum of $\mathcal{A}$. Indeed, the vector $\bX$ given by \eqref{eqn:defX} vanishes for a rectangular cross-section (as it is  rotationally symmetric  by a rotation of $\pi$, see  Proposition \ref{prop:symm-X0}). However,  as we discussed above, one can construct discrete spectrum  by bending some rectangular waveguides.  Nevertheless, while  the rectangular waveguide represents a very specific case of separable geometry, Theorem \ref{thm:discspec} has the advantage of being applicable to a broad class of geometrical cross-sections (see Section \ref{secnumerics}). 
 %and an eigenfunction $u\in H^1_0(\tilde{\Omega})\setminus \{0\}$ assoicated to $\lambda_d$ satisying $-\Delta u=\lambda_{d} u$
%with use the same regularity assumptions as in section \ref{section:waveguides} to define the waveguides  one assumes that the 

%More precisely, they consider a two-dimensional waveguide $\tilde{\Omega}$ with a $C^2$ smooth base curve $\tilde{\gamma}: \bbR \to  \bbR^2$

\subsection{Structure of the paper}

The paper is organized as follows. In Section \ref{notation:sect}, we introduce rigorously the functional framework of the problem. For further uses, we also introduce the so-called {\it Piola transform}. Section \ref{secessspec} is devoted to the proof of Theorem \ref{Main1} about the structure of the essential spectrum of the Maxwell operator $\mathcal{A}$ defined in \eqref{eqn:defAMaxwell}. Section \ref{secdiscrete} is about localization of possible discrete eigenvalues and the proof of Theorem \ref{thm:discspec}. In this section, we also provide sufficient  conditions for the existence of a Poincar\'e type inequality for the Maxwell operator defined on the full waveguide.  Finally, in Section \ref{secnumerics} we investigate the geometric conditions on the base curve $\Gamma$ : $\bY\neq 0$  and on the cross-section : $\bX \neq 0$ (where $\bX$ is defined in \eqref{eqn:defX}) involved in Theorem \ref{thm:discspec}, Corollary \ref{cor:vrai:exist} and Corollary \ref{cor:infiniteWG} for the existence of discrete spectrum. 
More precisely, we first show that planar curves $\Gamma$ with non-zero curvature satisfy $\bY \neq 0$. We then provide both analytical and numerical examples of domains for which $\bX = 0$ or $\bX \neq 0$. In addition, we investigate the stability of the condition $\mathbf{X} = 0$ with respect to variations in the shape of the cross-section $\omega$. To this end, we perform a detailed analysis of the case where $\omega$ is a rectangular cross-section, a reference domain for which $\mathbf{X} = 0$, by perturbing it  via shape derivative techniques.

\section{Notations, functional framework} \label{notation:sect}
In \S \ref{subsec:funcrame}, we introduce the functional spaces used all along the paper. In \S \ref{subsec:piola}, we discuss the so-called \emph{Piola transform} (see e.g  \cite[Section 3]{lamzac20} \cite[Section 3.9]{Monk} or \cite{Nic-07}) which allows to rewrite the problem initially posed in the waveguide $\Omega$ into the straight waveguide $\Omega_0$ 
(see also \cite[Theorem 2]{lamzac23} for stability properties of such transform).

\subsection{Functional framework}\label{subsec:funcrame}

In this paragraph we  recall  some notations and definitions of standard functional spaces involved in the study of Maxwell's equations.
Let $\mathscr{O}$ be a  (non-empty) open subset  of $\mathbb{R}^3$. Let $\Sigma:\mathscr{O} \to M_3(\mathbb{R})$ be a (measurable) matrix-field for which (as in \eqref{unif:elliptic:eu}) it exists two constants $\Sigma_0, \, \Sigma_1$ with $0<\Sigma_0\leq \Sigma_1$ such that
\begin{equation} \label{hyp:sigma}
\Sigma(\bx)^\top= \Sigma(\bx) \qquad \text{ and } \qquad 0<\Sigma_0 |\zeta|^2 \leq \Sigma(\bx) \, \zeta \cdot \overline{\zeta} \leq \Sigma_1 |\zeta|^2 \quad \forall \bx \in \mathscr{O}, \forall \zeta \in \mathbb{C}^3.
\end{equation}
This in particular implies the symmetric matrix-field $\Sigma$ is bounded in the $L^\infty(\mathscr{O})$-norm and it satisfies a uniform elliptic condition. Moreover, it yields that the matrix $\Sigma(\bx)$ is invertible  for any $\bx \in \mathscr{O}$, and the inverse matrix-field $(\bx \mapsto \Sigma(\bx)^{-1})$ also satisfies assumptions \eqref{hyp:sigma} with $(\Sigma^{-1})_0=(\Sigma_1)^{-1}$ and $(\Sigma^{-1})_1=(\Sigma_0)^{-1}$.

Let $L^2(\mathscr{O}):=L^2(\mathscr{O},\mathbb{C})$ be  the Hilbert space of square-integrable functions with the associated standard inner product defined for all $f,g \in L^2(\mathscr{O})$,  $(f,g)_{L^2(\mathscr{O})}:=\int_{\mathscr{O}} f \,\bar{g} \, \rmd \bx$. Similarly, the Sobolev space $H^1(\mathscr{O})$ is the subspace of $L^2(\mathscr{O})$ consisting of functions whose first-order derivatives are square-integrable. Throughout the paper we use a bold notation when dealing with vector fields, instead of scalar functions. Namely, we set $\bL^2(\mathscr{O}):=L^2(\mathscr{O})^3$ and $\bH^{1}(\mathscr{O}):=H^{1}(\mathscr{O})^3$.

Then we introduce the following objects.
\begin{itemize}
\item The linear space $\mathfrak{D}(\mathscr{O})$ of smooth (infinitely differentiable) functions $f: \mathscr{O} \to \bbC$ with compact support contained in $\mathscr{O}$, and $\mathfrak{D}(\overline{\mathscr{O}})=\{\phi\rvert_{\mathscr{O}} : \phi \in \mathfrak{D}(\mathbb{R}^3)\}$. Then we define $\bbD(\mathscr{O}):=\mathfrak{D}(\mathscr{O})^3$ and $\bbD(\overline{\mathscr{O}}):=\mathfrak{D}(\overline{\mathscr{O}})^3$.
\item The  Hilbert space $\bL_\Sigma^2(\mathscr{O})$ of vector fields endowed with the inner product
$$
(\bu,\bv)_{\bL_\Sigma^2(\mathscr{O})}=(\Sigma \bu,\bv)_{\bL^2(\mathscr{O})} = \int_{\mathscr{O}} \left(\Sigma (\bx) \, \bu(\bx)\right)\cdot  \overline{\bv(\bx)} \, \rmd \bx.
$$
Observe that due to the hypotheses on $\Sigma$, the norm associated with this inner product is equivalent to the norm of the standard one, thus 
\begin{equation} \label{equiv:topspaces}
    \bL_\Sigma^2(\mathscr{O}) \text{ and } \bL^2(\mathscr{O})  \text{ are equal as topological spaces.}
\end{equation} 
\item The maximal domain of the $\curlvec$ operator is defined as the Hilbert space 
$$H(\curlvec,\mathscr{O})=\left\{ \bu\in \bL^2(\mathscr{O}) \,: \,\curlvec \bu \in \bL^2(\mathscr{O})\right\}$$
endowed with the inner product
\begin{equation*}
(\bu,\bv)_{H(\curlvec,\mathscr{O})}= (\bu,\bv)_{\bL^2(\mathscr{O})} + (\curlvec \bu,\curlvec \bv)_{\bL^2(\mathscr{O})}.
\end{equation*}
Here, for $\bu\in \bL^2(\mathscr{O})$, $\curlvec \bu \in \bL^2(\mathscr{O})$ has to be understood in the distributional sense.
\item A vector field $\bu \in H(\curlvec, \mathscr{O})$ has \emph{zero tangential trace}, which we denote by $(\bu \times \bn)\rvert_{\partial \mathscr{O}} =0$ if for all $\bv \in H(\curlvec, \mathscr{O})$
\begin{equation}\label{eq.Hcurl0}
    (\bu, \curlvec \bv)_{\bL^2(\mathscr{O})} = (\curlvec \bu, \bv)_{\bL^2(\mathscr{O})}.
\end{equation}
Note that when defined $\bn$ has to be understood as the outward unit pointing normal to $\mathscr{O}$ (see Remark \ref{remark:normalcurl}).
We now define the space
\begin{equation*}
    H_0(\curlvec, \mathscr{O}) := \{ u \in H(\curlvec, \mathscr{O}) : (\bu \times \bn)\rvert_{\partial \mathscr{O}} =0 \}.
\end{equation*}
This space can also be characterized as the closure of $\bbD(\mathscr{O})$  for the operator norm $\left\|\cdot \right\|_{H(\curlvec,\Omega)}$ (see Lemma \ref{lem:55} in Appendix \ref{appendix}). 
\begin{Rem} \label{remark:normalcurl}
Observe that when $\mathscr{O}$ is a bounded Lipschitz domain, the unit outward normal $\bn$ to $\partial \mathscr{O}$ is well-defined as a function of $L^{\infty }(\partial \Omega)$. Moreover, by \cite[Theorem 2.11]{gira}, the condition $(\bu \times \bn)|_{\partial \mathscr{O}} = 0$  can be understood as a trace equality in the Sobolev space $\bH^{-\frac12}(\partial \mathscr{O})$. 
An interested reader can also have a look at \cite[Chapter 4, \S 2]{daulio} where the case of trace mappings for an infinite cylinder of the form $\mathscr{O}= \R \times \omega$ (with $\omega$ bounded Lipschitz domain of $\R^2$) is treated.
\end{Rem}
\item The maximal domain of the $\Div (\Sigma \, \cdot)$ operator is defined as the Hilbert space 
$$H(\Div \Sigma,\mathscr{O})=\left\{ \bu\in \bL_\Sigma^2(\mathscr{O}) \,: \,\Div (\Sigma \bu) \in L^2(\mathscr{O})\right\}$$
endowed with the inner product
\begin{equation*}
(\bu,\bv)_{H(\Div \Sigma,\mathscr{O})}= (\bu,\bv)_{\bL_\Sigma^2(\mathscr{O})} + (\Div (\Sigma\bu),\Div (\Sigma\bv))_{L^2(\mathscr{O})}.
\end{equation*}
Here, for $\bu\in \bL^2(\mathscr{O})$, $\Div(\Sigma \bu) \in L^2(\mathscr{O})$ has to be understood in the distributional sense. When $\Sigma$ is the identity matrix we write $H(\Div,\mathscr{O}) := H(\Div \mathds{1}_3,\mathscr{O})$.
Note that due to the hypotheses on $\Sigma$ we have that $H(\Div \Sigma,\mathscr{O})=\{\Sigma^{-1} \bu : \bu \in H(\Div,\mathscr{O})\}$.
\item A vector field $\bu \in H(\Div \Sigma, \mathscr{O})$ has \emph{zero normal trace}, which we denote by $((\Sigma\bu) \cdot \bn)\rvert_{\partial \mathscr{O}} =0$ if, for all $f  \in H^1(\mathscr{O})$ there holds
\begin{equation*}
    (\bu, \nabla f)_{\bL^2_\Sigma(\mathscr{O})} = -(\Div (\Sigma\bu), f)_{L^2(\mathscr{O})} 
\end{equation*}
We now define
\begin{equation} \label{DEF:H0div}
    H_0(\Div \Sigma, \mathscr{O}) := \{ u \in H(\Div \Sigma,\mathscr{O}) : ((\Sigma \bu) \cdot \bn)\rvert_{\partial \mathscr{O}} =0 \}.
\end{equation}
This space  can also be characterized  as the closure of $\bbD(\mathscr{O})$ for the operator norm $\left\|\cdot \right\|_{H(\Div \Sigma,\mathscr{O})}$ in the case the matrix-field $\left(\bx \mapsto \Sigma(\bx) \right)$ is $\mathscr{C}^\infty$ (see Lemma  \ref{prop:defHdiv:density} and Remark \ref{remark:sigmasmooth} in Appendix \ref{appendix}).
\begin{Rem} \label{remark:standardHdiv}
Observe that when $\mathscr{O}$ is a bounded Lipschitz domain, by \cite[Theorem 2.5.]{gira}, the notation $((\Sigma\bu)\cdot \bn)|_{\partial \mathscr{O}} = 0$ can be understood as a trace equality in the Sobolev space $H^{-\frac12}(\partial \mathscr{O})$. 
\end{Rem}
\item The Sobolev space $H_0^1(\mathscr{O})$ is defined as the space of functions $f \in H^1(\mathscr{O})$ for which for all $\bu \in H(\Div,\mathscr{O})$, one has
\begin{equation}
    (\nabla f,\bu)_{\bL^2(\mathscr{O})} = -(f,\Div\bu)_{L^2(\mathscr{O})}.
    \label{eqn:def_H10}
\end{equation}
\begin{Rem}
Defined in this way, $H_0^1(\mathscr{O})$ coincides with the closure of $\mathfrak{D}(\mathscr{O})$ with respect to the $H^1(\mathscr{O})$-norm (see Lemma \ref{prop:defH1coincide} in Appendix \ref{appendix}).
\end{Rem}
\end{itemize}

\bigskip

For further uses, we introduce the subspace of gradients
\begin{equation}\label{eqn:defgradientfields}
    \mathcal{G}(\mathscr{O}) := \{\nabla f : f \in H_0^1(\mathscr{O})\}\times\{\nabla g : g \in H^1(\mathscr{O})\}
\end{equation}
and its orthogonal in $\bL^2_{\varepsilon,\mu}(\mathscr{O})$
\begin{equation} \label{solenoidal:set}
\mathcal{J}_{\varepsilon,\mu}(\mathscr{O}) := \bigg\{ (
\bE,\bH)^{\top}\in  \bL^2_{\varepsilon,\mu}(\mathscr{O}) : \Div(\varepsilon \bE) = \Div(\mu \bH)=0,      (\mu \bH) \cdot \bn \rvert_{\partial \mathscr{O}} =0 \bigg\}
\end{equation}
Under mild hypotheses on the waveguide $\Omega$ one can show that $\mathrm{ker}(\mathcal{A}_{\varepsilon,\mu})= \overline{\mathcal{G}(\Omega)}$ (see Proposition \ref{prop:kerneldescription} in the Appendix).

\subsection{A unitarily equivalent operator {\it via} the Piola transform}\label{subsec:piola}
An important tool all along this paper is the so-called Piola transform which allows to rewrite the problem in any waveguide of the form $\Omega$, introduced in \eqref{diffeo:Phi} and below, as a problem recast in the straight waveguide $\Omega_0$.
In this section we prove the following result.
\begin{Pro}\label{prop:unitequivpiola}
The Maxwell operator $\mathcal{A}$ defined in \eqref{eqn:defAMaxwell} is unitarily equivalent to the Maxwell operator acting in $\bL_{\beps,\bmu}^2(\Omega_0)$ as
\[
	\hat{\mathcal{A}} := \rmi \begin{pmatrix}0 &  \beps^{-1}\curlvec\\-\bmu^{-1}\curlvec & 0\end{pmatrix},
\]
where
\begin{equation}\label{eq.defanisotropeps-mu}
    \beps := \varepsilon_0 \left(\frac{J_\Phi^\top J_\Phi}{\det J_\Phi}\right)^{-1},\quad \bmu := \mu_0 \left(\frac{J_\Phi^\top J_\Phi}{\det J_\Phi}\right)^{-1}
\end{equation}
and
\[
    D(\hat{\mathcal{A}}) := H_0(\curlvec,\Omega_0)\times H(\curlvec,\Omega_0).
\]
\end{Pro}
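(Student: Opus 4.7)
The plan is to realise the unitary equivalence via the covariant Piola transform, applied diagonally to the pair $(\bE,\bH)$. Concretely, define the map $T : \bL^{2}(\Omega) \to \bL^{2}(\Omega_0)$ by
\[
(T\bu)(\xi) := J_\Phi(\xi)^\top \, \bu(\Phi(\xi)), \qquad \xi \in \Omega_0,
\]
and set $\mathcal{T} := \mathrm{diag}(T,T)$ acting on $\bL^{2}_{\varepsilon_0,\mu_0}(\Omega) = \bL^{2}_{\varepsilon_0}(\Omega)\oplus \bL^{2}_{\mu_0}(\Omega)$. Its inverse is $T^{-1}\hat{\bu} = J_\Phi^{-\top}(\hat{\bu}\circ \Phi^{-1})$ since $\Phi$ is a global $\mathscr{C}^1$-diffeomorphism. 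The proof proceeds in three steps: isometry, identification of domains, and conjugation of the operator.

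\textbf{Step 1 (unitarity).} By the change of variables $\bx = \Phi(\xi)$ together with the elementary matrix identity $(J_\Phi^\top J_\Phi)^{-1} J_\Phi^\top = J_\Phi^{-1}$ one obtains, for every $\bE \in \bL^{2}(\Omega)$,
\[
\int_{\Omega_0} \beps \,(T\bE)\cdot \overline{T\bE}\, d\xi
= \varepsilon_0 \int_{\Omega_0} \det(J_\Phi)\,|\bE\circ \Phi|^2\, d\xi
= \int_{\Omega} \varepsilon_0\, |\bE|^2 \, d\bx,
\]
and analogously for the magnetic block with $\bmu$. So with $\beps,\bmu$ as defined in \eqref{eq.defanisotropeps-mu}, $\mathcal{T}$ is an isometry from $\bL^{2}_{\varepsilon_0,\mu_0}(\Omega)$ onto $\bL^{2}_{\beps,\bmu}(\Omega_0)$, hence unitary.

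\textbf{Step 2 (identification of domains).} The workhorse is the classical Piola curl formula
\[
\curlvec_\xi\!\bigl(J_\Phi^\top (\bu\circ \Phi)\bigr)(\xi) = \det\!\bigl(J_\Phi(\xi)\bigr)\, J_\Phi(\xi)^{-1}\, (\curlvec_{\bx} \bu)(\Phi(\xi)),
\]
which immediately gives $T\bigl(H(\curlvec,\Omega)\bigr) = H(\curlvec,\Omega_0)$. For the boundary condition, I would use the intrinsic weak definition of $H_0(\curlvec, \cdot)$ recalled after \eqref{eq.Hcurl0}: combining $\mathcal{T}$'s isometry property (Step 1) with the curl transformation rule shows that Green's identity $(T\bu,\curlvec T\bv)_{\bL^{2}(\Omega_0)} = (\curlvec T\bu, T\bv)_{\bL^{2}(\Omega_0)}$ for all $T\bv \in H(\curlvec,\Omega_0)$ is equivalent to its counterpart in $\Omega$. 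Therefore $T$ maps $H_0(\curlvec,\Omega)$ onto $H_0(\curlvec,\Omega_0)$, and consequently $\mathcal{T}$ sends $D(\mathcal{A})$ bijectively onto $D(\hat{\mathcal{A}})$.

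\textbf{Step 3 (operator conjugation and main obstacle).} Reading the Piola identity in reverse gives $(\curlvec_{\bx} \bH)\circ \Phi = \det(J_\Phi)^{-1}\, J_\Phi\, \curlvec_\xi (T\bH)$. Applying $T$ to $\varepsilon_0^{-1}\curlvec \bH$ one finds
\[
T\bigl(\varepsilon_0^{-1}\curlvec \bH\bigr) = \varepsilon_0^{-1}\det(J_\Phi)^{-1}\, J_\Phi^\top J_\Phi\, \curlvec \hat{\bH} = \beps^{-1}\, \curlvec \hat{\bH},
\]
and symmetrically $T(-\mu_0^{-1}\curlvec \bE) = -\bmu^{-1}\curlvec \hat{\bE}$; combined with Steps 1 and 2 this yields $\mathcal{T}\mathcal{A}\mathcal{T}^{-1} = \hat{\mathcal{A}}$. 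The one delicate point is the rigorous justification of the Piola curl formula, because $\Phi$ is only $\mathscr{C}^1$ so $J_\Phi$ is merely continuous: a direct pointwise computation involves derivatives of $J_\Phi$ that are not a priori defined. The standard remedy is to establish the identity first for smooth vector fields $\bu$ after approximating $\Phi$ by $\mathscr{C}^2$ diffeomorphisms (or to exploit the extra smoothness of $\gamma$ and $\theta$ where available), then extend by density using that $\bbD(\overline{\Omega})$ is dense in $H(\curlvec,\Omega)$ (Lemma~\ref{lem:55}) and the continuity of both sides in the graph norm; this is the technical core of the argument, the other steps being essentially algebraic.
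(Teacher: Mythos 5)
Your proposal is correct and follows essentially the same route as the paper: the Piola transform applied diagonally to $(\bE,\bH)$ is unitary from $\bL^2_0(\Omega)$ onto $\bL^2_{\beps,\bmu}(\Omega_0)$, maps $H_0(\curlvec,\Omega)\times H(\curlvec,\Omega)$ onto $H_0(\curlvec,\Omega_0)\times H(\curlvec,\Omega_0)$, and conjugates $\mathcal{A}$ into $\hat{\mathcal{A}}$ via the Piola curl identity (all of which the paper packages as Proposition \ref{prop:piolaproperties}). The only point of divergence is the regularity issue you flag at the end: rather than approximating $\Phi$ by $\mathscr{C}^2$ diffeomorphisms and passing to the limit, the paper simply cites \cite[Theorem 2.5]{lapazaI} for Points \ref{itm:3} and \ref{itm:5}, which establishes the curl and div transformation rules for bi-Lipschitz changes of variables and therefore covers the $\mathscr{C}^1$ case directly.
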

\begin{Rem}
The main feature of Proposition \ref{prop:unitequivpiola} is that one can see the Maxwell operator $\mathcal{A}_{\varepsilon_0,\mu_0}$ associated to an isotropic and homogeneous medium with constant scalar permittivity $\varepsilon_0$ and permeability  $\mu_0$  in the waveguide $\Omega$ as a Maxwell operator $\hat{\mathcal{A}}$ in the straight waveguide $\Omega_0$ associated to an anisotropic medium. The geometry of $\Omega$ is now encoded in the  permittivity and permeability tensors $ \beps$ and $\bmu$ of the  anisotropic medium.
\end{Rem}
Before going through the proof of Proposition \ref{prop:unitequivpiola} we introduce the Piola transform (see e.g  \cite[Section 3]{lamzac20},  \cite[Section 3.9]{Monk} or \cite{Nic-07}) as
\begin{equation}
    \mathscr{P}:\bL^2_\Sigma(\Omega) \to \bL^2_{\widetilde{\Sigma}}(\Omega_0), \quad \bv \mapsto J_\Phi^\top (\bv \circ \Phi),
\end{equation}
where 
\begin{equation}
    \widetilde{\Sigma} :=(\operatorname{det}J_\Phi) \  J_\Phi^{-1} (\Sigma \circ \Phi) (J_\Phi^{-1})^\top.
    \label{def:sigmatilde}
\end{equation}
This map has many properties.
\begin{Pro}
 We have that
\begin{enumerate}[label=\normalfont(\roman*)]
\label{prop:piolaproperties}
\item\label{itm:1} $\mathscr{P}$ is invertible and its inverse $\mathscr{P}^{-1}: \bL_{\widetilde{\Sigma}}^2(\Omega_0) \to \bL_\Sigma^2(\Omega)$ verifies:
\[
    \mathscr{P}^{-1}\bv = ((J_\Phi^{-1})^\top \bv)\circ \Phi^{-1};
\]
\item \label{itm:2} $\mathscr{P}$ is unitary map from $\bL^2_\Sigma(\Omega)$ to $\bL^2_{\widetilde{\Sigma}}(\Omega_0)$;
\item \label{itm:3}$\bv \in H(\curlvec, \Omega)$ if and only if $\mathscr{P}\bv \in H(\curlvec, \Omega_0)$, and in this case
\begin{equation}
    \curlvec (\mathscr{P}\bv)  = (\operatorname{det}J_\Phi) \ J_\Phi^{-1} (\curlvec \bv \circ \Phi); \label{eq.curlpiola}
\end{equation}
\item \label{itm:4}$\mathscr{P}\left(H_0(\curlvec,\Omega) \right) = H_0(\curlvec,\Omega_0)$;
\item \label{itm:5}$\bv \in H(\Div \Sigma,\Omega)$ if and only if $\mathscr{P}\bv \in H(\Div \widetilde{\Sigma},\Omega_0)$, and in this case
\begin{equation*}
    \operatorname{div}\left(\widetilde{\Sigma} \mathscr{P}\bv \right) = (\operatorname{det}J_\Phi) \left(\operatorname{div}(\Sigma \bv) \circ \Phi \right);
\end{equation*}
\item \label{itm:6}$\mathscr{P}(H_0(\Div \Sigma,\Omega)) = H_0(\Div\widetilde{\Sigma},\Omega_0)$.
\end{enumerate}
\end{Pro}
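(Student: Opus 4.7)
The plan is to establish the six items in an order that exploits the natural dependencies among them: items (i)--(ii) are algebraic, items (iii) and (v) are the fundamental intertwining identities with $\curlvec$ and $\operatorname{div}$, and the boundary statements (iv), (vi) will follow by combining these intertwinings with unitarity and the integration-by-parts characterizations of the zero-trace subspaces.

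For (i), I would compose the two candidate maps and check directly that $\mathscr{P}\circ\mathscr{P}^{-1}$ and $\mathscr{P}^{-1}\circ\mathscr{P}$ are the identity, using the chain rule $J_{\Phi^{-1}}(\bx)=(J_\Phi(\Phi^{-1}(\bx)))^{-1}$ together with $(J_\Phi^{-1})^\top J_\Phi^\top=\mathds{1}_3$. For (ii), since $\mathscr{P}$ is already invertible, it suffices to show that it is an isometry between $\bL^2_\Sigma(\Omega)$ and $\bL^2_{\widetilde{\Sigma}}(\Omega_0)$. Expanding the weighted inner product on $\Omega_0$ and using the key algebraic identity $\widetilde{\Sigma}\,J_\Phi^\top=(\det J_\Phi)\,J_\Phi^{-1}(\Sigma\circ\Phi)$, the integrand reduces to $(\det J_\Phi)\,(\Sigma\circ\Phi)(\bu\circ\Phi)\cdot\overline{\bv\circ\Phi}$, and the change of variables $\bx=\Phi(s,\by)$ transports this back to the inner product on $\bL^2_\Sigma(\Omega)$.

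The Piola curl identity (iii) is the technical heart of the proof. I would first verify the pointwise formula for $\bv\in\bbD(\overline{\Omega})$ by a direct differentiation, using the explicit columns of $J_\Phi$ computed in \eqref{eqn:defjacobian!}. To extend it to $\bv\in H(\curlvec,\Omega)$ under the mere $\mathscr{C}^1$-regularity of $\Phi$, I would proceed purely in the distributional sense: for any test field $\boldsymbol{\varphi}\in\bbD(\Omega_0)$, I would rewrite the pairing $(\mathscr{P}\bv,\curlvec\boldsymbol{\varphi})_{\bL^2(\Omega_0)}$ via the change of variables $\bx=\Phi(s,\by)$ as a pairing over $\Omega$ between $\bv$ and a suitable transported test field, and then use the hypothesis $\curlvec\bv\in\bL^2(\Omega)$ to move the $\curlvec$ back. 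The divergence identity (v) follows an analogous strategy, testing against scalar $\varphi\in\mathfrak{D}(\Omega_0)$; indeed it becomes a direct consequence of the chain rule once one observes that $\widetilde{\Sigma}\,\mathscr{P}\bv=(\det J_\Phi)\,J_\Phi^{-1}(\Sigma\bv)\circ\Phi$.

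Items (iv) and (vi) are then immediate. For (iv), given $\bu\in H_0(\curlvec,\Omega)$ and any $\boldsymbol{\varphi}\in H(\curlvec,\Omega_0)$, I would set $\bv:=\mathscr{P}^{-1}\boldsymbol{\varphi}$, apply the characterization \eqref{eq.Hcurl0} of zero tangential trace on $\Omega$, and transport both sides back to $\Omega_0$ using (ii) and (iii); the reverse inclusion is obtained symmetrically by applying $\mathscr{P}^{-1}$. Item (vi) is treated identically, pairing this time against gradients $\nabla f$ with $f\in H^1(\Omega_0)$ and using that $\mathscr{P}^{-1}\nabla f=\nabla(f\circ\Phi^{-1})$, a clean consequence of the chain rule combined with the definition of $\mathscr{P}^{-1}$. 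The main obstacle I anticipate is the rigorous distributional extension of (iii) under only $\mathscr{C}^1$-regularity of $\Phi$: one cannot differentiate the Jacobian pointwise, so the argument must be carried out entirely at the level of test-function pairings and careful changes of variables; once this is in hand, the remaining items reduce to algebraic manipulations.
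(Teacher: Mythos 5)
Your treatment of items (i), (ii), (iv) and (vi) is essentially identical to the paper's: (i) is a direct composition check, (ii) is the change-of-variables computation using $\widetilde{\Sigma}=(\det J_\Phi)\,J_\Phi^{-1}(\Sigma\circ\Phi)(J_\Phi^{-1})^\top$, and (iv), (vi) follow by transporting the weak characterizations \eqref{eq.Hcurl0} and the defining pairing of $H_0(\Div\Sigma,\cdot)$ through $\mathscr{P}$, invoking $\nabla(f\circ\Phi)=J_\Phi^\top((\nabla f)\circ\Phi)$ for (vi). The genuine divergence from the paper is in (iii) and (v), where the authors simply cite the analogous result in \cite[Theorem 2.5]{lapazaI} and move on, whereas you propose a self-contained distributional proof. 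That route is viable, but be aware of a subtle circularity in the sketch you give: when you "transport the test field" $\boldsymbol{\varphi}\in\bbD(\Omega_0)$ back to $\Omega$, the transported field $\mathscr{P}^{-1}\boldsymbol{\varphi}$ is only $\mathscr{C}^0$ under $\mathscr{C}^1$-regularity of $\Phi$, so identifying its distributional $\curlvec$ already presupposes the very Piola identity you are trying to establish. The correct way to make your argument rigorous is to compute $\curlvec(J_\Phi^\top(\bv\circ\Phi))$ directly in $\mathfrak{D}'(\Omega_0)$ for $\bv$ smooth and observe that the terms involving second derivatives of $\Phi$ cancel antisymmetrically by Schwarz's theorem (which holds distributionally for $W^{1,\infty}$ functions), leaving only the first-derivative terms that constitute the right-hand side of \eqref{eq.curlpiola}; one then passes to general $\bv\in H(\curlvec,\Omega)$ by density and the already-established unitarity (ii). The same cancellation mechanism handles (v). So your approach trades the external reference for a self-contained but more delicate argument; the paper's choice to cite \cite{lapazaI} is a pragmatic shortcut, and both paths lead to the same proposition.
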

The proof of Proposition \ref{prop:piolaproperties} is reminiscent of \cite[Theorem 2.5.]{lapazaI}. One of the major advantage of the Piola transform $\mathscr{P}$ is that it transforms adequately the natural functional spaces involved in the Maxwell system in the domain $\Omega$ to the same ones in the domain $\Omega_0$. 
\begin{proof}[Proof of Proposition \ref{prop:piolaproperties}]
The proof of Point \ref{itm:1} is straighforward. Point \ref{itm:2} is nothing but rewriting the change of variables. Indeed, let $\bu,\bv \in \bL^2_\Sigma(\Omega)$, one has
\begin{align*}
    \int_\Omega (\Sigma \bu) \cdot \overline{\bv} \, \rmd\bx &= \int_{\Omega_0} \left((\Sigma \circ \Phi)(\bu \circ \Phi)\right) \cdot \overline{(\bv \circ \Phi)} (\det J_\Phi) \,\rmd \bx \\&= \int_{\Omega_0} \left(J_\Phi^{-1}(\Sigma \circ \Phi)(J_\Phi^{-1})^\top J_\Phi^\top(\bu \circ \Phi)\right) \cdot \overline{\left( J_\Phi^\top(\bv\circ \Phi)\right)} (\det J_\Phi) \,\rmd \bx\\
    & = \int_{\Omega_0} \left(\widetilde{\Sigma}\mathscr{P}\bu\right)\cdot \overline{\mathscr{P}\bv}\, \rmd \bx
\end{align*}
which gives Point \ref{itm:2}. Points \ref{itm:3} and \ref{itm:5} can be found in \cite[Theorem 2.5.]{lapazaI}.

We are left with the proof of Points \ref{itm:4} and \ref{itm:6}. Let us deal with Point \ref{itm:4} and take $\bu \in H_0(\curlvec,\Omega)$, by definition for any $\bv\in H(\curlvec,\Omega)$ one has
\[
    \int_\Omega \bu \cdot \overline{(\curlvec \bv)} \,\rmd \bx =  \int_\Omega (\curlvec\bu) \cdot \overline{\bv} \, \rmd \bx
\]
which, after a change of variables and thanks to Point \ref{itm:3}, rewrites
\[
    \int_{\Omega_0} \mathscr{P}\bu \cdot \overline{(\curlvec \mathscr{P}\bv)} \,\rmd \bx =  \int_{\Omega_0} (\curlvec\mathscr{P}\bu) \cdot \overline{\mathscr{P}\bv}\, \rmd \bx.
\]
Again, by Point \ref{itm:3}, one has $\mathscr{P}H(\curlvec,\Omega) = H(\curlvec,\Omega_0)$ which yields (by \eqref{eq.Hcurl0}) that $\mathscr{P}\bu \in H_0(\curlvec,\Omega_0)$ and $\mathscr{P}H_0(\curlvec,\Omega)\subset H_0(\curlvec,\Omega_0)$. The converse inclusion is proved using the application $\mathscr{P}^{-1}$.

Finally, to prove Point \ref{itm:6}, take $\bu \in H_0(\Div \Sigma,\Omega)$, by definition for all $f \in H^1(\Omega)$ there holds 
\[
    \int_\Omega (\Sigma \bu) \cdot \overline{\nabla f} \,\rmd \bx = - \int_\Omega \Div(\Sigma \bu) \overline{f} \,\rmd \bx.
\]
Performing a change of variables and taking Point \ref{itm:5} into account one gets
\begin{equation}
    \int_{\Omega_0}((\Sigma\circ \Phi)(\bu \circ \Phi))\cdot \overline{((\nabla f)\circ \Phi)}(\det J_\Phi)\,\rmd \bx = - \int_{\Omega_0} (\Div(\Sigma \bu)\circ\Phi) \overline{(f\circ \Phi)} (\det J_\Phi) \, \rmd \bx.
    \label{eqn:hdiv0piola}
\end{equation}
Note that by Point \ref{itm:5} the term on the right-hand side of \eqref{eqn:hdiv0piola} rewrites
\[
    - \int_{\Omega_0} (\Div(\Sigma \bu)\circ\Phi) \overline{(f\circ \Phi)} (\det J_\Phi)\,\rmd \bx = - \int_{\Omega_0}\Div(\widetilde{\Sigma}\mathscr{P}\bu)\overline{(f\circ \varphi)} \,\rmd \bx.
\]
Recall that there holds $\nabla (f\circ \Phi) = J_\Phi^\top \left((\nabla f)\circ \Phi\right)$, hence using the definition of $\widetilde{\Sigma}$, one gets that the term on the left-hand side of \eqref{eqn:hdiv0piola} rewrites
\[
    \int_{\Omega_0}\left((\Sigma \circ \Phi)(\bu\circ\Phi)\right)\cdot \overline{((\nabla f)\circ\Phi)}(\det J_\Phi)\,\rmd \bx = \int_{\Omega_0} (\widetilde{\Sigma}\mathscr{P}\bu)\cdot\overline{(\nabla(f\circ\Phi))} \,\rmd \bx.
\]
It gives
\[
\int_{\Omega_0} (\widetilde{\Sigma}\mathscr{P}\bu)\cdot\overline{(\nabla(f\circ\Phi))}\, \rmd \bx = - \int_{\Omega_0}\Div(\widetilde{\Sigma}\mathscr{P}\bU)\overline{(f\circ \Phi)} \,\rmd \bx
\]
and note that $f \in H^1(\Omega)$ if and only $f\circ \Phi \in H^1(\Omega_0)$ thus, $\mathscr{P}\bu \in H_0(\Div\widetilde{\Sigma},\Omega_0)$ and one gets the inclusion $\mathscr{P}H_0(\Div \Sigma,\Omega)\subset H_0(\Div \widetilde{\Sigma},\Omega)$. The converse inclusion is proved in the same way using $\mathscr{P}^{-1}$.
\end{proof}
Note that using the notation \eqref{def:sigmatilde}, choosing $\Sigma = \varepsilon_0 \mathds{1}_3$ or $\Sigma = \mu_0 \mathds{1}_3$, by \eqref{eq.defanisotropeps-mu} one gets $\widetilde{\Sigma} = \beps$ or  $ \widetilde{\Sigma}= \bmu$, respectively.
Therefore, by  \ref{itm:2}, one can  introduce a unitary map $\bbU$ as follows
\begin{equation} \label{bbU:def}
\bbU: \bL_0^2(\Omega) \to \bL_\bbG^2(\Omega_0), \quad 
\begin{pmatrix}
\bE\\
\bH
\end{pmatrix}
\mapsto
\begin{pmatrix}
\mathscr{P}\bE \\
\mathscr{P}\bH
\end{pmatrix},
\end{equation}
where  the matrix-field $\bbG$ is given (in $\Omega_0$)  by
\begin{equation} \label{Gmatrix}
\bbG:=\frac{ J_\Phi^\top  J_\Phi}{\operatorname{det}J_\Phi}.
\end{equation}
and $\bL_0^2(\Omega)$ and $\bL_\bbG^2(\Omega_0)$ are  defined by  $\bL_0^2(\Omega):=\bL_{\varepsilon_0\mathds{1}_3,\mu_0\mathds{1}_3}^2(\Omega)$ and $\bL_\bbG^2(\Omega_0) := \bL_{\beps,\bmu}^2(\Omega_0)$.
\begin{Rem}\label{rem:Ginvertibleunifo}
We recall that condition \eqref{assumpt:a:kappa} implies   \eqref{unif:bounds:ak}. Thus  $\bbG(x)$, defined by \eqref{Gmatrix}, is well-defined  and invertible and one notices that $\operatorname{det}\bbG(\bx)=(\operatorname{det}J_{\Phi}(\bx))^{-1}$. Using \eqref{unif:bounds:ak} and \eqref{relaxed:hyp}, one deduces  that  $\bbG(\bx)$ and $\bbG(\bx)^{-1}= \operatorname{det}J_{\Phi}(\bx)\, \operatorname{adj}\bbG(\bx)$ are uniformly bounded in $\Omega_0$. Thus,  the matrices $\bbG(\bx)$ and  $\bbG(\bx)^{-1}$ are positive real symmetric matrix-fields which are uniformly positive definite in $\Omega_0$. Therefore,  $\beps$ and $ \bmu$ (defined by \eqref{eq.defanisotropeps-mu}) verify the condition \eqref{hyp:sigma}.
\end{Rem}
Now, we are in a good position to prove Proposition \ref{prop:unitequivpiola}.
\begin{proof}[Proof of Proposition \ref{prop:unitequivpiola}] 
Note that $\bbU$, defined in \eqref{bbU:def}, is a unitary map thanks to Point \ref{itm:1} of Proposition \ref{prop:piolaproperties} and we introduce the  operator 
\begin{equation} \label{def:Ahat}
\hat{\mathcal{A}}:=\bbU \mathcal{A}  \bbU^*: D(\hat{\mathcal{A}}) \subset
\bL^2_\bbG(\Omega_0) \to \bL^2_\bbG(\Omega_0)
\end{equation} 
where  $D(\hat{\mathcal{A}}) = \bbU(D(\mathcal{A}))$.
Observe that by Point \ref{itm:2} of Proposition \ref{prop:piolaproperties}, $\hat{\mathcal{A}}$ acts in $\bL_{\bbG}^2(\Omega_0)$ and that Points \ref{itm:3}-\ref{itm:4} yields
\begin{equation*}
D(\hat{\mathcal{A}})= H_0(\curlvec,\Omega_0)\times H(\curlvec,\Omega_0).
\end{equation*}
As  the unitary transform $\bbU^*$ involves the inverse of the Piola transform, by Points \ref{itm:1} and \ref{itm:3} of Proposition \ref{prop:piolaproperties}, one obtains an expression of $   \curlvec ( \mathscr{P}^{-1} \bv)$ (by replacing $\Phi$ by $\Phi^{-1}$ and $ J_\Phi$ by  $J_\Phi^{-1}$ in \eqref{eq.curlpiola}).  This leads to the following  expression of the operator $\hat{\mathcal{A}}$:
\begin{equation*}
\hat{\mathcal{A}}=
\rmi \begin{pmatrix}
0 &\beps^{-1} \curlvec\\
- \bmu^{-1} \curlvec & 0
\end{pmatrix},
\end{equation*}
which ends the proof of Proposition \ref{prop:unitequivpiola}.
\end{proof}

%%%% MATRIX G IN RELATIVELY PARALLEL FRAME
%In the basis $\{e_i^{\theta(s)}\}_{j=1,2,3}$ defined in \eqref{eqn:defetheta}, there holds we have
%\begin{equation} \label{def:G}
%\bbG(s,y_2,y_3)=(1- \kappa^\theta \cdot \by)^{-1}
%\begin{pmatrix}
%(1- \kappa^\theta \cdot \by)^2 + (\theta')^2|y|^2 & y_3\theta' & -y_2\theta'\\
%y_3\theta' & 1 & 0 \\
%-y_2\theta' & 0 & 1
%\end{pmatrix}.
%\end{equation}

\subsection{Another unitary operator acting in a fixed Hilbert space} \label{sect:weder}
For further uses, we need to be able to compare the operator $\hat{\mathcal{A}}$ of Proposition \ref{prop:unitequivpiola} with the operator $\mathcal{A}_0$ defined as the Maxwell operator on the straight waveguide, with constant permittivity $\varepsilon_0$ and permeability $\mu_0$. Hence, $\mathcal{A}_0$ is the operator which acts in $\bL_0^2(\Omega_0):= \bL^2_{\varepsilon_0 \mathds{1}_3,\mu_0 \mathds{1}_3}(\Omega_0)$ as
\begin{equation} \label{A0:deffi}
\mathcal{A}_0=
\rmi \begin{pmatrix}
0 & \varepsilon_0^{-1} \curlvec\\
- \mu_0^{-1} \curlvec & 0
\end{pmatrix},\quad
D(\mathcal{A}_0)= H_0(\curlvec,\Omega_0)\times H(\curlvec,\Omega).
\end{equation}
Note that $\hat{\mathcal{A}}$ and $\mathcal{A}_0$ do not act in the same Hilbert spaces. However, the spaces $\bL_\bbG^2(\Omega_0)$ and $\bL_0^2(\Omega_0)$ coincides as sets due to Remark \ref{rem:Ginvertibleunifo}. Thus, we are lead to introduce the identification map
\begin{equation} \label{IbbG:def}
  \mathcal{I}_\bbG:\bL_\bbG^2(\Omega_0) \to \bL_0^2(\Omega_0),\quad \mathcal{I}_{\bbG}(\bU) = \bU.
\end{equation}
Moreover, there holds $$\mathcal{I}_\bbG^* 
\begin{pmatrix}
\bE\\
\bH
\end{pmatrix}
= \begin{pmatrix}
\bbG \bE \\
\bbG \bH 
\end{pmatrix}$$
and thus this map is not unitary. However, as $\bbG$ satisfies the condition \eqref{hyp:sigma}  (see Remark \ref{rem:Ginvertibleunifo}), it is continuous with continuous inverse since the norms induced by the inner  products of $\bL_\bbG^2(\Omega_0)$ and $\bL_0^2(\Omega_0)$ are equivalent.  By virtue of the Proposition \ref{prop:unitequivpiola} and the relation \eqref{Gmatrix}, which defines 
$\bbG$, we can express  the operator $\hat{\mathcal{A}}$ as follows:
\begin{equation}\label{hatA:Azero}
\hat{\mathcal{A}} = \begin{pmatrix}
 \bbG & 0 \\
0 &  \bbG
\end{pmatrix}
\mathcal{I}_\bbG^{-1}\mathcal{A}_0 \mathcal{I}_\bbG.
\end{equation}

Now, since for any $x \in \Omega_0$ the real matrix $\bbG(x)$ is symmetric and positive definite (cf. Remark \ref{rem:Ginvertibleunifo}) then for any $x \in \Omega_0$ we can uniquely define its (symmetric) positive definite square root $\bbG^{1/2}$ (see \cite[Thm. 7.2.6]{hojho}). Following an idea of \cite[Ch.3 \S 3]{weder}, we introduce the unitary map $\bbV$ defined as
\begin{equation}\label{eqn:defbbv}
\bbV: \bL^2_\bbG(\Omega_0) \to \bL^2_0(\Omega_0), \quad \begin{pmatrix}
\bE\\
\bH
\end{pmatrix}
\mapsto
\begin{pmatrix}
\bbG^{-1/2} \bE \\
\bbG^{-1/2} \bH
\end{pmatrix}.
\end{equation}
The following holds.

\begin{Pro}\label{prop:unitnotmaxwell}
The operator $\mathcal{A}$ is unitarily equivalent to the operator $\tilde{\mathcal{A}}:= \bbV \hat{\mathcal{A}}\bbV^*$ acting in $\bL_0^2(\Omega_0)$. Moreover, one can express $ \tilde{\mathcal{A}}$ in term of $\mathcal{A}_0$ via the following decomposition
\[
    \tilde{\mathcal{A}} = \mathcal{I}_\bbG \bbV^*\mathcal{A}_0\mathcal{I}_\bbG\bbV^*,\ \mbox{ where } D(\tilde{\mathcal{A}}) = \bbV \mathcal{I}_\bbG^{-1} D(\mathcal{A}_0) = \bbV \left(H_0(\curlvec,\Omega_0)\times H(\curlvec,\Omega_0)\right).
\]
\end{Pro}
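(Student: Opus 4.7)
The argument splits naturally into three short, largely algebraic steps. First I would verify that the map $\bbV$ defined in \eqref{eqn:defbbv} is genuinely unitary from $\bL^2_\bbG(\Omega_0)$ to $\bL^2_0(\Omega_0)$. By Remark \ref{rem:Ginvertibleunifo}, $\bbG$ is a real symmetric matrix-field that is uniformly positive definite and uniformly bounded on $\Omega_0$; hence its pointwise spectral square root $\bbG^{1/2}$ is well-defined, symmetric positive, and uniformly bounded with uniformly bounded inverse. In particular, blockwise multiplication by $\bbG^{-1/2}$ is an element-wise bijection on $\bL^2(\Omega_0)\times \bL^2(\Omega_0)$, and a direct calculation using $\beps = \varepsilon_0 \bbG^{-1}$, $\bmu = \mu_0 \bbG^{-1}$ together with the symmetry of $\bbG^{1/2}$ gives
\[
   \|\bbV \bU\|^2_{\bL^2_0(\Omega_0)} \;=\; \varepsilon_0 \int_{\Omega_0} \bbG^{-1}\bE\cdot\overline{\bE}\,\rmd\bx + \mu_0 \int_{\Omega_0} \bbG^{-1}\bH\cdot\overline{\bH}\,\rmd\bx \;=\; \|\bU\|^2_{\bL^2_\bbG(\Omega_0)}.
\]
Combined with Proposition \ref{prop:unitequivpiola}, this yields that $\mathcal{A}$ and $\tilde{\mathcal{A}} = \bbV\hat{\mathcal{A}}\bbV^*$ are unitarily equivalent.

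Second, I would compute $\bbV^*$ explicitly. The same type of adjoint computation as above, again using $\beps = \varepsilon_0\bbG^{-1}$, $\bmu = \mu_0\bbG^{-1}$ and the symmetry of $\bbG^{1/2}$, shows that $\bbV^* : \bL^2_0(\Omega_0)\to \bL^2_\bbG(\Omega_0)$ acts blockwise as multiplication by $\bbG^{1/2}$. Substituting the identity \eqref{hatA:Azero} into $\tilde{\mathcal{A}} = \bbV\hat{\mathcal{A}}\bbV^*$ and observing that both $\mathcal{I}_\bbG$ and $\mathcal{I}_\bbG^{-1}$ are element-wise identities (pure topological re-labellings), one sees that $\mathcal{I}_\bbG\bbV^*$ is exactly blockwise multiplication by $\bbG^{1/2}$ viewed as a map $\bL^2_0(\Omega_0)\to \bL^2_0(\Omega_0)$, while the leftmost composition of $\bbV$ with the block-diagonal multiplication by $\bbG$ (coming from \eqref{hatA:Azero}) reduces to blockwise multiplication by $\bbG^{1/2}$. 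One therefore obtains
\[
    \tilde{\mathcal{A}}\,\bU \;=\; \bbG^{1/2}\,\mathcal{A}_0\bigl(\bbG^{1/2}\bU\bigr) \;=\; \mathcal{I}_\bbG\bbV^*\,\mathcal{A}_0\,\mathcal{I}_\bbG\bbV^*\,\bU,
\]
which is precisely the announced decomposition. Third, the domain identity $D(\tilde{\mathcal{A}}) = \bbV\mathcal{I}_\bbG^{-1}D(\mathcal{A}_0) = \bbV\bigl(H_0(\curlvec,\Omega_0)\times H(\curlvec,\Omega_0)\bigr)$ follows from the general rule $D(\tilde{\mathcal{A}}) = \bbV D(\hat{\mathcal{A}})$ attached to unitary equivalence, together with the fact that $D(\hat{\mathcal{A}}) = H_0(\curlvec,\Omega_0)\times H(\curlvec,\Omega_0) = \mathcal{I}_\bbG^{-1}D(\mathcal{A}_0)$ as sets, the last equality being the element-wise identification built into $\mathcal{I}_\bbG$.

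The main obstacle here is not conceptual but notational: the Hilbert spaces $\bL^2_\bbG(\Omega_0)$ and $\bL^2_0(\Omega_0)$ share the same underlying vector space but carry different inner products, so one must take care to use the correct weighted pairing when computing adjoints, and the non-unitary identification $\mathcal{I}_\bbG$ is precisely the bookkeeping device introduced to track this distinction. Once the formula $\bbV^* = \bbG^{1/2}$ (blockwise) has been established, every remaining step reduces to a direct substitution into \eqref{hatA:Azero}.
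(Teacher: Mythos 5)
Your proposal is correct and follows essentially the same route as the paper: substitute the factorization \eqref{hatA:Azero} into $\tilde{\mathcal{A}} = \bbV\hat{\mathcal{A}}\bbV^*$, collapse the block-diagonal multiplications by $\bbG^{-1/2}$ and $\bbG$ into a single block multiplication by $\bbG^{1/2}$, identify this with $\mathcal{I}_\bbG\bbV^*$, and read off the domain from the unitary conjugation. The only difference is that you take the extra (harmless, and indeed welcome) step of explicitly verifying that $\bbV$ is unitary and that $\bbV^* = \bbV^{-1}$ acts as blockwise multiplication by $\bbG^{1/2}$, facts the paper uses implicitly from its definition of $\bbV$.
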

The advantage of Proposition \ref{prop:unitnotmaxwell} is that the operator $\tilde{\mathcal{A}}$ is unitarily equivalent to the Maxwell operator $\mathcal{A}$ but acts in the Hilbert space $\bL_0^2(\Omega_0)$ which does not depend on the initial geometry of the waveguide $\Omega$. However, its major drawback is that its differential expression is more involved and we are loosing the Maxwell structure of the operators $\mathcal{A}$ and $\hat{\mathcal{A}}$, defined in \eqref{eqn:defAMaxwell}
 and Proposition \ref{prop:unitequivpiola} respectively.
 
 \begin{proof}[Proof of Proposition \ref{prop:unitnotmaxwell}] Since  $\tilde{\mathcal{A}} = \bbV \hat{\mathcal{A}}\bbV^*$, one has by virtue of \eqref{def:Ahat}, $\tilde{\mathcal{A}}=  \bbV \bbU \mathcal{A}( \bbV \bbU)^* $, thus $\tilde{\mathcal{A}}$ and $\mathcal{A}$ are unitarily equivalent . Moreover, from the factorization\eqref{hatA:Azero}, we deduce that
\begin{equation} \label{AA0}
\begin{split}
\tilde{\mathcal{A}}&= \bbV \hat{\mathcal{A}} \bbV^* = 
\begin{pmatrix}
\bbG^{-1/2} & 0 \\
0 & \bbG^{-1/2}
\end{pmatrix}
\begin{pmatrix}
\bbG & 0\\
0 & \bbG
\end{pmatrix}
\mathcal{I}_\bbG^{-1} \mathcal{A}_0 \mathcal{I}_\bbG \bbV^*\\
&=
\begin{pmatrix}
\bbG^{1/2} & 0 \\
0 & \bbG^{1/2}
\end{pmatrix}
\mathcal{I}_\bbG^{-1} \mathcal{A}_0 \mathcal{I}_\bbG \bbV^*
= \mathcal{I}_\bbG \bbV^* \mathcal{A}_0 \mathcal{I}_\bbG \bbV^*.
\end{split}
\end{equation}
From the above decomposition,  we remark also that 
\begin{equation*}
D(\tilde{\mathcal{A}}) = \bbV \mathcal{I}_\bbG^{-1} D(\mathcal{A}_0) = \bbV \left(H_0(\curlvec,\Omega_0)\times H(\curlvec,\Omega_0)\right).
\end{equation*}
\end{proof}
\section{Essential spectrum} \label{secessspec}
In this section we prove Theorem \ref{Main1}.

\subsection{The case of the constantly twisted waveguide}\label{sec-const-twist}

In this paragraph we focus on the specific case of the constantly twisted waveguide $\Omega_\beta$ defined as the image of the diffeomorphism $\Phi_\beta$ of Equation \eqref{phiB:def}. The Maxwell operator $\mathcal{A}_\beta$ is defined as $\mathcal{A}$ introduced in \eqref{eqn:defAMaxwell} in the specific case $\Omega = \Omega_\beta$ (see right panel of Figure \ref{fig:2}).

The main result of this paragraph is the following proposition.

\begin{Pro} \label{ess:spectr:beta}
There exists $a_\beta > 0$ such that there holds
\[
	\sigma(\mathcal{A}_\beta) = \sigma_{\mathrm{ess}}(\mathcal{A}_\beta) = (-\infty,-a_\beta]\cup\{0\}\cup[a_\beta,+\infty).
\]
Moreover, when   $\beta=0$ (i.e. when the waveguide is straight), one has  $
    a_0 = \sqrt{\lambda_2^N(\omega)}\, c$.
\end{Pro}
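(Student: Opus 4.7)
\smallskip

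The plan is to follow the roadmap indicated in Remark \ref{rem:defa_betanotwist}: straighten the geometry via the Piola transform and then invoke the spectral result \cite[Theorem 9.8]{filonov19} on the resulting anisotropic problem on $\Omega_0 = \mathbb{R}\times \omega$. I would proceed in three stages.

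First, apply Proposition \ref{prop:unitequivpiola} to $\Omega = \Omega_\beta$. Since $\Phi_\beta$ fits the general framework of \S\ref{section:waveguides}, we obtain that $\mathcal{A}_\beta$ is unitarily equivalent to an anisotropic Maxwell operator $\hat{\mathcal{A}}_\beta$ acting on the \emph{straight} waveguide $\Omega_0$, with permittivity and permeability tensors $\beps_\beta,\bmu_\beta$ computed from $J_{\Phi_\beta}$ via \eqref{eq.defanisotropeps-mu}. A direct computation using \eqref{phiB:def} shows that $\det J_{\Phi_\beta}\equiv 1$ and that $\bbG_\beta = J_{\Phi_\beta}^\top J_{\Phi_\beta}$ is quadratic in $(y_2,y_3)$ and involves the rotation by angle $\beta s$ in the transverse plane; in particular $\bbG_\beta$ is $2\pi/\beta$-periodic in $s$ (when $\beta\neq 0$) and reduces to $\mathds{1}_3$ when $\beta=0$.

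Second, in the special case $\beta=0$ one has $\hat{\mathcal{A}}_0 = \mathcal{A}_0$ posed on $\Omega_0 = \mathbb{R}\times\omega$ with constant scalar coefficients. Here I would apply the classical TE/TM decomposition (see e.g.\ \cite[Chapter IX, Section 4]{daulio}): modulo the infinite-dimensional kernel $\overline{\mathcal{G}(\Omega_0)}$ (cf.\ Proposition \ref{prop:kerneldescription}), $\mathcal{A}_0$ reduces on $\mathrm{ker}(\mathcal{A}_0)^\perp$ to a direct sum of two fibered operators whose spectra are, via partial Fourier transform in $s$, given by $\pm c\sqrt{\xi^2 + \lambda}$ with $\xi\in\mathbb{R}$ and $\lambda$ running through the Dirichlet (resp.\ non-zero Neumann) eigenvalues of $-\Delta$ on $\omega$. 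The threshold of the non-zero essential spectrum is therefore the minimum of $c\sqrt{\lambda_1^D(\omega)}$ and $c\sqrt{\lambda_2^N(\omega)}$; since $\lambda_2^N(\omega)\leq \lambda_1^D(\omega)$ by a standard min-max comparison, one obtains $a_0 = c\sqrt{\lambda_2^N(\omega)}$. Infinite multiplicity of $\{0\}$ comes from $\overline{\mathcal{G}(\Omega_0)}\subset \ker(\mathcal{A}_0)$.

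Third, for general $\beta$ the coefficients of $\hat{\mathcal{A}}_\beta$ are $s$-periodic. I would remove this periodicity by composing with a further unitary that rotates the transverse components by the angle $-\beta s$ at each point: in the moving frame $(\mathbf{e}_1^{\beta s},\mathbf{e}_2^{\beta s},\mathbf{e}_3^{\beta s})$ the tensors $\beps_\beta,\bmu_\beta$ become independent of $s$ (they depend only on $(y_2,y_3)$), at the cost of adding a first-order, $s$-independent term $\propto \beta$ coming from differentiating the rotation. The resulting self-adjoint operator is translation-invariant in $s$, so a partial Fourier transform in $s$ produces a direct integral over $\xi\in\mathbb{R}$ of operators on the compact cross-section $\omega$ with compact resolvent; the essential spectrum of $\mathcal{A}_\beta$ is then the closure of the union of eigenvalues of these fibers. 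This is exactly the setting of \cite[Theorem 9.8]{filonov19}, which supplies the existence of some $a_\beta>0$ such that $\sigma_{\mathrm{ess}}(\mathcal{A}_\beta)=(-\infty,-a_\beta]\cup\{0\}\cup[a_\beta,+\infty)$ and the infinite-multiplicity eigenvalue $0$. Since the fibered operators have compact resolvent, there is no additional discrete part, so $\sigma(\mathcal{A}_\beta)=\sigma_{\mathrm{ess}}(\mathcal{A}_\beta)$.

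The main obstacle in this program is the third stage: one has to produce an explicit $s$-independent reformulation of $\hat{\mathcal{A}}_\beta$ whose fibers fit the hypotheses of \cite[Theorem 9.8]{filonov19}, i.e.\ to verify that after de-twisting the anisotropic coefficients are uniformly elliptic on $\omega$ (inherited from Remark \ref{rem:Ginvertibleunifo}) and that the domain of the fibered operators is the correct Maxwell domain on $\omega$ with the perfectly conducting boundary condition. Once this translation-invariant model is in place, the positivity of $a_\beta$ and the description of the spectrum follow from \cite{filonov19}, and $\sigma(\mathcal{A}_\beta)$ coincides with its essential part because the cross-sectional problem has purely discrete spectrum.
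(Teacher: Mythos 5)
Your overall strategy (Piola transform to straighten the geometry, then invoke Filonov's theorem on the resulting anisotropic problem on $\Omega_0$) is the right one, and your Stage 2 (TE/TM decomposition for $\beta=0$) is a valid alternative that the paper itself mentions in Remark~\ref{rem:defa_betanotwist}. However, there is a concrete error in Stage 1 that renders your Stage 3 unnecessary and misdirected: you claim that $\bbG_\beta = J_{\Phi_\beta}^\top J_{\Phi_\beta}$ is "$2\pi/\beta$-periodic in $s$" and "involves the rotation by angle $\beta s$ in the transverse plane." This is false. If you actually carry out the multiplication, the trigonometric factors in the columns of $J_{\Phi_\beta}$ cancel (they appear as $\cos^2+\sin^2$, or as $\sin\cos - \cos\sin$, etc.), and one finds that
\[
\bbG_\beta(s,y_2,y_3)=
\begin{pmatrix}
1 + \beta^2 |\by|^2 & y_3 \beta & -y_2 \beta \\
y_3 \beta & 1 & 0 \\
-y_2 \beta & 0 & 1
\end{pmatrix},
\]
which is \emph{constant in $s$}. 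This is the crucial observation of the whole proof: after the Piola transform, the anisotropic coefficients $\beps_\beta = \varepsilon_0\bbG_\beta^{-1}$ and $\bmu_\beta=\mu_0\bbG_\beta^{-1}$ depend only on the transverse variable, and Filonov's theorem applies directly, with no further conjugation.

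Consequently your Stage 3 — an additional "de-twisting" unitary to remove $s$-dependence, allegedly producing a first-order, $s$-independent term proportional to $\beta$ — is trying to fix a problem that does not exist, and moreover is an attempt to re-derive the fibered/direct-integral structure that is already the content of Filonov's theorem rather than something to be established. The paper's proof is a two-line consequence of the computation above: $\det J_{\Phi_\beta}\equiv 1$, so $\bbG_\beta$ equals $J_{\Phi_\beta}^\top J_{\Phi_\beta}$, which is $s$-independent, hence \cite{filonov19} gives $\sigma(\mathcal{A}_\beta) = \sigma_{\mathrm{ess}}(\mathcal{A}_\beta) = (-\infty,-a_\beta]\cup\{0\}\cup[a_\beta,+\infty)$ for some $a_\beta>0$, and the same theorem identifies $a_0=c\sqrt{\lambda_2^N(\omega)}$ in the untwisted case. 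You should redo the matrix product $J_{\Phi_\beta}^\top J_{\Phi_\beta}$ and notice that it is constant in $s$; once that is in hand the rest is a direct appeal to Filonov.
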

The proof of Proposition \ref{ess:spectr:beta} is by applying Proposition \ref{prop:unitequivpiola} to the operator $\mathcal{A}_\beta$. It gives a unitarily equivalent operator $\hat{\mathcal{A}}_\beta$ with a new permittivity $\beps$ and permeability $\bmu$. It turns out that these matrix fields depend only on the transverse variable $\by \in \omega$, thus we can apply the result of \cite[Theorem 1.3.]{filonov19}. 

\begin{proof}[Proof of Proposition \ref{ess:spectr:beta}]
Let $\hat{\mathcal{A}}_\beta$ be the operator unitarily equivalent to $\mathcal{A}_\beta$ obtained {\it via}  Proposition \ref{prop:unitequivpiola}. To be able to apply \cite[Theorem 1.3.]{filonov19} we need to  compute the matrix fields $\beps$ and $\bmu$. To this aim, remark that  using \eqref{phiB:def} there holds
\begin{equation*}
    J_{\Phi_\beta}(s,y_2,y_3) = 
\begin{pmatrix}
1 & 0 & 0\\
\big(-y_2 \sin(\beta \, s)+ y_3 \cos(\beta \, s) \big)\beta & \cos (\beta \, s) & \sin (\beta \, s) \\[4pt]
-\big(y_2 \cos (\beta \, s)+ y_3 \sin (\beta \, s) \big)\beta& -\sin (\beta \, s)& \cos (\beta \, s)
\end{pmatrix},
\end{equation*}
therefore as $ \det(J_{\Phi_\beta})=1$,  $\bbG =\frac{J_{\Phi_\beta}^\top J_{\Phi_\beta}}{\det J_{\Phi_\beta}}$ verifies 
\begin{equation} \label{def:Gbeta}
\bbG(s,y_2,y_3)=\bbG(y_2,y_3) =
\begin{pmatrix}
1 + \beta^2 |\by|^2 & y_3 \beta & -y_2 \beta \\
y_3 \beta & 1 & 0 \\
-y_2 \beta & 0 & 1
\end{pmatrix}.
\end{equation}
Note that $\bbG$ does not depend on the longitudinal variable $s$. Thus neither do $\beps$ and $\bmu$ because they are simply defined as\footnote{Note that in Figure \ref{fig:diag} such a choice of $\beps$ and $\bmu$ are called $\beps_\beta$ and $\bmu_\beta$, respectively.}
$$\beps=\varepsilon_0 \, \bbG^{-1} \mbox{ and }  \bmu=\mu_0 \,\bbG^{-1}.$$
By \cite[Theorem 1.3.]{filonov19}, there exists an $a_\beta > 0$ such that the spectrum is as stated in Proposition \ref{ess:spectr:beta}. The case $\beta=0$ which corresponds to a homogeneous  straight waveguide of permittivity $\varepsilon_0>0$ and  $\mu_0$ is also treated in \cite[Theorem 1.3.]{filonov19} where they prove that $a_0=\sqrt{\lambda_2^N(\omega)}\, c$.
\end{proof}

To prove that the operator $\mathcal{A}$ defined in \eqref{eqn:defAMaxwell}, that is the Maxwell operator in the general waveguide $\Omega$, has the same essential spectrum as the operator $\mathcal{A}_\beta$ we need to compare them. To this aim, we need to rewrite them using the operators $\tilde{A}$ and $\tilde{A}_\beta$ from Proposition \ref{prop:unitnotmaxwell}, respectively. Thus, for further uses, we introduce the following notations.

Let $\tilde{\mathcal{A}}_\beta$ be the unitarily equivalent operator to $\mathcal{A}_\beta$ {\it via} Proposition \ref{prop:unitnotmaxwell} in the specific case of $\Omega = \Omega_\beta$. In this context, we set
\begin{equation} \label{allBeta}
\bbG_\beta:=\bbG, \quad \bL_\beta^2(\Omega_0) := \bL_{\bbG_\beta}^2(\Omega_0),
\end{equation}
where $\bbG$ the matrix defined in \eqref{Gmatrix}.

Note that $\tilde{\mathcal{A}_\beta}$ acts in $\bL^2_0(\Omega_0)$ and that there holds:
\begin{equation}\label{Unitarilyeq-Abeta}
\tilde{\mathcal{A}}_\beta := \bbV_\beta \bbU_\beta \mathcal{A}_\beta \bbU_\beta^* \bbV_\beta^*: D(\tilde{\mathcal{A}}_\beta) \subset \bL^2_0(\Omega_0) \to \bL^2_0(\Omega_0),
\end{equation}
where we have used the subscript $\beta$ for the map $\bbV$ defined in \eqref{eqn:defbbv} and the unitary map $\bbU$ defined in \eqref{bbU:def} to emphasize that we are working in the specific case $\Omega = \Omega_\beta$.

Thanks to Proposition \ref{prop:unitnotmaxwell}, we know that
\begin{equation}  \label{AbA0}
\tilde{\mathcal{A}}_\beta = \mathcal{I}_\beta \mathbb{V}_\beta^* \mathcal{A}_0 \mathcal{I}_\beta \mathbb{V}_\beta^*
\end{equation}
where
\begin{equation*} 
\mathcal{I}_\beta :\bL^2_\beta(\Omega_0) \to \bL^2_0(\Omega_0)
\end{equation*}
is the identity map. This formalism becomes useful in the following paragraph, in which we  compare the resolvent of $\mathcal{A}$ with the one of $\mathcal{A}_\beta$.

\medskip
\begin{figure}[h!!]
\begin{center}
\includegraphics[width=\textwidth]{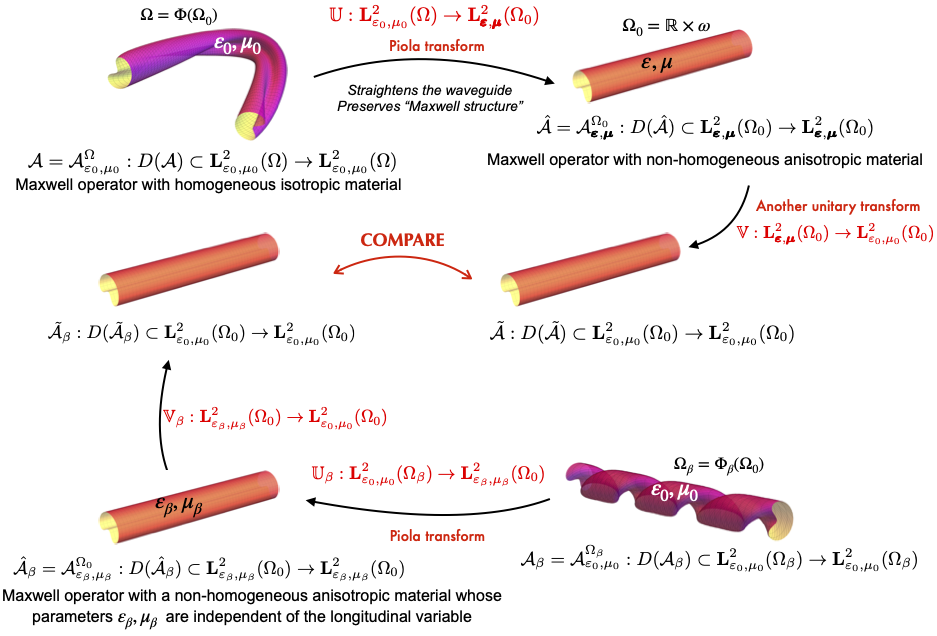}
\end{center}
\caption{Operators and transforms involved in our analysis. As the transforms written in red are unitary, comparing the essential spectra of the operators $\mathcal{A}$ and $\mathcal{A}_\beta$ amounts to compare the essential spectra of $\tilde{\mathcal{A}}$ and $\tilde{\mathcal{A}_\beta}$.}
\label{fig:diag}
\end{figure}

\subsection{Structure of the essential spectrum in the general case}
The objective of this paragraph is to prove Theorem \ref{Main1}. To this aim, in \S \ref{subsub:BS} we introduce a resolvent identity and a Birman-Schwinger type principle. In \S \ref{subsub:proofmain1} we prove Theorem \ref{Main1} using Meromorphic Fredholm Theorem. \S \ref{subsub:compactness} is devoted to the proof of compactness of an operator involved when using the Meromorphic Fredholm Theorem.

\subsubsection{A resolvent type identity and its consequence} \label{subsub:BS}
In this paragraph we prove two lemmas but first we need to introduce a few notations.
For $\lambda \in \rho(\tilde{\mathcal{A}}) \cap \rho(\tilde{\mathcal{A}}_\beta)$. We introduce the resolvents
\begin{equation*}
\tilde{R}(\lambda):= (\tilde{\mathcal{A}}-\lambda I )^{-1},\quad \tilde{R}_\beta(\lambda):= (\tilde{\mathcal{A}}_\beta -\lambda\,  I )^{-1},
\end{equation*}
as well as the reduced resolvents
\begin{equation} \label{red:res:b}
 \check{\tilde{R}}(\lambda):=\tilde{R}(\lambda)+\frac{1}{\lambda} \tilde{P}_{0},\quad \check{\tilde{R}}_\beta(\lambda):=\tilde{R}_\beta(\lambda)+\frac{1}{\lambda} \tilde{P}_{0,\beta}
\end{equation}
where $\tilde{P}_{0}$ and $\tilde{P}_{0,\beta}$ are the orthogonal projections (in $\bL_0^2(\Omega_0)$) onto the kernel of $\tilde{\mathcal{A}}$ and $\tilde{\mathcal{A}}_\beta$, respectively.
The first Lemma is a resolvent identity.
%\textcolor{red}{In the rest of the paper the letter $I$ will denote the identity map in whichever space we are.  Moreover, for two Hilbert spaces $\mathcal{H}_1$ and $\mathcal{H}_2$, one denotes by $B(\mathcal{H}_1, \mathcal{H}_2)$ the Banach algebra of bounded linear  operators from $\mathcal{H}_1$ to   $\mathcal{H}_2$ and if $\mathcal{H}_1=\mathcal{H}_2 $, one  writes  $B(\mathcal{H}_1)$ instead of $B(\mathcal{H}_1,\mathcal{H}_1)$.}
\begin{Lem} \label{lemma:resid1}
Let $\lambda \in \rho(\tilde{\mathcal{A}})\cap \rho(\tilde{\mathcal{A}}_\beta)$, there holds
\begin{equation} \label{1st:res:eq}
\tilde{R}(\lambda)= \bbV \mathcal{I}_\bbG^{-1}  \mathcal{I}_\beta \bbV_\beta^*  \tilde{R}_\beta(\lambda) \calQ_\beta(\lambda)^{-1} \mathcal{I}_\beta \bbV_\beta^* \bbV \mathcal{I}_\bbG^{-1},
\end{equation}
where  $\mathcal{Q}_\beta(\lambda)\in \mathcal{B}\big( \bL^2_0(\Omega_0)\big)$ is invertible and defined on by 
\begin{equation} \label{eqn:def:Qbeta}
    \mathcal{Q}_\beta(\lambda) := I + \lambda \left( I- \mathcal{I}_\beta \mathbb{V}_\beta^* (\bbV \mathcal{I}_\bbG^{-1})^2 \mathcal{I}_\beta \mathbb{V}_\beta^* \right)  \tilde{R}_\beta(\lambda).
\end{equation}
Similarly, one has
\begin{equation} \label{2nd:res:eq}
\tilde{R}_\beta(\lambda)= \bbV_\beta \mathcal{I}_\beta^{-1} \mathcal{I}_\bbG \bbV^* \tilde{R}(\lambda) {\mathcal{Q}}(\lambda)^{-1} \mathcal{I}_\bbG \bbV^* \bbV_\beta \mathcal{I}_\beta^{-1}
\end{equation}
where ${\calQ}(\lambda)\in \mathcal{B}\big( \bL^2_0(\Omega_0)\big)$ is invertible and  defined by
\begin{equation}\label{eqn:def:Q}
{\calQ}(\lambda) := I + \lambda \left( I - \mathcal{I}_\bbG \bbV^* (\bbV_\beta \mathcal{I}_\beta^{-1})^2 \mathcal{I}_\bbG \bbV^* \right) \tilde{R}(\lambda).
\end{equation}
Furthemore,  the definitions \eqref{eqn:def:Qbeta} and  \eqref{eqn:def:Q} of  the operators  $\mathcal{Q}_\beta(\lambda)\in \mathcal{B}\big( \bL^2_0(\Omega_0)\big))$ and  ${\calQ}(\lambda)\in \mathcal{B}\big( \bL^2_0(\Omega_0)\big)$   holds  for  $\lambda\in  \rho(\tilde{\mathcal{A}}_\beta)$ and $\lambda\in  \rho(\tilde{\mathcal{A}})$ respectively.
\end{Lem}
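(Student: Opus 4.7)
The plan is to perform an algebraic manipulation that converts $\tilde{\mathcal{A}} - \lambda I$ into $\tilde{\mathcal{A}}_\beta - \lambda I$ modulo a bounded perturbation, and then to factorise that perturbation in Birman--Schwinger form and invert. Throughout, the representation \eqref{AA0} from Proposition \ref{prop:unitnotmaxwell} is the pivotal tool.

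First I would introduce the shorthand $T := \mathcal{I}_\bbG \bbV^*$ and $T_\beta := \mathcal{I}_\beta \bbV_\beta^*$. By the definitions \eqref{IbbG:def} and \eqref{eqn:defbbv}, these operators act on $\bL_0^2(\Omega_0)$ as pointwise multiplication by the block-diagonal matrix fields $\mathrm{diag}(\bbG^{1/2},\bbG^{1/2})$ and $\mathrm{diag}(\bbG_\beta^{1/2},\bbG_\beta^{1/2})$. Since $\bbG^{1/2}$ and $\bbG_\beta^{1/2}$ are real symmetric and uniformly positive definite (Remark \ref{rem:Ginvertibleunifo}), both $T$ and $T_\beta$ are bounded, self-adjoint and invertible on $\bL_0^2(\Omega_0)$, with $T^{-1} = \bbV \mathcal{I}_\bbG^{-1}$ and $T_\beta^{-1} = \bbV_\beta \mathcal{I}_\beta^{-1}$. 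Setting $S := T^{-1} T_\beta$ one has $S^* = T_\beta T^{-1}$ and $S^*S = T_\beta T^{-2} T_\beta$, which are exactly the factors appearing in \eqref{1st:res:eq} and in the definition \eqref{eqn:def:Qbeta} of $\mathcal{Q}_\beta(\lambda)$. From \eqref{AA0} one has $\tilde{\mathcal{A}} = T\mathcal{A}_0 T$ on $D(\tilde{\mathcal{A}}) = T^{-1} D(\mathcal{A}_0)$ and $\tilde{\mathcal{A}}_\beta = T_\beta \mathcal{A}_0 T_\beta$ on $T_\beta^{-1} D(\mathcal{A}_0)$; in particular $S$ is a linear bijection from $D(\tilde{\mathcal{A}}_\beta)$ onto $D(\tilde{\mathcal{A}})$.

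The core algebraic step is the identity
\[
    \tilde{\mathcal{A}} - \lambda I = T\bigl(\mathcal{A}_0 - \lambda T^{-2}\bigr)T = TT_\beta^{-1}\bigl(\tilde{\mathcal{A}}_\beta - \lambda\, S^*S\bigr)T_\beta^{-1}T = (S^*)^{-1}\bigl(\tilde{\mathcal{A}}_\beta - \lambda\, S^*S\bigr) S^{-1},
\]
obtained by writing $\lambda u = \lambda T T^{-1} u$ to factor $T$ out of $(\tilde{\mathcal{A}} - \lambda I) u = T\mathcal{A}_0 Tu - \lambda T T^{-1} u$ for $u \in D(\tilde{\mathcal{A}})$, and then inserting $\mathcal{A}_0 = T_\beta^{-1}\tilde{\mathcal{A}}_\beta T_\beta^{-1}$ on $D(\mathcal{A}_0)$. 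I would then perform the purely algebraic Birman--Schwinger factorisation, valid on $D(\tilde{\mathcal{A}}_\beta)$,
\[
    \tilde{\mathcal{A}}_\beta - \lambda S^*S = (\tilde{\mathcal{A}}_\beta - \lambda I) + \lambda(I - S^*S) = \bigl[I + \lambda(I - S^*S)\tilde{R}_\beta(\lambda)\bigr](\tilde{\mathcal{A}}_\beta - \lambda I) = \mathcal{Q}_\beta(\lambda)(\tilde{\mathcal{A}}_\beta - \lambda I),
\]
legitimate as soon as $\lambda \in \rho(\tilde{\mathcal{A}}_\beta)$; this also proves that $\mathcal{Q}_\beta(\lambda)$ is well defined as a bounded operator on $\bL_0^2(\Omega_0)$ for every such $\lambda$ (and the analogous statement for $\mathcal{Q}(\lambda)$ follows by the symmetric computation). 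Combining the two formulas gives $(\tilde{\mathcal{A}} - \lambda I) = (S^*)^{-1}\mathcal{Q}_\beta(\lambda)(\tilde{\mathcal{A}}_\beta - \lambda I) S^{-1}$.

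To conclude, I need the invertibility of $\mathcal{Q}_\beta(\lambda)$ when $\lambda \in \rho(\tilde{\mathcal{A}}) \cap \rho(\tilde{\mathcal{A}}_\beta)$. For injectivity, if $\mathcal{Q}_\beta(\lambda)u = 0$ then $v := \tilde{R}_\beta(\lambda) u \in D(\tilde{\mathcal{A}}_\beta)$ satisfies $(\tilde{\mathcal{A}}_\beta - \lambda S^*S)v = 0$; via the above chain this gives $(\tilde{\mathcal{A}} - \lambda I)(Sv) = 0$, and since $Sv \in D(\tilde{\mathcal{A}})$ and $\lambda \in \rho(\tilde{\mathcal{A}})$ one obtains $v = 0$, hence $u = 0$. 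Surjectivity is obtained by exhibiting the preimage $u := (\tilde{\mathcal{A}}_\beta - \lambda I)\bigl[S^{-1}\tilde{R}(\lambda)(S^*)^{-1}\bigr] f$ of any $f \in \bL_0^2(\Omega_0)$: the bracket maps $\bL_0^2(\Omega_0)$ into $S^{-1} D(\tilde{\mathcal{A}}) = D(\tilde{\mathcal{A}}_\beta)$ so $u$ is well defined, and a direct reversal of the factorisations shows $\mathcal{Q}_\beta(\lambda) u = f$. The open mapping theorem then yields boundedness of $\mathcal{Q}_\beta(\lambda)^{-1}$, and inverting the chain immediately produces \eqref{1st:res:eq}; the companion identity \eqref{2nd:res:eq} follows by swapping $T \leftrightarrow T_\beta$ (equivalently, $S \leftrightarrow S^{-1}$). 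The main subtlety, rather than any algebraic difficulty, is the careful bookkeeping of domains of the unbounded operators in every composition above; this is controlled throughout by the crucial equality $SD(\tilde{\mathcal{A}}_\beta) = D(\tilde{\mathcal{A}})$ inherited from \eqref{AA0}.
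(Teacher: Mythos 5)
Your argument is correct and follows essentially the same route as the paper: the conjugation identity $\tilde{\mathcal{A}} - \lambda I = (S^*)^{-1}(\tilde{\mathcal{A}}_\beta - \lambda S^*S)S^{-1}$ (in your shorthand $S=\bbV\mathcal{I}_\bbG^{-1}\mathcal{I}_\beta\bbV_\beta^*$) is exactly what underlies the paper's direct computation \eqref{alternative:Qb}, and the explicit preimage you construct in the surjectivity step reproduces the paper's inverse formula \eqref{eq.inverseQBeta}. The only differences are presentational: you introduce abbreviations for the conjugating multiplication operators, isolate the Birman--Schwinger factorisation $\tilde{\mathcal{A}}_\beta - \lambda S^*S = \mathcal{Q}_\beta(\lambda)(\tilde{\mathcal{A}}_\beta-\lambda I)$ as a named step, and verify bijectivity of $\mathcal{Q}_\beta(\lambda)$ by separate injectivity/surjectivity arguments, whereas the paper simply exhibits the inverse and checks that it is bounded.
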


The following proposition describes a Birman-Schwinger type principle.
\begin{Pro} \label{lemmaK}
The following statements hold.
\begin{enumerate}[label=\normalfont(\roman*)]
\item \label{itm:BS1} Fix $\lambda\in \rho(\tilde{\mathcal{A}}_\beta)$ and  $\lambda_0 \in \rho(\tilde{\mathcal{A}}_\beta) \cap \rho(\tilde{\mathcal{A}})$.  Then
$$\calQ_\beta(\lambda)  = \calQ_\beta(\lambda_0) \left( I + \mathcal{K}_\beta(\lambda) \right)$$
where $\mathcal{K}_\beta(\lambda)\in \mathcal{B}\big(\bL_0^2(\Omega_0)\big)$ is defined as follows
\begin{equation} \label{operK}
\mathcal{K}_\beta(\lambda) := (\lambda-\lambda_0) \calQ_\beta(\lambda_0)^{-1} \left[ I - \mathcal{I}_\beta \bbV_\beta^* (\bbV \mathcal{I}_\bbG^{-1})^2 \mathcal{I}_\beta \bbV_\beta^*  \right] \check{\tilde{R}}_\beta(\lambda) \tilde{\mathcal{A}}_\beta \tilde{R}_\beta(\lambda_0).
\end{equation}
Moreover, $\calQ_\beta(\lambda)$ is invertible if and only if $I + \mathcal{K}_\beta(\lambda)$ is invertible.
\item \label{itm:BS2} Fix $\lambda\in \rho(\tilde{\mathcal{A}})$ and   $\lambda_0 \in \rho(\tilde{\mathcal{A}}_\beta) \cap \rho(\tilde{\mathcal{A}})$.  Then
$${\calQ}(\lambda)  = {\calQ}(\lambda_0) \left( I + {\mathcal{K}}(\lambda) \right)$$
where ${\mathcal{K}}(\lambda)\in  \mathcal{B}\big(\bL^2_0(\Omega_0)\big)$ is defined as follows
\begin{equation} \label{operK'}
{\mathcal{K}}(\lambda) := (\lambda-\lambda_0) {\calQ}(\lambda_0)^{-1} \left[ I - \mathcal{I}_\bbG \bbV^* (\bbV_\beta \mathcal{I}_\beta^{-1})^2 \mathcal{I}_\bbG \bbV^* \right] \check{\tilde{R}}(\lambda) \tilde{\mathcal{A}} \tilde{R}(\lambda_0).
\end{equation}
Moreover, ${\calQ}(\lambda)$ is invertible if and only if $I + {\mathcal{K}}(\lambda)$ is invertible.
\end{enumerate}
\end{Pro}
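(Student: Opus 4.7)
The plan is to prove part (i) by direct algebraic manipulation starting from the definition \eqref{eqn:def:Qbeta} of $\calQ_\beta(\lambda)$ and using the standard resolvent identity; part (ii) will then follow by the exact same argument after exchanging the roles of $\tilde{\mathcal{A}}$ and $\tilde{\mathcal{A}}_\beta$. To lighten notation, I would set $M_\beta := I - \mathcal{I}_\beta \bbV_\beta^* (\bbV \mathcal{I}_\bbG^{-1})^2 \mathcal{I}_\beta \bbV_\beta^* \in \mathcal{B}(\bL_0^2(\Omega_0))$, so that $\calQ_\beta(\lambda) = I + \lambda M_\beta \tilde{R}_\beta(\lambda)$, and first observe the elementary identity
\[
    \calQ_\beta(\lambda) - \calQ_\beta(\lambda_0) = M_\beta\bigl[\lambda \tilde{R}_\beta(\lambda) - \lambda_0 \tilde{R}_\beta(\lambda_0)\bigr].
\]

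The heart of the proof is then to show the scalar identity
\[
    \lambda \tilde{R}_\beta(\lambda) - \lambda_0 \tilde{R}_\beta(\lambda_0) = (\lambda-\lambda_0)\,\check{\tilde{R}}_\beta(\lambda)\,\tilde{\mathcal{A}}_\beta\,\tilde{R}_\beta(\lambda_0),
\]
as bounded operators on $\bL_0^2(\Omega_0)$. To that end I would use that $\tilde{\mathcal{A}}_\beta \tilde{R}_\beta(\lambda_0) = I + \lambda_0 \tilde{R}_\beta(\lambda_0)$ (a genuine bounded identity since $\tilde{R}_\beta(\lambda_0)$ maps into $D(\tilde{\mathcal{A}}_\beta)$), combined with the resolvent identity $(\lambda-\lambda_0)\tilde{R}_\beta(\lambda)\tilde{R}_\beta(\lambda_0) = \tilde{R}_\beta(\lambda) - \tilde{R}_\beta(\lambda_0)$ and the fact that, by self-adjointness of $\tilde{\mathcal{A}}_\beta$, the projector $\tilde{P}_{0,\beta}$ commutes with $\tilde{R}_\beta(\lambda_0)$, yielding $\tilde{P}_{0,\beta} \tilde{R}_\beta(\lambda_0) = -\lambda_0^{-1}\tilde{P}_{0,\beta}$. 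Expanding $\check{\tilde{R}}_\beta(\lambda) = \tilde{R}_\beta(\lambda) + \lambda^{-1}\tilde{P}_{0,\beta}$ in the right-hand side, the $\tilde{P}_{0,\beta}$-contributions cancel exactly, leaving $\tilde{R}_\beta(\lambda)(I + \lambda_0 \tilde{R}_\beta(\lambda_0))$, from which the resolvent identity immediately gives the desired equality.

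Combining these two displays, one gets
\[
    \calQ_\beta(\lambda) - \calQ_\beta(\lambda_0) = (\lambda-\lambda_0)\,M_\beta\,\check{\tilde{R}}_\beta(\lambda)\,\tilde{\mathcal{A}}_\beta\,\tilde{R}_\beta(\lambda_0).
\]
Since by Lemma \ref{lemma:resid1} the operator $\calQ_\beta(\lambda_0)$ is invertible in $\mathcal{B}(\bL_0^2(\Omega_0))$ (here we crucially use $\lambda_0 \in \rho(\tilde{\mathcal{A}})\cap \rho(\tilde{\mathcal{A}}_\beta)$, whereas $\lambda$ only needs to lie in $\rho(\tilde{\mathcal{A}}_\beta)$ for the definition of $\calQ_\beta(\lambda)$ to make sense), we can factor it on the left and recognize the operator $\mathcal{K}_\beta(\lambda)$ of \eqref{operK}, which proves $\calQ_\beta(\lambda) = \calQ_\beta(\lambda_0)(I+\mathcal{K}_\beta(\lambda))$. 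The equivalence of invertibility between $\calQ_\beta(\lambda)$ and $I+\mathcal{K}_\beta(\lambda)$ is then immediate from the invertibility of $\calQ_\beta(\lambda_0)$.

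The argument is not deep but a little delicate because of the kernel of $\tilde{\mathcal{A}}_\beta$: the reduced resolvent $\check{\tilde{R}}_\beta(\lambda)$ is used precisely so that $\mathcal{K}_\beta(\lambda)$ remains a bounded operator in a neighborhood of $\lambda=0$ (anticipating the Meromorphic Fredholm argument of \S\ref{subsub:proofmain1}), and the main obstacle is therefore to keep careful track of $\tilde{P}_{0,\beta}$ and verify the exact cancellation of its contributions. Part (ii) is obtained from a verbatim adaptation with $\tilde{\mathcal{A}}$, $\tilde{R}$, $\check{\tilde{R}}$, $M := I - \mathcal{I}_\bbG \bbV^* (\bbV_\beta \mathcal{I}_\beta^{-1})^2 \mathcal{I}_\bbG \bbV^*$ and $\calQ(\lambda_0)^{-1}$ in place of their $\beta$-counterparts, again invoking the self-adjointness of $\tilde{\mathcal{A}}$ and the invertibility of $\calQ(\lambda_0)$ provided by Lemma \ref{lemma:resid1}.
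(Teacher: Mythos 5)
Your proof is correct and takes essentially the same route as the paper: both reduce the claim to the first resolvent identity combined with the observation that the kernel projector $\tilde{P}_{0,\beta}$ contributes nothing, and then factor out $\calQ_\beta(\lambda_0)^{-1}$. The only cosmetic difference is that the paper inserts the reduced resolvent via $\tilde{P}_{0,\beta}\tilde{\mathcal{A}}_\beta = 0$ (so $\tilde{R}_\beta(\lambda)\tilde{\mathcal{A}}_\beta = \check{\tilde{R}}_\beta(\lambda)\tilde{\mathcal{A}}_\beta$), whereas you package the cancellation into the clean intermediate identity $\lambda\tilde{R}_\beta(\lambda) - \lambda_0\tilde{R}_\beta(\lambda_0) = (\lambda-\lambda_0)\check{\tilde{R}}_\beta(\lambda)\tilde{\mathcal{A}}_\beta\tilde{R}_\beta(\lambda_0)$ and verify it by expanding $\check{\tilde{R}}_\beta(\lambda) = \tilde{R}_\beta(\lambda) + \lambda^{-1}\tilde{P}_{0,\beta}$ and using $\tilde{P}_{0,\beta}\tilde{R}_\beta(\lambda_0) = -\lambda_0^{-1}\tilde{P}_{0,\beta}$; these are two phrasings of the same cancellation.
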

Now, we go through the proofs of Lemma \ref{lemma:resid1} and Proposition \ref{lemmaK}.
\begin{proof}[Proof of Lemma \ref{lemma:resid1}]
We only prove \eqref{1st:res:eq}, as the proof of \eqref{2nd:res:eq} is similar.\\
\noindent Let $\lambda \in \rho(\tilde{\mathcal{A}}_\beta)$. Note that by equations \eqref{AA0},\eqref{AbA0}, there holds
\[
   \tilde{\mathcal{A}} =  \mathcal{I}_\bbG \bbV^*   \bbV_\beta \mathcal{I}_\beta^{-1}  \tilde{\mathcal{A}}_\beta  \bbV_\beta \mathcal{I}_\beta^{-1} \mathcal{I}_\bbG \bbV^*.
\]
Moreover, one has (as $\lambda\in \rho(\tilde{\mathcal{A}}_\beta)$):
\begin{eqnarray} \label{alternative:Qb}
&&\mathcal{I}_\beta \bbV_\beta^* \bbV \mathcal{I}_\bbG^{-1} \left[ \tilde{\mathcal{A}} - \lambda \, I \right] \bbV \mathcal{I}_\bbG^{-1} \mathcal{I}_\beta \bbV_\beta^*  \tilde{R}_\beta(\lambda) \nonumber \\
&&=\mathcal{I}_\beta \bbV_\beta^* \bbV \mathcal{I}_\bbG^{-1} \left[ \mathcal{I}_\bbG \bbV^*   \bbV_\beta \mathcal{I}_\beta^{-1}  \tilde{\mathcal{A}}_\beta  \bbV_\beta \mathcal{I}_\beta^{-1} \mathcal{I}_\bbG \bbV^*  - \lambda \, I \right] \bbV \mathcal{I}_\bbG^{-1} \mathcal{I}_\beta \bbV_\beta^*  \tilde{R}_\beta(\lambda)  \nonumber \\
&&=\mathcal{I}_\beta \bbV_\beta^* \bbV \mathcal{I}_\bbG^{-1} \Big[ \mathcal{I}_\bbG \bbV^*   \bbV_\beta \mathcal{I}_\beta^{-1}  (\tilde{\mathcal{A}}_\beta - \lambda \, I) \bbV_\beta \mathcal{I}_\beta^{-1} \mathcal{I}_\bbG \bbV^* + \lambda \mathcal{I}_\bbG \bbV^*   \bbV_\beta \mathcal{I}_\beta^{-1}  \bbV_\beta \mathcal{I}_\beta^{-1} \mathcal{I}_\bbG \bbV^*   - \lambda \, I \Big] \bbV \mathcal{I}_\bbG^{-1} \mathcal{I}_\beta \bbV_\beta^*  \tilde{R}_\beta(\lambda) \nonumber  \\
&&=I + \lambda \left( I- \mathcal{I}_\beta \mathbb{V}_\beta^* (\bbV \mathcal{I}_\bbG^{-1})^2 \mathcal{I}_\beta \mathbb{V}_\beta^* \right)  \tilde{R}_\beta(\lambda) \nonumber \\
&& = \mathcal{Q}_\beta(\lambda).
\end{eqnarray}
Note that the last formula clearly  implies that  $\mathcal{Q}_\beta(\lambda)\in \mathcal{B}(\bL^2(\Omega_0))$. In addition, if $\lambda\in \rho(\tilde{\mathcal{A}})\cap  \rho(\tilde{\mathcal{A}}_\beta)$, $\mathcal{Q}_\beta(\lambda)$  is invertible and its inverse is given by
\begin{equation}\label{eq.inverseQBeta}
\mathcal{Q}_\beta(\lambda)^{-1}= (\tilde{\mathcal{A}}_\beta- \lambda\, I) (\bbV \mathcal{I}_\bbG^{-1} \mathcal{I}_\beta \bbV_\beta^*)^{-1}  \tilde{R}(\lambda) (\mathcal{I}_\beta \bbV_\beta^* \bbV \mathcal{I}_\bbG^{-1})^{-1}.
\end{equation}
Thus, $\mathcal{Q}_\beta(\lambda)^{-1}$ is bounded and belongs to $\mathcal{B}\big(\bL_0^2(\Omega_0)\big)$ (since $(\mathcal{I}_\beta \bbV_\beta^* \bbV \mathcal{I}_\bbG^{-1})^{-1}\in \mathcal{B}(\bL^2(\Omega_0))$, $ \tilde{R}(\lambda)\in \mathcal{B}\big(\bL_0^2(\Omega_0), D(\tilde{\mathcal{A}})\big)$, $(\bbV \mathcal{I}_\bbG^{-1} \mathcal{I}_\beta \bbV_\beta^*)^{-1}\in \mathcal{B}\big(D(\tilde{\mathcal{A}}),D(\tilde{\mathcal{A}}_{\beta}) \big)$ and $\tilde{\mathcal{A}}_\beta- \lambda\, I \in \mathcal{B}(D(\tilde{\mathcal{A}}_{\beta}), \bL^2(\Omega_0))$ where the Hilbert spaces $D(\tilde{\mathcal{A}})$ and  $D(\tilde{\mathcal{A}}_{\beta})$ are endowed  here with their respective graph norms).  Moreover, \eqref{1st:res:eq}  follows immediately from \eqref{eq.inverseQBeta}. This concludes the proof.
\end{proof}

\begin{proof}[Proof of Proposition \ref{lemmaK}]
Let $\lambda\in \rho(\tilde{\mathcal{A}}_\beta)$ and  $\lambda_0 \in \rho(\tilde{\mathcal{A}}_\beta) \cap \rho(\tilde{\mathcal{A}})$.
By the first resolvent equality
\begin{equation*}
\tilde{R}_\beta(\lambda) -\tilde{R}_\beta(\lambda_0) = (\lambda-\lambda_0) \tilde{R}_\beta(\lambda) \tilde{R}_\beta(\lambda_0).
\end{equation*}
Therefore, by Lemma \ref{lemmaK}, one has
\begin{equation*}
\begin{split}
\calQ_\beta(\lambda) &= I + \lambda \left[ I- \mathcal{I}_\beta \mathbb{V}_\beta^* (\bbV \mathcal{I}_\bbG^{-1})^2 \mathcal{I}_\beta \mathbb{V}_\beta^* \right]  \tilde{R}_\beta(\lambda)\\
&= I + (\lambda_0 - \lambda_0 + \lambda) \left[ I- \mathcal{I}_\beta \mathbb{V}_\beta^* (\bbV \mathcal{I}_\bbG^{-1})^2 \mathcal{I}_\beta \mathbb{V}_\beta^* \right]  (\tilde{R}_\beta(\lambda_0) -\tilde{R}_\beta(\lambda_0) +\tilde{R}_\beta(\lambda))\\
&= \calQ_\beta(\lambda_0) + (\lambda-\lambda_0) \left[ I- \mathcal{I}_\beta \mathbb{V}_\beta^* (\bbV \mathcal{I}_\bbG^{-1})^2 \mathcal{I}_\beta \mathbb{V}_\beta^* \right]  \tilde{R}_\beta(\lambda_0) \\
& \qquad \qquad + \lambda \left[ I- \mathcal{I}_\beta \mathbb{V}_\beta^* (\bbV \mathcal{I}_\bbG^{-1})^2 \mathcal{I}_\beta \mathbb{V}_\beta^* \right]  (\tilde{R}_\beta(\lambda) -\tilde{R}_\beta(\lambda_0))\\
&=\calQ_\beta(\lambda_0)  \left( I + (\lambda-\lambda_0) \calQ_\beta(\lambda_0)^{-1} \left[ I- \mathcal{I}_\beta \mathbb{V}_\beta^* (\bbV \mathcal{I}_\bbG^{-1})^2 \mathcal{I}_\beta \mathbb{V}_\beta^* \right] \left[I + \lambda \tilde{R}_\beta(\lambda) \right] \tilde{R}_\beta(\lambda_0) \right).
\end{split}
\end{equation*}
Moreover, on $D(\tilde{\mathcal{A}}_\beta)$  we have that
\begin{equation*}
I + \lambda \tilde{R}_\beta(\lambda) = (\tilde{\mathcal{A}}_\beta -\lambda \, I)^{-1} (\tilde{\mathcal{A}}_\beta -\lambda \, I)  + \lambda(\tilde{\mathcal{A}}_\beta -\lambda \, I )^{-1} = (\tilde{\mathcal{A}}_\beta -\lambda \, I)^{-1} (\tilde{\mathcal{A}}_\beta - \lambda \, I + \lambda \, I)=\tilde{R}_\beta(\lambda) \tilde{\mathcal{A}}_\beta.
\end{equation*}
Finally, to conclude the proof of Point \ref{itm:BS1}, we note that since the operator $\tilde{\mathcal{A}}_\beta$ commutes with its spectral projectors, then $\tilde{P}_{0,\beta} \tilde{\mathcal{A}}_\beta= \tilde{\mathcal{A}}_\beta \tilde{P}_{0,\beta}= 0$, and thus  $\tilde{R}_\beta(\lambda) \tilde{\mathcal{A}}_\beta = \check{\tilde{R}}_\beta(\lambda) \tilde{\mathcal{A}}_\beta$.\\[6pt]
\noindent The proof of Point \ref{itm:BS2} follows the same lines.
\end{proof}

%%%%%%%%%%%%%%%%%%

\subsubsection{Structure of the essential spectrum} \label{subsub:proofmain1} 

In this paragraph we prove Theorem \ref{Main1}. To this aim we will need the Meromophic Fredholm Theorem, which we now state (for more details, see \cite[Theorem 6.1]{borth}).

We start by defining what is a finitely meromorphic family of bounded operators.
\begin{Def} \label{def:fmf}
Let $D$ be an open connected subset of $\mathbb{C}$.
A family of bounded operators $T(s)$ on a Hilbert space $X$, parametrized by $s \in D \subset \mathbb{C}$,
is finitely meromorphic if for each point $a \in D$ we have a Laurent series representation
\begin{equation} \label{Fred:laurent:series}
T(s)= \sum_{k=-m}^\infty (s-a)^k T_k
\end{equation} 
converging in the operator topology in some neighborhood of $a$, where $m \in \mathbb{N}\cup \{0\}$ and the coefficients $T_k$ are finite-rank operators for $k<0$.
\end{Def}

\begin{Rem} \label{rem:discretepoles}
The definition above in particular implies that the poles of the family $T(s)$, i.e. the points $s \in D$ for which the Laurent series \eqref{Fred:laurent:series} cannot be chosen with $m=0$, form a discrete subset of $D$.
\end{Rem}

The Meromorphic Fredholm Theorem reads as follows.
\begin{Thm}[Meromorphic Fredholm Theorem] \label{mer:fredholm}
Let $X$ be a separable Hilbert space.
Suppose $T(s)$ is a finitely meromorphic family of compact operators on $X$, for $s \in D$. If $I - T(s)$ is invertible for at least one $s \in D$, then $(I-T(s))^{-1}$ exists as a finitely meromorphic family on $D$. 
\end{Thm}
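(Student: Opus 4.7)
I would follow the classical strategy of reducing the invertibility of $I - T(s)$ to a scalar meromorphic condition via a finite-rank factorization of Fredholm type, and then use connectedness of $D$ together with the identity principle to globalize. The argument is essentially that of \cite[Thm.\ 6.1]{borth}, but I would structure it so that the local finite-rank reduction and the propagation of non-vanishing determinants are clearly separated.

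Fix $a \in D$ and write the Laurent expansion $T(s) = \sum_{k \geq -m}(s-a)^k T_k$ guaranteed by Definition~\ref{def:fmf}. Split $T(s) = F_1(s) + H(s)$, where $F_1(s) := \sum_{k=-m}^{-1}(s-a)^k T_k$ has range in the finite-dimensional space $V := \sum_{k=-m}^{-1}\mathrm{Ran}(T_k)$, and $H(s) := \sum_{k \geq 0}(s-a)^k T_k$ is holomorphic near $a$ with compact values. Since $H(a) = T_0$ is compact on the separable space $X$, finite-rank operators are norm-dense in the compact ideal, so I may pick a finite-rank operator $F_0$ with $\|T_0 - F_0\| < 1/2$. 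By continuity, $\|H(s) - F_0\| < 1$ on some neighborhood $U$ of $a$, hence $I - (H(s) - F_0)$ is invertible by a Neumann series depending holomorphically on $s \in U$.

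The key algebraic identity is the factorization
\[
I - T(s) \;=\; \bigl(I - (H(s) - F_0)\bigr)\bigl(I - R(s)\bigr), \qquad R(s) := \bigl(I - (H(s) - F_0)\bigr)^{-1}\bigl(F_1(s) + F_0\bigr),
\]
in which $R(s)$ is a finitely meromorphic family of finite-rank operators whose range lies in the fixed finite-dimensional space $W := V + \mathrm{Ran}(F_0)$. Consequently, invertibility of $I - T(s)$ on $U$ reduces to that of $I - R(s)|_W$, which is governed by the scalar meromorphic function $d_U(s) := \det\bigl(I - R(s)|_W\bigr)$. Wherever $d_U(s) \neq 0$, Cramer's rule yields an explicit formula for $(I - R(s)|_W)^{-1}$ that is finitely meromorphic on $U$ with finite-rank residues; combined with the holomorphic factor, this shows $(I - T(s))^{-1}$ is itself finitely meromorphic on $U$ away from the zeros of $d_U$.

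To finish, I would cover $D$ by such neighborhoods. The hypothesis that $I - T(s_*)$ is invertible at some $s_* \in D$ forces $d_{U_*} \not\equiv 0$ on the neighborhood containing $s_*$, so its zero set is discrete there. Since $(I-T(s))^{-1}$ is uniquely determined wherever it exists, the local inverses agree on overlaps, and one propagates non-triviality of $d_U$ from chart to chart along chains of overlapping discs using the identity principle. Connectedness of $D$ guarantees every point is reachable from $s_*$, so $d_U \not\equiv 0$ in every chart, giving a global finitely meromorphic inverse with discrete pole set (cf.\ Remark~\ref{rem:discretepoles}). The main obstacle is precisely this globalization step: one must ensure that the locally defined determinants $d_U$ are not identically zero on any chart, which rests essentially on connectedness together with the single-point invertibility hypothesis, and one must verify that the residues of $(I-T(s))^{-1}$ at its poles remain of finite rank; the latter is automatic because Cramer's rule only involves minors of finite-dimensional matrices multiplied by a holomorphic operator factor.
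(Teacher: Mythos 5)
The paper does not actually prove Theorem~\ref{mer:fredholm}: it cites \cite[Theorem~6.1]{borth} and moves on. So there is no ``paper's own proof'' to compare against, and your task is really whether the standalone sketch holds up. It does at the level of strategy — this is indeed the classical Gohberg--Sigal / Steinberg route, and your globalization paragraph (chain of discs, identity principle, connectedness, open-and-closed argument) is correct and handles the subtle point that no chart may have $d_U \equiv 0$.

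There is, however, a concrete error in the local reduction. You claim that
\[
R(s) := \bigl(I - (H(s) - F_0)\bigr)^{-1}\bigl(F_1(s) + F_0\bigr)
\]
has range in the fixed finite-dimensional space $W := V + \mathrm{Ran}(F_0)$. It does not: $F_1(s) + F_0$ maps into $W$, but you then apply $\bigl(I - (H(s) - F_0)\bigr)^{-1}$, which carries $W$ to the $s$-dependent subspace $\bigl(I - (H(s) - F_0)\bigr)^{-1}(W)$. As a result, $R(s)\rvert_W$ need not be an endomorphism of $W$, and the scalar determinant $d_U(s) := \det\bigl(I - R(s)\rvert_W\bigr)$ is not well defined as written, which is precisely the object your whole local argument hinges on. Two standard fixes: (a) factor on the other side, writing $I - T(s) = (I - R'(s))\bigl(I - (H(s)-F_0)\bigr)$ with $R'(s) := (F_1(s) + F_0)\bigl(I - (H(s) - F_0)\bigr)^{-1}$, whose range \emph{is} always contained in $W$ since the finite-rank operator is applied last; or (b) keep your $R(s)$ but write it as $R(s) = A(s)B(s)$ with $B(s) := F_1(s)+F_0 : X \to W$ and $A(s) := \bigl(I - (H(s)-F_0)\bigr)^{-1}\iota_W : W \to X$, and invoke the identity ``$I-AB$ invertible $\Leftrightarrow$ $I-BA$ invertible'' to reduce to the endomorphism $B(s)A(s)$ of $W$ and take $\det(I_W - B(s)A(s))$. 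With either repair, the Cramer's-rule and finite-rank-residue claims go through as you stated them.
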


In order to prove Theorem \ref{Main1} we also need the following lemma whose proof is  postponed to \S \ref{subsub:compactness}.
\begin{Lem} \label{lemma:KK'compact}
Let assumptions \eqref{relaxed:hyp} hold. 
\begin{enumerate}[label=(\roman*)]
\item \label{itm:KK'compact1} Fix $\lambda\in \rho(\tilde{\mathcal{A}}_\beta)$ and  $\lambda_0 \in \rho(\tilde{\mathcal{A}}_\beta) \cap \rho(\tilde{\mathcal{A}})$.  
Then the operator $\mathcal{K}_\beta:\bL^2_0(\Omega_0) \to \bL^2_0(\Omega_0)$ defined in \eqref{operK}  is compact.
\item \label{itm:KK'compact2}Fix $\lambda\in \rho(\tilde{\mathcal{A}})$ and  $\lambda_0 \in \rho(\tilde{\mathcal{A}}_\beta) \cap \rho(\tilde{\mathcal{A}})$.  
Then the operator $\mathcal{K}:\bL^2_0(\Omega_0) \to \bL^2_0(\Omega_0)$ defined in \eqref{operK'}  is compact.
\end{enumerate}
\end{Lem}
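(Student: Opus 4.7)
I will prove part \ref{itm:KK'compact1}; part \ref{itm:KK'compact2} is analogous after swapping the roles of $\tilde{\mathcal{A}}$ and $\tilde{\mathcal{A}}_\beta$ on the general (Lipschitz) waveguide $\Omega$. Since $(\lambda-\lambda_0)\calQ_\beta(\lambda_0)^{-1}$ is a bounded operator on $\bL^2_0(\Omega_0)$ by Lemma \ref{lemma:resid1}, the plan is to prove compactness of $M_0\, T$, where
$$M_0 := I - \mathcal{I}_\beta\bbV_\beta^*(\bbV\mathcal{I}_\bbG^{-1})^2\mathcal{I}_\beta\bbV_\beta^*, \qquad T := \check{\tilde{R}}_\beta(\lambda)\tilde{\mathcal{A}}_\beta\tilde{R}_\beta(\lambda_0).$$
The underlying strategy is to combine the fact that $M_0$ is a pointwise multiplication operator whose symbol vanishes at infinity with the additional Sobolev regularity enjoyed by $\operatorname{ran}(T)$, so as to apply a Weck-type selection theorem on bounded Lipschitz truncations of the waveguide.

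The first step will identify $M_0$ as pointwise multiplication. Unwinding \eqref{IbbG:def} and \eqref{eqn:defbbv}, the maps $\bbV\mathcal{I}_\bbG^{-1}$ and $\mathcal{I}_\beta\bbV_\beta^*$ act on each of the $\bE,\bH$ components as multiplication by $\bbG^{-1/2}$ and $\bbG_\beta^{1/2}$ respectively, so that
$$M_0 = \operatorname{diag}\bigl(I_3-\bbG_\beta^{1/2}\bbG^{-1}\bbG_\beta^{1/2},\ I_3-\bbG_\beta^{1/2}\bbG^{-1}\bbG_\beta^{1/2}\bigr).$$
Using the explicit forms of $\bbG$ obtained from \eqref{eqn:defjacobian!} and of $\bbG_\beta$ in \eqref{def:Gbeta}, the asymptotic hypotheses \eqref{relaxed:hyp} yield $\bbG(s,\by)\to\bbG_\beta(\by)$ uniformly in $\by\in\omega$ as $|s|\to\infty$, and hence $M_0$ is bounded on $\Omega_0$ with $\|M_0\|_{L^\infty(\Omega_0\cap\{|s|\ge R\})}\to 0$ as $R\to\infty$. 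The second step extracts the regularity of $\operatorname{ran}(T)$: by self-adjointness of $\tilde{\mathcal{A}}_\beta$ one has $\tilde{P}_{0,\beta}\tilde{\mathcal{A}}_\beta = 0$, whence $T = \tilde{R}_\beta(\lambda)(I + \lambda_0\tilde{R}_\beta(\lambda_0))$ is bounded on $\bL^2_0(\Omega_0)$ with range in $D(\tilde{\mathcal{A}}_\beta)\cap(\ker\tilde{\mathcal{A}}_\beta)^\perp$. Through the unitary equivalence $\tilde{\mathcal{A}}_\beta = \bbV_\beta\bbU_\beta\mathcal{A}_\beta\bbU_\beta^*\bbV_\beta^*$ and Proposition \ref{prop:kerneldescription}, this subspace transports to pairs $(\bE,\bH)\in D(\mathcal{A}_\beta)\cap\mathcal{J}_{\varepsilon_0,\mu_0}(\Omega_\beta)$: that is, $\bE\in H_0(\curlvec,\Omega_\beta)$ with $\Div\bE=0$, and $\bH\in H(\curlvec,\Omega_\beta)\cap H_0(\Div,\Omega_\beta)$ with $\Div\bH=0$.

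The third step combines these two ingredients via a cutoff argument. Let $\chi_R\in\mathscr{C}^\infty_c(\bbR)$ equal $1$ on $[-R,R]$ with support in $[-R-1,R+1]$, and decompose $M_0 = \chi_R M_0 + (1-\chi_R)M_0$; the second summand has operator norm tending to $0$ as $R\to\infty$ by Step 1. For the first summand, the range of $\chi_RM_0\, T$ is supported in the bounded Lipschitz set $\{|s|\le R+1\}\cap\Omega_0$. Since the transformations $\bbV_\beta\bbU_\beta$ (pointwise multiplications by smooth, uniformly bounded matrix fields composed with the $\mathscr{C}^1$-diffeomorphism $\Phi_\beta$) preserve local Sobolev regularity, and after a secondary smooth cutoff that kills traces on the transverse cross-sections $\{|s|=R+1\}$, the localized fields lie in $H_0(\curlvec)\cap H(\Div)$ and $H(\curlvec)\cap H_0(\Div)$ on a bounded Lipschitz domain with homogeneous boundary conditions on its entire boundary. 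Weck's selection theorem then yields a compact embedding into $\bL^2$; composing with the bounded multiplier $\chi_RM_0$ and extending by zero to $\Omega_0$, the operator $\chi_RM_0\, T$ is compact. Passing to the norm limit as $R\to\infty$, $M_0\, T$ is a norm limit of compact operators and hence compact, so is $\mathcal{K}_\beta(\lambda)$.

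The principal obstacle will be the invocation of Weck's selection theorem: it demands genuinely combined $\curlvec$- and $\Div$-regularity on a bounded Lipschitz domain together with homogeneous boundary conditions on the full boundary, yet the transverse cuts $\{|s|=R+1\}$ of the truncated waveguide are not part of the physical boundary $\partial\Omega_\beta$. Handling this via a secondary transverse cutoff is classical in the Maxwell compactness literature but requires careful bookkeeping to ensure that the cutoff respects the mixed $H_0(\curlvec)$ / $H_0(\Div)$ structure on the lateral boundary and that all divergence constraints transfer correctly to the truncated domain.
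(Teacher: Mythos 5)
Your proposal is correct, and it takes a genuinely different route from the paper's. The paper proceeds in two stages: first it proves compactness under the stronger hypothesis that $\kappa$ and $\theta'-\beta$ have compact support (Lemma \ref{lemma:Kcompact}), and then for the general case it replaces the metric $\bbG$ by a family $\bbG_n$ built from compactly supported approximants of $\mathbf{k^\theta}$ and $\theta'-\beta$, proves $\|\bbG_n-\bbG\|_{L^\infty}\to 0$ (Lemma \ref{GnG:conv:lemma}), deduces $\|T_n^{-1}-(\bbV\mathcal{I}_\bbG^{-1})^2\|\to 0$ (Lemma \ref{lemma:TnT:conv}), and invokes the compactly supported case for each $n$ (Lemma \ref{lemma:Tn:comp}). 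You instead observe directly that the multiplication symbol $M_0=\operatorname{diag}(I_3-\bbG_\beta^{1/2}\bbG^{-1}\bbG_\beta^{1/2},\,I_3-\bbG_\beta^{1/2}\bbG^{-1}\bbG_\beta^{1/2})$ vanishes at infinity because $\bbG(s,\by)\to\bbG_\beta(\by)$ uniformly in $\by\in\omega$ as $|s|\to\infty$ under \eqref{relaxed:hyp}, and you split $M_0 = \chi_R M_0 + (1-\chi_R)M_0$; since $M_0$ and $\chi_R$ commute, $\chi_R M_0\,T = M_0(\chi_R T)$ with $\chi_R T$ compact by the same $H(\curlvec)\cap H(\Div)$-on-bounded-Lipschitz argument the paper uses, while $\|(1-\chi_R)M_0\,T\|\to 0$. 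This bypasses the introduction of $\bbG_n$ and its $L^\infty$ convergence estimates entirely: the paper's $\bbG_n$-truncation buys an exact identity $[I-\ldots]=[I-\ldots]\mathbb{M}$ in the compactly supported case, whereas your cutoff gives an approximate one, but both routes reduce to norm limits of compact operators built on the same Weck-type compact embedding (cf.\ \cite{weber,BaPaSch}). Your flagged ``obstacle'' about boundary conditions on the transverse cuts is handled exactly as the paper does — the cutoff $\chi_R$ vanishes in a neighborhood of the cuts $\{|s|=R+1\}$, so the localized fields have compact support in the open truncated cylinder and inherit only the mixed tangential/normal conditions on the lateral boundary $\R\times\partial\omega$, which is the setting of the compactness theorem you cite; no ``secondary cutoff'' is needed beyond a support margin, and the ``homogeneous boundary conditions on the entire boundary'' phrasing should be read as the mixed $H_0(\curlvec)\cap H(\Div)$ / $H(\curlvec)\cap H_0(\Div)$ pair rather than a single uniform condition.
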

\begin{proof}[Proof of Theorem \ref{Main1}]
Let  \eqref{relaxed:hyp} holds.
By Lemma \ref{lemma:KK'compact}  the operators $\mathcal{K}_\beta(\lambda)$ and ${\mathcal{K}}(\lambda)$ are compact for any $\lambda \in \mathbb{C}$ for which they are  defined. 

Recall that by Proposition \ref{ess:spectr:beta} the  spectrum of $\mathcal{A}_\beta$ is given by
\begin{equation}\label{eq.spec}
\sigma(\mathcal{A}_\beta) = \sigma_{\rm ess}(\mathcal{A}_\beta)=(-\infty, -a_\beta] \cup 	\{0\} \cup [a_\beta,+\infty) .
\end{equation}
Therefore what the theorem asserts is that the essential spectra of $\mathcal{A}$ and $A_\beta$ coincide.
In order to prove that, since $\mathcal{A}, \mathcal{A}_\beta$ are unitarily equivalent to $\tilde{\mathcal{A}}, \tilde{\mathcal{A}}_\beta$,  respectively, it is equivalent to prove that 
\begin{equation*}
\sigma_{\rm ess} (\tilde{\mathcal{A}}) = \sigma_{\rm ess}(\mathcal{A}_\beta).
\end{equation*}

\bigskip

\noindent
{\bf  First inclusion:} we will first show that $\sigma_{\rm ess} (\tilde{\mathcal{A}}) \subset \sigma_{\rm ess}(\mathcal{A}_\beta)$.

\begin{figure}[h!!]
\centering 
\includegraphics[width=0.75\textwidth]{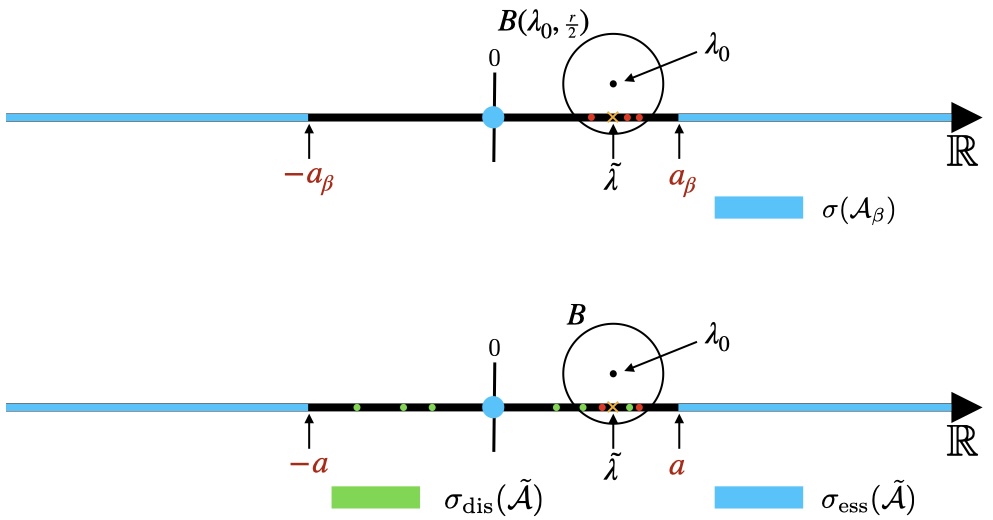}
\caption{In the first panel, the red dots indicate possible poles of the meromorphic function $\lambda \mapsto \left(I+\mathcal{K}_\beta(\cdot)\right)^{-1}$, i.e. points of the discrete set $S$ (cf. Point \ref{itm:listitmmerom1} below). Here the Analytic Fredholm Theorem is sufficient since $\lambda \mapsto \check{\tilde{R}}_\beta(\lambda)$ is analytic in $B(\lambda_0,\frac{r}{2})$. 
In the second panel, besides the red dots, the green dots indicate poles of the resolvent of $\tilde{\mathcal{A}}$ corresponding to potential discrete eigenvalues. In this case we need to use the Meromorphic Fredholm Theorem since $\lambda \mapsto \check{\tilde{R}}(\lambda)$ (and thus $\lambda \mapsto I+\mathcal{K}(\lambda)$) is not necessarily  analytic  in the ball $B$
.}
\label{fig-proof-fredholm}
\end{figure}
Since $\sigma_{\rm ess}(\mathcal{A}_\beta) = \sigma(\mathcal{A}_\beta)$, proving that $\sigma_{\rm ess} (\tilde{\mathcal{A}}) \subset \sigma_{\rm ess}(\mathcal{A}_\beta)$ is equivalent to prove that $\sigma_{\rm ess} (\tilde{\mathcal{A}}) \cap \rho(\mathcal{A}_\beta)= \emptyset$.
Moreover, since $\tilde{\mathcal{A}}$ is self-adjoint and thus $\sigma(\tilde{\mathcal{A}}) \cap \rho(\mathcal{A}_\beta) \subset \bbR \cap \rho(\mathcal{A}_\beta)$ and  by \eqref{eq.spec}, one has $ \bbR \cap \rho(\mathcal{A}_\beta)=(-a_\beta, a_\beta) \setminus \{0\} $. Thus, it is equivalent to prove that
\begin{equation} \label{empty:condition}
\sigma_{\rm ess}(\tilde{\mathcal{A}}) \cap \left( (-a_\beta, a_\beta) \setminus \{0\} \right) =\emptyset.
\end{equation}
Let $\tilde{\lambda}\in \left( (-a_\beta, a_\beta) \setminus \{0\} \right)$. Consider $r = \mathrm{dist}(\tilde{\lambda},\sigma(\mathcal{A}_\beta)) = \min \bigg\{|\tilde{\lambda}|, |\tilde{\lambda} - a_\beta|,|\tilde{\lambda}+ a_\beta|\bigg\}$.
Let $\lambda_0= \tilde{\lambda}+i \frac{r}{4}$ and note that  $\lambda_0 \in \mathbb{C}\setminus \bbR \subset \rho(\mathcal{A}_\beta) \cap \rho(\tilde{\mathcal{A}})$.
Let $B(\lambda_0,\frac{r}{2})$ be the open ball centered in $\lambda_0$ of radius $r/2$. In particular one has $\tilde{\lambda}\in B(\lambda_0,\frac{r}{2})$ and  $B(\lambda_0,\frac{r}{2}) \subset \rho(\mathcal{A}_\beta)$ (see the first  panel of Figure \ref{fig-proof-fredholm}).
\\[4pt]

Consider the map 
\begin{equation*}
\left\{ 
\begin{array}{lcl}
\rho(\mathcal{A}_\beta) & \to & \mathcal{B}(\bL^2_0(\Omega_0))\\
\lambda &\mapsto & \mathcal{K}_\beta(\lambda).
\end{array}
 \right.
\end{equation*}
It is clear  from \eqref{operK} that this  map is analytic since $\lambda \mapsto \check{\tilde{R}}_\beta(\lambda)$ is analytic on $\rho(\mathcal{A}_\beta)$.  Moreover, by Lemma \ref{lemma:KK'compact}, it takes values in the set of compact operators in $\mathcal{B}(\bL^2_0(\Omega_0))$.
Furthermore, observe that $\mathcal{K}_\beta(\lambda_0)=0$, thus $I+\mathcal{K}_\beta(\lambda_0)=I$ is invertible.
Since $B(\lambda_0, r/2)\subset \rho(\mathcal{A}_\beta)$ is an open connected subset of $\mathbb{C}$, we can apply Theorem \ref{mer:fredholm} to the finitely meromorphic family $\left(-\mathcal{K}_\beta(\lambda)\right)_{\lambda \in  B(\lambda_0, r/2)}$ (which here is even analytic in this set), yielding that $(I+\mathcal{K}_\beta(\lambda))^{-1}$ exists as a finitely meromorphic family on $B(\lambda_0, r/2)$.
In particular by Remark \ref{rem:discretepoles} there exists a discrete set  $S$  in $B(\lambda_0, r/2)$ such that
\begin{enumerate}[label=(\alph*)]
\item\label{itm:listitmmerom1} $( I+\mathcal{K}_\beta(\lambda))^{-1}$ exists as a meromorphic function on $B(\lambda_0, r/2)$, whose poles coincide with the set $S$.
\item\label{itm:listitmmerom2} If $\lambda \in S$,  the residue of  $(I+\mathcal{K}_\beta(\lambda))^{-1}$ at $\lambda$ is a finite-rank operator.
\end{enumerate}

Therefore, by Proposition \ref{lemmaK}, $\lambda \mapsto \calQ_\beta(\lambda)^{-1}$ exists as a meromorphic function on  $B(\lambda_0, r/2)$ with same poles in $S$,  and in particular $\lambda \mapsto  \calQ_\beta(\lambda)^{-1}$ is analytic on $B(\lambda_0, r/2) \setminus S$. 
As $\lambda \mapsto \tilde{R}_{\beta}(\lambda)$ is analytic on $B(\lambda_0, r/2) \setminus S\subset  \rho(\mathcal{A}_\beta)$, the function  
$$
\lambda \mapsto \bbV \mathcal{I}_\bbG^{-1}  \mathcal{I}_\beta \bbV_\beta^*  \tilde{R}_\beta(\lambda) \calQ_\beta(\lambda)^{-1} \mathcal{I}_\beta \bbV_\beta^* \bbV \mathcal{I}_\bbG^{-1} 
$$
is analytic on $B(\lambda_0, r/2) \setminus S$. Moreover,  by \eqref{1st:res:eq}, it coincides with  the analytic function $\lambda \mapsto \tilde{R}(\lambda)$ on the open set $\bbC^+\cap (B(\lambda_0, r/2) \setminus S)$ (where $\bbC^+=\{ z\in \bbC : \operatorname{Im}z>0 \}$ denotes here the upper-half plane). Thus, one has constructed an analytic continuation of the resolvent $\tilde{R}(\cdot)$ on $B(\lambda_0, r/2) \setminus S$. As for $\lambda\in \rho(\tilde{\mathcal{A}})$,  $\|\tilde{R}(\lambda)\|= \mathrm{dist}(\lambda, \sigma(\tilde{\mathcal{A}}))^{-1}$, $\tilde{R}(\lambda)$ blows up when   $\lambda$ approaches $\sigma(\tilde{\mathcal{A}})$. Thus the resolvent $\tilde{R}(\cdot)$ cannot be analytically extended on any open set which intersects $\sigma(\tilde{\mathcal{A}})$. Hence, it follows that 
$B(\lambda_0, r/2) \setminus S\subset \rho(\tilde{\mathcal{A}})$ and that
%Since $S$ is discrete,  the set $\left(B(\lambda_0, r/2) \setminus S\right) \cap \left(\mathbb{C}\setminus \mathbb{R}\right)$ is dense in $B(\lambda_0, r/2)$.
%In particular, for $x \in \left(B(\lambda_0, r/2) \setminus S\right) \cap \mathbb{R} $  there exists a sequence $(\lambda_n)_{n \in \mathbb{N}}$ such that $\lambda_n \to x$ for $n \to +\infty$.
%Observe that 
%\begin{equation} \label{eq:distlimit}
%\begin{split}
%\frac{1}{\mathrm{dist} (\lambda_n, \sigma(\tilde{\mathcal{A}}) )} =  \| \tilde{R}(\lambda_n) \| &= \| \bbV \mathcal{I}_\bbG^{-1}  \mathcal{I}_\beta \bbV_\beta^*  \tilde{R}_\beta(\lambda_n) \calQ_\beta(\lambda_n)^{-1} \mathcal{I}_\beta \bbV_\beta^* \bbV \mathcal{I}_\bbG^{-1} \| \\
%&\xrightarrow[n \to +\infty]{} \| \bbV \mathcal{I}_\bbG^{-1}  \mathcal{I}_\beta \bbV_\beta^*  \tilde{R}_\beta(x) \calQ_\beta(x)^{-1} \mathcal{I}_\beta \bbV_\beta^* \bbV \mathcal{I}_\bbG^{-1} \| < +\infty.
%\end{split}
%\end{equation}
%The last limit is a consequence of the fact that $\tilde{R}_\beta$ is analytic on $B(\lambda_0, r/2)$, while $\calQ_\beta$ is analytic on $B(\lambda_0, r/2) \setminus S$.
%Note that also
%\[
%\lim_{n\to +\infty} \mathrm{dist}(\lambda_n, \sigma(\tilde{\mathcal{A}})) = \mathrm{dist}(x, \sigma(\tilde{\mathcal{A}})),
%\]
%and therefore by \eqref{eq:distlimit} we have that $\mathrm{dist}(x, \sigma(\tilde{\mathcal{A}}))>0$ and thus $x \in \rho(\tilde{\mathcal{A}})$.
%Therefore $ B(\lambda_0, r/2)\setminus S\subset \rho(\tilde{\mathcal{A}})$ and
\begin{equation} \label{res:eq:S}
\tilde{R}(\lambda)= \bbV \mathcal{I}_\bbG^{-1}  \mathcal{I}_\beta \bbV_\beta^*  \tilde{R}_\beta(\lambda) \calQ_\beta(\lambda)^{-1} \mathcal{I}_\beta \bbV_\beta^* \bbV \mathcal{I}_\bbG^{-1} \quad \text{for all }\lambda \in B(\lambda_0, r/2)\setminus S.
\end{equation}

Note that by \eqref{res:eq:S}, the poles of the function  $\lambda \mapsto \tilde{R}(\lambda) $ on  $B(\lambda_0,\frac{r}2)$ are exactly the same  as the one  of the function $\lambda  \mapsto \calQ_\beta(\lambda)^{-1}$ on  $B(\lambda_0,\frac{r}2)$ (with the same order). These poles are precisely the elements of the set $S$ because of Point  \ref{itm:listitmmerom1} and the Birman-Schwinger principle in Point \ref{itm:BS1}~Proposition \ref{lemmaK}. Now, as $\tilde{\mathcal{A}}$ is self-adjoint, these poles should lie in $\mathbb{R}$ and thus $S \subset \R$.

It means that if $\tilde{\lambda}\notin S$, then $\tilde{\lambda}\in \rho(\tilde{\mathcal{A}})$, and thus $\tilde{\lambda}\notin \sigma_{\mathrm{ess}}(\tilde{\mathcal{A}})$ and condition \eqref{empty:condition} is satisfied. We are left with dealing with the case $\tilde{\lambda} \in S$, in this case, $\tilde{\lambda}\in \sigma(\tilde{\mathcal{A}})$ and is isolated.
Therefore $\tilde{\lambda}$ is necessarily an eigenvalue of $\tilde{\mathcal{A}}$ since $\tilde{\mathcal{A}}$ is self-adjoint.
Moreover, as it is isolated in $\sigma(\tilde{\mathcal{A}})$, its spectral projector is given by
$$
\bbE(\{\tilde{\lambda}\})=-\frac{1}{2  \pi\rmi}\int_{\mathcal{C}} \tilde{R}(\zeta) \mathrm{d}\zeta=\operatorname{Res}(\tilde{R}(\cdot) \ ,\tilde{\lambda})
$$
where $\mathcal{C}$ is small circle centered at $\tilde{\lambda}$ contained in $B(\lambda_0, r/2)$ positively oriented that does not contain or enclose any  point of $S$ other than $\tilde{\lambda}$. Thus, we are left with understanding what is the residue $\operatorname{Res}(\tilde{R}(\cdot) \ ,\tilde{\lambda})$. Note that for $\xi$ sufficiently close to $\tilde{\lambda}$, there holds
\[
	\tilde{R}_\beta(\xi) = \tilde{R}_\beta(\tilde{\lambda}) + (\xi - \tilde{\lambda)}F(\xi),\quad \calQ_\beta(\xi)^{-1} = \frac{\calQ_{\beta,-1}}{\xi - \tilde{\lambda}} + G(\xi),
\]
where $F,G$ are analytic functions in a neighbourhood of $\tilde{\lambda}$ and with bounded operator values. The fact that the pole of $(\xi \mapsto \calQ_\beta(\xi)^{-1})$ is of order one is a consequence of \eqref{res:eq:S} and the fact that the poles of $\tilde{R}(\lambda)$ are necessarily of order $1$ because $\tilde{\mathcal{A}}$ is a self-adjoint operator (see for instance \cite[Exercise 4.(b) \S 3.3]{Si15}). Here $\calQ_{\beta,-1}$ is the residue of $(\xi \mapsto -\calQ_\beta(\xi)^{-1})$ and is of finite rank by Point \ref{itm:listitmmerom2} and the Birman-Schwinger principle in Point \ref{itm:BS1}~Proposition \ref{lemmaK}. Therefore, for $\xi$ sufficiently close to $\tilde{\lambda}$, from \eqref{res:eq:S} one has
\begin{align*}
	\tilde{R}(\xi) &=\bbV \mathcal{I}_\bbG^{-1}  \mathcal{I}_\beta \bbV_\beta^* \left(\tilde{R}_\beta(\tilde{\lambda}) + (\xi - \tilde{\lambda)}F(\xi)\right)\left(\frac{\calQ_{\beta,-1}}{\xi - \tilde{\lambda}} + G(\xi)\right)\mathcal{I}_\beta \bbV_\beta^* \bbV \mathcal{I}_\bbG^{-1}\\
	& = \frac{1}{\xi - \tilde{\lambda}}\bbV \mathcal{I}_\bbG^{-1}  \mathcal{I}_\beta \bbV_\beta^*\tilde{R}_\beta(\tilde{\lambda})\calQ_{\beta,-1}\mathcal{I}_\beta \bbV_\beta^* \bbV \mathcal{I}_\bbG^{-1} + H(\xi),
\end{align*}
for some analytic map $H$. Hence, one obtains
\[
	\bbE(\{\tilde{\lambda}\}) = \operatorname{Res}(\tilde{R}(\cdot) \ ,\tilde{\lambda}) = \bbV \mathcal{I}_\bbG^{-1}  \mathcal{I}_\beta \bbV_\beta^*\tilde{R}_\beta(\tilde{\lambda})\calQ_{\beta,-1}\mathcal{I}_\beta \bbV_\beta^* \bbV \mathcal{I}_\bbG^{-1}.
\]
Moreover  $\bbE(\{\tilde{\lambda}\})$ is of finite rank, since  $\calQ_{\beta,-1}$ is finite rank and $\tilde{R}_\beta(\tilde{\lambda}) \in \mathcal{B}(\bL_0^2(\Omega_0))$ because $\tilde{\lambda}\in \rho(\mathcal{A}_\beta)$. 
Hence $\tilde{\lambda}$ is an isolated eigenvalue of finite multiplicity, and belongs to the discrete spectrum of $\tilde{\mathcal{A}}$. Therefore condition \eqref{empty:condition} is satisfied, and one concludes that $ \sigma_{\mathrm{ess}}(\tilde{\mathcal{A}}) \cap \rho(\mathcal{A}_\beta)=\emptyset$.\\[4pt]
 
 \noindent
{\bf  Second inclusion:} we will now show that $\sigma_{\rm ess}(\mathcal{A}_\beta) \subset \sigma_{\rm ess} (\tilde{\mathcal{A}})$.
Equivalently, we show that $\sigma_{\rm ess}(\mathcal{A}_\beta) \cap \left( \rho(\tilde{\mathcal{A}}) \cup \sigma_{\rm disc}(\tilde{\mathcal{A}})  \right)= \sigma(\mathcal{A}_\beta) \cap \left( \rho(\tilde{\mathcal{A}}) \cup \sigma_{\rm disc}(\tilde{\mathcal{A}})  \right)= \emptyset$.

Take $\tilde{\lambda} \in \rho(\tilde{\mathcal{A}}) \cup \sigma_{\rm disc} (\tilde{\mathcal{A}})$ and observe that $\tilde{\lambda} \neq 0$ because $0 \in \sigma_{\rm ess}(\tilde{\mathcal{A}})$. 
Indeed $\tilde{\mathcal{A}}= \bbV \hat{\mathcal{A}} \bbV^*$ and $\hat{\mathcal{A}}$ are unitarily equivalent (see Figure \ref{fig:diag}). Thus $\ker(\tilde{\mathcal{A}})= \bbV \ker(\hat{\mathcal{A}})$ where the infinite dimensional space $\ker(\hat{\mathcal{A}})$ is given by Proposition \ref{prop:kerneldescription}.
We can always assume that $\tilde{\lambda}\in \mathbb{R}$, otherwise $\tilde{\lambda}\in \rho(\mathcal{A}_\beta)$.  Moreover, there exists a ball $B\subset \mathbb{C}$ containing $\tilde{\lambda}$ such that $B \setminus \{\tilde{\lambda}\} \subset \rho(\tilde{\mathcal{A}})$ (see the second  panel of Figure \ref{fig-proof-fredholm}). Let $\lambda_0 \in \left(B \setminus \{\tilde{\lambda}\}\right) \cap\left(\rho(\tilde{\mathcal{A}}) \cap \rho(\mathcal{A}_\beta)\right)$ (such a $\lambda_0$ always exists because it can be taken complex valued). We check that $\left(\lambda \in B \mapsto -\mathcal{K}(\lambda) \in \mathcal{B}(\bL_0^2(\Omega_0))\right)$ is a finitely meromorphic family of compact operators.

By construction, the only possible pole in $B$ of $\left(\xi\in B \mapsto \check{\tilde{R}}(\xi)\right)$ is $\tilde{\lambda}$ and as $\mathcal{\tilde{A}}$ is a self-adjoint operator it is of order one. In particular, for $\xi \in B \setminus \{\tilde{\lambda}\}$, one has
\[
	\check{\tilde{R}}(\xi) = -\frac{\operatorname{Res}(\check{\tilde{R}}(\cdot) \ ,\tilde{\lambda})}{\xi- \tilde{\lambda}} + F(\xi),
\]
where $F$ is analytic on $B$ with values in bounded operators. By \eqref{operK'}, one gets
\begin{align*}
	\mathcal{K}(\xi) &= (\xi-\lambda_0) {\calQ}(\lambda_0)^{-1} \left[ I - \mathcal{I}_\bbG \bbV^* (\bbV_\beta \mathcal{I}_\beta^{-1})^2 \mathcal{I}_\bbG \bbV^* \right] \left(-\frac{\operatorname{Res}(\check{\tilde{R}}(\cdot) \ ,\tilde{\lambda})}{\xi- \tilde{\lambda}} + F(\xi)\right) \tilde{\mathcal{A}} \tilde{R}(\lambda_0)\\
	& = -\frac{\xi-\lambda_0}{\xi - \tilde{\lambda}} {\calQ}(\lambda_0)^{-1} \left[ I - \mathcal{I}_\bbG \bbV^* (\bbV_\beta \mathcal{I}_\beta^{-1})^2 \mathcal{I}_\bbG \bbV^* \right]\operatorname{Res}(\check{\tilde{R}}(\cdot) \ ,\tilde{\lambda})\tilde{\mathcal{A}} \tilde{R}(\lambda_0) + H(\xi)\\
	& = -\frac{\tilde{\lambda} - \lambda_0}{\xi - \tilde{\lambda}} {\calQ}(\lambda_0)^{-1} \left[ I - \mathcal{I}_\bbG \bbV^* (\bbV_\beta \mathcal{I}_\beta^{-1})^2 \mathcal{I}_\bbG \bbV^* \right]\operatorname{Res}(\check{\tilde{R}}(\cdot) \ ,\tilde{\lambda})\tilde{\mathcal{A}} \tilde{R}(\lambda_0)   \\& \qquad- {\calQ}(\lambda_0)^{-1} \left[ I - \mathcal{I}_\bbG \bbV^* (\bbV_\beta \mathcal{I}_\beta^{-1})^2 \mathcal{I}_\bbG \bbV^* \right]\operatorname{Res}(\check{\tilde{R}}(\cdot) \ ,\tilde{\lambda})\tilde{\mathcal{A}} \tilde{R}(\lambda_0) + H(\xi)
\end{align*}
where $H$ is an analytic map. Thus, if $\tilde{\lambda} \in \rho(\tilde{\mathcal{A}})$, then  $\operatorname{Res}(\check{\tilde{R}}(\cdot) \ ,\tilde{\lambda}) = 0$. If $\tilde{\lambda}\in \sigma_{\rm dis}(\tilde{\mathcal{A}})$ then $\operatorname{Res}(\check{\tilde{R}}(\cdot) \ ,\tilde{\lambda})$ is the spectral projector on the finite dimensional eigenspace associated with $\tilde{\lambda}$ thus, in both cases, $(\mathcal{K}(\lambda))_{\lambda \in B}$ is a finitely meromorphic family of compact operators. Now, remark that by construction $I+{\mathcal{K}}(\lambda_0)=I$ is invertible and thus one can apply Theorem \ref{mer:fredholm} to this finitely meromorphic family. This implies that  $I+{\mathcal{K}}(\lambda)$ is invertible for $\lambda \in B\setminus (S\cup\{\tilde{\lambda}\})$, where $S$ is a discrete subset of $B$. Therefore, by Point \ref{itm:BS2}~Proposition \ref{lemmaK} ${\calQ}(\lambda)$ is invertible for $\lambda \in B\setminus (S\cup\{\tilde{\lambda}\})$. 
Consequently, using \eqref{2nd:res:eq} and the same analytic continuation argument used in the proof of the first inclusion, one can show that $B \setminus (S\cup\{\tilde{\lambda}\})\subset \rho(\mathcal{A}_\beta)$. By Proposition \ref{ess:spectr:beta} we have $\sigma(\mathcal{A}_\beta) = (-\infty,-a_\beta]\cup\{0\}\cup[a_\beta,+\infty)$ and since $S\cup\{\tilde{\lambda}\}$ is a discrete set which does not contain $0$ then $B \subset \rho(\mathcal{A}_\beta)$. In particular $\tilde{\lambda} \in \rho(\mathcal{A_\beta})$ which concludes the proof.
\end{proof}

\subsubsection{Compactness} \label{subsub:compactness}

The aim of this section is to prove Lemma \ref{lemma:KK'compact}. The proof is divided into two steps. In the first one, we assume that the geometrical perturbation is compactly supported (see Lemma \ref{lemma:Kcompact}). The second step is by considering the general case as a limit of compact geometric perturbations.\\

For further uses, remark that in full generality, the metric tensor $\bbG$ defined in \eqref{Gmatrix} writes for $(s,y_2,y_3) \in \Omega_0$ as
\begin{equation} \label{matr:G}
\bbG (s,y_2,y_3)= \frac{1}{1-\mathbf{k^\theta} \cdot \by}
\begin{pmatrix}
(1-\mathbf{k^\theta}(s) \cdot \by)^2 + \theta'(s)^2 |\by|^2\ \  & y_3 \theta'(s)\  & -y_2 \theta'(s) \\
y_3 \theta'(s) & 1 & 0 \\
-y_2 \theta'(s) & 0 & 1
\end{pmatrix}.
\end{equation}

\begin{Lem} \label{lemma:Kcompact}
Assume $\kappa$ and $\theta'-\beta$ have compact support in $\R$. Then Lemma  \ref{lemma:KK'compact} holds.
\end{Lem}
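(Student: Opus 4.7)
The plan is to exploit two structural features. First, the middle factor $I - \mathcal{I}_\beta \bbV_\beta^* (\bbV \mathcal{I}_\bbG^{-1})^2 \mathcal{I}_\beta \bbV_\beta^*$ in $\mathcal{K}_\beta(\lambda)$ is a pointwise multiplication operator by a matrix field supported in a bounded sub-cylinder of $\Omega_0$. Second, the composition $\check{\tilde{R}}_\beta(\lambda) \tilde{\mathcal{A}}_\beta \tilde{R}_\beta(\lambda_0)$ maps $\bL_0^2(\Omega_0)$ boundedly into $D(\tilde{\mathcal{A}}_\beta) \cap \ker(\tilde{\mathcal{A}}_\beta)^\perp$, which under the unitary equivalence corresponds to solenoidal Maxwell fields lying in the domain of $\mathcal{A}_\beta$. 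Granted these two points, compactness will follow from the classical Weber--Weck compact embedding of $H(\curlvec) \cap H(\Div)$ into $\bL^2$ on bounded Lipschitz domains.

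First I would compute the middle factor explicitly. The maps $\bbV$ and $\bbV_\beta$ act as block-diagonal pointwise multiplications by $\operatorname{diag}(\bbG^{-1/2}, \bbG^{-1/2})$ and $\operatorname{diag}(\bbG_\beta^{-1/2}, \bbG_\beta^{-1/2})$ respectively, while $\mathcal{I}_\bbG$ and $\mathcal{I}_\beta$ are identity maps between $L^2$-spaces differing only by their reference inner product. Computing the respective adjoints with respect to the weighted inner products gives $\bbV^* = \operatorname{diag}(\bbG^{1/2}, \bbG^{1/2})$ and $\bbV_\beta^* = \operatorname{diag}(\bbG_\beta^{1/2}, \bbG_\beta^{1/2})$, so a short calculation yields
\[
    M := I - \mathcal{I}_\beta \bbV_\beta^* (\bbV \mathcal{I}_\bbG^{-1})^2 \mathcal{I}_\beta \bbV_\beta^* = \operatorname{diag}\bigl(\mathds{1}_3 - \bbG_\beta^{1/2}\bbG^{-1}\bbG_\beta^{1/2},\ \mathds{1}_3 - \bbG_\beta^{1/2}\bbG^{-1}\bbG_\beta^{1/2}\bigr).
\]
By comparing \eqref{matr:G} and \eqref{def:Gbeta}, the matrix $\bbG(s,\by)$ coincides with $\bbG_\beta(\by)$ exactly when $\mathbf{k^\theta}(s) = 0$ and $\theta'(s) = \beta$. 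The compact support hypothesis on $\kappa$ and $\theta' - \beta$ therefore confines $M$ to a bounded cylinder $K := [-L, L] \times \overline{\omega}$ for some $L > 0$.

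Next, I would analyze the image of $\check{\tilde{R}}_\beta(\lambda)\tilde{\mathcal{A}}_\beta \tilde{R}_\beta(\lambda_0)$. Using $\tilde{P}_{0,\beta}\tilde{\mathcal{A}}_\beta = 0$ together with $\tilde{\mathcal{A}}_\beta \tilde{R}_\beta(\lambda_0) = I + \lambda_0 \tilde{R}_\beta(\lambda_0)$, this composition equals $\tilde{R}_\beta(\lambda) + \lambda_0 \tilde{R}_\beta(\lambda)\tilde{R}_\beta(\lambda_0)$, which maps $\bL_0^2(\Omega_0)$ boundedly into $D(\tilde{\mathcal{A}}_\beta) \cap \ker(\tilde{\mathcal{A}}_\beta)^\perp$. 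Transferring back to the physical waveguide $\Omega_\beta$ via $\bbU_\beta^*\bbV_\beta^*$ and invoking Proposition \ref{prop:kerneldescription}, the image corresponds to solenoidal Maxwell fields in $D(\mathcal{A}_\beta) \cap \mathcal{J}_{\varepsilon_0, \mu_0}(\Omega_\beta)$: electric fields in $H_0(\curlvec, \Omega_\beta)$ with vanishing divergence, and magnetic fields in $H(\curlvec, \Omega_\beta)$ with vanishing divergence and zero normal trace.

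To conclude, given a bounded sequence $\{\bU_n\}_n \subset \bL_0^2(\Omega_0)$, I would set $\bV_n := \check{\tilde{R}}_\beta(\lambda) \tilde{\mathcal{A}}_\beta \tilde{R}_\beta(\lambda_0) \bU_n$ and pick a smooth cutoff $\chi \in \mathfrak{D}(\Omega_0)$ equal to $1$ on $K$ and supported in a slightly larger bounded open set $U \subset \Omega_0$. Since $\bV_n$ is bounded in the Maxwell graph norm and divergence-free, $\chi \bV_n$ is bounded in $(H(\curlvec, U) \cap H(\Div, U))^2$ (the divergence of $\chi \bV_n$ reduces to $\nabla \chi \cdot \bV_n$, bounded in $L^2$) and carries the tangential/normal boundary conditions inherited from $\bV_n$ on $\partial U \cap \partial \Omega_0$. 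The classical Weber--Weck compact embedding then provides a subsequence of $\chi \bV_n$ convergent in $\bL^2(U)$; since $M \bV_n = M(\chi \bV_n)$ and the outer bounded operator $(\lambda - \lambda_0)\calQ_\beta(\lambda_0)^{-1}$ preserves convergence, $\mathcal{K}_\beta(\lambda)\bU_n$ admits a convergent subsequence. The proof for $\mathcal{K}(\lambda)$ is formally identical after swapping the roles of the two constantly twisted geometries: its middle factor also reduces to multiplication by a compactly supported block matrix, and the image of $\check{\tilde{R}}(\lambda)\tilde{\mathcal{A}}\tilde{R}(\lambda_0)$ lies in solenoidal Maxwell fields on $\Omega$. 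The hard part will be making the compact embedding step fully rigorous: one must handle the mixed tangential/normal boundary conditions on $\partial \Omega_0$ and on the artificial cutoff boundary, and verify enough regularity of the coefficient field $M$ (guaranteed here by the $\mathscr{C}^1$-smoothness of $\Phi$) for the ``cutoff then multiply'' splitting to be valid.
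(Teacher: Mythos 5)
Your plan coincides with the paper's route: identify the middle factor as multiplication by a matrix field supported in a bounded sub-cylinder, observe that the range of the reduced resolvent consists of (weighted-)solenoidal Maxwell fields, cut off, and invoke a Weber-type compact embedding on the bounded piece. One computation is also exactly as in the paper, namely the block-diagonal formula $M = \operatorname{diag}\bigl(\mathds{1}_3 - \bbG_\beta^{1/2}\bbG^{-1}\bbG_\beta^{1/2},\ \mathds{1}_3 - \bbG_\beta^{1/2}\bbG^{-1}\bbG_\beta^{1/2}\bigr)$ for the middle factor, and the identity $\check{\tilde{R}}_\beta(\lambda)\tilde{\mathcal{A}}_\beta\tilde{R}_\beta(\lambda_0) = \tilde{R}_\beta(\lambda)+\lambda_0\tilde{R}_\beta(\lambda)\tilde{R}_\beta(\lambda_0)$ you derive is correct.

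There is, however, a genuine imprecision in the compact-embedding step. You transfer the fields back to the physical waveguide $\Omega_\beta$ (where the medium is homogeneous, so the electric and magnetic fields are plain divergence-free), but then you cut off with a $\chi\in\mathfrak{D}(\Omega_0)$ and assert that $\chi\bV_n$ is bounded in $(H(\curlvec,U)\cap H(\Div,U))^2$. These two pictures do not match. Once you sit in $\Omega_0$ (which you must, since the cut-off is there and the factor $M$ is supported there), the range of $\check{\hat{R}}_\beta(\lambda)$ satisfies the \emph{anisotropic} divergence constraints $\Div(\bbG_\beta^{-1}\bE)=0$ and $\Div(\bbG_\beta^{-1}\bH)=0$, not $\Div\bE=0$. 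When $\bbG_\beta$ is non-constant (i.e.\ $\beta\neq0$), the identity $\Div(\bbG_\beta^{-1}\bE)=0$ does \emph{not} imply $\Div\bE\in L^2$: writing $\bE = \bbG_\beta(\bbG_\beta^{-1}\bE)$ and expanding the divergence, one picks up terms involving first derivatives of $\bbG_\beta^{-1}\bE$ that are not controlled. So the correct space for the cut-off fields is $H(\curlvec,U)\cap H(\Div\,\bbG_\beta^{-1},U)$, with boundary conditions $H_0(\curlvec,U)\cap H(\Div\,\bbG_\beta^{-1},U)$ for the electric component and $H(\curlvec,U)\cap H_0(\Div\,\bbG_\beta^{-1},U)$ for the magnetic one, and the Weber/Bauer--Pauly--Schomburg compact embedding must be invoked in this weighted form. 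That is exactly what the paper keeps track of by working entirely with the Piola-transformed operator $\hat{\mathcal{A}}_\beta$ on $\Omega_0$ (never leaving $\Omega_0$), and by stating the divergence constraint as $\Div(\bbG_\beta^{-1}\bE)=0=\Div(\bbG_\beta^{-1}\bH)$ throughout. If you want to use plain $\Div$, you would have to do the cutoff in $\Omega_\beta$ itself (on the bounded Lipschitz domain $\Phi_\beta(C_{3s_0})$), which is also viable but is not what you wrote.
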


\begin{proof}[Proof of Lemma \ref{lemma:Kcompact}]
We prove only Point \ref{itm:KK'compact1}~Lemma \ref{lemma:KK'compact} under these stronger assumptions. The proof of Point \ref{itm:KK'compact2}~Lemma \ref{lemma:KK'compact} is the same exchanging the roles $\bbG_\beta$ and $\bbG$.

Start by noting that by assumptions there exists $s_0>0$ such that for all $|s| \geq s_0$ and all $\by \in \omega$ one has $\bbG(s,\by) =\bbG_\beta(s,\by)$ where $\bbG_\beta$ is defined in \eqref{allBeta}.

The operators $\check{\tilde{R}}_\beta(\lambda)$ is  regularizing operator whereas  the operator $ I - \mathcal{I}_\beta \bbV_\beta^* (\bbV \mathcal{I}_\bbG^{-1})^2 \mathcal{I}_\beta \bbV_\beta^*$ is a  localizing operator. Moreover,
we  prove  here that their composition  $[ I - \mathcal{I}_\beta \bbV_\beta^* (\bbV \mathcal{I}_\bbG^{-1})^2 \mathcal{I}_\beta \bbV_\beta^*] \check{\tilde{R}}_\beta(\lambda)$ is compact. 
Then, the operator $\mathcal{K}_\beta(\lambda)$  (defined by \eqref{operK}) will be compact due to the fact that $\calQ_\beta(\lambda_0)^{-1}$ and $\tilde{\mathcal{A}}_\beta \tilde{R}_\beta(\lambda_0)$ are bounded operators.

Denote with $\bx = (s,y_2,y_3) \in \Omega_0$ the generic point of the straight waveguide $\Omega_0$. For $s>0$ we define the finite cylinder $C_{s}:= (-s,s) \times \omega$. Observe that for any $\bU \in \bL^2_0(\Omega_0)$ 
\begin{equation} \label{compsupportB}
 \left[ I - \mathcal{I}_\beta \bbV_\beta^* (\bbV \mathcal{I}_\bbG^{-1})^2 \mathcal{I}_\beta \bbV_\beta^*  \right] \bU (\bx) = 0 \text{ for }\bx \in \Omega_0 \setminus C_{s_0}.
\end{equation}
Consider $\chi:\mathbb{R} \to [0,1]$ a smooth cut-off function such that $\chi \equiv 1$ on $[-s_0,s_0]$ and $\chi \equiv 0$ outside of $[-2s_0,2s_0]$. Then, thanks to \eqref{compsupportB}, we have that
\begin{equation} \label{ident:oper}
\left[ I - \mathcal{I}_\beta \bbV_\beta^* (\bbV \mathcal{I}_\bbG^{-1})^2 \mathcal{I}_\beta \bbV_\beta^*  \right]= \left[ I - \mathcal{I}_\beta \bbV_\beta^* (\bbV \mathcal{I}_\bbG^{-1})^2 \mathcal{I}_\beta \bbV_\beta^*  \right]\mathbb{M}
\end{equation}
where
\begin{equation} \label{def:M}
\mathbb{M}: \bL^2_0(\Omega_0) \to \bL^2_0(\Omega_0), \quad \mathbb{M}\bU = \chi \bU.
\end{equation}
Recall that $\tilde{\mathcal{A}}_\beta = \bbV_\beta \hat{\mathcal{A}}_\beta \bbV_\beta^*$ where $\hat{\mathcal{A}}_\beta$ is the operator unitarily equivalent to $\mathcal{A}_\beta$ obtained {\it via}  Proposition \ref{prop:unitequivpiola}.

Hence  by functional calculus the reduced resolvent of $\tilde{\mathcal{A}}_\beta$ introduced in \eqref{red:res:b} can be written as follows
$$ \check{\tilde{R}}_\beta (\lambda)= \bbV_\beta \check{\hat{R}}_\beta(\lambda)  \bbV_\beta^*.$$
Here $\check{\hat{R}}_\beta(\lambda) := \left[(\hat{\mathcal{A}}_\beta-\lambda)^{-1} + \frac{1}{\lambda} \hat{P}_{0,\beta}\right]: \bL^2_\beta(\Omega_0) \to \bL^2_\beta(\Omega_0)$ denotes the reduced resolvent at $\lambda$ of the operator $\hat{\mathcal{A}}_\beta$, where $\hat{P}_{0,\beta}:\bL^2_\beta(\Omega_0) \to \bL^2_\beta(\Omega_0)$ is the orthogonal projection (in $\bL^2_\beta(\Omega_0)$) onto the kernel of $\hat{\mathcal{A}}_\beta$.

Furthermore, it is immediate to see that $\bbM \bbV_\beta = \bbV_\beta \bbM$. Therefore 
$$\bbM \check{\tilde{R}}_\beta(\lambda)  =  \bbV_\beta  \bbM  \check{\hat{R}}_\beta(\lambda)  \bbV_\beta^*.$$
We will show that $\bbM \check{\hat{R}}_\beta(\lambda)$ is a compact operator from  $\bL^2_\beta(\Omega_0)$ to itself, and the proof of the lemma will follow.

Observe that $\check{\hat{R}}_\beta(\lambda) (\bL^2_\beta(\Omega_0)) = D(\hat{\mathcal{A}}_\beta) \cap \mathrm{ker}(\hat{\mathcal{A}}_\beta)^\perp$, i.e. the range of $\check{\hat{R}}_\beta(\lambda)$ is   the following set (cf. Proposition \ref{prop:kerneldescription} ): 
\begin{eqnarray} \label{compact:supp}
&& \Big\{\bU = (
\bE,
\bH) \in \bL_\beta^2(\Omega_0) : \bE \in H_0(\curlvec, \Omega_0) \cap H(\Div \bbG_\beta^{-1},\Omega_0),  \nonumber  \\
&& \qquad \bH \in H(\curlvec, \Omega_0) \cap H_0(\Div \bbG_\beta^{-1},\Omega_0)  \quad\Div(\bbG_\beta^{-1} \bE) =0= \Div(\bbG_\beta^{-1} \bH)  \Big\}.
\end{eqnarray}
Take $\bV \in \bL_\beta^2(\Omega_0)$. 
As $\chi$ is smooth and its  support in contained in $[-2s_0,2s_0]$, one can check that   $\bbM  \check{\hat{R}}_\beta(\lambda)\bV$  belongs to
\begin{eqnarray} \label{compact:emb:space}
&& \Big\{\bU = (
\bE,
\bH)\in \bL^2_\beta(C_{3s_0}) : \ \bE \in H_0(\curlvec, C_{3s_0}) \cap H(\Div  \bbG_\beta^{-1}, C_{3s_0}), \nonumber \\
& &\qquad  \bH \in H(\curlvec, C_{3s_0}) \cap H_0(\Div  \bbG_\beta^{-1}, C_{3s_0}) \bigg\}.
\end{eqnarray} 
Since $C_{3s_0}$ is bounded and Lipschitz and $\bbG_\beta^{-1}\in L^\infty(C_{3s_0})^{3 \times 3}$ is a symmetric uniformly positive definite matrix-field, the space introduced in \eqref{compact:emb:space} is compactly embeddedd into $\bL^2(C_{3s_0})^2$ (cf. e.g. \cite{weber} and \cite[Thm. 4.7]{BaPaSch} for a more general result). 

Hence $\bbM\ \check{\hat{R}}_\beta(\lambda)$ is compact and, using \eqref{ident:oper}, we conclude that  the operator $\mathcal{K}_\beta(\lambda)$ is compact.
\end{proof}

We now prove Lemma \ref{lemma:KK'compact} in full generality, i.e. under assumptions \eqref{relaxed:hyp}. In order to do so, we introduce and prove some useful lemmas.
\begin{Rem} \label{rem:compsupp}
Recall that the functions $k_1,k_2$ introduced in \eqref{frame:evolution} and the derivative $\theta'$ of $\theta$ (cf. \eqref{eqn:defetheta}) are continuous functions.  Moreover $|k_1(s)|,|k_2(s)| \leq |\kappa(s)|$ for any $s \in \R$, therefore assumptions \eqref{relaxed:hyp} imply that:
\begin{itemize}
\item there exists sequences $(k_{1,n})_{n \in \mathbb{N}}$ and $(k_{2,n})_{n \in \mathbb{N}}$ of continuous compactly supported functions of $\R$ such hat $\displaystyle \lim_{n \to +\infty}  k_{1,n}=k_1$ and $\displaystyle\lim_{n \to +\infty} k_{2,n}=k_2$ in $L^\infty(\R)$, respectively; 
\item there exists a sequence $(f_n)_{n \in \mathbb{N}}$ of continuous compactly supported functions of $\R$ such that $\displaystyle \lim_{n \to +\infty} f_n=\theta'(s) -\beta$ in $L^\infty(\R)$.
\end{itemize}
\end{Rem}

Define
\begin{equation*}
\mathbf{k}_n^\theta (s):= 
\begin{pmatrix}
\cos(\theta(s)) & -\sin(\theta(s))\\
\sin(\theta(s)) & \cos(\theta(s))
\end{pmatrix}\begin{pmatrix}k_{1,n}(s)\\ k_{2,n}(s)\end{pmatrix}
\end{equation*}
and observe that 
\begin{equation} \label{k:thetan:conv}
\lim_{n \to \infty} \| \mathbf{k}_n^\theta - \mathbf{k^\theta} \|_{L^\infty(\R)}=0.
\end{equation}
Define
\begin{equation} \label{bbGn:defi}
\bbG_n (s,\by) := \frac{1}{1-\mathbf{k}_n^\theta \cdot \by}
\begin{pmatrix}
(1-\mathbf{k}_n^\theta \cdot \by)^2+ (f_n+\beta)^2 |\by|^2 & y_3(f_n+\beta) & -y_2 (f_n+\beta)\\
y_3(f_n+\beta) & 1 & 0 \\
-y_2 (f_n+\beta) & 0 & 1
\end{pmatrix}.
\end{equation}

\begin{Lem} \label{GnG:conv:lemma}
We have that
\begin{equation*}
\lim_{n \to \infty}  \| \, \bbG_n - \bbG \, \|^2_{L^\infty(\Omega_0,\R^{3\times 3})} =0,
\end{equation*}
where $\bbG$ is defined in \eqref{Gmatrix}.
\end{Lem}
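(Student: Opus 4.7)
The plan is a direct entrywise computation relying on the uniform convergences recalled in Remark \ref{rem:compsupp} and the explicit forms of $\bbG$ (equation \eqref{matr:G}) and $\bbG_n$ (equation \eqref{bbGn:defi}). The key ingredients to assemble before estimating are: (i) $\omega$ is bounded so $|\by|\le b$ on $\omega$; (ii) by Remark \ref{rem:compsupp}, $\mathbf{k}_n^\theta\to \mathbf{k^\theta}$ and $f_n+\beta\to \theta'$ uniformly on $\R$, and in particular $\|\mathbf{k}_n^\theta\|_{L^\infty(\R)}$ and $\|f_n+\beta\|_{L^\infty(\R)}$ are bounded uniformly in $n$; (iii) assumption \eqref{assumpt:a:kappa} together with (ii) gives $n_0\in\N$ and $\delta>0$ such that for all $n\ge n_0$
\[
	1-\mathbf{k}_n^\theta(s)\cdot\by\ge \delta\quad\text{and}\quad 1-\mathbf{k^\theta}(s)\cdot\by\ge \delta\qquad\forall (s,\by)\in\Omega_0,
\]
so the prefactors $(1-\mathbf{k}_n^\theta\cdot\by)^{-1}$ and $(1-\mathbf{k^\theta}\cdot\by)^{-1}$ are uniformly bounded in $n,s,\by$.

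Denoting by $M_n,M$ the $3\times 3$ matrix-valued numerators appearing in \eqref{bbGn:defi} and \eqref{matr:G} respectively, I would write for $n\ge n_0$
\[
	\bbG_n-\bbG=\frac{M_n-M}{1-\mathbf{k}_n^\theta\cdot\by}+\left(\frac{1}{1-\mathbf{k}_n^\theta\cdot\by}-\frac{1}{1-\mathbf{k^\theta}\cdot\by}\right)M
\]
and estimate the two terms separately. For the second, using the common denominator identity
\[
	\frac{1}{1-\mathbf{k}_n^\theta\cdot\by}-\frac{1}{1-\mathbf{k^\theta}\cdot\by}=\frac{(\mathbf{k}_n^\theta-\mathbf{k^\theta})\cdot\by}{(1-\mathbf{k}_n^\theta\cdot\by)(1-\mathbf{k^\theta}\cdot\by)},
\]
together with $|\by|\le b$, $\delta$-boundedness of the denominators, and uniform boundedness of $M$, the factor is controlled by $C\,\|\mathbf{k}_n^\theta-\mathbf{k^\theta}\|_{L^\infty(\R)}\to 0$ by \eqref{k:thetan:conv}.

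For the first term, each entry of $M_n-M$ is a polynomial of bounded degree in the bounded quantities $\mathbf{k}_n^\theta,\mathbf{k^\theta},f_n+\beta,\theta',y_2,y_3$, whose arguments converge in $L^\infty$. Concretely, the $(1,1)$ entry difference expands as
\[
	(1-\mathbf{k}_n^\theta\cdot\by)^2-(1-\mathbf{k^\theta}\cdot\by)^2+\big((f_n+\beta)^2-(\theta')^2\big)|\by|^2,
\]
which factorizes into products where at least one factor is $(\mathbf{k}_n^\theta-\mathbf{k^\theta})\cdot\by$ or $(f_n+\beta-\theta')(f_n+\beta+\theta')$, and the remaining factors are uniformly bounded in $n,s,\by$; the other entries are even simpler. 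Hence $\|M_n-M\|_{L^\infty(\Omega_0,\R^{3\times3})}\to 0$, and combining with the uniform denominator bound gives $\|\bbG_n-\bbG\|_{L^\infty(\Omega_0,\R^{3\times3})}\to 0$, which yields the claim (squaring does not affect the limit). There is no real obstacle here: the only mild care needed is (iii), i.e.\ checking that the uniform ellipticity-type lower bound on $1-\mathbf{k}_n^\theta\cdot\by$ is preserved along the sequence, and this follows immediately from \eqref{assumpt:a:kappa} and \eqref{k:thetan:conv}.
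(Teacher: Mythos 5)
Your proof is correct and follows essentially the same approach as the paper: a direct entrywise $L^\infty$ estimate of $\bbG_n-\bbG$, using the uniform lower bound on $1-\mathbf{k}_n^\theta\cdot\by$ (for $n$ large) together with the uniform convergences $\mathbf{k}_n^\theta\to\mathbf{k^\theta}$ and $f_n+\beta\to\theta'$. The add-and-subtract split into a numerator-difference term and a denominator-difference term is merely a different bookkeeping of the same entrywise estimates the paper performs.
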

\begin{proof}[Proof of Lemma \ref{GnG:conv:lemma}]
Recall that $b:= \sup_{\by \in \omega}|\by|$ and by assumption \eqref{assumpt:a:kappa} that $b \|\kappa\|_{L^\infty(\R)} <1$. 
Moreover by \eqref{unif:bounds:ak}
\begin{equation*}
0<1-b \|\kappa\|_{L^\infty(\R)} \leq 1-\mathbf{k^\theta}(s) \cdot \by \leq 1+b \|\kappa\|_{L^\infty(\R)} \qquad \text{for all }(s,\by) \in \Omega_0.
\end{equation*}
Thus, by \eqref{k:thetan:conv} we also have that, for sufficiently large $n \in \mathbb{N}$, there exist $0<c \leq C$, independent of $(s,\by) \in \Omega_0$ such that
\begin{equation} \label{estimate:k:thetan}
0<c \leq 1-\mathbf{k}^\theta_n(s) \cdot \by \leq C \qquad \text{for all }(s,\by) \in \Omega_0.
\end{equation}

Recall that $\bbG$ can be written as in \eqref{matr:G}.
Since all matrix-norms are equivalent, it is convenient to use the max norm in $\R^{3\times 3}$. Thus $\| \bbG_n-\bbG \, \|_{L^\infty(\Omega_0,\R^{3\times 3})}$ can be estimated by the maximum among the following quantities:
\begin{enumerate}[label=(\roman*)]
\item $\| (1-\mathbf{k}^\theta_n \cdot \by) + |\by|^2 (f_n+\beta)^2 (1-\mathbf{k}^\theta_n \cdot \by)^{-1} - (1-\mathbf{k^\theta} \cdot \by) - |\by|^2 (\theta')^2 (1-\mathbf{k^\theta} \cdot \by)^{-1} \|_{L^\infty(\Omega_0)}$;
\item $ \| \by\|_{L^\infty(\omega)} \| (f_n+\beta) (1-\mathbf{k}^\theta_n \cdot \by)^{-1} - \theta'(1-\mathbf{k^\theta} \cdot \by)^{-1} \|_{L^\infty(\Omega_0)}$;
\item $\| (1-\mathbf{k}^\theta_n \cdot \by)^{-1} - (1-\mathbf{k^\theta} \cdot \by)^{-1} \|_{L^\infty(\Omega_0)}$.
\end{enumerate}
These quantities are controlled by  $\| \mathbf{k^\theta}-\mathbf{k}^\theta_n\|_{L^\infty(\R)}$, and $\| f_n -(\theta'-\beta)\|_{L^\infty(\R)}$, at most multiplied by  constants independent of $n \in \mathbb{N}$, and since both quantities tend to zero as $n$ goes to $\infty$,  the proof is concluded.
\end{proof}

For every $n \in \mathbb{N}$ introduce the multiplication operators 
\begin{equation*}
T_n: \bL^2_0(\Omega_0) \to \bL^2_0(\Omega_0), \quad \begin{pmatrix} \bE \\ \bH \end{pmatrix} \mapsto \begin{pmatrix}  \bbG_n\bE \\  \bbG_n\bH \end{pmatrix},
\end{equation*}
where $\bbG_n$ is defined in \eqref{bbGn:defi}. For sufficiently large $n \in \mathbb{N}$, they are invertible and their  inverses are given by
\begin{equation*}
T_n^{-1}: \bL^2_0(\Omega_0) \to \bL^2_0(\Omega_0), \quad \begin{pmatrix} \bE \\ \bH \end{pmatrix} \mapsto \begin{pmatrix}  \bbG_n^{-1}\bE \\  \bbG_n^{-1}\bH \end{pmatrix}.
\end{equation*}

\begin{Rem} \label{rem:bdd:Tn-1}
Observe that $\bbG_n^{-1} = \mathrm{adj}(\bbG_n) (\operatorname{det}\bbG_n)^{-1}$, where $\mathrm{adj}(\bbG_n)$ denotes the adjugate matrix of $\bbG_n$.
On  one hand,  $\|\operatorname{det}\bbG_n \|_{L^\infty(\Omega_0)}  = \|1-\mathbf{k}^\theta_n \cdot \by \|_{L^\infty(\Omega_0)}  \geq c>0$ is bounded away from 0 for sufficiently large $n \in \mathbb{N}$ (cf. \eqref{estimate:k:thetan}).
On the other hand, $\mathrm{adj}(\bbG_n)$ has entries that are minors of the matrix $\bbG_n$. Since all entries of $\bbG_n$ are  bounded in $L^\infty(\Omega_0)$, we can conclude that there exists $C>0$  such that $\|\bbG_n^{-1} \|_{L^\infty(\Omega_0,\R^{3\times 3})} \leq C$ for sufficiently large $n \in \mathbb{N}$. Hence the operator norms $\|T_n^{-1}\|$ are bounded, i.e.
\begin{equation} \label{norm:Tn:inverse}
\|T_n^{-1}\| \leq C
\end{equation}
for sufficiently large $n \in \mathbb{N}$.
\end{Rem}

Without loss of generality, we can assume that $T_n^{-1}$ are defined and satisfy \eqref{norm:Tn:inverse} for all $n \in \mathbb{N}$.

\begin{Lem} \label{lemma:TnT:conv}
The operators $T_n$ converge to $(\mathcal{I}_\bbG \bbV^*)^2$, and their inverses $T_n^{-1}$ converge to $(\bbV \mathcal{I}_\bbG^{-1})^2$.
\end{Lem}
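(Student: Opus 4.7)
The key first step is to identify the two limit operators as plain matrix-multiplication operators on $\bL^2_0(\Omega_0)$. Since $\bbV$ acts component-wise as multiplication by $\bbG^{-1/2}$, a short computation in the weighted inner products of $\bL^2_\bbG(\Omega_0)$ and $\bL^2_0(\Omega_0)$ shows that $\bbV^*$ is multiplication by $\bbG^{1/2}$ and, because $\mathcal{I}_\bbG$ is the set-theoretic identity with adjoint $\mathcal{I}_\bbG^*$ equal to multiplication by $\bbG$, the compositions $\bbV \mathcal{I}_\bbG^{-1}$ and $\mathcal{I}_\bbG \bbV^*$ are multiplication by $\bbG^{-1/2}$ and $\bbG^{1/2}$ respectively. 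Squaring, one obtains that $(\bbV\mathcal{I}_\bbG^{-1})^2$ is multiplication by $\bbG^{-1}$ and $(\mathcal{I}_\bbG \bbV^*)^2$ is multiplication by $\bbG$, both understood as bounded operators on $\bL^2_0(\Omega_0)$ thanks to Remark~\ref{rem:Ginvertibleunifo}.

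With this identification, the convergence $T_n \to (\mathcal{I}_\bbG \bbV^*)^2$ in operator norm reduces to showing that multiplication by $\bbG_n - \bbG$ tends to zero in $\mathcal{B}(\bL^2_0(\Omega_0))$. Since the operator norm of multiplication by a matrix-valued function on $\bL^2_0(\Omega_0)$ is controlled (up to the fixed ellipticity constants of $\varepsilon_0\mathds{1}_3$ and $\mu_0\mathds{1}_3$) by the $L^\infty(\Omega_0,\R^{3\times 3})$-norm of that matrix field, this is exactly the content of Lemma~\ref{GnG:conv:lemma}.

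For the second statement, $T_n^{-1} \to (\bbV \mathcal{I}_\bbG^{-1})^2$, the plan is to exploit the pointwise algebraic identity
\[
    \bbG_n^{-1}(\bx) - \bbG^{-1}(\bx) = \bbG_n^{-1}(\bx)\bigl(\bbG(\bx) - \bbG_n(\bx)\bigr)\bbG^{-1}(\bx), \qquad \bx \in \Omega_0.
\]
Combining the uniform bound $\|\bbG_n^{-1}\|_{L^\infty(\Omega_0,\R^{3\times 3})} \le C$ from Remark~\ref{rem:bdd:Tn-1} with the uniform boundedness of $\bbG^{-1}$ from Remark~\ref{rem:Ginvertibleunifo} and Lemma~\ref{GnG:conv:lemma} yields $\|\bbG_n^{-1} - \bbG^{-1}\|_{L^\infty(\Omega_0,\R^{3\times 3})} \to 0$, whence the required operator-norm convergence by the same multiplication-operator bound as above.

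No step here presents a real obstacle; the only non-automatic point is the first, namely the correct identification of the weighted adjoints $\bbV^*$ and $\mathcal{I}_\bbG^*$, after which the lemma is a routine consequence of the pointwise matrix convergence already in hand.
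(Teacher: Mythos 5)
Your proposal is correct and follows essentially the same route as the paper. The paper likewise treats $T := (\mathcal{I}_\bbG \bbV^*)^2$ as the multiplication operator by $\bbG$ (this identification is used silently when writing $\|(T_n - T)\bU\|^2 = \|(\bbG_n - \bbG)\bE\|^2_{\bL^2_{\varepsilon_0}} + \|(\bbG_n - \bbG)\bH\|^2_{\bL^2_{\mu_0}}$), deduces $\|T_n - T\| \le \|\bbG_n - \bbG\|_{L^\infty(\Omega_0,\R^{3\times 3})} \to 0$ from Lemma~\ref{GnG:conv:lemma}, and handles the inverses via $T_n^{-1} - T^{-1} = T_n^{-1}(T - T_n)T^{-1}$ together with the uniform bound $\|T_n^{-1}\| \le C$ of Remark~\ref{rem:bdd:Tn-1}. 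Your only deviation is cosmetic: you carry out the resolvent-type identity at the pointwise matrix-field level, $\bbG_n^{-1} - \bbG^{-1} = \bbG_n^{-1}(\bbG - \bbG_n)\bbG^{-1}$, before passing to operator norms, whereas the paper writes the identity directly at the level of the multiplication operators; both yield the same estimate. The extra care you take in spelling out that $\bbV^*$ is multiplication by $\bbG^{1/2}$ and $\mathcal{I}_\bbG^*$ by $\bbG$ — hence $(\mathcal{I}_\bbG\bbV^*)^2 = \bbG\,\cdot$ and $(\bbV\mathcal{I}_\bbG^{-1})^2 = \bbG^{-1}\,\cdot$ — is correct and makes explicit a step the paper leaves implicit.
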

\begin{proof}[Proof of Lemma \ref{lemma:TnT:conv}]
Recall the definitions $\mathcal{I}_\bbG$ and  $\bbV$ in \eqref{IbbG:def} and \eqref{eqn:defbbv} respectively. Denote $T:=(\mathcal{I}_\bbG \bbV^*)^2$.
Then, one has  for  all $\bU=(\bE, \bH)^{\top}\in \bL^2_0(\Omega_0) $:
\begin{equation*}
\begin{split}
\big\| (T_n - T) \bU \big\|^2_{\bL^2_0(\Omega_0)} &= \|(\bbG_n - \bbG) \bE \|^2_{\bL^2_{\varepsilon_0}(\Omega_0)} +\|(\bbG_n - \bbG) \bH \|^2_{\bL^2_{\mu_0}(\Omega_0)} \\
&  \leq \| \bbG_n - \bbG \|^2_{L^\infty(\Omega_0,\R^{3\times 3})} \left( \|\bE\|_{\bL^2_{\varepsilon_0}(\Omega_0)}^2 +\|\bH\|_{\bL^2_{\mu_0}(\Omega_0)}^2 \right).
\end{split}
\end{equation*}
Hence by Lemma \ref{GnG:conv:lemma}
\begin{equation} \label{conv:TnT:eq}
\lim_{n \to \infty}\| T_n - T\| \leq \lim_{n \to \infty}\|\bbG_n - \bbG\|_{L^\infty(\Omega_0,\R^{3\times 3})} =0.
\end{equation}
Observe that
\begin{equation*}
T_n^{-1}-T^{-1} = T_n^{-1} (T-T_n) T^{-1}.
\end{equation*}
By Remark \ref{rem:bdd:Tn-1} the operator norms   $\|T_n^{-1}\|$ are bounded. Therefore from \eqref{conv:TnT:eq} we also have that $T_n^{-1}$ converges in operator norm to $T^{-1}=(\bbV \mathcal{I}_\bbG^{-1})^2$.
\end{proof}

\begin{Lem} \label{lemma:Tn:comp}
For all $n \in \mathbb{N}$, the operator
\begin{equation*}
\left[ I - \mathcal{I}_\beta\bbV_\beta^* T_n^{-1} \mathcal{I}_\beta \bbV_\beta^* \right] \check{\tilde{R}}_\beta: \bL^2_0(\Omega_0) \to \bL^2_0(\Omega_0)
\end{equation*}
is compact.
\end{Lem}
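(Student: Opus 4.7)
The plan is to mimic very closely the proof of Lemma \ref{lemma:Kcompact}, with $(\bbV \mathcal{I}_\bbG^{-1})^2$ replaced by the finite-rank-geometric approximation $T_n^{-1}$, and to exploit the fact that $\bbG_n$ agrees with $\bbG_\beta$ outside a compact set in the longitudinal variable. More precisely, $T_n^{-1}$ acts componentwise as pointwise multiplication by $\bbG_n^{-1}(s,\by)$, and since the ingredients $k_{1,n}$, $k_{2,n}$ and $f_n=\theta'_n-\beta$ of $\bbG_n$ (see Remark \ref{rem:compsupp} and \eqref{bbGn:defi}) are continuous and compactly supported in $\R$, there exists $s_n>0$ such that
\[
\bbG_n(s,\by)=\bbG_\beta(\by)\qquad \text{for all } |s|>s_n,\ \by\in\omega.
\]
Because $\mathcal{I}_\beta \bbV_\beta^*$ is itself a multiplication operator by a matrix field depending only on $\by$ (namely involving $\bbG_\beta^{1/2}$), the composition $\mathcal{I}_\beta \bbV_\beta^* T_n^{-1} \mathcal{I}_\beta \bbV_\beta^*$ acts componentwise as multiplication by $\bbG_\beta^{1/2}(\by)\,\bbG_n^{-1}(s,\by)\,\bbG_\beta^{1/2}(\by)$, which collapses to the identity matrix whenever $|s|>s_n$. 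Hence $I-\mathcal{I}_\beta \bbV_\beta^* T_n^{-1} \mathcal{I}_\beta \bbV_\beta^*$ is a multiplication operator supported in the finite cylinder $C_{s_n}=(-s_n,s_n)\times\omega$.

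From here the argument follows step by step the proof of Lemma \ref{lemma:Kcompact}. I would fix a smooth cutoff $\chi_n:\R\to[0,1]$ with $\chi_n\equiv 1$ on $[-s_n,s_n]$ and $\mathrm{supp}\,\chi_n\subset[-2s_n,2s_n]$, and define the multiplication operator $\bbM_n:\bL^2_0(\Omega_0)\to\bL^2_0(\Omega_0)$, $\bU\mapsto\chi_n\bU$. The support observation above gives
\[
\bigl[I-\mathcal{I}_\beta\bbV_\beta^* T_n^{-1}\mathcal{I}_\beta\bbV_\beta^*\bigr]\check{\tilde{R}}_\beta(\lambda)=\bigl[I-\mathcal{I}_\beta\bbV_\beta^* T_n^{-1}\mathcal{I}_\beta\bbV_\beta^*\bigr]\bbM_n\,\check{\tilde{R}}_\beta(\lambda),
\]
so the compactness of the left-hand side reduces to that of $\bbM_n\check{\tilde{R}}_\beta(\lambda)$. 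Using the factorization $\check{\tilde{R}}_\beta(\lambda)=\bbV_\beta\check{\hat{R}}_\beta(\lambda)\bbV_\beta^*$ and the fact that $\bbM_n$ commutes with $\bbV_\beta$ (since $\bbV_\beta$ depends only on $\by$ while $\chi_n$ depends only on $s$), one further reduces to compactness of $\bbM_n\check{\hat{R}}_\beta(\lambda):\bL^2_\beta(\Omega_0)\to\bL^2_\beta(\Omega_0)$.

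The range of $\check{\hat{R}}_\beta(\lambda)$ is contained in the Maxwell-type space \eqref{compact:supp}. Multiplying by the smooth cutoff $\chi_n$ produces fields supported in the bounded Lipschitz cylinder $C_{2s_n}\subset C_{3s_n}$, and sitting in the Maxwell-type space \eqref{compact:emb:space} on $C_{3s_n}$ (with $s_0$ replaced by $s_n$). Since $\bbG_\beta^{-1}\in L^\infty(C_{3s_n})^{3\times 3}$ is symmetric and uniformly positive definite, the embedding of \eqref{compact:emb:space} into $\bL^2(C_{3s_n})^2$ is compact by the classical Weber-type result (\cite{weber}, \cite[Thm.~4.7]{BaPaSch}), whence $\bbM_n\check{\hat{R}}_\beta(\lambda)$ is compact and so is $\bigl[I-\mathcal{I}_\beta\bbV_\beta^* T_n^{-1}\mathcal{I}_\beta\bbV_\beta^*\bigr]\check{\tilde{R}}_\beta(\lambda)$.

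The main (and essentially only) subtlety is verifying that the factor $I-\mathcal{I}_\beta\bbV_\beta^* T_n^{-1}\mathcal{I}_\beta\bbV_\beta^*$ is a multiplication operator with support compact in $s$; everything else is routine and is a literal repetition of the argument already used for Lemma \ref{lemma:Kcompact}. This subtlety boils down to the structural remark that $\bbG_\beta$, contrary to $\bbG_n$ or $\bbG$, is translation invariant in $s$, so that the $\by$-only factors arising from $\bbV_\beta^*$ commute with cutoffs depending only on $s$.
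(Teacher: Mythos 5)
Your proof is correct and matches the paper's approach; the paper's actual proof is a two-sentence pointer that says exactly what you do in detail — observe that $\bbG_n$ coincides with $\bbG_\beta$ outside a compact interval in $s$ (because $k_{1,n}$, $k_{2,n}$, $f_n$ are compactly supported), then reproduce the argument of Lemma~\ref{lemma:Kcompact} with $(\bbV\mathcal{I}_\bbG^{-1})^2$ replaced by $T_n^{-1}$. Your spelled-out verification that $\mathcal{I}_\beta\bbV_\beta^* T_n^{-1}\mathcal{I}_\beta\bbV_\beta^*$ is multiplication by $\bbG_\beta^{1/2}\bbG_n^{-1}\bbG_\beta^{1/2}$, hence reduces to the identity outside $C_{s_n}$, and that $\bbM_n$ commutes with $\bbV_\beta$, is precisely the content being reused; there is no gap.
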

\begin{proof}[Proof of Lemma \ref{lemma:Tn:comp}]
Since by Remark \ref{rem:compsupp} the functions $\mathbf{k}^\theta_n$ and $f_n$ are compactly supported in $\R$, there exists $s_0>0$ such that $\bbG_n(s,\by)=\bbG_\beta(s,\by)$ for all $|s| \geq s_0$ and $\by \in \omega$.
 Reproducing the proof of Lemma \ref{lemma:Kcompact} where $(\bbV \mathcal{I}_\bbG^{-1})^2$ is replaced by $T_n^{-1}$, we conclude.
\end{proof}

\bigskip

We are now ready to prove Lemma \ref{lemma:KK'compact}.
\begin{proof}[Proof of Lemma \ref{lemma:KK'compact}]
By Lemma \ref{lemma:TnT:conv} and Lemma \ref{lemma:Tn:comp} the following operator
\begin{equation*}
\left[ I - \mathcal{I}_\beta\bbV_\beta^* (\bbV \mathcal{I}_\bbG^{-1})^2 \mathcal{I}_\beta \bbV_\beta^* \right] \check{\tilde{R}}_\beta: \bL^2_0(\Omega_0) \to \bL^2_0(\Omega_0),
\end{equation*}
appearing in the definition of $\mathcal{K}_\beta (\lambda)$ (cf. \eqref{operK}), is the limit (in operator norm) of a sequence of compact operators in $\mathcal{B}(\bL^2_0(\Omega_0))$ and it is thus compact.
This proves Point \ref{itm:KK'compact1}~Lemma \ref{lemma:KK'compact}.

The proof of Point \ref{itm:KK'compact2}~Lemma \ref{lemma:KK'compact} is the same once one exchanges the roles $\bbG_\beta$ and $\bbG$ in all the results of this section.
\end{proof}

\begin{Rem}
We would like to mention the works \cite{alberti-19, bogli20023, Fer-24} for general results on the essential spectra of Maxwell operators in dispersive and dissipative unbounded media. More precisely, these studies deal with locally perturbed media (with respect to the vacuum) in domains with cylindrical ends \cite{Fer-24}, or with more general locally perturbed  configurations \cite{alberti-19,bogli20023}. Their results are complementary to those developed in this section: while our focus is on the influence of geometry on the essential spectrum, their work primarily investigates the effects of dissipation and dispersion. In these references, dissipation leads to the loss of self-adjointness of the Maxwell operators, while dispersion is accounted for by allowing the material parameters -- the electric permittivity $\varepsilon$ and magnetic permeability $\mu$ -- to depend on the frequency. These considerations motivate the use of spectral theory for operator pencils and techniques involving the numerical range to  localize and classify  the different types of  essential spectra of the associated non-self-adjoint Maxwell operator.
In the paper \cite{Fer-24}, for instance, they analyse a medium consisting of a locally perturbed region connected to straight half-waveguides made of a standard, homogeneous, non-dissipative material. An abstract decomposition of the essential spectrum is established, separating the real essential spectra contributed by the straight half-waveguides from the complex essential spectrum generated by dispersion and dissipation in the locally perturbed region (see Theorem 1.1 and Figure 2 of \cite{Fer-24}). It would be interesting to explore how their results might be combined with Theorem \ref{Main1} to investigate cases where the half-waveguides are not straight. Finally, we also mention the works \cite{Bro-25,cas-kach-jol-17,Cas-Haz-Jol-22,cas-jol-ros-25} that provide precise descriptions of the spectral structure of dispersive and/or dissipative Maxwell operators in separable unbounded geometries.

% and their connection physical effect such as energy conservation \cite{}, existence of limiting amplitudes and apmlitude principles, existence of resonances and long-time of the electromagnetic energy \cite{cas-jol-ros-25}.
\end{Rem}

\section{Discrete eigenvalues} \label{secdiscrete}

In \S \ref{sect:localization} we use the Birman-Schwinger principle proved in Proposition \ref{lemmaK} to both localize possible discrete eigenvalues in the gap and proved a Poincar\'e-type inequality. In \S \ref{sect:exisEV} we prove Theorem \ref{thm:discspec} about the existence of discrete eigenvalues under a geometric condition.

\subsection{Localization of possible eigenvalues in the gap and Poincar\'e inequality} \label{sect:localization}

\begin{figure}[h!!]
\begin{center}
\includegraphics[width=0.7\textwidth]{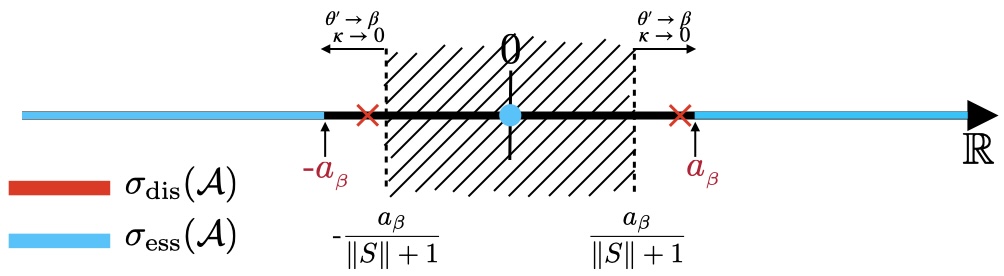}
\end{center}
\caption{When the waveguide $\Omega$ is sufficiently close to the straight one $\Omega_0$ (i.e. when $\|S\| <1$), the possible discrete eigenvalues  appear continuously from the thresholds of the essential spectrum. In particular, this localization result prevents the discrete spectrum to concentrate around the origin, more precisely in the interval $\big(-\frac{a_\beta}{\|S\|+1},\frac{a_\beta}{\|S\|+1} \big)$, leaving 0 an isolated point of the spectrum of $\mathcal{A}$.}
\label{fig:evboud}
\end{figure}

Driven by  \eqref{operK} we are led to introduce the  following operator 
\begin{equation} \label{def:opS}
S:=\left( I- \mathcal{I}_\beta \mathbb{V}_\beta^* (\bbV \mathcal{I}_\bbG^{-1})^2 \mathcal{I}_\beta \mathbb{V}_\beta^* \right):\bL^2_0(\Omega_0) \to \bL^2_0(\Omega_0).
\end{equation}
The following proposition provides information on the localization of the discrete spectrum of $\mathcal{A}$, as illustrated in Figure~\ref{fig:evboud}. Moreover, as we will see below (see Corollary \ref{cor:poincare} and Remark \ref{Rem-Poincare}), this result allows us, under certain conditions, to establish a Poincar\'e-type inequality for the Maxwell operators $\mathcal{\tilde{A}}$ (in the straight waveguide $\Omega_0$) and $\mathcal{A}$ (in the original waveguide $\Omega$).
\begin{Pro}\label{prop:loceivalue}
Let  $\lambda\in \bbR\cap \rho(\mathcal{A}_\beta)$.
Then, there exists a bounded linear isomorphism $$\varphi_\lambda: \ker (\tilde{\mathcal{A}}-\lambda \, I) \to \ker \left(S\tilde{R}_\beta(\lambda) + \frac{1}{\lambda} I\right).$$
\noindent In particular, if $\|S\|<1$ and  $\lambda \in \sigma_{\mathrm{dis}}(\mathcal{A})$, then one has 
\begin{equation}\label{eq.localisation}
|\lambda| \in \left[\frac{a_\beta}{\|S\|+1}, a_\beta\right)  \mbox{ and  $0$ is an isolated eigenvalue  in  $\sigma(\mathcal{A})$.}
\end{equation}
\end{Pro}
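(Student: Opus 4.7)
The strategy I would follow is to exploit the alternative representation of $\calQ_\beta(\lambda)$ appearing in the derivation of Lemma \ref{lemma:resid1}, namely
\begin{equation*}
\calQ_\beta(\lambda) = M(\tilde{\mathcal{A}} - \lambda I) N\,\tilde{R}_\beta(\lambda), \qquad M := \mathcal{I}_\beta \bbV_\beta^* \bbV \mathcal{I}_\bbG^{-1}, \quad N := \bbV \mathcal{I}_\bbG^{-1} \mathcal{I}_\beta \bbV_\beta^*,
\end{equation*}
and to read off from it a Birman--Schwinger-type isomorphism between kernels. Both $M$ and $N$ are bounded linear isomorphisms of $\bL^2_0(\Omega_0)$ because each of the four factors entering their definition is a continuous isomorphism (either unitary, or a bi-Lipschitz identification of equivalent-norm spaces) between the relevant weighted $\bL^2$-spaces. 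A crucial additional property, which I would establish first, is that $N$ restricts to a bijection $D(\tilde{\mathcal{A}}_\beta) \to D(\tilde{\mathcal{A}})$; this follows by a direct diagram chase through Figure \ref{fig:diag}, using the fact, coming from Proposition \ref{prop:unitequivpiola}, that both $\hat{\mathcal{A}}$ and $\hat{\mathcal{A}}_\beta$ are defined on the same domain $H_0(\curlvec,\Omega_0) \times H(\curlvec,\Omega_0)$. Since $0 \in \sigma(\mathcal{A}_\beta)$, any $\lambda \in \rho(\mathcal{A}_\beta)$ is nonzero, so $\calQ_\beta(\lambda) = \lambda\bigl(S\tilde{R}_\beta(\lambda) + \tfrac{1}{\lambda}I\bigr)$ and $\ker \calQ_\beta(\lambda)$ coincides with the kernel appearing in the statement.

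The isomorphism is easier to define through its inverse: set
\begin{equation*}
\psi_\lambda(\bV) := N\tilde{R}_\beta(\lambda)\bV, \qquad \bV \in \ker\!\left(S\tilde{R}_\beta(\lambda) + \tfrac{1}{\lambda}I\right).
\end{equation*}
This map is manifestly bounded on $\bL^2_0(\Omega_0)$, and since $\tilde{R}_\beta(\lambda)\bV \in D(\tilde{\mathcal{A}}_\beta)$ the domain-preservation property of $N$ gives $\psi_\lambda(\bV) \in D(\tilde{\mathcal{A}})$; the factorization of $\calQ_\beta(\lambda)$ combined with the invertibility of $M$ then yields $(\tilde{\mathcal{A}} - \lambda I)\psi_\lambda(\bV) = M^{-1}\calQ_\beta(\lambda)\bV = 0$, so $\psi_\lambda$ indeed lands in $\ker(\tilde{\mathcal{A}} - \lambda I)$. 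Injectivity is immediate from the injectivity of $N$ and $\tilde{R}_\beta(\lambda)$. For surjectivity, given $\bU \in \ker(\tilde{\mathcal{A}} - \lambda I)$ I would take the explicit candidate $\bV := (\tilde{\mathcal{A}}_\beta - \lambda I)N^{-1}\bU$ (well-defined since $N^{-1}\bU \in D(\tilde{\mathcal{A}}_\beta)$) and check by direct computation that $\psi_\lambda(\bV) = \bU$ and $\calQ_\beta(\lambda)\bV = M(\tilde{\mathcal{A}} - \lambda I)\bU = 0$. Both kernels being closed subspaces of the Banach space $\bL^2_0(\Omega_0)$, the inverse mapping theorem finally yields the boundedness of $\varphi_\lambda := \psi_\lambda^{-1}$, with explicit formula $\varphi_\lambda(\bU) = (\tilde{\mathcal{A}}_\beta - \lambda I)N^{-1}\bU$.

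For the localization assertion, Theorem \ref{Main1} places any $\lambda \in \sigma_{\mathrm{dis}}(\mathcal{A})$ in $\rho(\mathcal{A}_\beta) \cap \bbR = (-a_\beta, a_\beta)\setminus\{0\}$, so the isomorphism supplies a nonzero $\bV$ with $S\tilde{R}_\beta(\lambda)\bV = -\tfrac{1}{\lambda}\bV$. Taking norms and using the self-adjointness of $\mathcal{A}_\beta$ to rewrite $\|\tilde{R}_\beta(\lambda)\| = \min(|\lambda|, a_\beta - |\lambda|)^{-1}$, I obtain
\begin{equation*}
\min\bigl(|\lambda|,\, a_\beta - |\lambda|\bigr) \leq \|S\|\,|\lambda|.
\end{equation*}
If the minimum were $|\lambda|$, this would force $\|S\| \geq 1$, contradicting $\|S\| < 1$; hence $a_\beta - |\lambda| \leq \|S\|\,|\lambda|$, i.e.\ $|\lambda| \geq a_\beta/(1+\|S\|)$, proving \eqref{eq.localisation}. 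Since this excludes any discrete eigenvalue from a punctured neighbourhood of $0$, and since $\{0\}$ is already separated from $\pm a_\beta$ within $\sigma_{\mathrm{ess}}(\mathcal{A})$ by Theorem \ref{Main1}, the point $0$ must be isolated in $\sigma(\mathcal{A})$.

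The main technical hurdle I anticipate is the domain-preservation property of $N$: although $\tilde{\mathcal{A}}$ and $\tilde{\mathcal{A}}_\beta$ act in the same Hilbert space $\bL^2_0(\Omega_0)$, their domains are defined through two distinct unitary conjugations ($\bbV$ versus $\bbV_\beta$) and a priori differ, so it is not automatic that the multiplication-like operator $N$ intertwines them; the saving point is precisely the identity $D(\hat{\mathcal{A}}) = D(\hat{\mathcal{A}}_\beta)$ from Proposition \ref{prop:unitequivpiola}.
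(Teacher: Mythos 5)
Your proposal is correct and follows essentially the same route as the paper: the map $\varphi_\lambda(\bU) = (\tilde{\mathcal{A}}_\beta - \lambda I)N^{-1}\bU$ you arrive at is literally the paper's $\varphi_\lambda(\psi) = (\tilde{\mathcal{A}}_\beta-\lambda)(\bbV_\beta\mathcal{I}_\beta^{-1}\mathcal{I}_\bbG\bbV^*)\psi$, built from the same factorization of $\calQ_\beta(\lambda)$ established in Lemma~\ref{lemma:resid1}, and the localization step via $\|\tilde R_\beta(\lambda)\| = \min(|\lambda|,a_\beta-|\lambda|)^{-1}$ matches as well. The only cosmetic differences are that you define the inverse map $\psi_\lambda$ first and make the domain-preservation property of $N$ explicit, which the paper handles implicitly.
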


\begin{proof}[Proof of Proposition \ref{prop:loceivalue}]
Let $\lambda\in \bbR\cap \rho(\mathcal{A}_\beta)$ then as $\mathcal{A}_\beta$ and $\tilde{\mathcal{A}}_\beta$ are unitarily equivalent (see \eqref{Unitarilyeq-Abeta}),   $\tilde{\mathcal{A}}_\beta -\lambda \, I$ is invertible . Thus, for $\psi \in \ker(\tilde{\mathcal{A}}-\lambda\, I)$, one defines an injective bounded linear map  $\varphi_\lambda$ by $$\varphi_\lambda (\psi) :=(\tilde{\mathcal{A}}_\beta -\lambda) \left( \bbV_\beta \mathcal{I}_\beta^{-1} \mathcal{I}_\bbG \bbV^* \right) \psi.$$ By definition of  $\calQ_\beta(\lambda)$ (cf.\eqref{eqn:def:Qbeta}), if $\psi \in \ker(\tilde{\mathcal{A}}-\lambda \, I)$ and by performing the same computations as in \eqref{alternative:Qb} one has 
\[
0=(\tilde{\mathcal{A}}-\lambda \, I) \psi = \mathcal{I}_\bbG \bbV^* \bbV_\beta \mathcal{I}_\beta^{-1} \calQ_\beta(\lambda) \varphi_\lambda(\psi).
\]
In particular we have $0 = \calQ_\beta(\lambda) \varphi_\lambda(\psi)$ which means that ${\rm ran}(\varphi_\lambda) \subset \ker(\mathcal{Q}_\beta(\lambda))$. Now, remark that actually these two sets are equal. Indeed, take $\bU \in \ker(\mathcal{Q}_\beta(\lambda))$ and set $\psi := \bbV \mathcal{I}_\bbG^{-1} \mathcal{I}_\beta \bbV_\beta^* \tilde{R}_\beta(\lambda) \bU$. Again, by following  the same computations as in \eqref{alternative:Qb} one has $(\tilde{\mathcal{A}}-\lambda) \psi=0$ as well as $\varphi_\lambda(\psi) = \bU$ which proves that $\varphi_\lambda$ is a bijective linear map thus an isomorphism.

Note that $\varphi_\lambda(\ker(\tilde{A}-\lambda \, I )) = \ker(\calQ_\beta(\lambda))$. As $\lambda\neq 0$  (since $0\in \sigma(\mathcal{A}_\beta)$ by Proposition \ref{ess:spectr:beta}), one has also  $\ker(\calQ_\beta(\lambda))= \ker(S\tilde{R}_\beta(\lambda) + \frac1\lambda I)$ which proves the first part of the lemma.

Now, if $\lambda \in \sigma_{\mathrm{dis}}(\mathcal{A})=\sigma_{\mathrm{dis}}(\tilde{\mathcal{A}})$, then,  by Theorem \ref{Main1}, $\lambda\in (-a_{\beta}, a_{\beta}) \setminus \{ 0\}$, and  there exists $\bU \in \ker(S\tilde{R}_\beta(\lambda) + \frac1\lambda I)\setminus \{ 0\}$.  Thus, it implies that $\|\bU\|_{\bL^2_0(\Omega_0)} = |\lambda| \|S \tilde{R}_\beta(\lambda) \bU\|_{\bL^2_0(\Omega_0)}$. In particular, since $\tilde{R}_\beta(\lambda)$ is the resolvent of a self-adjoint operator, we get that
\begin{equation} \label{quotient:S}
1 \leq \frac{|\lambda| \ \|S\|}{\mathrm{dist(\lambda,\sigma(\tilde{\mathcal{A}}_\beta))}},
\end{equation}
where we recall that (by Proposition \ref{ess:spectr:beta}) $\sigma(\tilde{\mathcal{A}}_\beta) = \sigma({\mathcal{A}_\beta})=(-\infty,-a_\beta] \cup \{0\} \cup [a_\beta,+\infty)$.
Assume $\|S\|<1$. Suppose by contradiction  that $|\lambda|\leq a_\beta/2$, then $\mathrm{dist}(\lambda,\sigma(\tilde{\mathcal{A}}_\beta))=|\lambda|$. Then the quotient in the right hand-side of \eqref{quotient:S} is equal to $\|S\|$, hence it is absurd. Therefore necessarily $|\lambda| > a_\beta/2$ in which case  $\mathrm{dist}(\lambda,\sigma(\tilde{\mathcal{A}}_\beta))=a_{\beta}-|\lambda|$ and  the right hand-side in \eqref{quotient:S} reads as follows:
\begin{equation*}
1 \leq \frac{|\lambda| \ \|S\|}{a_\beta - |\lambda|},
\end{equation*}
which yields  $\displaystyle | \lambda|\geq \frac{a_{\beta}}{\|S\|+1}$ and thus the second statement of the lemma.
\end{proof} 

Observe that for $\bU \in \bL_0^2(\Omega_0)$, the operator $S$ defined in \eqref{def:opS} acts as a multiplication operator as follows: 
$$S \bU = \left( \mathds{1}_3 - \bbG_\beta^{1/2} \bbG^{-1}\bbG_\beta^{1/2}\right) \bU.$$
Here $\mathds{1}_3$ is the unit $3\times 3$ matrix. 
The purpose of the following proposition is to show that the operator norm of $S$ is controlled by $\|\kappa\|_{L^\infty(\R)}$ and $\|\theta' - \beta\|_{L^\infty(\R)}$.
In particular if $\kappa$  and $\theta'-\beta$ are small enough in $L^\infty(\R)$-norm we have that $\|S\|<1$ and we can thus apply Proposition \ref{prop:loceivalue}.
Therefore, taking a small  deformation $\Omega$ of the constantly twisted waveguide $\Omega_\beta$ creates possible discrete spectrum in a  continuous way from the thresholds $\pm a_\beta$.
\begin{Pro} \label{cont:creation}
There exists $C > 0$ such that
\begin{equation} \label{estimate:Snormop}
\|S\| \leq C \left( \|\kappa\|_{L^\infty(\R)} + \|\theta'-\beta\|_{L^\infty(\R)} \right).
\end{equation}
Moreover,  in the particular case where $\theta'=\beta$ (in other words when the twist  $\beta\geq 0$ is constant), if one assumes that $ b\|\kappa\|_{L^\infty(\R)} < \frac{1}{2}$, then one has $\|S\|<1$ which implies that \eqref{eq.localisation}  holds and  that $0$ is an isolated eigenvalue in the spectrum  of $\mathcal{A}$.
\end{Pro}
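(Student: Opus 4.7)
My plan exploits the matrix-multiplication structure of $S$. Since $S(\bE,\bH)^\top = (M\bE, M\bH)^\top$ with $M := \mathds{1}_3 - \bbG_\beta^{1/2}\bbG^{-1}\bbG_\beta^{1/2}$, and the weights in the inner product of $\bL^2_0(\Omega_0)$ are the scalars $\varepsilon_0,\mu_0$, a direct computation gives
\[
  \|S\|_{\bL^2_0\to\bL^2_0} \;=\; \sup_{(s,\by)\in\Omega_0} \|M(s,\by)\|_{\mathrm{op}},
\]
where $\|\cdot\|_{\mathrm{op}}$ denotes the Euclidean operator norm on $\R^3$. Everything then reduces to pointwise matrix analysis.

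For the first inequality \eqref{estimate:Snormop}, I would use the algebraic identities
\[
  M \;=\; \bbG_\beta^{1/2}\bigl(\bbG_\beta^{-1}-\bbG^{-1}\bigr)\bbG_\beta^{1/2} \;=\; \bbG_\beta^{-1/2}\bigl(\bbG-\bbG_\beta\bigr)\bbG^{-1}\bbG_\beta^{1/2},
\]
so that submultiplicativity yields $\|M\|_{\mathrm{op}} \leq C\,\|\bbG-\bbG_\beta\|_{\mathrm{op}}$ with $C$ depending only on $\beta,b,\omega$ (since $\bbG_\beta^{\pm 1/2}$ depend on $(\by,\beta)$ in the bounded cross-section and $\bbG^{-1}$ is uniformly bounded by Remark \ref{rem:Ginvertibleunifo}). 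Then I would read off $\|\bbG-\bbG_\beta\|_{L^\infty(\Omega_0,\R^{3\times3})}$ entry by entry from \eqref{matr:G} and \eqref{def:Gbeta}. Writing $\alpha := \mathbf{k^\theta}\cdot\by$, each entry of $\bbG-\bbG_\beta$ is a rational function of $\alpha$ and $\theta'-\beta$ whose denominator $1-\alpha$ is uniformly bounded below by $1-b\|\kappa\|_\infty$, and an elementary simplification bounds it by a constant times $|\alpha|+|\theta'-\beta|\leq b\|\kappa\|_\infty+\|\theta'-\beta\|_\infty$, which gives \eqref{estimate:Snormop}.

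For the moreover part, the key observation is that since $\bbG_\beta^{1/2}\bbG^{-1}\bbG_\beta^{1/2}$ is symmetric positive definite, its operator norm equals its largest eigenvalue, and its spectrum coincides with that of the generalized eigenvalue problem $\bbG_\beta v = \lambda\bbG v$. Consequently
\[
   \|M\|_{\mathrm{op}} < 1 \;\iff\; \text{every such } \lambda \in (0,2) \;\iff\; 0 < \bbG_\beta < 2\bbG \ \text{pointwise}.
\]
The left inequality is automatic from $\bbG_\beta,\bbG>0$. For the right one, with $\theta'=\beta$, I would compute $2\bbG-\bbG_\beta$ explicitly from \eqref{matr:G}: its lower-right $2\times 2$ block equals $\frac{1+\alpha}{1-\alpha}\mathds{1}_2$, positive since $|\alpha|<1$, and a short calculation shows that its Schur complement at the $(1,1)$ entry collapses exactly to the scalar $1-2\alpha$. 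The hypothesis $b\|\kappa\|_\infty<1/2$ then gives $1-2\alpha\geq 1-2b\|\kappa\|_\infty>0$ uniformly in $(s,\by)$, whence by the Schur complement criterion $2\bbG-\bbG_\beta>0$ uniformly, giving $\|S\|<1$. The localization \eqref{eq.localisation} and the isolation of $0$ in $\sigma(\mathcal{A})$ follow directly from Proposition \ref{prop:loceivalue}.

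The only mildly delicate point is recognizing that $\|M\|_{\mathrm{op}}<1$ at a point is equivalent to the pointwise positivity $\bbG_\beta<2\bbG$; once this pivot is in place, the Schur complement miraculously simplifies to the scalar $1-2\alpha$, which matches the threshold $b\|\kappa\|_\infty<1/2$ exactly. All remaining steps are routine algebra and $L^\infty$-bookkeeping.
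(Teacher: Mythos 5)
Your proposal is correct but follows a genuinely different route from the paper, most notably in the ``moreover'' part. The paper establishes both claims by diagonalizing $M = \mathds{1}_3 - \bbG_\beta^{1/2}\bbG^{-1}\bbG_\beta^{1/2}$ explicitly: it writes down the three eigenvalues $\lambda_j(s,\by)$ as functions of $\alpha := \mathbf{k^\theta}\cdot\by$ and $\theta'-\beta$ (formula \eqref{eq.eigenvalues}), bounds each by $C(\|\kappa\|_\infty+\|\theta'-\beta\|_\infty)$ for the first inequality, and in the case $\theta'=\beta$ simplifies them to $\lambda_1=\alpha$ and $\lambda_2=-\frac{\alpha^2+|\alpha|(2-\alpha)}{2(1-\alpha)}$, which it then controls via a single scalar function $f$ that is strictly below $1$ on $(-1/2,1/2)$. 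You instead bypass the eigenvalue formulas entirely: for the first inequality you use the identity $M=\bbG_\beta^{-1/2}(\bbG-\bbG_\beta)\bbG^{-1}\bbG_\beta^{1/2}$ plus submultiplicativity and an entrywise estimate of $\bbG-\bbG_\beta$; for the ``moreover'' part you reformulate $\|M\|<1$ as the positive-definiteness condition $0<\bbG_\beta<2\bbG$ and verify it by a Schur complement computation. The observation that the Schur complement of the lower-right block of $2\bbG-\bbG_\beta$ collapses to the scalar $1-2\alpha$ (which I checked, and which is consistent with $\det\bbG_\beta/\det\bbG = 1-\alpha$) is a clean way to read off the threshold $b\|\kappa\|_\infty<1/2$ without touching the square roots in \eqref{eq.eigenvalues}. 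What the paper's route buys is an explicit sharp bound $\|S\|\le \sup_{|x|<b\|\kappa\|_\infty} f(x)$, which feeds directly into the localization statement; your route is shorter and more conceptual.

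One point you should tighten: the claim $\|M\|_{\mathrm{op}}<1$ pointwise is not the same as $\|S\|=\sup_{(s,\by)}\|M(s,\by)\|_{\mathrm{op}}<1$. Your uniform Schur complement bound $1-2\alpha\ge 1-2b\|\kappa\|_\infty>0$, together with $\frac{1+\alpha}{1-\alpha}\ge\frac{1/2}{3/2}$ and the uniform $L^\infty$ bound on the entries of $2\bbG-\bbG_\beta$, does give $2\bbG-\bbG_\beta\ge c\,\mathds{1}_3$ with $c>0$ independent of $(s,\by)$, hence the generalized eigenvalues are uniformly bounded away from $2$ (using $\bbG\le C\,\mathds{1}_3$ from Remark~\ref{rem:Ginvertibleunifo}). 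But your remark that the lower bound ``is automatic from $\bbG_\beta,\bbG>0$'' only gives the pointwise $\lambda>0$; for $\sup\|M\|<1$ you also need $\lambda$ uniformly bounded away from $0$, which follows from $\bbG_\beta\ge c'\mathds{1}_3$ (uniform because $\by$ ranges over the bounded $\overline{\omega}$) and $\bbG^{-1}\ge C^{-1}\mathds{1}_3$. This is routine, but it should be stated rather than dismissed as automatic.
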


\begin{proof}[Proof of Proposition \ref{cont:creation}]
Recall the expressions \eqref{def:Gbeta} for $\bbG_\beta$  and \eqref{matr:G} for $\bbG$, as well as the definition  \eqref{eqn:defktheta} of $\mathbf{k^\theta}$.
Set $M:=\mathds{1}_3-\bbG_\beta^{1/2}\bbG^{-1}\bbG_\beta^{1/2}$. Then $M$ is real symmetric matrix field  and observe that its  eigenvalues $\{\lambda_j\}_{j=1,2,3}$ (not in any particular order) are given by
\begin{eqnarray}\label{eq.eigenvalues}
    &&  \lambda_1(s,\by) = \mathbf{k^\theta} \cdot \by;  \\[5pt]
      &&  \lambda_2(s,\by)  = -\frac{1}{2(1-\mathbf{k^\theta} \cdot \by)} \bigg( (\mathbf{k^\theta} \cdot \by)^2 + (\beta-\theta')^2 |\by|^2 \nonumber  \\
        & &\qquad \qquad \qquad  \qquad \qquad   + \sqrt{\left( (\mathbf{k^\theta} \cdot \by)^2 + (\beta-\theta')^2 |y|^2 \right) \left( (2- \mathbf{k^\theta} \cdot \by )^2 + (\beta-\theta')^2 |\by|^2 \right)}  \bigg);  \nonumber \\
     &&   \lambda_3(s,\by)  = -\frac{1}{2(1-\mathbf{k^\theta} \cdot \by)} \bigg( (\mathbf{k^\theta} \cdot \by)^2 + (\beta-\theta')^2 |\by|^2 \nonumber \\
        && \qquad \qquad \qquad  - \sqrt{\left( (\mathbf{k^\theta} \cdot \by)^2 + (\beta-\theta')^2 |\by|^2 \right) \left( (2-\mathbf{k^\theta} \cdot \by )^2 + (\beta-\theta')^2 |\by|^2 \right)}  \bigg) \nonumber.
\end{eqnarray}
Recalling assumption \eqref{assumpt:a:kappa} and that $|\mathbf{k^\theta}|=\kappa$, the curvature of the base curve $\Gamma$, it is immediate to see that for all $j=1,2,3$
\begin{equation*}
    |\lambda_j(s, \by)| \leq C \left( \|\kappa\|_{L^\infty(\R)} + \|\theta'-\beta\|_{L^\infty(\R)} \right), \quad \forall (s,\by)\in \Omega_0,
\end{equation*}
where $C>0$ depends uniformly on $\sup_{\by \in \omega} |\by|^2$, $\sup_{s \in \R} \kappa(s)$ and dimensional constants.
Therefore, one has 
\begin{equation}\label{eq.bornSeigen}
\|S \|\leq \sup_{(s,\by)\in \Omega_0}\|M(s,\by)\| = \sup_{(s,\by)\in \Omega_0} \max(|\lambda_1(s,\by) |, |\lambda_2(s,\by) ) |\leq C \left( \|\kappa\|_{L^\infty(\R)} + \|\theta'-\beta\|_{L^\infty(\R)} \right)%\\
%\sup_{(s,\by)\in \Omega_0}\|M(s,\by)\| =\sup_{(s,\by)\in \Omega_0}\max\big(|\lambda_1(s,\by) |, |\lambda_3(s,\by) | \big)
\end{equation}
where $\|M(s,\by)\|$ stands here for the spectral norm of the  $3\times 3$  matrix $M(s,\by)$.\\ %, and consequently the operator norm of $S$, is controlled by the right hand-side of \eqref{estimate:Snormop}.\\

%\noindent  By \eqref{eq.eigenvalues}, one has  
%$$
%\|S\|=\sup_{(s,\by)\in \Omega_0}\|M(s,\by)\| =\sup_{(s,\by)\in \Omega_0}\max\big(|\lambda_1(s,\by) |, |\lambda_3(s,\by) | \big)
%$$
In the particular case where $\theta'=\beta$, the expression  \eqref{eq.eigenvalues}  of the eignevalues $\lambda_1(s,\by)$ and $\lambda_2(s,\by)$  simplifies to 
\begin{equation}\label{eq.eigenvaluesconsttwist}
  \lambda_1 (s, \by)= \mathbf{k^\theta} \cdot \by \quad \mbox{  and }  \quad     \lambda_2(s, \by)=-\frac{1}{2(1-\mathbf{k^\theta} \cdot \by)} \big((\mathbf{k^\theta} \cdot \by)^2+ |\mathbf{k^\theta} \cdot \by| \, (2-\mathbf{k^\theta} \cdot \by ) \big)
\end{equation}
(where we have used   that  $2-\mathbf{k^\theta} \cdot \by >0$ by virtue of   \eqref{assumpt:a:kappa}). Moreover, one has  $\mathbf{k^\theta} \cdot \by\in  {\rm I}:=(- b\|\kappa\|_{L^\infty(\R)} ,  b\|\kappa\|_{L^\infty(\R)})$ for all $(s,\by)\in \Omega_0$.   Thus, as one  assumes  here  that $ b\|\kappa\|_{L^\infty(\R)} < \frac{1}{2}$,  the segment $\overline{\rm I}$ is included in $ (-1/2,1/2) $. Thus, one obtains from \eqref{eq.bornSeigen} and \eqref{eq.eigenvaluesconsttwist}  that
$$
\|S \| \leq \sup_{x\in I} f(x)< f\Big(\frac{1}{2}\Big)=1 \mbox{ where } f(x):=\max\left(|x|,  \frac{1}{2(1-x)} \big(x^2+  |x| \, (2-x) \big) \right) \mbox{ on } \Big(-\frac{1}{2}, \frac{1}{2}\Big).
$$
Hence, by applying Proposition \ref{cont:creation}, one concludes   that \eqref{eq.localisation}  holds and  that $0$ is an isolated eigenvalue in $\sigma(\mathcal{A})$.
\end{proof}
\noindent Propositions \ref{prop:loceivalue} and \ref{cont:creation} yield the following Poincar\'e inequality.
\begin{Cor}\label{cor:poincare}{(A Poincar\'e-type inequality)}
If $\|S\|<1$ then there exists $C > 0$ such that for all $\bU = (\bE,\bH)^\top \in D(\hat{\mathcal{A}})\cap \ker(\hat{\mathcal{A}})^\perp=\big(H_0(\curlvec,\Omega_0)\times H(\curlvec,\Omega_0) \big) \cap \mathcal{J}_{\beps,\bmu}(\Omega_0)$ there holds
\begin{equation}\label{eq.Poincare}
    \|\bU\|_{\bL_{\beps,\bmu}^2(\Omega_0)} \leq C \|\hat{\mathcal{A}}\bU\|_{\bL_{\beps,\bmu}^2(\Omega_0)}.
\end{equation}
Thus, in particular, if  $\theta'=\beta$  and   $ b\|\kappa\|_{L^\infty(\R)} < \frac{1}{2}$ then the inequality \eqref{eq.Poincare} holds.
\end{Cor}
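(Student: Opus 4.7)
The plan is to transfer the inequality to the original Maxwell operator $\mathcal{A}$ via unitary equivalence and then apply the spectral theorem, exploiting the spectral gap provided by Proposition~\ref{prop:loceivalue}.

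First, recall from Proposition~\ref{prop:unitequivpiola} and Equation~\eqref{def:Ahat} that $\hat{\mathcal{A}} = \bbU \mathcal{A} \bbU^*$ where $\bbU$ is unitary from $\bL^2_0(\Omega)$ to $\bL^2_\bbG(\Omega_0) = \bL^2_{\beps,\bmu}(\Omega_0)$. Consequently $\ker(\hat{\mathcal{A}}) = \bbU(\ker(\mathcal{A}))$, the spectra coincide, and for any $\bU \in D(\hat{\mathcal{A}}) \cap \ker(\hat{\mathcal{A}})^{\perp}$ the vector $\bU_0 := \bbU^* \bU$ lies in $D(\mathcal{A}) \cap \ker(\mathcal{A})^{\perp}$ with $\|\bU\|_{\bL^2_{\beps,\bmu}(\Omega_0)} = \|\bU_0\|_{\bL^2_0(\Omega)}$ and $\|\hat{\mathcal{A}} \bU\|_{\bL^2_{\beps,\bmu}(\Omega_0)} = \|\mathcal{A} \bU_0\|_{\bL^2_0(\Omega)}$. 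Hence it suffices to establish the corresponding inequality for $\mathcal{A}$ on $D(\mathcal{A}) \cap \ker(\mathcal{A})^{\perp}$.

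Second, I combine the spectral information on $\mathcal{A}$. By Theorem~\ref{Main1},
\[
\sigma_{\mathrm{ess}}(\mathcal{A}) = (-\infty,-a_\beta]\cup\{0\}\cup[a_\beta,+\infty),
\]
so $\sigma(\mathcal{A}) \cap (-a_\beta,a_\beta)$ consists of $\{0\}$ together with possibly some isolated eigenvalues of finite multiplicity. The assumption $\|S\| < 1$ then activates the second statement of Proposition~\ref{prop:loceivalue}: any $\lambda \in \sigma_{\mathrm{dis}}(\mathcal{A})$ satisfies $|\lambda| \geq a_\beta/(\|S\|+1)$, and $0$ is an isolated point of $\sigma(\mathcal{A})$. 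Setting $\eta := a_\beta/(\|S\|+1) > 0$, this yields the clean spectral picture
\[
\sigma(\mathcal{A}) \cap (-\eta,\eta) \subseteq \{0\}.
\]

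Third, since $\mathcal{A}$ is self-adjoint on $\bL^2_0(\Omega)$, let $\{E_\lambda\}$ be its spectral resolution and let $P_0 = E(\{0\})$ be the orthogonal projection onto $\ker(\mathcal{A})$. For $\bU_0 \in D(\mathcal{A}) \cap \ker(\mathcal{A})^{\perp}$ one has $P_0 \bU_0 = 0$, so the spectral measure of $\bU_0$ is supported in $\sigma(\mathcal{A}) \setminus \{0\} \subseteq \{|\lambda| \geq \eta\}$. The spectral theorem then gives
\begin{equation*}
\|\mathcal{A} \bU_0\|_{\bL^2_0(\Omega)}^2 = \int_{|\lambda|\geq \eta} \lambda^2 \, d\|E_\lambda \bU_0\|^2 \geq \eta^2 \int_{|\lambda| \geq \eta} d\|E_\lambda \bU_0\|^2 = \eta^2 \|\bU_0\|_{\bL^2_0(\Omega)}^2,
\end{equation*}
which, after transferring back via $\bbU$, yields \eqref{eq.Poincare} with $C = 1/\eta = (\|S\|+1)/a_\beta$. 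Finally, the particular case $\theta' = \beta$ and $b\|\kappa\|_{L^\infty(\R)} < 1/2$ follows immediately, because Proposition~\ref{cont:creation} guarantees $\|S\| < 1$ under these hypotheses. There is no real obstacle here beyond making sure the unitary equivalence and the kernel description of Proposition~\ref{prop:kerneldescription} are correctly used to identify $D(\hat{\mathcal{A}}) \cap \ker(\hat{\mathcal{A}})^{\perp}$ with $\big(H_0(\curlvec,\Omega_0) \times H(\curlvec,\Omega_0)\big) \cap \mathcal{J}_{\beps,\bmu}(\Omega_0)$.
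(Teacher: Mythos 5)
Your proof is correct and follows essentially the same route as the paper's: both hinge on Proposition~\ref{prop:loceivalue} to conclude (under $\|S\|<1$) that $0$ is an isolated eigenvalue, so that the reduced operator on $\ker(\hat{\mathcal{A}})^\perp$ has $0$ in its resolvent set, and both invoke Proposition~\ref{cont:creation} for the final sufficient condition. The paper states this abstractly as boundedness of $(\hat{\mathcal{A}}_\perp)^{-1}$, whereas you apply the spectral theorem to the unitarily equivalent operator $\mathcal{A}$ and thereby extract the explicit constant $C = (\|S\|+1)/a_\beta$ from the localization bound \eqref{eq.localisation} — a small quantitative improvement, but the same argument in substance.
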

\begin{proof}[Proof of Corollary \ref{cor:poincare}]
We recall that $\hat{\mathcal{A}}$ is the operator defined in Proposition \ref{prop:unitequivpiola} which is unitarily equivalent to $\mathcal{A}$. Moreover, by Proposition \ref{prop:kerneldescription}, 
$\hat{\mathcal{A}}$ has an infinite dimensional kernel $\ker(\hat{\mathcal{A}}) = \overline{\mathcal{G}(\Omega_0)}$  and  $\ker(\hat{\mathcal{A}})^\perp = \mathcal{J}_{\beps,\bmu}(\Omega_0)$.
Thus, by Proposition \ref{prop:loceivalue},  if $\|S\|<1$ then $0$ is an isolated eigenvalue of $\sigma(\hat{\mathcal{A}})=\sigma(\mathcal{A})$. Therefore, the self-adjoint  operator $\hat{\mathcal{A}}_\perp := \hat{\mathcal{A}}|_{\ker(\hat{\mathcal{A}})^\perp}$ defined  on  $D(\hat{\mathcal{A}}_\perp) = D(\hat{\mathcal{A}})\cap \ker(\hat{\mathcal{A}})^\perp$ is invertible  and   $(\hat{\mathcal{A}}_\perp)^{-1}\in \mathcal{B}\big(\bL^2_{\beps,\bmu}(\Omega_0) \big)$. Hence, there exists $C>0$ such that 
$$
\|(\hat{\mathcal{A}}_\perp)^{-1}\bV \|_{\bL^2_{\beps,\bmu}(\Omega_0)} \leq C \| \bV \|_{\bL^2_{\beps,\bmu}(\Omega_0)}  \quad \forall \bV\in \bL^2_{\beps,\bmu}(\Omega_0)
$$
Therefore, setting $\bU:=(\hat{\mathcal{A}}_\perp)^{-1}\bV$, it follows,  by the invertibility of $\hat{\mathcal{A}}_\perp$, that 
\begin{equation*}
    \|\bU\|_{\bL_{\beps,\bmu}^2(\Omega_0)} \leq C \|\hat{\mathcal{A}}_{\perp }\bU\|_{\bL_{\beps,\bmu}^2(\Omega_0)}= \|\hat{\mathcal{A}}\bU\|_{\bL_{\beps,\bmu}^2(\Omega_0)}  \quad  \forall \bU\in  D(\hat{\mathcal{A}}_\perp)=D(\hat{\mathcal{A}})\cap \ker(\hat{\mathcal{A}})^\perp .
\end{equation*}
which is precisely the  Poincar\'e inequality \eqref{eq.Poincare} since $D(\hat{\mathcal{A}})\cap \ker(\hat{\mathcal{A}})^\perp=\big(H_0(\curlvec,\Omega)\times H(\curlvec,\Omega) \big) \cap \mathcal{J}_{\beps,\bmu}(\Omega_0) $. \\[4pt]
Furthermore, by Proposition \ref{cont:creation}, one has $\|S\|<1$ if  $\theta'=\beta$ and $ b\|\kappa\|_{L^\infty(\R)} < \frac{1}{2}$. This concludes the proof.
\end{proof}
\begin{Rem}\label{Rem-Poincare}
From the Corollary  \ref{cor:poincare}, the  unitary equivalence \eqref{def:Ahat} between  the operators $\mathcal{A}$ and $\hat{\mathcal{A}}$ and the Proposition \ref{prop:piolaproperties}, it clearly follows (under the same assumptions as in   Corollary  \ref{cor:poincare}) a  Poincar\'e-type inequality in the waveguide  $\Omega$:  there exists $C>0$ such that
\begin{equation*}
    \|\bU\|_{\bL_0^2(\Omega)} \leq C \|\mathcal{A}\bU\|_{\bL_0^2(\Omega)}, \, \forall \bU\in\big(H_0(\curlvec,\Omega)\times H(\curlvec,\Omega) \big) \cap \mathcal{J}_{\varepsilon_0  \mathds{1}_3, \mu_0  \mathds{1}_3}(\Omega) .
\end{equation*}

\end{Rem}

\subsection{Existence of eigenvalues} \label{sect:exisEV}

\subsubsection{Outlines of the proof and preliminaries}
In this section, we prove Theorem \ref{thm:discspec} about the existence of discrete spectrum for the self-adjoint operator $\mathcal{A}$ under adequate geometric conditions. The fact that its discrete spectrum is symmetric with respect  to the origin is a classical property of Maxwell operators  (see Proposition \ref{prop.eigensym} in the appendix section \ref{appendix:specMaxwell}.)

Recall that $0\in \sigma_{\mathrm{ess}}(\mathcal{A})=(-\infty,-a_\beta] \cup \{0\} \cup [a_\beta,+\infty)$  with $a_{\beta}>0$ (see Theorem \ref{Main1}) and is an eigenvalue of infinite multiplicity of $\mathcal{A}$ (see Proposition \ref{prop:unitequivpiola} and Proposition \ref{prop:kerneldescription}). Thus, if $0$ is  not an isolated eigenvalue of $\mathcal{A}$, there exists a sequence $(\lambda_n)_{n\in \mathbb{N}}$ of discrete eigenvalues of $\mathcal{A}$ converging to $0$. Hence, in this case $\sigma_{\operatorname{dis}}(\mathcal{A})$ is not empty and has even  an infinite cardinality. Thus, Theorem \ref{thm:discspec} obviously holds.
Therefore, for the remainder of the proof, we focus only on the case where
$$
0 \mbox{ is an isolated eigenvalue in }\sigma(\mathcal{A}) .
$$
We note that, if one assumes that $\theta'=0$ (i.e. that there is no twist) as in Theorem \ref{thm:discspec},  this latter property  is always satisfied for instance when  $b \|\kappa\|_{\|\kappa\|_{L^\infty(\R)}}<1/2$ (see Proposition \ref{cont:creation}).

The strategy of the proof is to work with the quadratic form associated with the operator $\hat{\mathcal{A}}^2$, where $\hat{\mathcal{A}}$ is the operator defined in Proposition \ref{prop:unitequivpiola} (which is unitarily  equivalent to $\mathcal{A}$).  However, as the kernel of $\hat{\mathcal{A}}$ is infinite dimensional and given by the closure of the space of gradient fields $\mathcal{G}(\Omega_0)$ defined in \eqref{eqn:defgradientfields}), one must instead work with the reduced self-adjoint  operator $\hat{\mathcal{A}}_\perp := \hat{\mathcal{A}}|_{\ker(\hat{\mathcal{A}})^\perp}$ defined on the Hilbert space $ \ker(\hat{\mathcal{A}})^\perp=\mathcal{J}_{\beps,\bmu}(\Omega_0)$ (a closed subspace of $\bL_0^2(\Omega_0)$ endowed with the $\bL_0^2(\Omega_0)$-inner product). The domain of  $\hat{\mathcal{A}}_\perp$ is given (see Propositions \ref{prop:unitequivpiola}  and  \ref{prop:kerneldescription}) by $$D(\hat{\mathcal{A}}_\perp) = D(\hat{\mathcal{A}})\cap \ker(\hat{\mathcal{A}})^\perp=\big(H_0(\curlvec,\Omega_0)\times H(\curlvec,\Omega_0) \big) \cap \mathcal{J}_{\beps,\bmu}(\Omega_0).$$
%Thus, $0\in \sigma_{\mathrm{ess}}(\hat{\mathcal{A}})$ and one must instead work with the operator $\hat{\mathcal{A}}_\perp := \hat{\mathcal{A}}|_{\ker(\hat{\mathcal{A}})^\perp}$, with domain $D(\hat{\mathcal{A}}_\perp) = D(\hat{\mathcal{A}})\cap \ker(\hat{\mathcal{A}})^\perp$. %Hence, in \S \ref{subsubsec:kernel}, \todo{missing reference} we investigate further the kernel of $\hat{\mathcal{A}}$ and its orthogonal and define the quadratic form associated with the operator $(\hat{\mathcal{A}}_\perp)^2$.
\S \ref{subsubsec:proofthmdiscspec} is about the proof of Theorem \ref{thm:discspec} and uses the standard min-max technique with the quadratic form of the self-adjoint operator $(\hat{\mathcal{A}}_\perp)^2$. The goal is to build an adequate trial function which has energy strictly below the positive  threshold of the essential spectrum of $(\hat{\mathcal{A}}_\perp)^2$, yielding the existence of an eigenvalue of finite multiplicity for $\hat{\mathcal{A}}$ (and thus for $\mathcal{A}$). Note that the main difficulty is to obtain a trial function belonging to $\ker(\hat{\mathcal{A}})^\perp$. Finally in \S \ref{subsubsub:delta} we construct a family of waveguides for which there is always discrete Maxwell eigenvalues.

Let us consider the sesquilinear form acting in the Hilbert space $\mathcal{J}_{\beps,\bmu}(\Omega_0)$ endowed with the scalar product of $\bL_{\bbG}^2(\Omega_0)$ and defined as

\begin{equation*}
\begin{split}
q \left[(\bE_1,\bH_1)^\top,(\bE_2,\bH_2)^\top\right] &:= \left( \redA  (\bE_1,\bH_1)^\top, \redA (\bE_2,\bH_2)^\top  \right)_{\bL^2_\bbG(\Omega_0)} \\
&= \int_{\Omega_0}\bmu^{-1} \curlvec \bE_1 \cdot \curlvec \overline{\bE}_2 +  \int_{\Omega_0} \beps^{-1} \curlvec \bH_1 \cdot \curlvec \overline{\bH}_2
\end{split}
\end{equation*}
with
\[
(\bE_1,\bH_1)^\top,(\bE_2,\bH_2)^\top \in D(q) := D(\redA).
\]
The following lemma holds.
\begin{Lem}\label{lem:fqofsquare} We have:
\begin{enumerate}[label=\normalfont(\roman*)]
	\item\label{itm:1fqofsquare} $q$ is a closed, densely defined, symmetric and bounded below form.
	\item\label{itm:2fqofsquare} $q$ is the quadratic form associated with the operator $(\redA)^2$.
\end{enumerate}
\end{Lem}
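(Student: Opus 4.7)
The plan is to derive both statements from the self-adjointness of $\redA$ combined with standard facts about quadratic forms of non-negative self-adjoint operators.

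First, I would verify that $\redA$ is self-adjoint on the closed subspace $\mathcal{J}_{\beps,\bmu}(\Omega_0) = \ker(\hat{\mathcal{A}})^\perp$ of $\bL^2_{\beps,\bmu}(\Omega_0)$. Since $\hat{\mathcal{A}}$ is self-adjoint by Proposition \ref{prop:unitequivpiola}, its kernel is automatically a reducing subspace: the orthogonal projection onto $\ker(\hat{\mathcal{A}})$ commutes with the spectral resolution of $\hat{\mathcal{A}}$. Hence the part of $\hat{\mathcal{A}}$ on $\ker(\hat{\mathcal{A}})^\perp$, which is precisely $\redA$, is self-adjoint on $\mathcal{J}_{\beps,\bmu}(\Omega_0)$ with domain $D(\redA) = D(\hat{\mathcal{A}}) \cap \ker(\hat{\mathcal{A}})^\perp$, as already anticipated in the discussion preceding the lemma.

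Next, I would prove (ii). The operator $(\redA)^2$ is non-negative and self-adjoint, and by functional calculus its positive square root $|\redA| := ((\redA)^2)^{1/2}$ satisfies $D(|\redA|) = D(\redA)$. The canonical closed quadratic form attached to the non-negative self-adjoint operator $(\redA)^2$ has form domain $D(((\redA)^2)^{1/2}) = D(\redA)$ and is given by
\[
q_{(\redA)^2}(u,v) = (|\redA| u,\, |\redA| v)_{\bL^2_\bbG(\Omega_0)}.
\]
By the spectral theorem applied to $\redA$, for all $u,v \in D(\redA)$ one has
\[
(|\redA| u,\, |\redA| v)_{\bL^2_\bbG(\Omega_0)} = \int_{\bbR} |\lambda|^2 \, d(E_{\redA}(\lambda) u, v)_{\bL^2_\bbG(\Omega_0)} = (\redA u,\, \redA v)_{\bL^2_\bbG(\Omega_0)} = q(u,v),
\]
where $E_{\redA}$ denotes the spectral measure of $\redA$. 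Thus $q = q_{(\redA)^2}$, which is (ii).

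Statement (i) then follows at no additional cost: the canonical form of a non-negative self-adjoint operator is closed, symmetric, and bounded below by $0$; density of $D(q) = D(\redA)$ in $\mathcal{J}_{\beps,\bmu}(\Omega_0)$ is a standard consequence of the self-adjointness of $\redA$. The only genuinely non-trivial step is the self-adjointness of $\redA$ on $\mathcal{J}_{\beps,\bmu}(\Omega_0)$; once this is established the rest is a direct application of spectral calculus, so this is the point I would treat with the most care.
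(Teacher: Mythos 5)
Your proof is correct and takes essentially the same approach as the paper: both rest on the self-adjointness of $\redA$ and the standard representation theory of quadratic forms associated with non-negative self-adjoint operators. The only cosmetic difference is that the paper invokes \cite[Thm.~2.1 \& Thm.~2.6, Ch.~6 \S 2]{kato} as a black box for (ii), whereas you unpack that citation via the spectral theorem and the identity $D(((\redA)^2)^{1/2}) = D(\redA)$; the content is identical.
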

\begin{proof}[Proof of Lemma \ref{lem:fqofsquare}]
Point \ref{itm:1fqofsquare} is immediate once noticing that $\redA$ is a self-adjoint  injective operator acting in the Hilbert space $\mathcal{J}_{\beps,\bmu}(\Omega_0)$. Concerning Point \ref{itm:2fqofsquare}, it is a consequence of the general fact that if $\mathscr{L}$ is a self-adjoint operator on a separable Hilbert space $X$, the unique sesquilinear form associated with $\mathscr{L}^2$ by \cite[Thm. 2.1 \& Thm. 2.6 Ch. 6 \S 2]{kato} is given by
\[
	\mathscr{Q}[x,y] = \langle\mathscr{L}x,\mathscr{L}y\rangle_X,\quad x,y \in D(\mathscr{Q}) = D(\mathscr{L}).
\]
\end{proof}
Finally, we end this paragraph by the following lemma about the structure of the spectrum of $(\redA)^2$.
\begin{Lem}\label{lem:specsquarereduced} Assume that $0$ is an isolated eigenvalue of $\hat{\mathcal{A}}$. The following holds.
\begin{enumerate}[label=\normalfont(\roman*)]
	\item\label{itm:1specsquarereduced} Let $\lambda \in (0,+\infty)$. Then one has
\begin{equation*}
\lambda \text{ is an eigenvalue of } (\redA)^2  \iff \pm \sqrt{\lambda} \text{ are eigenvalues of } \redA.
\end{equation*}
	\item\label{itm:2specsquarereduced} One has $\sigma_{\mathrm{ess}}((\redA)^2) = [a_\beta^2, +\infty)$.
\end{enumerate}
\end{Lem}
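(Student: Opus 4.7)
The strategy is to exploit the self-adjointness of $\redA$ together with the spectrum structure of $\hat{\mathcal{A}}$ already obtained in Theorem \ref{Main1}, and to transfer this information to $(\redA)^2$ by means of the factorization $(\redA)^2 - \lambda = (\redA - \sqrt{\lambda})(\redA + \sqrt{\lambda})$ and the spectral mapping theorem for the essential spectrum.

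\textbf{Proof of \ref{itm:1specsquarereduced}.} The implication ``$\Leftarrow$'' is immediate: if $u_\pm \in D(\redA)$ satisfy $\redA u_\pm = \pm\sqrt{\lambda}\, u_\pm$, then $u_\pm \in D((\redA)^2)$ and $(\redA)^2 u_\pm = \lambda\, u_\pm$. For the reverse implication, let $u \in D((\redA)^2)\setminus \{0\}$ with $(\redA)^2 u = \lambda u$ and set $v := (\redA + \sqrt{\lambda})u \in D(\redA)$. The factorization gives $(\redA - \sqrt{\lambda}) v = ((\redA)^2 - \lambda) u = 0$. If $v \neq 0$, then $\sqrt{\lambda}$ is an eigenvalue of $\redA$; if $v = 0$, then $\redA u = -\sqrt{\lambda}\, u$ so $-\sqrt{\lambda}$ is an eigenvalue. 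In both cases, at least one of $\pm\sqrt{\lambda}$ is an eigenvalue of $\redA$, hence of $\hat{\mathcal{A}}$, hence of $\mathcal{A}$ (via the unitary equivalence of Proposition \ref{prop:unitequivpiola}). The symmetry of the spectrum of $\mathcal{A}$ with respect to the origin (Proposition \ref{prop.eigensym} in Appendix \ref{appendix:specMaxwell}) then yields that the other of $\pm\sqrt{\lambda}$ is also an eigenvalue of $\mathcal{A}$, and being nonzero it belongs to the spectrum of $\redA$ as well.

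\textbf{Proof of \ref{itm:2specsquarereduced}.} The plan is in two steps: first identify $\sigma_{\mathrm{ess}}(\redA)$, then apply the spectral mapping theorem. By Proposition \ref{prop:kerneldescription}, $\bL^2_\bbG(\Omega_0) = \ker(\hat{\mathcal{A}}) \oplus \ker(\hat{\mathcal{A}})^\perp$ and $\hat{\mathcal{A}}$ decomposes as the orthogonal sum of the zero operator on $\ker(\hat{\mathcal{A}})$ and $\redA$ on $\ker(\hat{\mathcal{A}})^\perp$. Consequently $\sigma(\hat{\mathcal{A}}) = \{0\} \cup \sigma(\redA)$. Since by assumption $0$ is an isolated point of $\sigma(\hat{\mathcal{A}})$, the point $0$ cannot belong to $\sigma(\redA)$ (otherwise it would be an accumulation point of $\sigma(\hat{\mathcal{A}})$). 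Combined with Theorem \ref{Main1}, which gives $\sigma_{\mathrm{ess}}(\hat{\mathcal{A}}) = (-\infty,-a_\beta]\cup\{0\}\cup[a_\beta,+\infty)$, one deduces
\[
\sigma_{\mathrm{ess}}(\redA) = (-\infty,-a_\beta]\cup[a_\beta,+\infty).
\]
Applying the spectral mapping theorem for the essential spectrum of self-adjoint operators to the continuous function $f(x)=x^2$ (see e.g.\ \cite[Thm. 9.11]{birsol} or the standard spectral calculus), one finally obtains
\[
\sigma_{\mathrm{ess}}((\redA)^2) = f\bigl(\sigma_{\mathrm{ess}}(\redA)\bigr) = [a_\beta^2,+\infty).
\]
No step is delicate here: the only subtle point is to rule out $0 \in \sigma(\redA)$, which uses the standing hypothesis that $0$ is isolated in $\sigma(\hat{\mathcal{A}})$ (a hypothesis that is legitimate in view of the dichotomy recalled at the beginning of \S \ref{sect:exisEV}).
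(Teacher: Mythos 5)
Your proposal is correct, but Point \ref{itm:1specsquarereduced} is argued by a genuinely different route than the paper's. The paper proves the implication ``$\Rightarrow$'' constructively, using the Maxwell block structure: from an eigenvector $(\bE,\bH)^\top$ of $(\redA)^2$ it builds the explicit pair $(\bE, \mp \tfrac{\rmi}{\sqrt{\lambda}}\bmu^{-1}\curlvec\bE)$ and checks directly that this is an eigenvector of $\redA$ for $\pm\sqrt\lambda$. You instead use the abstract factorization $(\redA)^2-\lambda=(\redA-\sqrt\lambda)(\redA+\sqrt\lambda)$ to extract one of $\pm\sqrt\lambda$ as an eigenvalue, then invoke the spectral symmetry of the Maxwell operator (Proposition~\ref{prop.eigensym}) and the unitary equivalences to recover the other sign. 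Both approaches are sound; yours is more operator-theoretic and would transfer to any self-adjoint operator whose spectrum is symmetric about $0$, while the paper's is more concrete and stays inside the Maxwell framework. For Point \ref{itm:2specsquarereduced} your strategy coincides with the paper's.

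Two small issues in your write-up of Point \ref{itm:2specsquarereduced}. First, the parenthetical justification ``(otherwise it would be an accumulation point of $\sigma(\hat{\mathcal{A}})$)'' is not correct as stated: having $0\in\sigma(\redA)$ alone does not force $0$ to be an accumulation point. The missing ingredient is the injectivity of $\redA$, which the paper invokes explicitly: since $0$ is isolated in $\sigma(\hat{\mathcal{A}})\supset\sigma(\redA)$, if $0\in\sigma(\redA)$ then $0$ would be an isolated point of $\sigma(\redA)$, hence an eigenvalue (for a self-adjoint operator), contradicting injectivity of $\redA=\hat{\mathcal{A}}|_{\ker(\hat{\mathcal{A}})^\perp}$. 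You should make this step explicit. Second, the reference \cite[Thm.\ 9.11]{birsol} for the spectral mapping theorem is spurious: \cite{birsol} is Birman--Solomyak's short article on the self-adjoint Maxwell operator and contains no such theorem. The fact $\sigma_{\mathrm{ess}}(T^2)=\sigma_{\mathrm{ess}}(T)^2$ for a self-adjoint $T$ is standard and can be stated without a citation (as the paper does) or cited from a functional-analysis text.
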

\begin{proof}[Proof of Lemma \ref{lem:specsquarereduced}] Let us start by proving Point \ref{itm:1specsquarereduced}.
Let $(\bE,\bH) \in D((\redA)^2)$ be an eigenvector of $(\redA)^2$ associated with $\lambda > 0$. Then $(\bE, \mp \frac{\rmi}{\sqrt{\lambda}} \bmu^{-1} \curlvec \bE)$ belongs to $D(\redA)$ and it is an eigenvector of $\redA$ (and $\hat{\mathcal{A}}$) associated to $\pm \sqrt{\lambda}$.
Note that we can also choose $(\pm \frac{\rmi}{\sqrt{\lambda}} \beps^{-1} \curlvec \bH,\bH)$. The other implication is obvious.

Now let us deal with Point \ref{itm:2specsquarereduced}. By definition of the operator $\hat{\mathcal{A}}$ (see Proposition \ref{prop:unitequivpiola}) and  Theorem \ref{Main1}, one has $\sigma_{\mathrm{ess}}(\hat{\mathcal{A}}) = (-\infty,a_\beta] \cup  \{0 \}\cup (-\infty,a_\beta]$. Moreover, on the one hand $0$ is an isolated eigenvalue of $\hat{\mathcal{A}}$ and on the other hand,  the reduced self-adjoint operator $\redA= \hat{\mathcal{A}}|_{\ker(\hat{\mathcal{A}})^\perp}$ is injective by construction. Thus, the essential spectrum of  $\redA$ is deduced from the one of $\hat{\mathcal{A}}$ by removing $0$. In other words, one has  $\sigma_{\mathrm{ess}}(\redA) = \sigma_{\mathrm{ess}}(\hat{\mathcal{A}})\setminus \{ 0\} = (-\infty,-a_\beta]  \cup [a_\beta,+\infty)$.
 Then, Point \ref{itm:2specsquarereduced}  is a consequence of the standard fact that for self-adjoint operators one has $\sigma_{\mathrm{ess}}((\redA)^2) = \sigma_{\mathrm{ess}}((\redA))^2$.
\end{proof}
\subsubsection{Proof of Theorem \ref{thm:discspec}}\label{subsubsec:proofthmdiscspec}
The goal of this paragraph is to prove Theorem  \ref{thm:discspec}. In order to prove it, we start by recalling the min-max principle. To this aim, we need the following definition.
\begin{Def} Let $\mathcal{Q}$ be a closed and bounded-below quadratic form with dense domain $D(\mathcal{Q})$ in a complex Hilbert space $\mathcal{H}$. For $n \in \mathbb{N}$, the $n$-th min-max value of $\mathcal{Q}$ is defined as
\begin{equation}\label{eqn:defminmaxlevel}
	\alpha_n(\mathcal{Q}) := \inf_{W \subset D(\mathcal{Q})\atop \dim W = n} \sup_{u \in W \setminus\{0\}} \frac{\mathcal{Q}[u]}{\|u\|_\mathcal{H}^2}.
\end{equation}
We also denote by $\mathcal{Q}$ the associated sesquilinear form and if $\mathcal{L}$ is the unique self-adjoint operator associated with the sesquilinear form $\mathcal{Q}$ {\it via} Kato's first representation theorem (see \cite[Chapter IV, Theorem 2.1]{kato}), we shall refer to \eqref{eqn:defminmaxlevel} as the $n$-th min-max value of $\mathcal{L}$ and set $\alpha_n(\mathcal{L}) := \alpha_n(\mathcal{Q})$.
\end{Def}
The min-max principle reads as follows.
\begin{Pro}\label{prop:min-max} Let $\mathcal{Q}$ be a closed and bounded-below quadratic form with dense domain $D(\mathcal{Q})$ in a complex Hilbert space $\mathcal{H}$ and let $\mathcal{L}$ be the unique self-adjoint operator associated with $\mathcal{Q}$. Then, for $n\in \mathbb{N}$, we have the following alternative:
\begin{enumerate}[label=\normalfont(\roman*)]
\item if $\alpha_n(\mathcal{L}) < \inf \sigma_{\mathrm{ess}}(\mathcal{L})$ then $\alpha_n(\mathcal{L})$ is the $n$-th discrete eigenvalue of $\mathcal{L}$ (counted with multiplicity and in increasing order),
\item if $\alpha_n(\mathcal{L}) = \inf \sigma_{\mathrm{ess}}(\mathcal{L})$ then for all $k \geq n$ there holds $\alpha_k(\mathcal{L}) = \inf \sigma_{\mathrm{ess}}(\mathcal{L})$.
\end{enumerate}
\end{Pro}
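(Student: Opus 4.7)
The plan is to invoke the spectral theorem for $\mathcal{L}$ and proceed through the classical min-max bound, treating the two alternatives separately.

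First I would set up the spectral background. By Kato's first representation theorem, $\mathcal{Q}$ corresponds to a unique bounded-below self-adjoint operator $\mathcal{L}$ with spectral measure $E(\cdot)$. The cornerstone observation is that for any $\mu < \inf \sigma_{\mathrm{ess}}(\mathcal{L}) =: \Lambda_{\mathrm{ess}}$, the spectral projector $E((-\infty, \mu])$ has finite rank, its range is spanned by eigenfunctions of $\mathcal{L}$, and $\sigma(\mathcal{L}) \cap (-\infty, \mu]$ consists of finitely many eigenvalues of finite multiplicity. I would denote by $(\lambda_k)_{k \geq 1}$ the nondecreasing enumeration (with multiplicity) of eigenvalues of $\mathcal{L}$ strictly below $\Lambda_{\mathrm{ess}}$, with associated orthonormal eigenbasis $(\phi_k)$.

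For the first alternative I would establish $\alpha_n(\mathcal{L}) = \lambda_n$ by a two-sided estimate. The upper bound $\alpha_n(\mathcal{L}) \leq \lambda_n$ follows by testing $W = \mathrm{span}(\phi_1, \ldots, \phi_n) \subset D(\mathcal{L}) \subset D(\mathcal{Q})$: for $u = \sum_{k=1}^n c_k \phi_k$, a spectral expansion gives $\mathcal{Q}[u] = \sum_{k=1}^n \lambda_k |c_k|^2 \leq \lambda_n \|u\|_{\mathcal{H}}^2$. The lower bound $\alpha_n(\mathcal{L}) \geq \lambda_n$ uses the standard codimension trick: for any $n$-dimensional $W \subset D(\mathcal{Q})$, the subspace $\mathrm{span}(\phi_1, \ldots, \phi_{n-1})^{\perp}$ has codimension $n-1$ in $\mathcal{H}$ and therefore meets $W$ in a nonzero element $u$. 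Applying the form-theoretic spectral representation $\mathcal{Q}[u] = \int \lambda \, d(E(\lambda) u, u)_{\mathcal{H}}$, which is valid on the full form domain, yields $\mathcal{Q}[u] \geq \lambda_n \|u\|_{\mathcal{H}}^2$, and hence $\sup_{u \in W \setminus \{0\}} \mathcal{Q}[u]/\|u\|_{\mathcal{H}}^2 \geq \lambda_n$. Passing to the infimum over $W$ closes the bound.

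For the second alternative, the monotonicity $\alpha_n(\mathcal{L}) \leq \alpha_{n+1}(\mathcal{L})$ is built into the definition \eqref{eqn:defminmaxlevel}. For the reverse bound, the fact that $\Lambda_{\mathrm{ess}} \in \sigma_{\mathrm{ess}}(\mathcal{L})$ ensures that $\mathrm{Ran}(E([\Lambda_{\mathrm{ess}}, \Lambda_{\mathrm{ess}} + \varepsilon]))$ is infinite-dimensional for every $\varepsilon > 0$; extracting any $k$-dimensional subspace therein produces a test space on which the Rayleigh quotient is bounded by $\Lambda_{\mathrm{ess}} + \varepsilon$, so $\alpha_k(\mathcal{L}) \leq \Lambda_{\mathrm{ess}} + \varepsilon$ for every $\varepsilon > 0$, and hence $\alpha_k(\mathcal{L}) \leq \Lambda_{\mathrm{ess}}$. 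Combined with $\alpha_n(\mathcal{L}) = \Lambda_{\mathrm{ess}}$ and monotonicity, this forces $\alpha_k(\mathcal{L}) = \Lambda_{\mathrm{ess}}$ for all $k \geq n$.

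The main technical subtlety is the codimension argument in the first alternative: the competitor $W$ is only assumed to lie in $D(\mathcal{Q})$, which is strictly larger than $D(\mathcal{L})$, so one cannot manipulate $\mathcal{L} u$ directly. This is overcome by using the form-level spectral identity $\mathcal{Q}[u] = \int \lambda \, d(E(\lambda) u, u)_{\mathcal{H}}$ rather than the operator identity $\mathcal{Q}[u] = (\mathcal{L} u, u)_{\mathcal{H}}$, the former being valid throughout the form domain. Everything else reduces to elementary linear-algebraic dimension counting and monotone convergence in the spectral measure.
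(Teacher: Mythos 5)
The paper states Proposition \ref{prop:min-max} as a classical fact (the min-max principle for semibounded self-adjoint operators, e.g.\ Reed--Simon Vol.~IV, Thm.~XIII.1--2) and gives no proof, so there is no in-paper argument to compare against; I am evaluating your attempt on its own merits.

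Your overall approach --- spectral calculus, the two-sided estimate $\alpha_n(\mathcal{L})=\lambda_n$ via a direct trial space for the upper bound and a codimension count for the lower bound, plus the form-level spectral identity $\mathcal{Q}[u]=\int\lambda\,\mathrm d(E(\lambda)u,u)_\mathcal{H}$ to avoid needing $u\in D(\mathcal{L})$ --- is exactly the standard proof, and the monotonicity plus ``infinite-dimensional spectral range'' argument for alternative (ii) is also correct. The one genuine gap is at the start of alternative (i): you enumerate the eigenvalues $\lambda_1\le\lambda_2\le\cdots$ strictly below $\Lambda_{\mathrm{ess}}:=\inf\sigma_{\mathrm{ess}}(\mathcal{L})$ and then write ``$\alpha_n(\mathcal{L})=\lambda_n$'', but the hypothesis $\alpha_n(\mathcal{L})<\Lambda_{\mathrm{ess}}$ does not \emph{a priori} tell you that this list has at least $n$ terms; if it had only $m<n$ terms, both sides of your two-sided estimate would be using objects $\phi_{m+1},\dots,\phi_n$ that do not exist. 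This is easily patched with the very same codimension trick: if there are only $m<n$ eigenvalues below $\Lambda_{\mathrm{ess}}$, then $\operatorname{span}(\phi_1,\dots,\phi_m)^\perp$ has codimension $m<n$, hence meets every $n$-dimensional $W\subset D(\mathcal{Q})$ nontrivially, and for $u$ in the intersection the spectral mass lies entirely in $[\Lambda_{\mathrm{ess}},\infty)$, giving $\mathcal{Q}[u]\ge\Lambda_{\mathrm{ess}}\|u\|_\mathcal{H}^2$ and thus $\alpha_n(\mathcal{L})\ge\Lambda_{\mathrm{ess}}$, contradicting the hypothesis of (i). With that preliminary step in place, your argument is complete and correct.
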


Theorem \ref{thm:discspec} is a consequence of the following proposition.

\begin{Pro}\label{prop:existtrialfunc} Let $\bX$ and $\bY^\theta$ be the vectors of $\bbR^2$ defined in \eqref{eqn:defX}. Assume that $0$ is an isolated eigenvalue of  $\hat{\mathcal{A}}$. Suppose $\theta'\equiv 0$, $\displaystyle \lim_{|s|\to \infty} \kappa(s) =0$ and that $\kappa \in L^1(\mathbb{R})$.
\[
    \bX \cdot \ \bY^\theta > \frac{2\lambda_2^N(\omega)}{\varepsilon_0} \ \frac{b^2 \|\kappa\|_{L^\infty(\R)}}{1-b\|\kappa\|_{L^\infty(\R)}}\|\kappa\|_{L^1(\R)}
\]
then there exists $(\bE,\bH)^\top\neq(0,0)^\top \in D(\redA)$ such that
\begin{equation}
	q[(\bE,\bH)^\top] - a_\beta^2 \|(\bE,\bH)^\top\|^2_{\bL_\bbG^2(\Omega_0)} < 0.
	\label{eqn:evexistence}
\end{equation}
\end{Pro}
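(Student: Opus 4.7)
The plan is to apply the min-max principle (Proposition~\ref{prop:min-max}) to the quadratic form $q$ associated with $(\redA)^2$, whose essential spectrum starts at $a_0^2 = \lambda_2^N(\omega)/(\varepsilon_0\mu_0)$ by Lemma~\ref{lem:specsquarereduced} (here $\theta'\equiv 0$ forces $\beta=0$, hence $a_\beta = a_0$). The goal is thus to exhibit a non-trivial element of $D(\redA)$ on which $q - a_0^2\|\cdot\|^2$ is strictly negative. The natural candidate is a localized version of a generalized threshold eigenmode of the straight Maxwell operator: starting from a normalized Neumann eigenfunction $\psi$ of $L_\omega^N$ associated with $\lambda_2^N(\omega)$, I would introduce
\[
  \bE^\star(\by) := \frac{\rmi}{a_0\,\varepsilon_0}\,(0,\partial_{y_3}\psi, -\partial_{y_2}\psi),\qquad \bH^\star(\by) := \psi\,\mathbf{e}_1,
\]
which is $s$-independent, satisfies $\bE^\star\times\bn = 0$ on $\partial\Omega_0$ thanks to the Neumann condition on $\psi$, and formally solves the Maxwell eigenvalue equation at $a_0$ in the straight setting. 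The trial function would then be $\bV_n := \chi_n\,(\bE^\star,\bH^\star)^\top$ with cut-offs $\chi_n \in \mathscr{C}_c^\infty(\R)$ converging pointwise to $1$ and $\|\chi_n'\|_{L^2(\R)}\to 0$ (e.g.\ $\chi_n(s) := \chi(s/n)$). This $\bV_n$ lies in $D(\hat{\mathcal{A}})$.

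Under $\theta'\equiv 0$, the metric tensor \eqref{matr:G} simplifies to $\bbG = \mathrm{diag}(a,1/a,1/a)$ with $a(s,\by) := 1 - \mathbf{k^\theta}(s)\cdot\by$. A direct computation, exploiting the pointwise $\bbG$-orthogonality of $\mathbf{e}_1$, $(0,\partial_{y_3}\psi,-\partial_{y_2}\psi)$ and $(0,\partial_{y_2}\psi,\partial_{y_3}\psi)$, yields
\[
q[\bV_n] - a_0^2\,\|\bV_n\|^2 = \frac{1}{\varepsilon_0}\!\int_{\Omega_0}\!\chi_n^2\,\Bigl(a - \tfrac{1}{a}\Bigr)\bigl[\lambda_2^N|\psi|^2 - |\nabla_\perp\psi|^2\bigr]\,\rmd\by\,\rmd s + \frac{1}{a_0^2\varepsilon_0^2\mu_0}\!\int_{\Omega_0}\!\frac{(\chi_n')^2}{a}\,|\nabla_\perp\psi|^2\,\rmd\by\,\rmd s.
\]
The algebraic heart of the argument is the Pohozaev-type identity
\[
\int_\omega y_i\bigl[\lambda_2^N|\psi|^2 - |\nabla_\perp\psi|^2\bigr]\,\rmd\by = \tfrac{1}{2}\,X_i, \qquad i=2,3,
\]
obtained by integration by parts using $-\Delta_\perp\psi = \lambda_2^N\psi$ together with $\partial_{\bn}\psi = 0$ on $\partial\omega$. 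Writing $a - 1/a = -2\,\mathbf{k^\theta}\cdot\by - (\mathbf{k^\theta}\cdot\by)^2/a$ and inserting this identity produces a leading term $-\varepsilon_0^{-1}\int_\R\chi_n^2\,\mathbf{k^\theta}(s)\,\rmd s\cdot\bX$, which converges to $-\varepsilon_0^{-1}\bX\cdot\bY^\theta$ by dominated convergence (as $\mathbf{k^\theta}\in L^1(\R)$). The quadratic remainder is bounded using $|\mathbf{k^\theta}\cdot\by|\leq\kappa(s)\,b$, $a \geq 1 - b\|\kappa\|_{L^\infty}$, $\|\psi\|_{L^2(\omega)}^2 = 1$ and $\|\nabla_\perp\psi\|_{L^2(\omega)}^2 = \lambda_2^N(\omega)$, yielding precisely the constant $\tfrac{2\lambda_2^N(\omega)}{\varepsilon_0}\tfrac{b^2\|\kappa\|_{L^\infty}}{1-b\|\kappa\|_{L^\infty}}\|\kappa\|_{L^1}$ appearing in the hypothesis, while the cut-off term vanishes as $n\to\infty$.

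Since $\bV_n$ need not belong to $D(\redA)$, I would pass to the orthogonal projection $\bU_n := \mathcal{P}_\perp\bV_n$ onto $\ker(\hat{\mathcal{A}})^\perp = \mathcal{J}_{\beps,\bmu}(\Omega_0)$. Because $\hat{\mathcal{A}}$ vanishes on its (infinite-dimensional) kernel, $q[\bU_n] = q[\bV_n]$ and $\|\bU_n\|^2 = \|\bV_n\|^2 - \|\mathcal{P}_\parallel\bV_n\|^2$. The gradient correction $\mathcal{P}_\parallel\bV_n$ solves weighted elliptic problems sourced by $\Div(\beps\bE_n) = \rmi\chi_n/a_0\,(k_2^\theta\partial_{y_2}\psi - k_1^\theta\partial_{y_3}\psi)$ and the analogous expression for $\Div(\bmu\bH_n)$, both of pointwise size $O(\kappa)$; transverse Poincar\'e estimates in $\omega$ then give $\|\mathcal{P}_\parallel\bV_n\|^2 \lesssim \|\kappa\|_{L^\infty}\|\kappa\|_{L^1}$ uniformly in $n$, which can be absorbed into the same order as the quadratic remainder. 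Non-triviality $\bU_n \neq 0$ follows because $\bH^\star = \psi\,\mathbf{e}_1$ is not a gradient field, $\psi$ being a non-constant Neumann eigenfunction.

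The principal obstacle is ensuring that, after the projection onto $\mathcal{J}_{\beps,\bmu}(\Omega_0)$, the total energy $q[\bU_n] - a_0^2\|\bU_n\|^2$ remains strictly negative with the precise constant stated in the hypothesis; this requires sharp control of the Hodge-type correction on the unbounded cylinder $\Omega_0$, taking care to avoid any appearance of $\partial_s\kappa$ in the error estimates. An alternative route, potentially matching the stated constant exactly, would be to build a $(\beps,\bmu)$-divergence-free test function from the outset, e.g.\ by defining $\bE_n := \beps^{-1}\curlvec(\chi_n\psi\,\mathbf{e}_1)$ (which gives $\Div(\beps\bE_n)=0$ automatically via $\Div\curlvec = 0$) and coupling it to $\bH_n$ through the Maxwell relation $\bH_n = -\rmi\,\bmu^{-1}\curlvec\bE_n/a_0$; this would bypass the projection step altogether at the cost of redoing the energy identity in this modified setting.
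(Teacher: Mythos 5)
Your trial function differs structurally from the paper's: you take the full transverse-magnetic mode $(\chi_n\bE^\star,\chi_n\bH^\star)$ with $\bH^\star=\psi\,\mathbf{e}_1$ non-zero, whereas the paper uses the purely electric field $(\bF_n,0)^\top$ with $\bF_n=\varphi_n(0,\partial_{y_3}\psi,-\partial_{y_2}\psi)^\top$. This matters in two ways. First, with $\bH_n=\chi_n\psi\,\mathbf{e}_1$ you must correct the magnetic component so that $\Div(\bmu\bH)=0$; but $\bmu\bH_n=\mu_0\chi_n\psi\,a^{-1}\mathbf{e}_1$ with $a(s,\by)=1-\mathbf{k^\theta}(s)\cdot\by$, so $\Div(\bmu\bH_n)=\mu_0\bigl(\chi_n'\psi/a+\chi_n\psi\,\partial_s(a^{-1})\bigr)$, and $\partial_s(a^{-1})$ brings in $(\mathbf{k^\theta})'$. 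Under the hypotheses of the proposition, $\kappa\in L^1\cap L^\infty$ and $\kappa\to0$, but $\gamma$ is only $\mathscr{C}^2$, so $\kappa$ (and $\mathbf{k^\theta}$) are merely continuous: $(\mathbf{k^\theta})'$ need not exist, and even if it does it is not controlled. You flag this as "the principal obstacle," but you do not resolve it, and your "alternative route" $\bE_n:=\beps^{-1}\curlvec(\chi_n\psi\,\mathbf{e}_1)$ runs into the same wall because $\beps^{-1}$ carries the $s$-dependent factor $a^{-1}$, so $\curlvec\bE_n$ again involves $\partial_s(a^{-1})$. The paper sidesteps all of this by taking $\bH=0$: the only correction is the electric one, and $\Div(\beps\bF_n)$ involves solely \emph{transverse} derivatives of $a$, which produce $\mathbf{k^\theta}$ itself, not its $s$-derivative (see Lemma~\ref{lem:projsolenoidaleortho}).

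Second, even granting the projection estimate, your energy identity produces $a-1/a=-2\,\mathbf{k^\theta}\cdot\by-(\mathbf{k^\theta}\cdot\by)^2/a$, leaving a quadratic remainder which you bound by exactly the constant $\tfrac{2\lambda_2^N}{\varepsilon_0}\tfrac{b^2\|\kappa\|_\infty}{1-b\|\kappa\|_\infty}\|\kappa\|_{L^1}$ — but the projection correction $a_\beta^2\|\mathcal{P}_\parallel\bV_n\|^2$ is \emph{another} non-negative term of the same order. In the paper, the factor is the \emph{linear} $a=1-\mathbf{k^\theta}\cdot\by$; after using $\int_\omega[\lambda_2^N|\psi|^2-|\nabla\psi|^2]\,\rmd\by=0$ the constant term cancels exactly and no quadratic remainder survives, so the full error budget goes to the single projection term $\lambda_2^N c^2\|\nabla u_n\|^2$, which \eqref{estimate:nablau:eps} bounds precisely by the constant in \eqref{eqn:ineqdiscspec}. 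Your two errors add, so as written your sufficient condition would be strictly more demanding than the one stated; to recover the stated threshold you would need the projection correction to be asymptotically negligible compared to the quadratic remainder, which you neither prove nor can prove without controlling $\kappa'$.
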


We postpone the proof of Proposition \ref{prop:existtrialfunc} to the end of this section. We are now in a good position to prove Theorem \ref{thm:discspec}.
\begin{proof}[Proof of Theorem \ref{thm:discspec}] Assume \eqref{eqn:ineqdiscspec} holds. By Proposition \ref{prop:existtrialfunc} there exists $(\bE,\bH)^\top\neq(0,0)^\top \in D(\redA)$ such that \eqref{eqn:evexistence} holds. It yields
\[
	\alpha_1((\redA)^2) = \alpha_1(q) \leq \frac{q((\bE,\bH)^\top)}{\|(\bE,\bH)^\top\|_{\bL_{\bbG}^2(\Omega_0)}} < a_\beta^2.
\]
Thus, by Point \ref{itm:2specsquarereduced}~Lemma \ref{lem:specsquarereduced} we have $\alpha_1((\redA)^2) < \inf \sigma_{\mathrm{ess}}((\redA)^2)=a_{\beta}^2$  and by Proposition \ref{prop:min-max} we know that $\alpha_1((\redA)^2)$ is a discrete eigenvalue of $(\redA)^2$. Note that as $\redA$ is injective, so is $(\redA)^2$ and necessarily $\alpha_1((\redA)^2) > 0$. Now, by Point \ref{itm:1specsquarereduced}~Lemma \ref{lem:specsquarereduced}, we get that $\pm \sqrt{\alpha_1((\redA)^2)}$ are eigenvalues of $\redA$ of finite multiplicity  lying in $(-a_{\beta},a_{\beta})\setminus \{0\}$. Thus they are  discrete eigenvalues of $\hat{\mathcal{A}}$ (and thus of $\mathcal{A}$).
\end{proof}

The rest of this paragraph is devoted to the proof of Proposition \ref{prop:existtrialfunc}.
\begin{proof}[Proof of Proposition \ref{prop:existtrialfunc}]
The proof is divided into three steps. In the first step, we construct a sequence of trial functions of the form $(\bF_n,0) \in D(\hat{\mathcal{A}})$, for $
n\in\mathbb{N}$, and in the second step we project it onto the solenoidal space $\mathcal{J}_{\beps,\bmu}(\Omega_0)$ to obtain a trial function of the form $(\bE_n,0)\in D(\redA)$. The last step, is devoted to the computation of the energy $q[(\bE_n,0)^\top] - a_\beta^2 \|(\bE_n,0)^\top\|_{\bL_\bbG^2(\Omega_0)}^2$.

Before going through all steps of the proofs, note that as the twist $\theta'\equiv 0$, then the Jacobian matrix $J_\Phi$ defined in \eqref{eqn:defjacobian!} is diagonal and the coefficients $\beps$ and $\bmu^{-1}$ of Proposition \ref{prop:unitequivpiola} read as follows:
\begin{align*}
\beps &= \varepsilon_0
\begin{pmatrix}
(1- \mathbf{k^\theta} \cdot \by)^{-1} & 0 & 0\\
0 & 1- \mathbf{k^\theta} \cdot \by & 0\\
0 & 0 & 1- \mathbf{k^\theta} \cdot \by
\end{pmatrix},\\[6pt]
\bmu^{-1} &= \mu_0^{-1}
\begin{pmatrix}
1- \mathbf{k^\theta} \cdot \by & 0 & 0\\
0 & (1- \mathbf{k^\theta} \cdot \by)^{-1} & 0 \\
0 & 0 & (1- \mathbf{k^\theta} \cdot \by)^{-1}
\end{pmatrix}.
\end{align*}
For $(\bE,0) \in D(\redA)$, this yields the following expression
\begin{equation}\label{eqn:exprexplicitq}
q[(\bE,0)^\top]= \frac{1}{\mu_0} \int_{\Omega_0} \Big((1- \mathbf{\bk^\theta} \cdot \by) |(\curlvec \bE)_1|^2 + \frac{1}{1- \mathbf{\bk^\theta} \cdot \by} \left( |(\curlvec \bE)_2|^2 + |(\curlvec \bE)_3|^2 \right) \Big).
\end{equation}
\paragraph{Step 1: } For $n \in \mathbb{N}$, we define the sequence of functions $(\varphi_n: \R \to [0,1])$ of cut-off functions defined as follows:
\begin{equation} \label{def:phin}
\varphi_n(s) := 
\begin{cases}
\displaystyle \frac{s}{n}+2, & s \in (-2n,-n)\\[4pt]
1, & s \in (-n,n)\\
\displaystyle -\frac{s}{n}+2, & s \in (n,2n)\\[4pt]
0, & \text{otherwise}.
\end{cases}
\end{equation}
Observe that as defined $\varphi_n \in H^1(\R)$ and
\begin{equation} \label{cutoff:der:estimate}
\|\varphi'_n\|_{L^2(\R)} =\sqrt{ \frac{2}{n}}.
\end{equation}
Define $(\bF_n)_{n\in \mathbb{N}}$ to be the sequence of vector fields with separated variables defined for $(s,\by)\in \Omega_0$
\begin{equation} \label{defin:Fn}
\bF_n(s,\by):= \varphi_n(s) \begin{pmatrix}
0\\
\partial_{y_3} \psi(\by) \\
-\partial_{y_2} \psi(\by)
\end{pmatrix}, \text{for all } n\in \mathbb{N}.
\end{equation}
Here, recall that $\psi$ is an eigenfunction associated with the first non-trivial Neumann eigenvalue  $\lambda_2^N(\omega)$ in the cross-section $\omega$ whose eigenvalue is related to $a_\beta^2$, that is the bottom of the essential spectrum of $(\redA)^2$. Note that in our case, since $\theta'\equiv 0$, one has  $a_\beta^2 := \sqrt{\lambda_2^N(\omega)}\, c$ (see Remark \ref{rem:defa_betanotwist}). Note that as defined $\bF_n \in H(\curlvec,\Omega_0)$, but actually it is in $H_0(\curlvec,\Omega_0)$. To prove it, remark that there holds
\begin{equation}\label{eqn:curfn}
\curlvec \bF_n = \begin{pmatrix}
\lambda_2^N(\omega) \varphi_n \psi \\
\varphi'_n \nabla_\by \psi
\end{pmatrix}
\end{equation}
and that for $\bv=(v_1,v_2,v_3)^\top \in{\bbD}(\overline{\Omega_0})$ one has
\[
	\int_{\Omega_0} \curlvec \bF_n \cdot \overline{\bv} \rmd s\, \rmd \by= \lambda_2^N \int_{\Omega_0}\varphi_n(s)\psi(\by)\,  \overline{v_1}(s,\by) \rmd s\, \rmd \by + \int_{\Omega_0}\varphi_n'(s) \nabla_\by \psi(\by)\cdot\overline{ \begin{pmatrix}v_2(s,\by)\\ v_3(s,\by)\end{pmatrix}} \rmd s  \,\rmd \by.
\]
Similarly, one has
\begin{align*}
	\int_{\Omega_0} \bF_n \cdot \overline{\curlvec \bv} \rmd s \rmd \by& = \int_{\Omega_0}\varphi_n(s)\left(\partial_{y_3}\psi(\by)\, \overline{(\partial_{y_3} v_1(s,\by) - \partial_s v_3(s,\by))}\right.\\
	&\quad  \quad \left.- \partial_{y_2}\psi(\by)\, \overline{(\partial_s v_2(s,\by) - \partial_{y_2} v_1(s,\by))}\right)\rmd s\, \rmd \by\\
	& = \int_{\Omega_0} \varphi_n(s) \nabla_\by\psi(\by) \cdot \overline{\nabla_\by v_1(s,\by)} \,\rmd s \, \rmd \by\\
	& \quad - \int_{\Omega_0} \varphi_n(s) \left( \partial_{y_3}\psi (\by)\, \overline{\partial_s v_3(s,\by)} + \partial_{y_2}\psi(\by)\overline{\partial_s v_2(s,\by)}\right) \rmd s\, \rmd \by.
\end{align*}
Using the variational definition of $\psi$ being a mode of the Neumann Laplacian in $\omega$ one obtains
\[
	\int_{\Omega_0} \varphi_n(s) \nabla_\by\psi(\by) \cdot \overline{\nabla_\by v_1(s,\by)} \rmd s \, \rmd \by = \lambda_2^N(\omega) \int_{\Omega_0}\varphi_n(s) \psi(\by) \overline{v_1(s,\by)} \rmd s \,\rmd \by
\]
and integrating by parts with respect to $s$, one has
\begin{eqnarray*}
	- \int_{\Omega_0} \!\varphi_n(s) \Big(\partial_{y_3}\psi(\by) \overline{\partial_s v_3(s,\by)} + \partial_{y_2}\psi(\by)\overline{\partial_s v_2(s,\by)}\Big) \rmd s \rmd \by  = \int_{\Omega_0}  \!\varphi_n'(s) \nabla_\by\psi(\by) \cdot \overline{\begin{pmatrix}{v_2(s,\by)}\\[4pt]{v_3(s,\by)}\end{pmatrix}}\rmd s \rmd \by.
\end{eqnarray*}
It yields that for all $\bv \in \bbD(\overline{\Omega_0})$ one has
\[
	\int_{\Omega_0} \curlvec \bF_n \cdot \overline{\bv} \rmd s \rmd \by = \int_{\Omega_0} \bF_n \cdot \overline{\curlvec \bv} \rmd s \rmd \by.
\]
As $\bbD(\overline{\Omega_0})$ is dense in $H(\curlvec,\Omega_0)$ (see e.g. \cite[Theorem 2.10.]{gira} or Theorem 1 page 279 of \cite{daulio}), one can conclude that $\bF_n \in H_0(\curlvec,\Omega_0)$.
\paragraph{Step 2.} Note that the elements of the sequence of vector fields $((\bF_n,0)^\top)_{n\in\mathbb{N}}$ do not necessarily belong to the solenoidal space $\mathcal{J}_{\beps,\bmu}(\Omega_0)$. To overcome this difficulty, for all $n\in \mathbb{N}$, we project $(\bF_n,0)^\top$ orthogonally onto $\mathcal{J}_{\beps,\bmu}(\Omega_0)$. This is performed by applying, for all $n\in \mathbb{N}$, Lemma \ref{lem:projsolenoidaleortho} to $\bF_n$ (see Appendix \ref{appendix:PDE}). It yields the existence of $u_n \in H_0^1(\Omega_0)$ weak solution of 
\begin{equation*}
\begin{cases}
\Div \beps \nabla u_n = \Div \beps \bF_n, & \text{in }\Omega_0,\\
u_n=0 &  \text{on } \partial\Omega_0.
\end{cases}
\end{equation*}
Set $\bE_n := \bF_n - \nabla u_n$. Remark that since $u_n \in H_0^1(\Omega_0)$, $\nabla u_n \in H_0(\curlvec,\Omega_0)$ and so is $\bE_n$. Remark that as constructed, $\Div(\beps\bE_n) = 0$ and thus $(\bE_n,0)^\top \in \mathcal{J}_{\beps,\bmu}(\Omega_0)\cap H_0(\curlvec,\Omega_0)\times H(\curlvec,\Omega_0) = D(\redA)$.

\paragraph{Step 3.} In this step we compute the quantity
\[
	q[(\bE_n,0)^\top] - a_\beta^2 \|(\bE_n,0)\|_{\bL_{\bbG}(\Omega_0)}^2 = q[(\bE_n,0)^\top] -\lambda_2^N \, c^2 \|(\bE_n,0)\|_{\bL_{\bbG}(\Omega_0)}^2.
\]

Note that as defined $\bE_n$ is in the orthogonal of the space of gradients $\nabla (H_0^1(\Omega_0))$. Hence, by the Pythagorean theorem, there holds  
\begin{equation} \label{bepsEE:term}
\begin{split}
\|\bE_n\|_{\bL_\beps^2(\Omega_0)}^2 &= \|\bF_n\|_{\bL_\beps^2(\Omega_0)}^2 - \|\nabla u_n\|_{\bL_\beps^2(\Omega_0)}^2 \\
&= \varepsilon_0 \int_{\Omega_0}  \left((1-\mathbf{k^\theta}(s) \cdot \by) |\varphi_n(s)|^2 |\nabla_{\by} \psi(\by)|^2\right) \rmd s \rmd \by - \|\nabla u_n\|_{\bL^2_\beps(\Omega_0)}^2.
\end{split}
\end{equation}
Moreover, using \eqref{eqn:curfn} and \eqref{eqn:exprexplicitq}, one has
\begin{eqnarray} 
\displaystyle \nonumber q[(\bE_n,0)^\top]  = q[(\bF_n,0)^\top] =&  \displaystyle   \frac{(\lambda_2^N(\omega))^2}{\mu_0} \int_{\Omega_0} \left((1- \mathbf{k^\theta}(s) \cdot \by) |\varphi_n(s)|^2 |\psi(s,\by)|^2 \right)\rmd s\, \rmd \by \nonumber \\
\displaystyle  &+  \displaystyle   \frac{1}{\mu_0} \int_{\Omega_0} \left((1-\mathbf{k^\theta}(s) \cdot \by)^{-1} |\varphi'_n(s)|^2 |(\nabla\psi)(\by)|^2 \right)\rmd s\, \rmd \by.\label{qE:term}
\end{eqnarray}
Moreover, by definition of $\psi$, one has
\begin{equation} \label{term:zero:ibp}
\frac{(\lambda_2^N(\omega))^2}{\mu_0} \int_{\Omega_0} \left(|\varphi_n(s)|^2 |\psi(\by)|^2\right)\rmd s\, \rmd \by = \frac{\lambda_2^N(\omega)}{\mu_0} \int_{\Omega_0} \left(|\varphi_n(s)|^2 |\nabla_{\by} \psi(\by)|^2\right)\rmd s \rmd \by.
\end{equation}
Hence, combining \eqref{bepsEE:term}, \eqref{qE:term}, and \eqref{term:zero:ibp} and recalling that $c^2=(\varepsilon_0 \mu_0)^{-1}$,  we obtain:
\begin{equation}
\begin{split}
q[(\bE_n,0)^\top] - \lambda_2^N(\omega) \,c^2 \|\bE_n\|_{\bL_{\beps}^2(\Omega_0)}^2 &=\frac{\lambda_2^N(\omega)}{\mu_0} \int_{\Omega_0} (\mathbf{k^\theta}(s) \cdot \by)  \,  |\varphi_n(s)|^2 \left( |\nabla_{\by} \psi(\by)|^2 -  \lambda_2^N(\omega) |\psi(\by)|^2 \right)\rmd s\, \rmd \by \\& \quad\quad + \frac{1}{\mu_0} \int_{\Omega_0} \left((1-\mathbf{k^\theta}(s) \cdot \by)^{-1} |\varphi'_n(s)|^2 |\nabla_{\by} \psi(\by)|^2\right)\rmd s\, \rmd \by\\
&\quad\quad+ \lambda_2^N(\omega)\, c^2 \|\nabla u_n\|_{\bL^2_\beps(\Omega_0)}^2.
\end{split}
\label{eqn:diffqneghope}
\end{equation}
Let $j \in \{1,2\}$, as $\psi$ is an eigenfunction of the Neumann Laplacian in $\omega$, it turns out the function $\Xi_j : \by\in \mapsto y_j \nabla_{\by} \psi(\by) \in H_0(\Div,\omega)$ and that $\Div(\Xi_j)(\by) = y_j (\Delta \psi)(\by) + (\partial_j \psi)(\by)$. Indeed, to prove this last formula pick $\varphi \in H^1(\omega)$ and note that
\begin{equation}\label{eq.div}
	y_j \nabla_{\by} \varphi = \nabla_{\by} (y_j \varphi) - \varphi \  (\delta_{1,j},\delta_{2,j})^\top,
\end{equation}
where for $j,k \in \{1,2\}$, $\delta_{j,k}$ is the Kronecker symbol. 
Then, setting $\varphi = \psi$ and taking the divergence of both sides  of equation \eqref{eq.div} yields that  $\Div(\Xi_j)(\by) = y_j (\Delta \psi)(\by) + (\partial_j \psi)(\by) \in L^2(\omega)$ and thus $\Xi_j \in H(\Div,\omega)$.  The fact that  $\Xi_j \in H_0(\Div,\omega) $  follows  from the following integration by parts (which holds by \eqref{eq.div} since $y_j \varphi \in H^1(\omega)$ for $\varphi \in H^1(\omega)$):
\begin{align}
	\int_{\omega} \Xi_j(\by) \cdot \overline{\nabla_{\by} \varphi}(\by) \rmd \by &= \int_{\omega}  \nabla_{\by}\psi(\by) \cdot \overline{\nabla_{\by} (y_j \varphi)(\by)} \rmd \by - \int_\omega \partial_j \psi(\by)\, \overline{\varphi(\by)}\rmd \by \nonumber\\
	& = -\int_\omega \left(y_j(\Delta\psi)(\by) + \partial_j \psi(\by) \right) \overline{\varphi(\by)} \rmd \by \label{2nd:ligne} \\
	& = - \int_\omega \Div(\Xi_j)(\by) \,  \overline{\varphi(\by)}\rmd \by. \nonumber
\end{align}
As a small remark,  observe  that we have used the same (weak) definitions for the space $H(\Div,\omega)$ and $H_0(\Div,\omega)$ as the ones in \S \ref{subsec:funcrame}, which are introduced for open subsets of $\R^3$, while here for the 2-dimensional domain $\omega \subset \R^2$. In this case, since $\omega$ is bounded and Lipschitz, we also have the more standard characterization of such Hilbert spaces via traces (cf. Remark \ref{remark:standardHdiv}).
Hence, using \eqref{2nd:ligne} on each component of the following vector, one gets
\begin{equation*}
\int_\omega \by |\nabla \psi|^2 \rmd \by = - \int_\omega \left( \nabla \psi \, \overline{\psi}  + \by \Delta \psi  \, \overline{\psi} 	\right) \rmd \by = \int_\omega \left( \lambda_2^N(\omega) |\psi|^2 \by - \nabla \psi \, \overline{\psi}  \right) \rmd \by,
\end{equation*}
so that
\begin{eqnarray}
&&\frac{\lambda_2^N(\omega)}{\mu_0} \int_{\Omega_0}\left( (\mathbf{k^\theta}(s) \cdot \by)\, |\varphi_n(s)|^2 \left( |\nabla_{\by} \psi(\by)|^2 -  \lambda_2^N(\omega) |\psi(\by)|^2 \right)\right)\rmd s\, \rmd \by  \nonumber\\ &&=- \frac{\lambda_2^N(\omega)}{\mu_0} \int_{\Omega_0}\left( |\varphi_n(s)|^2\, \mathbf{k^\theta}(s)    \cdot \nabla_\by\ \psi(\by) \, \overline{\psi(\by)}\right)\rmd s\, \rmd \by.
\label{eqn:realpsinablapsi}
\end{eqnarray}
Observe that in the above equation the left-hand side is real and thus the right-hand side necessarily real too. In particular, as $\psi \in H^1(\omega)$, integrating by parts, one has 
\[
	\int_\omega \nabla_{\by}\psi \, \overline{\psi} \rmd \by = \frac12 \int_\omega\left( \nabla_{\by}\psi  \, \overline{\psi}  +  \psi \, \overline{\nabla_{\by} 
	\psi } \right)\rmd \by= \frac12 \int_{\partial\omega} \bn(\by) |\psi|^2 \rmd\sigma(\by)  = \frac{1}{2} \bX,
\]
where $\bn(\by)$ is the outward pointing normal to $\partial\omega$. Combining \eqref{eqn:realpsinablapsi} with \eqref{eqn:diffqneghope} yields
\begin{equation*}
\begin{split}
q[(\bE_n,0)^\top] -  \frac{\lambda_2^N(\omega)}{\varepsilon_0 \mu_0} \|\bE_n\|^2_{\bL_{\beps}^2(\Omega_0)}= & -\frac{\lambda_2^N(\omega)}{2\mu_0} \left(\int_{\R} |\varphi_n(s)|^2 \mathbf{k^\theta}(s)\rmd s\right) \cdot \bX\\
& \ + \frac{1}{\mu_0} \int_{\Omega_0} \left((1-\mathbf{k^\theta}(s) \cdot \by)^{-1} |\varphi'_n(s)|^2 |\nabla_{\by}\psi(\by)|^2\right)\rmd s\, \rmd \by\\& \ + \lambda_2^N(\omega) \, c^2 \|\nabla u_n\|_{\bL^2_\beps(\Omega_0)}^2 .
\end{split}
\end{equation*}
Finally, recalling \eqref{estimate:nablau:eps} to estimate the  term  $ \|\nabla u_n\|_{\bL^2_\beps(\Omega_0)}^2$ and that $c^2=(\varepsilon_0 \mu_0)^{-1}$,  we gets:
\begin{equation*}
\begin{split}
q[(\bE_n,0)^\top] -  \frac{\lambda_2^N(\omega)}{\varepsilon_0 \mu_0} \|\bE_n\|^2_{\bL_{\beps}^2(\Omega_0)} \leq & -\frac{\lambda_2^N(\omega)}{2\mu_0} \left(\int_{\R} |\varphi_n(s)|^2 \mathbf{k^\theta}(s)\rmd s\right) \cdot \bX\\
& \ + \frac{\lambda_2^N(\omega)}{\mu_0 (1-b \|\kappa\|_{L^\infty(\R)})} \|\varphi'_n\|_{L^2(\R)}^2 
\\& \ + \lambda_2^N(\omega)^2 \frac{1}{\mu_0} \frac{b^2 \|\kappa\|_{L^\infty(\R)}}{1-b\|\kappa\|_{L^\infty(\R)}}\|\kappa\|_{L^1(\R)}.
\end{split}
\end{equation*}
By the Dominated Convergence Theorem and \eqref{cutoff:der:estimate} the right-hand side of the above inequality converges to
\begin{equation*}
\frac{\lambda_2^N(\omega)}{2\mu_0}  \left( -\left(\int_{\R}  \mathbf{k^\theta}(s)\rmd s\right) \cdot \bX + 2\lambda_2^N(\omega)\ \frac{b^2 \|\kappa\|_{L^\infty(\R)}}{1-b\|\kappa\|_{L^\infty(\R)}}\|\kappa\|_{L^1(\R)}\right).
\end{equation*}
By hypothesis this quantity is strictly negative, hence for a sufficiently large $n \in \mathbb{N}$ inequality \eqref{eqn:evexistence} holds with $(\bE,\bH)^\top= (\bE_n, 0)^\top$. %The symmetry of $\sigma_{\mathrm{dis}}(\mathcal{A})$ is a consequence of Lemma~\ref{lem:specsquarereduced}.
\end{proof}

\subsubsection{Slightly curved waveguides: Proof of Corollary \ref{cor:vrai:exist}} \label{subsubsub:delta}
In this paragraph we prove Corollary \ref{cor:vrai:exist}. 

Consider the base curve $\Gamma$ and the associated relatively adapted parallel frame $(\mathbf{e}_1,\mathbf{e}_2,\mathbf{e}_3)$ stated in the corollary. We recall the definition \eqref{eqn:defX} of $\bY= \int_\R \mathbf{k}(s) \, \rmd s$, where $\mathbf{k}=(k_1,k_2)^\top$ with $k_1,k_2$  continuous functions (cf. \eqref{frame:evolution}). By assumptions, we have that $\bY \neq 0$.

Next, we construct a slightly curved waveguide starting from the base curve $\Gamma$ and the relatively adapted parallel frame $(\mathbf{e}_1,\mathbf{e}_2,\mathbf{e}_3)$, as in \S \ref{section:waveguides}. To do so, for $\delta \in (0,1]$, one introduces
\begin{equation*}
\gamma_\delta: \mathbb{R} \to \mathbb{R}^3, \quad s \mapsto \frac{1}{\delta}\gamma(\delta s).
\end{equation*}
Note that the curvature $\kappa_\delta$ of $\Gamma_\delta:=\gamma_\delta(\R)$ at a point $\gamma(s)$ verifies $\kappa_\delta(s)= \delta \kappa(\delta s)$. In particular $\|\kappa_\delta\|_{L^\infty(\R)}=\delta \|\kappa\|_{L^\infty(\R)}$ and 
\[
    \|\kappa_\delta\|_{L^1(\R)} = \int_\R \delta \kappa(\delta s) \rmd s = \|\kappa\|_{L^1(\R)}.
\]
Furthermore, observe that for $j \in \{1,2,3\}$, setting $\mathbf{e}_j^\delta(s) := \mathbf{e}_j(\delta s)$ for all $\delta \in (0,1]$, we obtain a relatively adapted parallel frame (as defined in the Appendix  \ref{appendix:relatparallel}) to $\Gamma_\delta$, that is
\begin{equation*} 
    \left\{
        \begin{array}{lcl}
            \displaystyle\frac{d \mathbf{e}_1^\delta}{ds}(s) & = & k_1^\delta(s) \mathbf{e}_2^\delta(s) +  k_2^\delta(s) \mathbf{e}_3^\delta(s),  \vspace{3pt} \\
            \vspace{3pt}
            \displaystyle\frac{d \mathbf{e}_2^\delta}{ds}(s) & = & -  k_1^\delta(s) \mathbf{e}_1^\delta(s),\\
            \displaystyle\frac{d \mathbf{e}_3^\delta}{ds}(s) & = & - k_2^\delta(s) \mathbf{e}_1(s),
        \end{array}
    \right.
\end{equation*}
where 
\[
	k_1^\delta(s) := \delta k_1(\delta s),\quad k_2^\delta(s) := \delta k_2(\delta s).
\]
Remark that $k_1^\delta,k_2^\delta:\R \to \R$ are  continuous and satisfy $k_1^\delta(s)^2+k_2^\delta(s)^2=\kappa_\delta(s)^2$ for all $s \in \R$.
Now observe that 
\[
	\bY_\delta := \int_{\R}\begin{pmatrix}k_1^\delta(s)\\ k_2^\delta(s)\end{pmatrix} \rmd s = \delta \int_{\R}\begin{pmatrix}k_1(\delta s)\\ k_2(\delta s)\end{pmatrix}  \rmd s = \int_{\R}\begin{pmatrix}k_1( s)\\ k_2( s)\end{pmatrix}  \rmd s = \bY \neq 0.
\]
Moreover, by assumptions, the cross-section $\omega$ is such that there exists an normalized eigenfunction associated to $\lambda_2^N(\omega)$ for which $\bX \neq 0$.
Then, as mentioned in the statement of the corollary, there exists a unique $\theta_\star \in [0,2\pi)$ such that
\[
\bX \cdot (\mathfrak{R}_{\theta_\star} \bY) = |\bX| \, |\bY|> 0,
\]
where  $\mathfrak{R}_{\theta_\star}$ is the (anticlockwise) rotation in  $\bbR^2$ of angle $\theta_\star$.
%Hence, for such a $\theta_\star$, we %get
%\begin{equation}
%	  \bX \cdot \bY_{\delta}^{\theta_\star}=\bX \cdot (\mathfrak{R}_\theta \bY_\delta)= |\bX| |\bY|>0.
%	\label{eqn:slighltycurvedinfinty}
%\end{equation}
Then, as in \eqref{eqn:defetheta}, we define the rotated frame $(\mathbf{e}_1^{\delta, \theta_\star} ,\mathbf{e}_2^{\delta, \theta_\star}, \mathbf{e}_3^{\delta, \theta_\star})$ in term of $(\mathbf{e}_1^\delta(s), \mathbf{e}_2^\delta(s), \mathbf{e}_3^\delta(s))$. 

Define
\begin{equation*}
    \mathbf{k}_\delta^{\theta_\star}(s):= \mathfrak{R}_{\theta_\star}
    \begin{pmatrix}k_1^\delta(s)\\k_2^\delta(s)\end{pmatrix}.
\end{equation*}
as in \eqref{eqn:defktheta}.
Then, using notation \eqref{eqn:defX}, observe that the quantity $\bY_\delta^{\theta_\star}:=  \int_{\R}\mathbf{k}_\delta^{\theta_\star}(s) \, \mathrm{d}s$ associated to the new relatively adapted parallel frame $(\mathbf{e}_1^{\delta, \theta_\star} ,\mathbf{e}_2^{\delta, \theta_\star}, \mathbf{e}_3^{\delta, \theta_\star})$ to $\Gamma_\delta$,
is such that
\begin{equation*}
    \bY^{\theta_\star}_{\delta} =\mathfrak{R}_{\theta_\star} \bY \quad \mbox{ and thus } \quad \bX \cdot  \bY^{\theta_\star}_{\delta} = |\bX| \, |\bY|> 0.
\end{equation*}

Now, for $\delta\in (0,1]$, we consider  the waveguide $\Omega_{\delta}$ defined as in \eqref{diffeo:Phi}, that is
\begin{equation}\label{eq.defOmegadelta}
	\Omega_\delta :=\Phi_{\delta} (\R \times \omega)=\{\Phi_{\delta}(s,y_2,y_3)=\gamma_{\delta}(s) + y_2 \mathbf{e}_2^{\delta, \theta_\star} + y_3 \mathbf{e}_3^{\delta, \theta_\star}  : s\in \R, (y_2,y_3) \in \omega\}.
\end{equation}
On the one hand, note that the condition $b \, \|\kappa\|_{L^\infty(\R)}<1 $ implies that 
$$ \det(J_{{\Phi}_{\delta}}(s,\by)) = 1-\mathbf{k}_{\delta}^{\theta_\star}(s)\cdot \mathbf{y}=1- \delta \, \mathbf{k}^{\theta_\star}(\delta s) \cdot \mathbf{y},
$$ 
and thus Condition \eqref{unif:bounds:ak} holds for $\Phi_{\delta}$ for any $\delta\in (0,1]$.
On the other hand, the  injectivity of $\Phi$ (for $\delta=1$) implies that $\Phi_{\delta}$ is injective for all $\delta \in (0,1]$.
Therefore, $\Phi_{\delta}$ is  $\mathscr{C}^1$-diffeomorphism from $\R \times \omega$ to $\Omega_{\delta}$ for any $\delta \in (0,1]$.
As $\|\kappa_\delta\|_{L^\infty(\R)} = \delta \|\kappa\|_{L^\infty(\R)}$,  one has that: 
\begin{eqnarray*}
&& \displaystyle \mbox{ if } \delta \in (0; \delta_\star) \ \mbox{ with } \ \delta_\star=\min\left( \frac{|\bX| |\bY|}{\left(|\bX| |\bY|+ 2 \, b \, \|\kappa\|_{L^1(\R)} \, \lambda_2^N(\omega)  \right) \, b  \, \|\kappa\|_{L^\infty(\R)} },1\right)
\\[10pt]
&&\displaystyle  \mbox{ then } {\bX \cdot \bY_\delta^{\theta_\star}} >2 \lambda_2^N(\omega) \ \frac{ b^2 \|\kappa_{\delta}\|_{L^\infty(\R)}}{1-b\|\kappa_{\delta}\|_{L^\infty(\R)}}\|\kappa\|_{L^1(\R)}.
\end{eqnarray*}
Thus, Theorem \ref{thm:discspec} combined with the above inequality  yields the Corollary \ref{cor:vrai:exist}.

%%%SECTION 5 %%%%%%%%%%%
%%%%%
\section{Discussion on the geometric condition for the existence of discrete spectrum} \label{secnumerics}
In this section we focus on the vectors
$$\bX= \int_{\partial\omega} \bn(\by) |\psi(\by)|^2\rmd \sigma(\by) \ \mbox{ and  } \ \bY= \int_{\R} \mathbf{k}(s) \rmd  s $$
introduced in \eqref{eqn:defX} which play a fundamental role in the sufficient condition \eqref{eqn:ineqdiscspec} of Theorem \ref{thm:discspec} and in the Corollary \ref{cor:vrai:exist} in order to ensure the existence of eigenvalues in the Maxwell gap $(-a_\beta,a_\beta)\setminus\{0\}$. In particular, in both of these results one needs, at least, $\bX \neq 0$ and  $\bY\neq 0$. In this section we investigate  these two conditions.  In \S\ref{subsecisoY}, we show that, for example, if the base curve $\Gamma$ is planar and its two-dimensional curvature has non-zero integral over $\R$, then $\bY \neq 0$. In \S \ref{subsecisos} we provide an example of a cross-section $\omega$ for which the vector $\bX \neq 0$. 
Then in \S \ref{sec:proofcor9} we prove Corollary \ref{cor:infiniteWG} showing that, in fact, there exists an infinite number of waveguides satisfying the hypotheses of Corollary \ref{cor:vrai:exist}, thus having non-empty discrete Maxwell spectrum.
In \S \ref{subsec:crossecbreak} we prove that when the domain has sufficient symmetries, one always have $\bX = 0$. This includes most cross-sections for which $\bX$ can be computed explicitly such as disks or rectangles. In \S \ref{subsecrectangle} we focus on slight perturbations of rectangles and prove that even though $\bX = 0$ when $\omega$ is a rectangle, one can find infinitely many small perturbations of the rectangle for which $\bX \neq 0$. This is proved using shape derivative techniques.
Finally, in \S \ref{section:numerics} we provide some numerical examples.

\subsection{Examples of base curves $\Gamma$ satisfying the geometric condition $\bY\neq 0$} \label{subsecisoY}
Let $({\bf e}_{c,1},{\bf e}_{c,2},{\bf e}_{c,3})$ stand for the canonical basis of $\mathbb{R}^3$.
As in  \S \ref{sec-rec-waveguide}, assume the that the base  curve $\Gamma$ is planar and without of loss of generality that it is contained in the plane $y_3=0$. 
Since the geometry is simpler, it is easy to describe any relatively adapted parallel frame along this type of curve. For example, one can choose the following $\mathscr{C}^1$-smooth frame:
$$
s \in \bbR \mapsto \left(\mathbf{e}_1(s), \mathbf{e}_2(s), \mathbf{e}_3(s)\right) = \left(\gamma'(s), \mathbf{n}(s), \mathbf{e}_{c,3}\right),
$$
where $\mathbf{n}(s)$ denotes the unit  normal vector to the curve at $\gamma(s)$, obtained by rotating the unit tangent vector $\gamma'(s)$ in the plane $y_3=0$ by  $\pi/2$ in the counterclockwise direction.
Note that the obtained frame is a positively oriented orthonormal basis of $\bbR^3$ for each $s \in \R$. 
For this frame, the function $k_2=0$ and the function $k_1$ coincides with the two-dimensional definition of \emph{signed} curvature and satisfies $|k_1|=\kappa$ (cf. \eqref{frame:evolution}).

Moreover, since a relatively adapted parallel frame to a curve is unique up to the initial condition on the transverse plane (cf. Proposition \ref{ex:un:RAPF}), any other relatively adapted parallel frame to $\Gamma$ is of the form $(\mathbf{e}_1^{\bar\theta}(s),\mathbf{e}_2^{\bar\theta}(s),\mathbf{e}_3^{\bar\theta}(s))$ with $\mathbf{e}_1^{\bar\theta}(s)=\mathbf{e}_1(s)=\gamma'(s)$ and $\mathbf{e}_2^{\bar\theta},\mathbf{e}_3^{\bar\theta}$ as in \eqref{eqn:defetheta}, for some constant angle ${\bar\theta}\in [0,2\pi)$. 
That is, the relatively adapted parallel frame is defined up to the choice of a constant (clockwise) rotation in the transverse plane.
The same waveguide, built as in \S \ref{section:waveguides} with this rotated frame $(\mathbf{e}_1^{\bar\theta}(s),\mathbf{e}_2^{\bar\theta}(s),\mathbf{e}_3^{\bar\theta}(s))$, can be built equivalently keeping the initial frame $\left(\gamma'(s), \mathbf{n}(s), \mathbf{e}_{c,3}\right)$ while choosing  as new cross-section the (counterclockwise) rotation $\mathfrak{R}_{\bar\theta} \omega$ of the initial cross-section $\omega$.

Using e.g.  \eqref{frame:evolution:theta} after remarking that $\bar\theta'=0$, one can see that  the frame  satisfies the required differential evolution condition to be a relatively adapted parallel frame:
\begin{equation*}
    \left\{
        \begin{array}{lcl}
            \displaystyle\frac{d \mathbf{e}_1^{\bar\theta}}{ds}(s) & = & k_1^{\bar\theta}(s) \mathbf{e}_2^{\bar\theta}(s) + k_2^{\bar\theta}(s)\mathbf{e}_3^{\bar\theta}(s),  \vspace{3pt} \\
            \vspace{3pt}
            \displaystyle\frac{d \mathbf{e}_2^{\bar\theta}}{ds}(s) & = & - k_1^{\bar\theta}(s)\mathbf{e}_1^{\bar\theta}(s),\\
            \displaystyle\frac{d \mathbf{e}_3^{\bar\theta}}{ds}(s) & = & -k_2^{\bar\theta}(s) \mathbf{e}_1^{\bar\theta}(s)
        \end{array}
    \right.
\end{equation*}
with $k_1^{\bar\theta}(s),k_2^{\bar\theta}(s)$ continuous functions on $\R$ defined as follows
\begin{equation*}
\begin{pmatrix}k_1^{\bar\theta}(s)\\k_2^{\bar\theta}(s)\end{pmatrix} := k_1(s)\begin{pmatrix}
\cos({\bar\theta}) \\
\sin({\bar\theta})
\end{pmatrix}.
\end{equation*}
Finally, observe that
\begin{equation} \label{Yneq0:formula}
 \bY^{\bar\theta} =\int_{\R} k_1(s)\begin{pmatrix}
\cos({\bar\theta}) \\
\sin({\bar\theta})
\end{pmatrix} \, \mathrm{d}s = \begin{pmatrix}
\cos({\bar\theta}) \\
\sin({\bar\theta})
\end{pmatrix}
\int_{\R} k_1(s)
\mathrm{d}s \neq 0  \quad  \mbox{ if only if }
  \quad  \int_{\R} k_1(s) \, \mathrm{d}s \neq 0.
\end{equation}
As  $k_1$ is continuous, one has for instance  $\int_{\R} k_1(s) \, \mathrm{d}s \neq 0$ (and thus $\bY^{\bar\theta} \neq 0$) if $k_1$ has a constant sign (either non-negative or non-positive) and is not identically zero.  This is the case of a planar concave (or convex) curve.
Conversely, if $k_1$ is for instance odd, one has $\int_{\R} k_1(s) \, \mathrm{d}s=0$ and thus $\bY^{\bar\theta}=0$. This is the case of a planar curve that presents symmetries.

\subsection{A geometry satisfying  the  condition $\bX\neq 0$} \label{subsecisos}
%We provide theoretical and numerical examples of cross-sections $\omega$ satisfying $\bX \neq 0$. \todo{numerical?}
In this paragraph, we exhibit a cross-section $\omega\subset \R^2$ for which $\bX \neq 0$. This is the purpose of the following proposition.

\begin{Pro} \label{prop:righttriangleX}
When $\omega$ is an isosceles right triangle then one has $\bX \neq 0$.
\label{prop:isoceslesprop}
\end{Pro}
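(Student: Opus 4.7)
The plan is to exploit the classical reflection symmetry of the isosceles right triangle to write down the first non-trivial Neumann eigenfunction explicitly, and then carry out the boundary integral defining $\bX$ on each of the three sides. Since $\bX$ is equivariant under rotation of $\omega$ and scales linearly with a dilation of $\omega$ (so that the statement $\bX \neq 0$ is invariant under similarity transformations), it is enough to treat the specific representative $T := \{(y_2,y_3) \in \R^2 : y_2 > 0,\ y_3 > 0,\ y_2 + y_3 < \pi\}$, whose two legs lie on the coordinate axes and whose hypotenuse lies on the line $y_2 + y_3 = \pi$.

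The central observation is that the reflection $\sigma : (y_2,y_3) \mapsto (\pi - y_3, \pi - y_2)$ across the hypotenuse sends $T$ onto the complementary triangle in the square $S := (0,\pi)^2$, so that $T$ is a fundamental domain for $\sigma$ acting on $S$. A Neumann eigenfunction of $-\Delta$ on $T$ extends by reflection across the hypotenuse to a $\sigma$-invariant Neumann eigenfunction of $-\Delta$ on $S$, and conversely. Since the Neumann spectrum of $S$ is $\{m^2 + n^2 : m, n \in \{0,1,2,\ldots\}\}$ and the action of $\sigma$ on the eigenspace associated with the eigenvalue $1$ of $S$ (spanned by $\cos(y_2)$ and $\cos(y_3)$) has a $1$-dimensional invariant subspace generated by $\psi(y_2, y_3) := \cos(y_2) - \cos(y_3)$, one concludes that $\lambda_2^N(T) = 1$ is simple with associated eigenspace $\operatorname{span}\{\psi\}$.

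It then suffices to compute the three boundary contributions to $\bX$. On the horizontal leg $\{y_3 = 0,\ 0 \leq y_2 \leq \pi\}$ with outward normal $(0,-1)$, the integrand is $(\cos(y_2) - 1)^2$ and $\int_0^\pi (\cos(y_2) - 1)^2 \rmd y_2 = 3\pi/2$, giving a contribution $(0, -3\pi/2)$. By the symmetry $(y_2,y_3)\leftrightarrow (y_3,y_2)$ of $|\psi|^2$, the vertical leg yields $(-3\pi/2, 0)$. On the hypotenuse, using the parametrization $(t, \pi - t)$ with $t \in [0, \pi]$, arc length measure $\sqrt{2}\,\rmd t$, outward normal $(1,1)/\sqrt{2}$, and $\psi(t, \pi - t) = \cos(t) - \cos(\pi-t) = 2\cos(t)$, the contribution is $\int_0^\pi 4 \cos^2(t) \rmd t \cdot (1,1) = (2\pi, 2\pi)$. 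Summing,
\[
    \bX = (0, -3\pi/2) + (-3\pi/2, 0) + (2\pi, 2\pi) = (\pi/2, \pi/2) \neq 0.
\]
The only genuinely non-computational step is identifying $\lambda_2^N(T)$ and the associated eigenfunction, which is where the reflection/unfolding bijection does all the work; once $\psi$ is in hand, the evaluation of $\bX$ is elementary.
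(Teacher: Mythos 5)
Your proof is correct and follows essentially the same route as the paper: both identify the first nontrivial Neumann eigenfunction of the isosceles right triangle by unfolding across the hypotenuse into the square, then evaluate the defining integral for $\bX$ (the paper converts to the area integral $2\int_\omega\psi\,\nabla\psi$ via the divergence theorem, whereas you compute the three boundary pieces directly — a cosmetic difference). Two small inaccuracies, neither affecting the conclusion: the definition of $\bX$ requires a \emph{normalized} eigenfunction, and your $\psi=\cos y_2-\cos y_3$ has $\|\psi\|_{L^2(T)}^2=\pi^2/2$, so your $(\pi/2,\pi/2)$ equals $\|\psi\|_{L^2}^2\,\bX$ rather than $\bX$ itself — but since this is a positive scalar, $\bX\neq 0$ still follows; and $\bX$ in fact scales as the \emph{reciprocal} of the dilation factor (not linearly), although your asserted invariance of $\bX\neq 0$ under similarity transformations remains correct.
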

\begin{proof}[Proof of Proposition \ref{prop:isoceslesprop}]
For the sake of simplicity we assume that $\omega$ is the right triangle of vertices $(0,0), (1,0)$ and $(0,1)$. Any other isosceles right triangle being, up to a translation and a rotation, homothetic to this one.

The modes of the Neumann Laplacian  in $\omega$ are obtained thanks to the one of the Neumann Laplacian $L_{\mathscr{C}}^N$ in the square $\mathscr{C} := [0,1]\times [0,1]$. Indeed, by separation of variables, one easily obtains that    $\sigma(L_\mathscr{C}^N) = \{\pi^2(n^2+m^2)\}_{m,n\in \mathbb{N}\cup\{0\}}$.
Then, to  construct the eigenelements of $L_{\omega}^N$ from those of $L_\mathscr{C}^N$, we introduce the following orthogonal affine symmetry $S$ defined by
$$
S(y_1, y_2) = (1 - y_2, 1 - y_1), \ \forall (y_1,y_2)\in \bbR^2
$$
whose axis of symmetry contain the diagonal of the square $\{(y_1, 1 - y_1) : y_1 \in (0,1)\}$.
First, note that one has obviously $S(\mathscr{C}) = \mathscr{C}$, $
S(\overline{\omega}) \cup \overline{\omega }= \mathscr{C}$ and $S(\omega)\cap \omega=\emptyset$.
Next, observe that any eigenfunction $\phi$ of $L_\mathscr{C}^N$  satisfying the symmetry condition
$
\phi(S(y_1, y_2)) = \phi(y_1, y_2)$ for all $(y_1,y_2)\in \omega$
(in other words the eigenfunction ``$\phi$ is even with respect to $D$'') gives rise to an eigenfunction of $L_\omega^N$ for the same eigenvalue. Conversely, given an eigenfunction of the Neumann Laplacian on $\omega$, one can recover an eigenfunction of $L_\mathscr{C}^N$ (for the same eigenvalue) by extending it symmetrically via the reflection $S$ (across the diagonal 
$D$).\\
In particular, one deduces that $\lambda_2^N(\omega) = \pi^2$  is simple and  associated with the normalized eigenfunction $\psi$ defined by
\[
	\psi(y_1,y_2) := \sqrt{2}\cos(\pi y_2) - \sqrt{2}\cos(\pi y_1), \  \forall \by=(y_1,y_2) \in \overline{\omega}.
\]
Remark that by the divergence theorem there holds
\begin{equation}\label{eqn:sevexprX}
	\bX = \int_{\partial\omega} \bn(\by) |\psi(\by)|^2 \rmd \sigma(\by) = \int_\omega \nabla (\psi^2)(\by) \rmd \by = 2 \int_\omega \psi(\by) \, \nabla \psi(\by) \, \rmd \by.
\end{equation}
Hence, we obtain
\begin{equation*}
\bX = 4 \int_0^1 \int_0^{1-y_1}  \left( \cos(\pi y_2)-\cos(\pi y_1) \right)
\begin{pmatrix}
 \pi \sin(\pi y_1) \\
-\pi \sin(\pi y_2)
\end{pmatrix} \rmd y_2 \, \rmd y_1
= \begin{pmatrix}
1 \\
1
\end{pmatrix},
\end{equation*}
proving that $\bX \neq 0$.
\end{proof}

\subsection{Proof of Corollary \ref{cor:infiniteWG}} \label{sec:proofcor9}

In this paragraph we prove Corollary \ref{cor:infiniteWG}.
We will build an infinite class of admissible waveguides (as in  \S \ref{section:waveguides}) for which $\bX,\bY \neq 0$, and satisfying condition \eqref{eqn:ineqdiscspec} of Theorem \ref{thm:discspec}, thus having discrete eigenvalues in the Maxwell gap. In order to do so, we will make use of the results discussed in the Sections \ref{subsecisoY} and \ref{subsecisos}, and we will apply Corollary \ref{cor:vrai:exist}.\\[4pt]

We  take as cross-section $\omega$ the right isosceles right triangle of vertices $(0,0), (1,0)$ and $(0,1)$. By Proposition \ref{prop:righttriangleX} we have that $\bX \neq 0$.

Then we consider a planar curve $\Gamma$ having a non-negative two-dimensional signed curvature which is not identically zero. For example we could consider a planar parabola parametrized as follows:
\begin{equation*}
\gamma:\R \to \R^3, \quad t \mapsto (\gamma_1(t), \gamma_2(t), \gamma_3(t))^\top = (t,t^2,0)^\top.
\end{equation*}
In this case we have that the following vectors
\begin{equation} \label{planar:RAPF}
\mathbf{e}_1(t) =\frac{\gamma'(t)}{\|\gamma'(t)\|}= \frac{1}{\sqrt{1+4t^2}} \begin{pmatrix}
1\\2t\\0
\end{pmatrix}, \quad
\mathbf{n}(t)=  \frac{1}{\sqrt{1+4t^2}} \begin{pmatrix}
-2t\\1\\0
\end{pmatrix},
\quad  \mathbf{e}_{c,3}= \begin{pmatrix}
0\\ 0\\ 1
\end{pmatrix}
\end{equation}
form a relatively adapted parallel frame along $\Gamma$.
Even if the parametrization $\gamma$ is not arc-length (cf. Remark \ref{remark:arclength}), we can compute the two-dimensional signed curvature at each point  $\gamma(t)$ of the curve for any $t \in \R$:
$$k(t) = \frac{\gamma_1'(t) \gamma_2''(t) - \gamma_2'(t) \gamma_1''(t)}{\left( \gamma_1'(t)^2 + \gamma_2'(t)^2 \right)^{3/2}}=\frac{2}{(1 + 4 t^2)^{\frac32}}>0.$$
 Observe that  $\|k\|_{L^\infty(\R)}=\|k\|_{L^1(\R)}=2$.
Therefore $\bY^{\bar \theta} \neq 0$ for any $\bar \theta \in \R$ (cf. \eqref{Yneq0:formula}).
We can already choose $\bar\theta=\theta_\star \in [0,2\pi)$, the angle maximizing the scalar product as in Corollary \ref{cor:vrai:exist}.
Moreover, since rotating the frame in the transverse plane by a constant angle $\theta_\star$ can be re-interpreted as just choosing as new cross-section $\omega_\star := \mathfrak{R}_{\theta_\star} \omega$ (the counterclockwise rotation by  angle $\theta_\star$ of the initial cross-section $\omega$), as already noted in \S \ref{subsecisoY}, we can assume $\theta_\star=0$.
Therefore, we now only need to show that the  map
\begin{equation*}
\Phi:\R \times \omega \to \R^3, \quad (t,\by) \mapsto \gamma(t) + y_2 \mathbf{n}(t) +y_3 \mathbf{e}_{c,3},
\end{equation*}
which defines the waveguide $\Omega:=\Phi(\R \times \omega)$ as in \S\ref{section:waveguides}, has the property of being a global $\mathscr{C}^1$-diffeomorphism. 
A sufficient condition for that, for this particular geometry, is  $b:=\sup_{\by \in \omega} |\by|<1/2$ (that is Condition \eqref{assumpt:a:kappa} is satisfied).
This  can be seen for example  using Remark \ref{Rem-Thm-Hadamard-Cacciopoli}. Evidently $|\gamma(t)| \to \infty$ as $t \to \infty$. Moreover the waveguide $\Omega$ is clearly simply connected if it verifies the local non-self-intersecting condition \eqref{assumpt:a:kappa}, i.e. if the cross-section's width $b$ is small enough compared to the curvature.

\medskip

Thus we can apply Corollary \ref{cor:vrai:exist} to the waveguide $\Omega$ constructed as such, since it satisfies the assumptions in its statement. This gives rise to an infinite class of (slightly curved) waveguides $\{\Omega_\delta\}_{\delta\in (0, \delta_\star)}$ (defined by \eqref{eq.defOmegadelta}), each of which has a non-empty discrete spectrum.

\subsection{Cross-sections breaking the geometric condition $\bX\neq0$}\label{subsec:crossecbreak}
We say that a domain $\omega \subset \R^2$ has rotational symmetry if there exists a point $\mathfrak{c} \in \omega$ and  $\varphi \in (0, 2\pi)$ such that $\mathfrak{R}_\varphi \omega = \omega$, where $\mathfrak{R}_\varphi$ is the rotation in $\R^2$ of angle $\varphi$ around the point $\mathfrak{c}$.
Without loss of generality, we can assume that $\mathfrak{c}$ coincides with the origin of $\R^2$.

Note that in all the examples provided in \S \ref{subsecisos}, the cross-section $\omega$ has no rotational symmetry. In all these examples, this is actually necessary for $\bX$ to be non-zero, this is the purpose of the following proposition.
\begin{Pro}\label{prop:symm-X0}
Suppose that the first non-trivial Neumann Laplacian eigenvalue $\lambda_2^N(\omega)$ in the cross-section $\omega$ is simple. If $\omega$ presents at least one rotational symmetry then $\bX=0$.
\end{Pro}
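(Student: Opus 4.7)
The plan is to exploit the invariance of the eigenspace under the rotational symmetry together with the fact that $\lambda_2^N(\omega)$ is simple. Since $\mathfrak{R}_\varphi\omega = \omega$, the function $\psi \circ \mathfrak{R}_\varphi$ is again an $L^2$-normalized eigenfunction of the Neumann Laplacian on $\omega$ associated with the eigenvalue $\lambda_2^N(\omega)$. By the assumed simplicity of $\lambda_2^N(\omega)$, the associated eigenspace is one-dimensional, hence there exists $\varepsilon \in \{-1,+1\}$ such that
\[
   \psi \circ \mathfrak{R}_\varphi = \varepsilon\,\psi \quad \text{on } \omega.
\]
Differentiating this identity via the chain rule yields $\mathfrak{R}_\varphi^\top (\nabla\psi)\circ \mathfrak{R}_\varphi = \varepsilon\,\nabla\psi$, i.e. $(\nabla\psi)\circ \mathfrak{R}_\varphi = \varepsilon\,\mathfrak{R}_\varphi\nabla\psi$, since $\mathfrak{R}_\varphi$ is orthogonal.

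Next I would use the equivalent expression \eqref{eqn:sevexprX} for $\bX$, namely
\[
   \bX = 2\int_\omega \psi(\by)\nabla\psi(\by)\,\rmd \by,
\]
and perform the change of variables $\by = \mathfrak{R}_\varphi \bz$. Since $|\det \mathfrak{R}_\varphi|=1$ and $\mathfrak{R}_\varphi\omega = \omega$, this produces
\[
   \bX = 2\int_\omega \psi(\mathfrak{R}_\varphi \bz)\,(\nabla\psi)(\mathfrak{R}_\varphi \bz)\,\rmd \bz = 2\int_\omega (\varepsilon\,\psi(\bz))\,(\varepsilon\,\mathfrak{R}_\varphi \nabla\psi(\bz))\,\rmd \bz = \mathfrak{R}_\varphi \bX,
\]
since $\varepsilon^2 = 1$ and $\mathfrak{R}_\varphi$ is linear and can be taken out of the integral.

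Finally, it remains to observe that $\mathfrak{R}_\varphi \bX = \bX$ forces $\bX = 0$: as $\varphi \in (0, 2\pi)$, the rotation matrix $\mathfrak{R}_\varphi \neq I_2$ has $1$ as an eigenvalue only if $\varphi = 0$ (which is excluded). More concretely, the fixed-point subspace $\ker(\mathfrak{R}_\varphi - I_2)$ in $\R^2$ is trivial whenever $\varphi \in (0,2\pi)$, so $\bX \in \ker(\mathfrak{R}_\varphi - I_2) = \{0\}$, which concludes the argument. I do not expect any technical obstacle here; the proof is essentially a one-line symmetry argument once the simplicity hypothesis is used to reduce $\psi\circ\mathfrak{R}_\varphi$ to $\pm\psi$, and one should highlight that simplicity is precisely what is needed since otherwise $\psi\circ\mathfrak{R}_\varphi$ could be a nontrivial linear combination of several eigenfunctions and the above scalar reduction would fail.
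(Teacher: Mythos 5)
Your proof is correct and follows essentially the same route as the paper's: invariance of the one-dimensional eigenspace under $\mathfrak{R}_\varphi$ forces $\psi\circ\mathfrak{R}_\varphi=\pm\psi$, the chain rule and a change of variables give $\mathfrak{R}_\varphi\bX=\bX$, and the trivial fixed-point subspace of a nontrivial planar rotation concludes. The only (minor) presentational difference is that you assert, rather than verify via the weak formulation as the paper does, that $\psi\circ\mathfrak{R}_\varphi$ is again a Neumann eigenfunction.
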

\begin{Rem} Note that the hypothesis of $\lambda_2^N(\omega)$ being simple is necessary for Proposition \ref{prop:symm-X0} to hold. Indeed, if one choses $\omega$ to be the equilateral triangle of side $1$ then $\lambda_2^N(\omega)$ has multiplicity 2 (see for instance \cite[\S 3.5]{GN13}). One can compute that there exists a normalized eigenfunction associated with $\lambda_2^N(\omega)$ such that $\bX = \int_{\partial\omega} \bn(\by) |\psi(\by)|^2 \rmd \sigma(\by) \neq 0$. One obstruction for it to hold is that $\bX$ is not linear with respect to $\psi$.
\end{Rem}

\begin{proof}[Proof of Proposition \ref{prop:symm-X0}]
We will identify $\mathfrak{R}_\varphi$ with its $2 \times 2$ matrix representation and observe that if $\mathfrak{R}_\varphi \omega= \omega$, then $\mathfrak{R}_{- \varphi} \, \omega = \omega$ and recall that $\mathfrak{R}_\varphi \mathfrak{R}_\varphi^\top=\mathds{1}_2$.

Since $\lambda_2^N(\omega)$ is simple, there exists a unique normalized eigenfunction $\psi \in H^1(\omega)$  up to a sign. Define $g:=\psi\circ\mathfrak{R}_\varphi$ and observe that $g \in L^2(\omega)$, has $L^2$-norm equal to 1, belongs to $H^1(\omega)$ and
\begin{equation} \label{nabla:g}
\nabla g = \mathfrak{R}_\varphi^\top (\nabla \psi)\circ \mathfrak{R}_\varphi. 
\end{equation}
Therefore, for all $f \in H^1(\omega)$, by several changes of variables there, holds
\begin{equation*}
\begin{split}
\int_\omega \nabla g (\by) \cdot \overline{\nabla f(\by)} \, \rmd \by &= \int_\omega   \left(\mathfrak{R}_\varphi^\top(\nabla \psi)  (\mathfrak{R}_\varphi \by)\right)  \cdot \overline{\nabla f (\by)}  \,\rmd \by \\
&= \int_ {\mathfrak{R}_\varphi \omega} \left(\mathfrak{R}_\varphi^\top(\nabla \psi)  (\by)\right)  \cdot \overline{(\nabla f) (\mathfrak{R}_{-\varphi}\by)} \, \rmd \by \\
&= \int_ {\mathfrak{R}_\varphi\omega} \nabla \psi  (\by)  \cdot \overline{(\nabla( f \circ \mathfrak{R}_{-\varphi})) (\by)} \,\rmd \by \\
&= \lambda_2^N(\omega) \int_{\mathfrak{R}_\varphi\omega} \psi(\by) \, \overline{(f\circ \mathfrak{R}_{-\varphi})(\by)}   \, \rmd \by = \lambda_2^N(\omega) \int_\omega g(\by) \overline{f(\by)} \,\rmd \by.
\end{split}
\end{equation*}
Hence, $g$ is an eigenfunction of the Neumann Laplacian on $\omega$ associated with $\lambda_2^N(\omega)$. Thus, as it is a simple eigenvalue and from \eqref{nabla:g} we deduce that there exists $\eta \in \{-1,1\}$ such that 
\begin{equation}
	\psi\circ\mathfrak{R}_\varphi = \eta \psi,\quad \mathfrak{R}_\varphi \nabla \psi = \eta (\nabla \psi)\circ \mathfrak{R}_\varphi.
	\label{eqn:etasignphir}
\end{equation}
Using the same expression for $\bX$ as the one given in \eqref{eqn:sevexprX}, one observes using \eqref{eqn:etasignphir} that 
\begin{equation*}
\begin{split}
    \mathfrak{R}_\varphi \bX = 2 \int_\omega \psi(\by) \mathfrak{R}_\varphi  (\nabla \psi) (\by) \rmd \by &=  2 \eta^2 \int_\omega \psi(\mathfrak{R}_\varphi \by) \, (\nabla \psi) (\mathfrak{R}_\varphi \by) \rmd \by\\& =   2 \int_{\mathfrak{R}_{-\varphi}\,\omega} \psi(\by) \, \nabla \psi (\by) \rmd \by =  2 \int_{\omega} \psi(\by)  \, \nabla \psi (\by) \rmd \by =  \bX.
\end{split}
\end{equation*}
Thus $\bX \in \ker(\mathfrak{R}_\varphi - \mathds{1}_2)$ which is not possible unless $\varphi \in \{2k\pi\}_{k\in \mathbb{Z}}$. Since $\varphi \in (0,2\pi)$, necessarily $\bX=0$.
\end{proof}

Note that the hypothesis that $\lambda_2^N(\omega)$ is simple is fundamental. Indeed the equilateral triangle presents obvious symmetries. Yet its first non-trivial Neumann Laplacian eigenvalue is double and one can find two orthonormal eigenfunctions both having non-zero  $\bX$.

\subsection{A pertubative approach for rectangular cross-sections} \label{subsecrectangle}
In this paragraph, we use a perturbative approach by means of shape derivatives to investigate the variation of the quantity $\bX$ with respect to the domain $\omega$.

Namely, we start with a domain $\omega$ for which $\bX = 0$ and whose first non-trivial eigenvalue $\lambda_2^N(\omega)$ is simple and we perturb it {\it via} a small parameter $t \in \R$ and a given vector field to obtain a family of domains $(\omega_t)_t$ for which we compute the shape derivative of $(t \mapsto \bX_t)$ at $t = 0$. In \S \ref{subsubsec:adjoint problem} we introduce the so-called adjoint problem for domains verifying $\bX = 0$. In \S \ref{subsubsec:regularityeigenpair} we prove a regularity results on the dependence of the eigenvalue and the eigenfunctions with respect to the parameter $t$. In \S \ref{subsubsec:shapederivativetheoretical} we compute explicitly the shape derivative of $(t \mapsto \bX_t)$ at $t =0$ using a Lagrangian method. Finally, in \S \ref{subsubsec:shapederivativerectangle}, we apply these theoretical results to the specific case of $\omega$ being the rectangle $\omega := (0,\ell)\times (0,L)$ for $L > 0$ with $\ell > L$.

\subsubsection{The adjoint problem for domains verifying $\bX = 0$}\label{subsubsec:adjoint problem}
Let $\omega$ be a Lipschitz domain for which $\bX = 0$ and whose first non-trivial eigenvalue $\lambda_2^N(\omega)$ is simple and associated with a normalized eigenfunction $\psi_\omega$. For a given $\bw$ in the unit sphere $\mathbb{S}^1$ of $\R^2$,  we are interested in finding a solution $q^*$ of the following adjoint problem
\begin{equation}\label{eqn:ajointpb1}
\begin{cases}
-\Delta q^* = \lambda_2^N(\omega) q^*, &\text{in }\omega,\\
\partial_\bn q^* = -2 \psi_\omega  \, (\bn \cdot \bw), & \text{on }\partial\omega.
\end{cases}
\end{equation}
Actually, the following lemma, proves that there always exists a weak solution to \eqref{eqn:ajointpb1} in the following sense: $q^* \in H^1(\omega)$ is a weak solution of \eqref{eqn:ajointpb1} if for all $v \in H^1(\omega)$ there holds
\begin{equation*}
    \int_\omega \nabla q^* \cdot \overline{\nabla v} \, \rmd \by - \lambda_2^N(\omega) \int_\omega q^* \overline{v} \, \rmd \by = - 2\int_{\partial\omega} \psi_\omega   \overline{v} \, ( \bn \cdot \bw) \, \rmd \sigma(\by).
\end{equation*}
\begin{Lem}\label{lem:solweakadjoint}
Under the assumption that $\bX = 0$ and $\lambda_2^N(\omega)$ is simple, there exists a unique weak solution $q^*$ to the adjoint problem \eqref{eqn:ajointpb1} orthogonal in $L^2(\omega)$ to $\psi_\omega$.
\end{Lem}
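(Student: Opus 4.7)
The plan is to interpret \eqref{eqn:ajointpb1} as a Fredholm problem at the Neumann eigenvalue $\lambda_2^N(\omega)$, and to recognize the assumption $\bX=0$ as precisely the compatibility condition making it solvable.

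First, I would introduce the continuous sesquilinear form $a(u,v) := \int_\omega \nabla u\cdot\overline{\nabla v}\,\rmd\by - \lambda_2^N(\omega)\int_\omega u\,\overline{v}\,\rmd\by$ on $H^1(\omega)$, together with the antilinear functional $\ell(v) := -2\int_{\partial\omega}\psi_\omega\,\overline{v}\,(\bn\cdot\bw)\,\rmd\sigma(\by)$, so that $q^*\in H^1(\omega)$ is a weak solution of \eqref{eqn:ajointpb1} iff $a(q^*,v)=\ell(v)$ for every $v\in H^1(\omega)$. Since $\omega$ is a bounded Lipschitz domain, $\ell$ is continuous on $H^1(\omega)$ by the trace theorem, and the Neumann Laplacian $L_\omega^N$ has compact resolvent, so the unbounded self-adjoint operator $L_\omega^N-\lambda_2^N(\omega)\,\mathrm{Id}$ is Fredholm of index zero on $L^2(\omega)$ with finite-dimensional kernel.

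Second, I would invoke the Fredholm alternative in the self-adjoint setting. By the simplicity assumption, $\ker\big(L_\omega^N-\lambda_2^N(\omega)\,\mathrm{Id}\big)=\mathrm{span}\,\{\psi_\omega\}$, and the weak formulation $a(q^*,v)=\ell(v)$ is solvable if and only if $\ell$ vanishes on this kernel, i.e.\ $\ell(\psi_\omega)=0$. Computing,
\begin{equation*}
\ell(\psi_\omega) = -2\int_{\partial\omega}|\psi_\omega|^2\,(\bn\cdot\bw)\,\rmd\sigma(\by) = -2\,\bw\cdot\bX,
\end{equation*}
by the definition \eqref{eqn:defX} of $\bX$. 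Since $\bX=0$ by assumption, the compatibility condition holds for every $\bw\in\mathbb{S}^1$, and the Fredholm alternative yields the existence of a weak solution $q^*\in H^1(\omega)$, unique up to addition of a multiple of $\psi_\omega$.

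Finally, uniqueness in the stated form is obtained by selecting within the affine solution set the unique representative orthogonal to $\psi_\omega$ in $L^2(\omega)$: if $\widetilde q$ is any weak solution, then $q^* := \widetilde q - (\widetilde q,\psi_\omega)_{L^2(\omega)}\,\psi_\omega$ is again a weak solution (since $\psi_\omega\in\ker(a(\cdot,\cdot))$) and satisfies $(q^*,\psi_\omega)_{L^2(\omega)}=0$. If $q_1^*,q_2^*$ are two such solutions, their difference lies in $\mathrm{span}\{\psi_\omega\}\cap\psi_\omega^\perp=\{0\}$, giving uniqueness. The main (and only substantive) point of the argument is the verification that $\bX=0$ is exactly the Fredholm compatibility condition; once this is observed, the rest is standard.
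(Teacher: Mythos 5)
Your proof is correct and takes the route the paper itself alludes to but does not carry out: the authors remark after the statement that ``the condition $\bX = 0$ could be re-interpreted as a necessary condition to use Fredholm's alternative,'' and you make exactly that observation precise. The paper's actual proof is more hands-on: it decomposes $q^*$ and the test functions with respect to $\mathrm{span}(1,\psi_\omega)$ and its $L^2$-orthogonal complement $\mathscr{H}$, computes the mean $\underline{q^*}$ explicitly by testing against $v\equiv 1$, and obtains the component in $\mathscr{H}\cap H^1(\omega)$ via Lax--Milgram after proving coercivity of the restricted form $a^\perp$ using the spectral gap $\lambda_3^N(\omega)>\lambda_2^N(\omega)$. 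Your Fredholm route is shorter and conceptually cleaner; the paper's route is more constructive (it produces an explicit formula for the mean of $q^*$), which is not needed here but perhaps more in the spirit of the elementary tools used elsewhere in the section. One small point of care in your write-up: the right-hand side $\ell$ is a boundary functional and not an element of $L^2(\omega)$, so the Fredholm alternative should be invoked for the variational problem (i.e.\ for the associated operator $H^1(\omega)\to (H^1(\omega))'$, which is a compact perturbation of an isomorphism by Rellich), rather than literally for $L^N_\omega-\lambda_2^N(\omega)\,\mathrm{Id}$ acting on $L^2(\omega)$; the compatibility condition $\ell(\psi_\omega)=0$ and the rest of the argument are then exactly as you state.
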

The proof of Lemma \ref{lem:solweakadjoint} is by using Lax-Milgram theorem in adequate functional spaces. Note that the condition $\bX = 0$ could be re-interpreted as a necessary condition to use Fredholm's alternative.
\begin{proof}[Proof of Lemma \ref{lem:solweakadjoint}]
Let $\mathscr{H}$ be the orthogonal in $L^2(\omega)$ of ${\rm span}(1,\psi_\omega)$ and remark that any $w \in L^2(\omega)$ decomposes as
\begin{equation}
        w = \underline{w} + ( w, \psi_\omega)_{L^2(\omega)}\psi_\omega + w^\perp,\quad {\text{where} }\quad \underline{w} := \int_{\omega} w \, \rmd \by \quad\text{and}\quad w^\perp \in \mathscr{H}.
        \label{eqn:decomporthogood}
\end{equation}
By definition, if $q^* \in H^1(\omega)$ is a weak solution  to \eqref{eqn:ajointpb1} it verifies for all $v \in H^1(\omega)$
\[
a[q^*,v] = \ell[v]
\]
where for $u,v \in H^1(\omega)$, we have set
\[
	a[u,v] := \int_\omega (\nabla u)\cdot \overline{(\nabla v)} \, \rmd \by - \lambda_2^N(\omega)\int_\omega u \overline{v}\rmd \by,\quad \ell[v] := -2 \int_{\partial\omega}\psi_\omega \,  \overline{v} \, (\bn\cdot \bw) \,\rmd \sigma(\by).
\]
Now, decompose any $q^*,v \in H^1(\omega)$ as in \eqref{eqn:decomporthogood}  by assuming that $q_*$ is orthogonal in $L^2(\omega)$ to $\psi_{\omega}$. Because of orthogonality relations, the weak formulation rewrites
\begin{align*}
        -\lambda_2^N(\omega) \, |\omega| \, \underline{q^*}\ \underline{v} +a[(q^*)^\perp,v^\perp] & = -2 \overline{\underline{v}}\int_{\partial\omega} \psi_\omega \, (\bn \cdot \bw) \, \rmd  \sigma(\by) -2 \overline{( v, \psi_\omega)}_{L^2(\omega)} (\bX\cdot \bw) \\& \quad- 2 \int_{\partial\omega} \psi_\omega  \, \overline{v^\perp} \, (\bn\cdot\bw) \, \rmd  \sigma(\by)\\
        & = -2 \overline{\underline{v}}\int_{\partial\omega} \psi_\omega (\bn \cdot \bw)\, \rmd  \sigma(\by)- 2 \int_{\partial\omega} \psi_\omega \, \overline{v^\perp}\,  (\bn\cdot\bw)\,  \rmd  \sigma(\by)
\end{align*}
where the last equality holds because $\bX = 0$. Choosing $v \equiv 1$, we get
\[
    \underline{q^*} = \frac{2}{\lambda_2^N(\omega)|\omega|}\int_{\partial\omega} \psi_\omega (\bw \cdot \bn) \rmd \sigma(\by).
\]
As  $q_*$ is orthogonal in $L^2(\omega)$, we are left with finding $(q^*)^\perp$. To this aim, one notices that the decomposition \eqref{eqn:decomporthogood} obviously implies that  $\mathscr{H} \cap H^1(\omega)$ is a Hilbert space as a closed subspace of $H^1(\omega)$ for the $H^1$-norm. Then,  one can  consider the sesquilinear form $a^\perp := a|_{(\mathscr{H} \cap H^1(\omega))^2}$ and the antilinear form $b^\perp := l|_{\mathscr{H} \cap H^1(\omega)}$.  Note that $a^\perp$ is continuous on $(\mathscr{H} \cap H^1(\omega))^2$ (because $a$ is  continuous on $H^1(\omega)^2$). Moreover, it is also coercive. Indeed, if one takes $u \in \mathscr{H}\cap H^1(\omega)$ and $\delta \in (0,1)$, then letting $\lambda_3^N(\omega)$ to be the third Neumann eigenvalue in $\omega$,  the min-max principle yields
\[
	a[u,u] =\int_\omega |\nabla u|^2\rmd \by - \lambda_2^N(\omega)|u|^2\rmd \by \geq \delta \|\nabla u\|_{\bL^2(\omega)}^2 + \left((1-\delta) \lambda_3^N(\omega) - \lambda_2^N(\omega)\right)\|u\|_{L^2(\omega)}^2.
\]
Thus, as $\lambda_2^N(\omega)$ is simple one has $\lambda_3^N(\omega) > \lambda_2^N(\omega)$ and choosing $\delta$ sufficiently small, each term on the right-hand side is positive yielding the existence of $c_\delta > 0$ such that $a[u,u]\geq c_\delta \|u\|_{H^1(\omega)}^2$. 
The antilinear form $\ell$ is continuous on $\mathscr{H}\cap H^1(\omega)$ thanks to the continuity of the Dirichlet trace from $H^1(\omega)\to L^2(\partial\omega)$. 
Thus, one has the existence and uniqueness  of $(q^*)^\perp$ by the Lax-Milgram theorem.  Finally, one concludes, by tracing back the chain of equivalences, that 
$q^* = \underline{q^*} + (q^*)^\perp$
is the unique weak solution to the adjoint problem \eqref{eqn:ajointpb1}, which is  $L^2(\omega)$-orthogonal to $\psi_{\omega}$.
 
\end{proof}
\subsubsection{Eigenpair regularity with respect to uni-parametric domain perturbations}\label{subsubsec:regularityeigenpair}
Consider a  bounded Lipschitz domain $\omega\subset \R^2$ and  the Sobolev space
$$
W^{1,\infty}(\R^2,\R^2):=\Big\{ G=(G_1,G_2)\in L^{\infty}(\bbR^2)^2 \mid \partial_{y_j} G_i \in L^{\infty}(\bbR^2) \mbox{ for }i,j=1,2 \Big\}.
$$
The space $W^{1,\infty}(\R^2,\R^2)$ is a Banach space endowed with the norm $\| \cdot\|_{W^{1,\infty}(\R^2,\R^2)}$ defined by
$$
\| G\|_{W^{1,\infty}(\R^2,\R^2)}:=\sum_{i=1}^2\|G_i\|_{L^{\infty}(\R^2)}+ \sum_{i=1}^2 \sum_{j=1}^2\|\partial_{y_j} G_i\|_{L^{\infty}(\R^2)},\ \forall G\in W^{1,\infty}(\R^2,\R^2).
$$

For $T>0$ and a fixed generic vector field $V: \R^2 \to \R^2$ of class $W^{1,\infty}(\R^2,\R^2)$, we define the map
\begin{equation} \label{def:Thethat}
\Theta: \left\{
			\begin{array}{ccl}
				[-T,T] &\to& W^{1,\infty}(\R^2, \R^2)\\
				t & \mapsto & I+t V
			\end{array}.
		\right.
\end{equation}
Note that $\Theta(0)$ is the identity and that $\Theta'(0) = V$. Let $J_{\Theta(t)}$ denote the Jacobian matrix of $\Theta(t)$. For sufficiently small $T > 0$, the map $t \mapsto J_{\Theta(t)}$ belongs to the class $\mathscr{C}^{1}([-T, T], L^\infty(\mathbb{R}^2, \mathbb{R}^{2 \times 2}))$, and the determinant map $t \mapsto \det(J_{\Theta(t)})\in \mathscr{C}^{1}([-T, T], L^\infty(\mathbb{R}^2, \mathbb{R}))$ is a positive function, uniformly bounded from below with respect to $t\in [-T,T]$ for the $L^\infty(\mathbb{R}^2, \mathbb{R}))$-norm. As a result, the inverse Jacobian $t \mapsto J_{\Theta(t)}^{-1}$ is well-defined and also belongs to $\mathscr{C}^{1}([-T, T], L^\infty(\mathbb{R}^2, \mathbb{R}^{2 \times 2}))$, and the reciprocal of the determinant, $t \mapsto \det(J_{\Theta(t)})^{-1}$, lies in $\mathscr{C}^{1}([-T, T], L^\infty(\mathbb{R}^2, \mathbb{R}))$. Therefore, for all $t \in [-T, T]$, the map $\Theta(t)$ is clearly injective and defines a $W^{1,\infty}$-diffeomorphism from $\omega$ onto $\omega_t := \Theta(t)(\omega)$ for all $t \in [-T,T]$. \medskip 

For $t$ fixed in $[-T,T]$, the spectrum of the Neumann Laplacian $L_{\omega_t}^N$  in $\omega_t$ consist of a sequence of  positive  eigenvalues $(\lambda_n^N)_{n\geq 1}$ of finite multiplicity, tending to $+\infty$ and listed  (with their  multiplicity) as follows 
$0=\lambda_1^{N}(\omega_t) \leq \lambda_2^{N}(\omega_t) \leq  \lambda_3^{N}(\omega_t) \leq  \ldots  $. One first establishes the following Lemma on the regularity of theses eigenvalues as function of $t$. 
\begin{Lem}\label{Lem-Lipschitz}
For $j\in \mathbb{N}$, the function $t\mapsto \lambda_2^{N}(\omega_t)$ are  Lipschitz continuous on $[-T, T]$.
\end{Lem}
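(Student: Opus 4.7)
The strategy is the classical one in shape sensitivity analysis: transport the eigenvalue problem on the moving domain $\omega_t$ back to the reference domain $\omega$ via the diffeomorphism $\Theta(t)$, so that all eigenvalues are realised as min--max quotients of $t$-dependent quadratic forms on the \emph{fixed} Hilbert space $H^1(\omega)$, and then exploit the smoothness of $t\mapsto J_{\Theta(t)}$.

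\medskip

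\noindent\textbf{Step 1: pull-back to the fixed domain.} For $u\in H^1(\omega_t)$, set $\tilde u:=u\circ\Theta(t)\in H^1(\omega)$; the map $u\mapsto\tilde u$ is a topological isomorphism $H^1(\omega_t)\to H^1(\omega)$. Using the change of variables $\by=\Theta(t)(\bx)$ and the identity $(\nabla u)\circ\Theta(t)=(J_{\Theta(t)}^{-\top})\nabla\tilde u$, the Rayleigh quotient for the Neumann Laplacian on $\omega_t$ rewrites, for every $\tilde u\in H^1(\omega)$, as
\begin{equation*}
\mathcal{R}_t[\tilde u]
\;=\;
\frac{\displaystyle\int_{\omega} (A_t\,\nabla\tilde u)\cdot\overline{\nabla\tilde u}\,\rmd\bx}
{\displaystyle\int_{\omega} m_t\,|\tilde u|^2\,\rmd\bx},
\qquad
A_t:=(\det J_{\Theta(t)})\,J_{\Theta(t)}^{-1}J_{\Theta(t)}^{-\top},
\qquad m_t:=\det J_{\Theta(t)}.
\end{equation*}
By the regularity of $t\mapsto J_{\Theta(t)}$ in $\mathscr{C}^1([-T,T],L^\infty(\bbR^2,\bbR^{2\times2}))$ and the uniform positivity of $\det J_{\Theta(t)}$, the matrix field $A_t$ is symmetric, uniformly bounded and uniformly elliptic on $\omega$, and $m_t$ is uniformly bounded from above and below, all uniformly in $t\in[-T,T]$.

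\medskip

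\noindent\textbf{Step 2: min--max characterization.} By the pull-back of Step 1 and the standard min--max principle applied to the self-adjoint operator associated with the pair of quadratic forms $(\int A_t\nabla\tilde u\cdot\overline{\nabla\tilde u},\int m_t|\tilde u|^2)$,
\begin{equation*}
\lambda_j^N(\omega_t)
\;=\;
\inf_{\substack{W\subset H^1(\omega)\\ \dim W=j}}\,
\sup_{\tilde u\in W\setminus\{0\}}\,\mathcal{R}_t[\tilde u],
\qquad j\in\bbN.
\end{equation*}
The key point is that the \emph{test space} $H^1(\omega)$ no longer depends on $t$; only the quadratic forms do.

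\medskip

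\noindent\textbf{Step 3: Lipschitz dependence of the quadratic forms.} From the $\mathscr{C}^1$-regularity of $t\mapsto J_{\Theta(t)}$ and of $t\mapsto\det J_{\Theta(t)}$ in $L^\infty(\omega)$, the maps $t\mapsto A_t$ and $t\mapsto m_t$ are Lipschitz in $L^\infty(\omega,\bbR^{2\times 2})$ and $L^\infty(\omega)$ respectively, with some Lipschitz constants $C_A,C_m>0$. Hence there exists a constant $C>0$ (depending only on $T$, $\omega$ and $V$) such that for all $s,t\in[-T,T]$ and all $\tilde u\in H^1(\omega)\setminus\{0\}$,
\begin{equation*}
\bigl|\mathcal{R}_t[\tilde u]-\mathcal{R}_s[\tilde u]\bigr|\;\le\;C\,|t-s|\,\mathcal{R}_0[\tilde u]\,,
\end{equation*}
after using the uniform lower bound on $m_t$ to control the denominator.

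\medskip

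\noindent\textbf{Step 4: conclusion by min--max.} Fixing $j$ and taking the infimum over $j$-dimensional subspaces in the bound of Step 3 yields
\begin{equation*}
\bigl|\lambda_j^N(\omega_t)-\lambda_j^N(\omega_s)\bigr|
\;\le\;C\,|t-s|\,\lambda_j^N(\omega_0)\cdot\bigl(1+o(1)\bigr),
\end{equation*}
and hence $t\mapsto\lambda_j^N(\omega_t)$ is Lipschitz on $[-T,T]$, with a constant depending on $j$, $\omega$, $V$ and $T$.

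\medskip

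The main technical obstacle is the bookkeeping of Step 3: one must check carefully that the Lipschitz constant arising from the perturbation of $A_t$ and $m_t$ in $L^\infty$ can be chosen uniform in the test function $\tilde u$, which is exactly what is achieved thanks to the uniform ellipticity of $A_t$ and the uniform positivity of $m_t$. Everything else is a routine application of the min--max principle on a fixed Hilbert space.
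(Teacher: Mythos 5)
Your overall strategy is the same as the paper's: pull the eigenvalue problem back to the fixed domain $\omega$ via $\Theta(t)$, characterize $\lambda_j^N(\omega_t)$ by a min--max over the fixed space $H^1(\omega)$ with a $t$-dependent Rayleigh quotient, and exploit the $\mathscr{C}^1$ (hence Lipschitz) dependence of $J_{\Theta(t)}$, $J_{\Theta(t)}^{-1}$ and $\det J_{\Theta(t)}$ in $L^\infty$. Your Step 3 estimate $|\mathcal{R}_t[\tilde u]-\mathcal{R}_s[\tilde u]|\le C\,|t-s|\,\mathcal{R}_0[\tilde u]$ is correct and coincides, up to the choice of normalizing quotient, with the paper's key inequality.

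However, Step 4 as written papers over a genuine subtlety: you cannot simply ``take the infimum over $j$-dimensional subspaces in the bound of Step 3.'' The infimum of a sum is not the sum of infima, and the $W$ that is nearly optimal for $\mathcal{R}_s$ need not control $\sup_W\mathcal{R}_0$. The correct move is to fix $W=W_s$ a minimizer for $\lambda_j^N(\omega_s)$, deduce
$\lambda_j^N(\omega_t)\le\lambda_j^N(\omega_s)+C|t-s|\sup_{\tilde u\in W_s}\mathcal{R}_0[\tilde u]$, and then invoke the uniform comparability $\mathcal{R}_0\le \tilde C\,\mathcal{R}_s$ (a consequence of the uniform ellipticity of $A_t$ and the two-sided bound on $m_t$ that you established in Step 1) to get $\sup_{W_s}\mathcal{R}_0\le\tilde C\,\lambda_j^N(\omega_s)$. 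One then still needs a $t$-uniform bound $\lambda_j^N(\omega_s)\le C_1\,\lambda_j^N(\omega_0)$ (again from comparability plus min--max) to obtain a genuine Lipschitz constant. Also, the ``$(1+o(1))$'' factor in your final display is not what this argument gives; the multiplicative factor is a fixed constant of the form $\tilde C\,C_1$, not something close to $1$. The paper instead normalizes by $R_q(u;t')$ and closes the resulting self-referential inequality with a short bootstrap valid for $|t-t'|$ small, then globalizes using the uniform bound on $\lambda_j^N(\omega_t)$; your $\mathcal{R}_0$-normalization avoids the bootstrap at the cost of needing the comparability estimates explicitly, which you implicitly assumed but did not use. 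Once Step 4 is fixed as above, the proof is complete and essentially equivalent to the paper's.
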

The Lipschitz continuity of Neumann eigenvalues with respect to domain deformations induced by locally Lipschitz continuous homeomorphisms has been already established in \cite{Lamb-05}. Since we focus here on the much simpler case of a one-parameter family of domain perturbations, we provide a shorter and more elementary proof in the Appendix \ref{Sec-reg-Lips-eigen-Neumann}.

Assuming that $\lambda_2^N(\omega)$ is simple, the aim of this second lemma is to show that, provided $T$ is sufficiently small, the second Neumann eigenvalue $\lambda_2^N(\omega_t)$ and a corresponding normalized eigenfunction $\psi_{t}$ can be smoothly followed with respect to $t$.

\begin{Lem}\label{lem-implcitefunction}
Let $\omega$ be a bounded Lipschitz domain in $\R^2$ for which $\lambda_2^N(\omega)$ is simple and associated with a normalized eigenfunction $\psi_\omega \in H^1(\omega)$. There exists $t_\star > 0$ and a $\mathscr{C}^1$-smooth map
\[
\Psi: \left\{\begin{array}{ccc}(-t_\star,t_\star) &\to &  \R \times H^1(\omega)\\
t & \mapsto & (\lambda^N(t),\varphi_t))
\end{array}\right.
\]
such that there holds
\begin{enumerate}[label=\normalfont(\roman*)]
    \item $(\lambda^N(0),\varphi_0) = (\lambda_2^N(\omega),\psi_\omega)$,
    \item for all $t\in(-t_\star,t_\star)$, the function $\psi_t := \varphi_t \circ (\Theta(t))^{-1} \in H^1(\omega_t)$ is an eigenfunction of the Neumann Laplacian in $\omega_t$ associated with the simple eigenvalue $\lambda^N(t) = \lambda_2^N(\omega_t)$.
\end{enumerate}
\label{lem:implicit_modes}
\end{Lem}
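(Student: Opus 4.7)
The plan is to transport the Neumann eigenvalue problem on the moving domain $\omega_t$ back to the fixed domain $\omega$ via $\Theta(t)$, and then apply the implicit function theorem at $t=0$ to a suitable nonlinear map encoding both the eigen-equation and a normalization.

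First, I would perform the change of variables: a pair $(\lambda, \psi_t)$ solves the weak Neumann eigenvalue problem on $\omega_t$ if and only if $\varphi_t := \psi_t \circ \Theta(t) \in H^1(\omega)$ satisfies, for all $w \in H^1(\omega)$,
\begin{equation*}
\int_\omega A(t) \nabla \varphi_t \cdot \overline{\nabla w} \, \rmd y \;=\; \lambda \int_\omega \det(J_{\Theta(t)}) \, \varphi_t \, \overline{w} \, \rmd y,
\end{equation*}
where $A(t) := \det(J_{\Theta(t)}) \, J_{\Theta(t)}^{-1}(J_{\Theta(t)}^{-1})^\top$. Since $\Theta(t)=I+tV$ with $V \in W^{1,\infty}(\R^2,\R^2)$, the maps $t \mapsto A(t)$ and $t \mapsto \det(J_{\Theta(t)})$ are $\mathscr{C}^1$ from a small interval $(-T,T)$ into $L^\infty(\omega,\R^{2\times 2})$ and $L^\infty(\omega)$ respectively, with $\det(J_{\Theta(t)})$ bounded below by a positive constant after possibly reducing $T$.

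Next, I would define the $\mathscr{C}^1$-map $F: (-T, T) \times \R \times H^1(\omega) \to (H^1(\omega))^* \times \R$ by
\begin{equation*}
F(t, \lambda, \varphi) := \Bigl( w \mapsto \int_\omega A(t) \nabla \varphi \cdot \overline{\nabla w} - \lambda \int_\omega \det(J_{\Theta(t)}) \varphi \overline{w}, \; (\varphi, \psi_\omega)_{L^2(\omega)} - 1 \Bigr).
\end{equation*}
Clearly $F(0, \lambda_2^N(\omega), \psi_\omega) = 0$. The key step is to verify that the partial differential $D_{(\lambda,\varphi)} F(0, \lambda_2^N(\omega), \psi_\omega)$ is a Banach-space isomorphism. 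Given $(f, c) \in (H^1(\omega))^* \times \R$, I must find a unique $(\mu, \phi)$ solving, for all $w \in H^1(\omega)$,
\begin{equation*}
\int_\omega \nabla \phi \cdot \overline{\nabla w} \, \rmd y \;-\; \lambda_2^N(\omega) \int_\omega \phi \overline{w} \, \rmd y \;=\; \langle f, w \rangle \;+\; \mu \int_\omega \psi_\omega \overline{w} \, \rmd y,
\end{equation*}
together with $(\phi, \psi_\omega)_{L^2(\omega)} = c$. Since $\lambda_2^N(\omega)$ is simple, the self-adjoint Fredholm operator $-\Delta - \lambda_2^N(\omega)$ (with Neumann boundary condition) has one-dimensional kernel $\R \psi_\omega$. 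Testing the equation against $\psi_\omega$ yields the compatibility condition $\mu = -\langle f, \psi_\omega \rangle$ (using that $\psi_\omega$ is $L^2$-normalized). With this choice, the Fredholm alternative produces $\phi$ up to an element of the kernel, and the normalization $(\phi,\psi_\omega)_{L^2(\omega)} = c$ fixes this element uniquely. Hence the differential is a continuous bijection, therefore an isomorphism by the open mapping theorem.

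The implicit function theorem then yields $t_\star \in (0,T]$ and a $\mathscr{C}^1$-map $t \mapsto (\lambda^N(t), \varphi_t)$ with $F(t, \lambda^N(t), \varphi_t) = 0$ and $(\lambda^N(0), \varphi_0) = (\lambda_2^N(\omega), \psi_\omega)$. Undoing the pullback, $\psi_t := \varphi_t \circ \Theta(t)^{-1} \in H^1(\omega_t)$ is a normalized eigenfunction of the Neumann Laplacian on $\omega_t$ associated with $\lambda^N(t)$. Finally, to identify $\lambda^N(t)$ with the second Neumann eigenvalue $\lambda_2^N(\omega_t)$ and to keep simplicity, I would invoke the Lipschitz continuity of Lemma \ref{Lem-Lipschitz}: since $\lambda_2^N(\omega)$ is simple and isolated, shrinking $t_\star$ ensures that $\lambda_2^N(\omega_t)$ is the unique eigenvalue of $L_{\omega_t}^N$ in a small neighbourhood of $\lambda_2^N(\omega)$, and that no other branch can collide with $\lambda^N(t)$. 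The main subtlety is not the implicit function theorem application itself but this last labelling argument, which requires combining the $\mathscr{C}^1$-branch of eigenvalues produced by the IFT with the lower-semicontinuous control of the full Neumann spectrum under Lipschitz deformations.
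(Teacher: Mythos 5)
Your proposal follows the same three-step skeleton as the paper: pull the Neumann eigenvalue problem back to the fixed domain $\omega$ via $\Theta(t)$, apply the implicit function theorem to a map encoding the weak eigen-equation plus a normalization constraint, and then invoke Lemma~\ref{Lem-Lipschitz} to label $\lambda^N(t)$ as $\lambda_2^N(\omega_t)$ and preserve simplicity. The verification that the linearized operator is an isomorphism is also the same in spirit, though you appeal to the Fredholm alternative abstractly where the paper makes the argument explicit with a three-way orthogonal decomposition of $H^1(\omega)$ into $\mathrm{span}(1)$, $\mathrm{span}(\psi_\omega)$ and their $L^2$-orthocomplement, handling the constant component separately and running Lax--Milgram only on the complement (on which the form is coercive). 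Both are valid; the Fredholm route is shorter, while the paper's decomposition is more self-contained and avoids re-deriving the Fredholm index.

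One point worth flagging. You impose the \emph{linear} normalization $(\varphi,\psi_\omega)_{L^2(\omega)}=1$, whereas the paper uses the \emph{quadratic} one $\int_\omega|\varphi|^2\det(J_{\Theta(t)})\,\rmd\by=1$, which by change of variables is precisely $\|\psi_t\|_{L^2(\omega_t)}^2=1$. Your choice makes the partial derivative slightly cleaner, but it does \emph{not} produce an $L^2(\omega_t)$-normalized eigenfunction, so your assertion near the end that ``$\psi_t$ is a normalized eigenfunction'' is incorrect as written. The lemma as stated only asks for an eigenfunction, so this does not invalidate your proof; but the normalization is used downstream in the paper (the definition of $\bX_t$ in~\eqref{eqn:defXt} presumes it), so either adopt the quadratic constraint from the start, or add a final $\mathscr{C}^1$ renormalization $\psi_t\mapsto\psi_t/\|\psi_t\|_{L^2(\omega_t)}$ (legitimate for $|t|$ small since the norm stays bounded away from zero), and then re-check that the resulting branch is still $\mathscr{C}^1$.
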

We first note that such a regularity result is stated without proof in \cite[Thm.~2.5.7]{Henrot}, under the assumption that the domain is of class~$\mathscr{C}^3$ for the regularity of the eigenfunction. The proof given here closely follows the approach used to prove \cite[Theorem~5.5.1]{HP18}, which considers the regularity of the solution to a Neumann-type  problem for the Laplacian with respect to the parameter $t$ in domains $\omega_t$ (under the assumption that  the boundary of $ \omega$ is $\mathscr{C}^1$). However, it is adapted to the present setting, which is slightly different, as we consider here not only an eigenvalue problem, but also a weaker regularity for the domain $\omega$, namely Lipschitz regularity. The argument relies on an application of the implicit function theorem within a suitable functional framework. The proof is divided into three steps.

\begin{proof}[Proof of Lemma \ref{lem:implicit_modes}]
{\bf Step 1: pull back of the eigenvalue problem to $\omega$.} \\[4pt]
Let us assume that $t \in (-T,T)$ and let $(\lambda,\tilde{\varphi}) \in \R \times H^1(\omega_t)$ be an eigenpair of the Neumann Laplacian in $\omega_t$.
Thus, for all $\tilde{f} \in H^1(\omega_t)$, one as
\[
    \int_{\omega_t} \nabla \tilde{\varphi}\cdot\overline{\nabla \tilde{f}} \, \rmd \by = \lambda \int_{\omega_t} \tilde{\varphi} \overline{\tilde{f}} \, \rmd \by.
\]
Pulling back the problem to the fixed domain $\omega$, setting $\varphi = \tilde{\varphi}\circ \Theta(t)$, for all $f \in H^1(\omega_t)$ the variational formulation reads for all $f \in H^1(\omega)$:
\[
    \int_\omega (J_{\Theta(t)}^{-\top} \nabla \varphi) \cdot (J_{\Theta(t)}^{-\top} \overline{\nabla f}) \det(J_\Theta(t))\, \rmd  \by = \lambda \int_\omega\varphi \overline{f} \det(J_{\Theta(t)}) \,\rmd \by.
\]
with $J_{\Theta(t)}$ the Jacobian  matrix associated to the $W^{1,\infty}$-diffeomorphism $\Theta(t)$ introduced in \eqref{def:Thethat}.
Driven by this formulation, for any $u \in H^1(\omega)$, we introduce the continuous antilinear form 
\[
    F(t,\lambda,u) : \left\{
    \begin{array}{ccl}
        H^1(\omega) & \to & \R   \\
         f & \mapsto &  \displaystyle\int_\omega (J_{\Theta(t)}^{-\top} \nabla u) \cdot (J_{\Theta(t)}^{-\top} \overline{\nabla f}) \det(J_\Theta(t))\, \rmd \by - \lambda \int_\omega u \overline{f} \det(J_{\Theta(t)}) \, \rmd \by.
    \end{array}
    \right.
\]
By definition $F(t,\lambda,\varphi) \in (H^1(\omega))'$ and we consider
\begin{equation}\label{eq.defmathcalF}
    \mathscr{F} : \left\{
    \begin{array}{ccl}
        (-T,T)\times \R \times H^1(\omega) & \to & (H^1(\omega))'\times \R \\
         (t,\lambda,u) & \mapsto &   \displaystyle \Big(F(t,\lambda,u),\Big(\int_\omega |u|^2 \det(J_{\Theta(t)}) \, \rmd \by\Big) - 1\Big).
    \end{array}
    \right.
\end{equation}
{\bf Step 2: Application of the implicit function theorem to $\mathcal{F}$.} \\[4pt]
Note that as constructed, there holds $\mathscr{F}(0,\lambda_2^N(\omega),\psi_\omega) = (0,0)$. Our aim is to apply the implicit function theorem to $\mathscr{F}$ at the point $(0,\lambda_2^N(\omega),\psi_\omega) \in (-T,T)\times \R \times H^1(\omega)$. First, let us prove that $F$ is of class $\mathscr{C}^1$. Consider $(t,\lambda,u) \in (-T,T) \times \R \times H^1(\omega)$ and note that there holds $ \Theta(t+h) = \Theta(t) + h V$ yielding
\begin{equation}\label{eq.taylorinverse}
    J_{\Theta(t+h)} = J_{\Theta(t)} + h J_{V}, \quad J_{\Theta(t+h)}^{-\top} = J_{\Theta(t)}^{-\top} - h J_{\Theta(t)}^{-\top}J_V^{\top}J_{\Theta(t)}^{-\top} + h^2 R_h.
\end{equation}
where $R_h \in L^\infty(\R^2,\R^{2\times2})$ uniformly in $h$ for sufficiently small $h$. Similarly, one gets
\begin{equation}\label{eq.taylordet}
    \det(J_{\Theta(t+h)}) = \det(J_{\Theta(t)}) + h \det(J_{\Theta(t)})\Tr(J_{\Theta(t)}^{-1}J_V) + h^2 r_h
\end{equation}
where $r_h \in L^\infty(\R^2,\R)$ uniformly in $h$ for sufficiently small $h$. A rather lengthy but straightforward computation yields that $F$ is Fréchet differentiable at $(t,\lambda,u)$ and that its Fréchet derivative in a direction $(h,\mu,v)\in \R\times\R\times H^1(\omega)$ verifies for all $f \in H^1(\omega)$ 
\begin{align*}
 \big(D F_{(t,\lambda,u)}(h,\mu,v)\big)(f) &:= \int_{\omega} (J_{\Theta(t)}^{-\top}\nabla v) \cdot (J_{\Theta(t)}^{-\top}\overline{\nabla f}) \det(J_{\Theta(t)})\,  \rmd \by \\&\quad-  h \int_\omega (J_{\Theta(t)}^{-\top}J_V^\top J_{\Theta(t)}^{-\top} \nabla u) (J_{\Theta(t)}^{-\top} \overline{  \nabla f}) \det(J_{\Theta(t)}) \, \rmd \by \\&\quad- h \int_\omega (J_{\Theta(t)}^{-\top} \nabla u) (J_{\Theta(t)}^{-\top}J_V^\top J_{\Theta(t)}^{-\top} \overline{ \nabla f}) \det(J_{\Theta(t)}) \, \rmd \by\\
    & \quad+h \int_\omega (J_{\Theta(t)}^{-\top} \nabla u) (J_{\Theta(t)}^{-\top} \overline{\nabla f}) \det(J_{\Theta(t)})\Tr(J_{\Theta(t)}^{-1}J_V) \, \rmd \by\\
    & \quad- h \lambda \int_\omega u \overline{ f} \det(J_{\Theta(t)})\Tr(J_{\Theta(t)}^{-1}J_V)\, \rmd \by\\
    & \quad- \mu \int_\omega u \overline{f} \det(J_{\Theta(t)}) \, \rmd \by - \lambda \int_\omega v \overline{f} \det(J_{\Theta(t)})\, \rmd \by.
\end{align*}
One can also check from its expression that $DF$ is a continuous map from $(-T,T)\times \R \times H^1(\omega)$ to the space of continuous linear maps $\mathscr{L}(\R \times \R \times H^1(\omega),(H^1(\omega))')$. Thus $F$ is of class $\mathcal{C}^1$ on $(-T,T) \times \R \times H^1(\omega)$ . Note that for $(\mu,v) \in \R \times H^1(\omega)$ and $f \in H^1(\omega)$, one has
\begin{equation}\label{eq.DefDFaupointsinteressants}
    \Big(D F_{(0,\lambda_2^N(\omega),\psi_\omega)}(0,\mu,v)\Big)(f) = \int_\omega \nabla v\cdot \overline{\nabla f}\, \rmd \by - \mu \int_\omega \psi_\omega \overline{f} \, \rmd \by - \lambda_2^N(\omega) \int_\omega v \overline{f} \, \rmd \by.
\end{equation}
Set $\mathscr{\widetilde{F}}(t,\lambda,u) := \int_\omega u^2 \det(J_{\Theta(t)})\rmd \by - 1$ and remark that it is also $\mathcal{C}^1$ on  $(-T,T) \times \R \times H^1(\omega)$ with Fréchet derivative at a point $(h,\mu,v) \in \R \times \R \times H^1(\omega)$ given by
\begin{equation}\label{eq.DefDwidetildeFaupointsinteressants}
    (D\mathscr{\widetilde{F}}_{(t,\lambda,u)})(h,\mu,v) = 2 \int_\omega u \overline{v} \det(J_{\Theta(t)})\, \rmd \by + h \int_{\omega}|u|^2 \det(J_{\Theta(t)})\Tr(J_{\Theta(t)}^{-1}J_V)\, \rmd \by.
\end{equation}
Thus,  $\mathscr{F}$,  defined by \eqref{eq.defmathcalF} is of class $\mathscr{C}^1$ and  $D\mathscr{F}$ is a $\mathscr{C}^0$-map from $(-T,T)\times \R \times H^1(\omega)$ to $(\R\times\R\times H^1(\omega))'$ and  one has (by  \eqref{eq.DefDFaupointsinteressants} and \eqref{eq.DefDwidetildeFaupointsinteressants} evaluated at $(t, \lambda,u)=(0,\lambda_2^N(\omega),\psi_\omega)$):
\begin{equation*}
    \big(D \mathscr{F}_{(0,\lambda_2^N(\omega),\psi_\omega)}(0,\mu,v)\big)(f) = \Big(\int_\omega \nabla v\cdot \overline{\nabla f} \rmd \by - \mu \int_\omega \psi_\omega \overline{f} \rmd \by - \lambda_2^N(\omega) \int_\omega v \overline{f} \, \rmd \by, 2 \int_\omega \psi_\omega \overline{v} \,\rmd \by\Big).
\end{equation*}
We prove now that  $(\mu,v) \mapsto D \mathscr{F}_{(0,\lambda_2^N(\omega),\psi_\omega)}(0,\mu,v)$ is an isomorphism. To this aim, take $(\Lambda,x) \in (H^1(\omega))' \times \R$  and note that by Riesz representation theorem, there exists a unique $g \in H^1(\omega)$ such that for all $f \in H^1(\omega)$ there holds
\begin{equation}
    \Lambda(f) = \int_{\omega} \nabla g \cdot \overline{\nabla f}\, \rmd \by + \int_\omega g \overline{f}\, \rmd \by.
    \label{eqn:RieszLambda}
\end{equation}
We are looking for $(\mu,v) \in \R \times H^1(\omega)$ such that for all $f \in H^1(\omega)$
\begin{equation}\label{eqn:surjectivityconditions}
    \int_\omega \nabla v\cdot \overline{\nabla f} \, \rmd \by - \mu \int_\omega \psi_\omega \overline{f} \rmd \by - \lambda_2^N(\omega) \int_\omega v \overline{f} \, \rmd \by = \Lambda(f),\quad x = 2 \int_\omega \psi_\omega \overline{v} \, \rmd \by.
\end{equation}
Taking $f = \psi_\omega$ gives $\mu = - \Lambda(\psi_\omega)$. Remark that the first equation in \eqref{eqn:surjectivityconditions} can be rewritten as follows:  for all $f\in H^1(\omega)$ there holds $a[v,f] = \ell[f]$, where
\[
    a: (w,f) \in H^1(\omega)^2 \mapsto \int_\omega \nabla w\cdot \overline{\nabla f} \, \rmd \by  - \lambda_2^N(\omega) \int_\omega w \overline{f}\, \rmd \by
\]
and
\[
    \ell: f \in H^1(\omega) \mapsto \Lambda(f) + \mu \int_{\omega}\psi_\omega \overline{f}\, \rmd \by.
\]
We look for $v \in H^1(\omega)$ decomposed as
\[
    v =  c_1+ c_2 \psi_\omega + v^\perp
\]
for some $c_1,c_2 \in \R$ and $v^\perp \in H^1(\omega)\cap \mathscr{H}$, where $\mathscr{H}$ is the orthogonal in $L^2(\omega)$ of ${\rm span}(1,\psi_\omega)$. Note that the second equation in \eqref{eqn:surjectivityconditions} immediately yields $c_2 = \frac{x}2$. Let us look for $v^\perp$ verifying for all $f^\perp \in H^1(\omega)\cap \mathscr{H}$ the equation $a^\perp[v^\perp,f^\perp] = \ell^\perp[f^\perp]$ where we have set $a^\perp := a|_{(H^1(\omega)\cap \mathscr{H}^\perp)^2}$ and $\ell^\perp := \ell|_{H^1(\omega)\cap \mathscr{H}^\perp}$.

As defined $a^\perp$ is a continuous sesquilinear form of $(H^1(\omega)\cap \mathscr{H})^2$ and $\ell^\perp$ is a antilinear form on $H^1(\omega)\cap \mathscr{H}$. Note that $a^\perp$ is also coercive because if one takes $w \in H^1(\omega) \cap \mathscr{H}$ and $\delta \in (0,1)$, then letting $\lambda_3^N(\omega)$ to be the third Neumann eigenvalue in $\omega$,  the min-max principle yields
\[
	a^\perp[w,w] =\int_\omega |\nabla w|^2\rmd \by - \lambda_2^N(\omega)\int_\omega|w|^2\rmd \by \geq \delta \|\nabla w\|_{\bL^2(\omega)}^2 + \left((1-\delta) \lambda_3^N(\omega) - \lambda_2^N(\omega)\right)\|w\|_{L^2(\omega)}^2.
\]
Thus, as $\lambda_2^N(\omega)$ is simple one has $\lambda_3^N(\omega) > \lambda_2^N(\omega)$ and choosing $\delta$ sufficiently small, each term on the right-hand side is positive yielding the existence of $c_\delta > 0$ such that $a^\perp[w,w]\geq c_\delta \|w\|_{H^1(\omega)}^2$. Hence by the Lax-Milgram theorem, there exists a unique $v^\perp \in H^1(\omega) \cap \mathscr{H}$ such that for all $f^\perp \in H^1(\omega)\cap \mathscr{H}$ the equation $a^\perp[v^\perp,f^\perp] = \ell^\perp[f^\perp]$ holds.

Now, decompose any function $f \in H^1(\omega)$ as
\[
    f = \underline{f} +\left(\int_\omega f \, \overline{\psi_\omega} \, \rmd \by\right)\psi_\omega + f^\perp
\]
where $\underline{f} := \int_{\omega}f \rmd \by$ and $f^\perp \in H^1(\omega)\cap \mathscr{H}$ and remark that there holds
\[
    a[v,f] = a[c_1,\underline{f}] + a^\perp[v^\perp,f^\perp],\quad \ell[f] = \underline{f} \int_\omega g \, \rmd \by+ \Lambda(f^\perp) = \underline{f} \Lambda(1) + \Lambda(f^\perp),
\]
where we have used expression \eqref{eqn:RieszLambda} and the fact that $\mu=-\Lambda(\psi_{\omega})$ to simplify the term $\Lambda(f)$.
Thus, using that $a^\perp[v^\perp,f^\perp] = b^\perp[f^\perp] = \Lambda(f^\perp)$, the expression $a[v,f] = b[f]$ rewrites
\[
    a[c_1,\underline{f}] = \underline{f} \Lambda(1).
\]
It reads
\[
    -\lambda_2^N(\omega)\, c_1 \, \underline{f}\, |\omega| = \underline{f} \Lambda(1).
\]
As this should be true for all $f \in H^1(\omega)$ we obtain
\[
    c_1 = -\frac{\Lambda(1)}{\lambda_2^N(\omega) |\omega|}.
\]
Hence we have proven that necessarily
\[
    \mu = - \Lambda(\psi_\omega), \quad v = -\frac{\Lambda(1)}{\lambda_2^N(\omega) |\omega|} + \frac{x}2\psi_\omega + v^\perp. 
\]
In particular the sought expression of $(\mu,v)$ is unique and by construction the obtained expressions verify $D\mathscr{F}(0,\lambda_2^N(\omega),\psi_\omega)(0,\mu,v) = (\Lambda,x)$.

Now, by the implicit function theorem, there exists $t_\star > 0$ and a map $\Psi$ of class $\mathscr{C}^1$ defined as
\[
\Psi: \left\{\begin{array}{ccc}(-t_\star,t_\star) &\to &  \R \times H^1(\omega)\\
t & \mapsto & (\lambda^N(t),\varphi_t)
\end{array}\right.
\]
such that $\Psi(0) = (\lambda_2^N(\omega), \psi_\omega)$ and verifying  $\mathscr{F}(t, \Psi(t))=\mathscr{F}(t,\lambda^N(t),\varphi_t) = (0,0)$ for all $t \in (-t_\star,t_\star)$. In particular, for all $f \in H^1(\omega)$ there holds
\[
   \int_\omega (J_{\Theta(t)}^{-\top} \nabla \varphi_t) \cdot (J_{\Theta(t)}^{-\top} \overline{\nabla f}) \det(J_\Theta(t))\rmd \by- \lambda^N(t) \int_\omega \varphi_t \overline{f} \det(J_{\Theta(t)})\rmd \by= 0
\]
and moreover one has
\[
    \int_\omega |\varphi_t|^2 \det(J_{\Theta(t)}) \rmd \by= 1.
\]
Setting $\psi_t := \varphi_t \circ (\Theta(t))^{-1}$ pushing forward the equation to $\omega_t$ this reads, for all $\tilde{f} \in H^1(\omega_t)$
\[
    \int_{\omega_t}\nabla \psi_t\cdot \overline{\nabla \tilde{f}} \rmd \by= \lambda_2^N(t) \int_{\omega_t} \psi_t \overline{\tilde{f}} \rmd \by,\quad \int_{\omega_t}|\psi_t|^2 \rmd \by= 1
\]
and thus $\psi_t$ is a normalized eigenmode of the Neumann Laplacian associated with $\lambda^N(t)$.\\[6pt]
{\bf Step 3: Continuity argument.}\\[4pt]
In fact, for sufficiently small values of $t$, we necessarily have $\lambda^N(t) = \lambda_2^N(\omega_t)$, and this eigenvalue remains simple. This follows from the simplicity of $\lambda^N(0) = \lambda_2^N(\omega)$, together with the fact that, for each $j \in \mathbb{N}$, the functions $t \mapsto \lambda_j^N(\omega_t)$ are, by Lemma \ref{Lem-Lipschitz}, continuous and indeed, even  Lipschitz continuous on $[-T, T]$. Therefore, possibly after choosing a smaller value $t_*> 0$ than the one provided by the implicit function theorem in Step 2, $\lambda^N(t) = \lambda_2^N(\omega_t)$ and is simple on the interval $(-t_*, t_*)$.
%Furthermore, provided $t$ is small enough   $\lambda^N(t) = \lambda_2^N(\omega_t)$ is simple  since  it is simple at $0$, it is simple  since it is continuous and simple for $t=0$. 

%Moreover, as $[-t_0,t_0]\subset \cup_{t\in [-t_0,t_0]} (t-1/(2\, C_0),t+1/(2\, C_0) )$, by compactness of $[-t_0,t_0]$, there exists $n\in \mathbb{N}$ and  $s_1, s_2, \ldots, s_n \in [-t_0,t_0]$ such that 
%$$[-t_0,t_0]\subset \bigcup_{i=1}^n (s_i-1/(2\, C_0),s_i+1/(2\, C_0)).$$
%Therefore, denoting $\lambda_j(\omega_{s_{\max}})=\max_{i=1, \ldots, n}\lambda_j(\omega_{s_{i}})$, one obtains from 
%\begin{equation}\label{eq.Lipsch2}
%|\lambda_j^N(\omega_t')-\lambda_j^N(\omega_t) |\leq 
 %C_{t_0}  \, (2\lambda_j(\omega_{s_{\max}})+1)\ |t-t'|, \forall t,t'\in [-t_0,t_0].
%\end{equation}
%Thus, one concludes that $t\mapsto \lambda_j^N(\omega_t)$
%is locally Lipschitz continuous on $(-T,T)$.

%it proves that $t \in (-T,T)\to\lambda_j^N(\omega_t)$ is continuous. In particular, as $\lambda_2^N(\omega)$ is simple, up to choosing $t_\star$ smaller, we know by continuity that $t \in (-t_\star,t_\star) \mapsto \lambda_2^N(\omega_t)$ is simple and therefore for all $t \in (-t_\star,t_\star)$  there holds $\lambda_2^N(t) = \lambda_2^N(\omega_t)$.
\end{proof}
\begin{Rem}
We point out that in the proof of the Lemma \ref{lem-implcitefunction}, one can obtain an explicit expression for the eigenpair derivative at $t = 0$, namely
$\big(\left.\frac{\rmd \lambda_2^N(\omega_t)}{\rmd t}\right|_{t=0}, \left.\frac{\rmd \varphi_t}{\rmd t}\right|_{t = 0}\big)$,
via the implicit function theorem, by differentiating with respect to $t$ the equation
$\mathscr{F}(t, \lambda_2^N(t), \varphi_t) = (0,0)$ and evaluating the resulting expression at $t = 0$.
Using the invertibility of 
$D \mathscr{F}_{(0, \lambda_2^N(\omega), \psi_\omega)}$, this yields
\[
\Big(\left.\frac{\rmd \lambda_2^N(\omega_t)}{\rmd t}\right|_{t=0}, \left.\frac{\rmd \varphi_t}{\rmd t}\right|_{t = 0}\Big)
= - \Big[ D \mathscr{F}_{(0, \lambda_2^N(\omega), \psi_\omega)} \Big]^{-1}
\Big(0, D \mathscr{F}_{(0, \lambda_2^N(\omega), \psi_\omega)}(1, 0, 0) \Big).
\]
Furthermore, since $\psi_t := \varphi_t \circ (\Theta(t))^{-1}$, one obtains that
$\left.\frac{\rmd \psi_t}{\rmd t}\right|_{t=0}
= \left.\frac{\rmd \varphi_t}{\rmd t}\right|_{t=0}+ \psi_\omega \circ (\Theta^{-1})'(0),$
with $(\Theta^{-1})'(0) = -\Theta^{-1}(0) \Theta'(0) \Theta^{-1}(0)=-V$.
We will not make explicit use of these formulas in the current proof, but they may be useful for other applications. Therefore, we leave the simplification of these derivatives to the interested reader.

\end{Rem}

\subsubsection{Shape derivative of $(t \mapsto \bX_t)$}\label{subsubsec:shapederivativetheoretical}
Let $\omega$ be a  bounded Lipschitz domain for which $\bX = 0$ and assume that $\lambda_2^N(\omega)$. By Lemma \ref{lem:implicit_modes}, there exists $t_\star > 0$ such that for all $t\in(-t_\star,t_\star)$, $ (\lambda^N(t),\psi_t)$ is a eigenpair of the Neumann Laplacian in $\omega_t$ $\psi_t$ a normalized eigenfunction.
Moreover, $t\mapsto (\lambda^N(t),\psi_t)$ is of class $\mathscr{C}^1$ on $(-t_\star, t_\star)$, and $\lambda^N(t)$ remains simple and coincides with the second Neumann eigenvalue $\lambda_2^N(\omega_t)$. Therefore, in what follows, we write $\lambda_2^N(t) := \lambda^N(t)$.\\
We consider the quantity
\begin{equation}\label{eqn:defXt}
\bX_t:=  \int_{\partial\omega_t} \bn_t(\by) |\psi_t(\by)|^2 \rmd \sigma(\by), \quad \forall t\in (-t_*,t_*)
\end{equation}
Remark that $\bX_t$ depends on the choice of the perturbation field $V$  (cf. \eqref{def:Thethat}). For $t =0$ one has  $\omega_0=\omega$ and for the sake of simplicity, we may also denote $\psi_\omega$ by $\psi_0$ and $\bX_0$ by $\bX$.\\

We emphasize that, throughout this section, we work within the framework of shape optimization. Consequently, we consider real-valued eigenfunctions $\psi_t$. Indeed, since the operator $L^N_{\omega_t}$ commutes with complex conjugation, it is legitimate to study $L^N_{\omega_t}$ on the real-valued Hilbert space $L^2(\omega_t)$, replacing all associated Sobolev spaces involves in the definition of the domain of the operator with their real-valued counterparts. This modification does not affect the sequence of eigenvalues, which retains the same multiplicities. Moreover, all results from \S \ref{subsubsec:adjoint problem} and \S \ref{subsubsec:regularityeigenpair} (specifically Lemmas \ref{lem:solweakadjoint}, \ref{Lem-Lipschitz}, and \ref{lem:implicit_modes}) remain valid in the real setting. Their proofs carry over directly by simply omitting the complex conjugation in the integrals. In particular, this implies that if one starts with a real-valued eigenfunction $\psi_{\omega}$ of $L^N_{\omega}$ in this real framework setting, then the corresponding eigenfunctions $\psi_t$ constructed in Lemma \ref{lem:implicit_modes} are also real-valued for all $t \in (-t_{\star}, t_{\star})$.\\ Finally, observe that it is not restrictive to consider real-valued eigenfunctions. Indeed, our goal is to prove that the coefficient $\bX_t$ is non-zero for sufficiently small $ t $, assuming that $\bX_0 = 0$  and that the eigenvalue $ \lambda_2^N(\omega)$ is simple. By continuity (see Lemma \ref{Lem-Lipschitz}), $\lambda_2^N(t) $ remains simple for small values of $ t$ . Consequently, the coefficient $\bX_t$, as defined in \eqref{eqn:defXt}, is independent of the choice of normalized eigenfunctions associated with this simple eigenvalue.

\begin{Lem} \label{prop:perturb}
Let $\omega$ be a bounded Lipschitz domain in $\R^2$ for which $\bX = 0$ and consider the second Neumann eigenvalue $\lambda_2^N(\omega)$. Assume it is simple and denote by $\psi_\omega$ an associated normalized real-valued eigenfunction. Let   $\bX_t$ be defined by \eqref{eqn:defXt} on $(-t_*,t_*)$ for $t_*>0$  and consider $q^*\in H^1(\omega)$ the unique weak solution of the adjoint problem given by Lemma \ref{lem:solweakadjoint}.\medskip

\noindent If $\psi_\omega, q^*, \psi_\omega^2 \in H^2(\omega)$, then the map $t \mapsto \bX_t $  is differentiable at $t=0$ and thus
\begin{equation}\label{eq.derivee}
    \bX_t  =   \Big(\frac{\rmd \bX_t}{\rmd t}   \Big)\rvert_{t = 0} \ t + o(t), \quad \text{when } t\to 0.
\end{equation}
Moreover in any fixed direction $\bw$ of the two-dimensional unit sphere $\mathbb{S}^1$, the derivative $\frac{\rmd}{\rmd t} (\bX_t \cdot \bw)\rvert_{t=0}$  is given by 
\begin{equation}\label{eqn:defexpressionderivXt}
\begin{split}
\Big(\frac{\rmd}{\rmd t} \bX_t \cdot \bw \Big)\rvert_{t = 0} &=  \int_{\partial \omega} \nabla (\psi_\omega^2)\cdot \bw \ (V \cdot \bn) \, \rmd \sigma(\by) - \lambda_2^N(\omega) \int_{\partial\omega} q^* \psi_\omega \ (V\cdot \bn) \, \rmd \sigma(\by)\\
&\qquad + \int_{\partial\omega} \nabla q^* \cdot \nabla \psi_\omega \ (V \cdot \bn) \, \rmd \sigma(\by).
\end{split}
\end{equation}
\end{Lem}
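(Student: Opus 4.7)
The plan is to apply a Lagrangian (Céa-type) method in order to avoid having to explicitly differentiate $\psi_t$ with respect to $t$. Fixing a direction $\bw \in \mathbb{S}^1$, the functional to differentiate is $J(t):=\bX_t\cdot \bw = \int_{\partial \omega_t}(\bn_t\cdot \bw)\,\psi_t^2\,\rmd\sigma$, and the natural constraints on $\psi_t$ are the weak eigenvalue equation and the $L^2$-normalization condition. Accordingly, I would introduce the Lagrangian
\[
  \mathcal{L}(t,\psi,\lambda,p,\mu):=\int_{\partial\omega_t}(\bn_t\cdot\bw)\,\psi^2\,\rmd\sigma+\int_{\omega_t}\bigl(\nabla\psi\cdot\nabla p-\lambda\,\psi\,p\bigr)\,\rmd\by+\mu\Big(\int_{\omega_t}\psi^2\,\rmd\by-1\Big),
\]
which agrees with $J(t)$ whenever $(\psi,\lambda)=(\psi_t,\lambda_2^N(t))$, irrespective of the choice of multipliers $(p,\mu)$.

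The key first step is to identify the \emph{right} multipliers. Cancelling $\partial_\psi\mathcal{L}$ against an arbitrary admissible variation and integrating by parts yields the condition $-\Delta p=\lambda_2^N(\omega)\,p-2\mu\,\psi_\omega$ in $\omega$ together with $\partial_\bn p=-2\psi_\omega(\bn\cdot\bw)$ on $\partial\omega$. Testing the resulting equation against $\psi_\omega$ and integrating by parts gives the Fredholm compatibility condition $\mu=\bX\cdot\bw$, which by our hypothesis $\bX=0$ reduces to $\mu=0$. With $\mu=0$, the Euler--Lagrange equation for $p$ becomes precisely the adjoint problem \eqref{eqn:ajointpb1}, whose unique solution $p=q^\star$ orthogonal to $\psi_\omega$ is furnished by Lemma \ref{lem:solweakadjoint}. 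This is the crucial place where the assumption $\bX=0$ enters: it is exactly what makes the adjoint Lagrangian framework compatible.

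Next, I would justify the differentiability of $t\mapsto J(t)$ and the standard Lagrangian identity. By Lemma \ref{lem:implicit_modes}, after pulling back through $\Theta(t)$ we have $t\mapsto(\lambda_2^N(t),\varphi_t)$ of class $\mathscr{C}^1$ in $\R\times H^1(\omega)$. Using the $H^2$-regularity of $\psi_\omega$, $q^\star$, and $\psi_\omega^2$, together with the stability of the Piola-type change of variables under $\Theta(t)$, one obtains enough regularity to differentiate the Lagrangian pulled back on $\omega$. Choosing $(p,\mu)=(q^\star,0)$ and using the fact that $\partial_\psi\mathcal{L}$ vanishes identically in an admissible direction (by the adjoint equation) and that $\partial_\lambda\mathcal{L}=-\int_\omega \psi_\omega q^\star\,\rmd\by=0$ (by the orthogonality of $q^\star$ to $\psi_\omega$), one deduces
\[
  \frac{\rmd J}{\rmd t}\Big|_{t=0}=\partial_t\mathcal{L}\big|_{t=0,(\psi,\lambda,p,\mu)=(\psi_\omega,\lambda_2^N(\omega),q^\star,0)}.
\]
This reduces the computation to Hadamard-type transport formulas for integrals over the moving domain $\omega_t$ with $t$-independent integrands.

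The final step is the explicit computation of $\partial_t\mathcal{L}$. For the boundary contribution, since $\bw$ is a constant vector, the divergence theorem yields $\int_{\partial\omega_t}(\bn_t\cdot\bw)\psi_\omega^2\,\rmd\sigma=\int_{\omega_t}\bw\cdot\nabla(\psi_\omega^2)\,\rmd\by$, and the Reynolds transport formula for a fixed integrand gives a derivative equal to $\int_{\partial\omega}\nabla(\psi_\omega^2)\cdot\bw\,(V\cdot\bn)\,\rmd\sigma$. The same transport formula applied to the PDE part $\int_{\omega_t}(\nabla\psi_\omega\cdot\nabla q^\star-\lambda_2^N(\omega)\psi_\omega q^\star)\,\rmd\by$ delivers the remaining two terms of \eqref{eqn:defexpressionderivXt}. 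Summing the contributions yields the expansion \eqref{eq.derivee} together with the stated expression for $\frac{\rmd}{\rmd t}(\bX_t\cdot\bw)|_{t=0}$; the identity holds for every $\bw\in \mathbb{S}^1$, hence for the vectorial derivative. The main technical obstacle I anticipate is twofold: first, guaranteeing that the Lagrangian identity (differentiating under the variational constraint) is rigorously valid in the merely Lipschitz setting of $\omega$, which is exactly where the $H^2$-regularity hypotheses on $\psi_\omega$, $q^\star$, and $\psi_\omega^2$ are used to extend these functions past $\partial\omega$ and to take meaningful boundary traces of their gradients; second, checking that the Hadamard transport formula can be applied with only $W^{1,\infty}$-regularity of the perturbation field $V$, which follows by writing everything on the fixed domain $\omega$ via the pullback $\Theta(t)$ and invoking the $\mathscr{C}^1$-dependence of $J_{\Theta(t)}$, $\det J_{\Theta(t)}$, and of the pulled-back eigenpair established in Lemma \ref{lem:implicit_modes}.
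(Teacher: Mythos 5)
Your proposal is the same Céa/Lagrangian strategy as the paper's, with the same adjoint problem and the same twin observations that $\bX=0$ enters as a Fredholm compatibility condition and that the orthogonality $q^*\perp\psi_\omega$ kills the $\partial_\lambda$ contribution in the chain rule. Two technical variants are worth flagging. First, you add a second multiplier $\mu$ for the $L^2$-normalization constraint and deduce $\mu=0$ from $\bX=0$; the paper omits this multiplier and simply inherits the normalization from Lemma~\ref{lem:implicit_modes}, with the same use of $q^*\perp\psi_\omega$ — this difference is harmless. Second, and more substantively, you write the Lagrangian directly on the moving domain $\omega_t$ and jump to the boundary-integral expression \eqref{eqn:defexpressionderivXt} in one step via a Hadamard/Reynolds transport formula. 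The paper instead pulls everything back to the fixed domain $\omega$ through $\Theta(t)$, computes Fréchet derivatives of the Jacobian factors $J_{\Theta(t)}^{-\top}$ and $\det J_{\Theta(t)}$ (Steps 1--3 of the proof), and only afterwards converts the resulting volume integrals to boundary integrals by a long integration by parts (Step 4). Your route is shorter, but on a merely Lipschitz $\omega$ with $V\in W^{1,\infty}$ it needs two ingredients the paper's formulation carefully avoids: (i) extending $\psi_\omega$, $q^*$ and $\psi_\omega^2$ past $\partial\omega$, so that ``fix the integrand, move the domain'' is meaningful and the transport formula applies; and (ii) a chain rule for a curve $t\mapsto\psi_t$ living in the $t$-dependent space $H^1(\omega_t)$ — Lemma~\ref{lem:implicit_modes} only delivers $\mathscr{C}^1$-dependence of the pulled-back curve $\varphi_t=\psi_t\circ\Theta(t)\in H^1(\omega)$, so the material derivative $\dot\psi_0$ must be extracted from that. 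Neither point is fatal, but both require justification your outline skips, which is precisely what the paper's pull-back (and its lengthy Step 4) buys you. Finally, the $H^2$ hypotheses enter the paper's proof to license the exchange of mixed second derivatives and the integrations by parts in Step 4, not to extend functions across $\partial\omega$ as you suggest.
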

The proof of Lemma \ref{prop:perturb} relies on  the so-called Céa’s Method \cite{Cea-86} in order to have for any fixed $\bw\in \mathbb{S}^1$ an  expression of the derivative 
$
\left((\frac{\rmd}{\rmd t} \bX_t \cdot \bw \right)\rvert_{t=0}
$
which does not involve  explicitly  $\left(\frac{\rmd}{\rmd t} \psi_t\right) \rvert_{t=0}$, but rather the solution of the adjoint problem \eqref{eqn:ajointpb1}.
The main advantage of this method is that the obtained expression for $\left(\frac{\rmd}{\rmd t} \bX_t \cdot \bw \right)\rvert_{t=0}$ exhibits an explicit linear dependence on the deformation field $V$ as can be seen in Lemma \ref{prop:perturb}.
\begin{proof}[Proof of Lemma \ref{prop:perturb}]
Assume $\bX_0 = 0$ and $\bw\in \mathbb{S}^1$ is fixed. The proof is divided in four steps.\\[6pt]
%As $\lambda_2^N(\omega)$ is simple, then, by Lemma   \ref{lem-implcitefunction},  there exists $t_*>0$ such that  for $t\in (-t_*,t_*)$, $ (\lambda_2^N(t),\psi_t)$ is a eigenpair of the Neumann Laplacian in $\omega_t$ where $\psi_t$ is a real-valued normalized eigenfunction and  at $t=0$,   $\psi_0=\psi_{\omega}$ and $\lambda_2^N(0)=\lambda^N_2(\omega)$. In addition, $t\mapsto (\lambda^N(t),\psi_t)$ is of class $\mathscr{C}^1$ on $(-t_{\star}, t_{\star} )$, and $\lambda^N(t)$ remains simple and coincides with the second Neumann eigenvalue $\lambda_2^N(\omega_t)$.
%up to choosing $t_\star$ smaller, by continuity the eigenvalue $\lambda_2^N(t)$ will remain simple for $|t|< t_\star$. Recall that $\psi_t$ is an eigenfunction associated with the first non-trivial Neumann eigenfunction on $\omega_t$.
{\bf Step 1 : Introduction of the Lagrangian $\mathcal{L}_\bw$ and its Frechet derivative.}\\[6pt]
We define the functional
\[
	J_\bw : (t,u)\in (-t_\star,t_\star) \times H^1(\omega) \mapsto \int_{\omega} J_{\Theta(t)}^{-\top}(\nabla (u^2)  \cdot \bw) \det(J_{\Theta(t)}) \, \rmd \by\in \R.
\]
Note that for $(t,u) \in (-t_\star,t_\star)\times H^1(\omega)$, there holds
\[
	J_\bw (t,u)= 2 \int_\omega u \big((J_{\Theta(t)}^{-\top}\nabla u) \cdot \bw\big) \det(J_{\Theta(t)}) \, \rmd \by.
\]
Introduce the Lagrangian $\mathcal{L}_\bw$ defined for $(t,u,q,\lambda) \in (-t_\star,t_\star) \times H^1(\omega)\times H^1(\omega)\times \R \to \R$ by
\begin{equation*}
 \mathcal{L}_\bw(t,u,q,\lambda) := J_\bw(t,u)+ \int_{\omega} (J_{\Theta(t)}^{-1}J_{\Theta(t)}^{-\top} \nabla q) \cdot \nabla u \det(J_{\Theta(t)}) \rmd \by- \lambda \int_{\omega} q u \det(J_{\Theta(t)}) \rmd \by.
\end{equation*}

Moreover, using formulas \eqref{eq.taylorinverse} and \eqref{eq.taylordet} evaluated at $t=0$, along with the identity $\Tr(J_V) = \Div V$, one can show the following. On the one hand, for any fixed $q \in H^1(\omega)$, the map
$
(t, u, \lambda) \in (-t_1, t_1) \times H^1(\omega) \times \mathbb{R} \mapsto \mathcal{L}_\bw(t, u, q, \lambda)
$
is Fréchet differentiable at every point $(0, u, \lambda) \in \{0\} \times H^1(\omega) \times \mathbb{R}$. On the other hand, its Fréchet derivative is given by the following bounded linear operator:

\begin{equation}
	\left\{
		\begin{array}{ccl}
			\R \times H^1(\omega) \times \R & \longrightarrow & \R\\[6pt]
			(h,v,\mu) & \mapsto & 2 h \displaystyle \int_\omega \left( u  \nabla u \ \Div V - u J_V^\top  \nabla u\right)\cdot \bw \, \rmd \by+ 2\int_\omega\left( v \nabla u  + u \nabla v \right)\cdot \bw \, \rmd \by\\[6pt]
			&& \displaystyle  +h \int_\omega\Big(\nabla q \cdot \nabla u\ \Div V - (J_V^\top + J_V) \nabla q \cdot \nabla u\Big) \,\rmd \by+ \int_\omega \nabla q \cdot \nabla v\, \rmd \by\\[6pt]
			&& \displaystyle  - \lambda h \int_\omega q\, u\  \Div V\, \rmd \by- \mu \int_\omega q \, u \,\rmd \by- \lambda \int_{\omega} q\, v \,\rmd \by.
		\end{array}
	\right.
	\label{eqn:Frechetderiv}
\end{equation}
Observe that pulling-back the eingenvalue problem in $\omega_t$ to $\omega$, one gets for all $q\in H^1(\omega)$ that
\begin{equation}\label{eqn:lagrangiantoJ}
\mathcal{L}_\bw(t,\psi_t\circ \Theta(t),q,\lambda_2^N(t))=J_\bw(t,\psi_t\circ \Theta(t)).
\end{equation}

\noindent {\bf Step 2: Expression of the partial derivatives of $\mathcal{L}_\bw$.}\\[6pt]
Thanks to \eqref{eqn:Frechetderiv}, one obtains that the Lagrangian $\mathcal{L}_\bw$ admits a partial derivative with respect to its second variable. This partial derivative is a continuous linear map from $H^1(\omega)$ to $\R$ and defined for all $v \in H^1(\omega)$ as 

\begin{equation}\label{eqn:partialderivativesecondvariablelagrangian}
\frac{\partial \mathcal{L}_\bw}{\partial u}(0, \psi_\omega,q,\lambda_2^N(\omega))[v]=2\int_\omega ( v\, \nabla \psi_\omega +\psi_\omega \, \nabla v)\cdot\bw \,\rmd \by +\int_{\omega}  \nabla q \cdot \nabla v  \, \rmd \by- \lambda_2^N(\omega) \int_{\omega}  q \, v \, \rmd \by.
\end{equation}
%\begin{eqnarray}\label{eqn:partialderivativesecondvariablelagrangian}
%\frac{\partial \mathcal{L}_\bw}{\partial u}(t,u,q,\lambda)[v] & :=& 2\Big(\int_\omega v\, (J_{\Theta(t)}^{-\top}\nabla u)\cdot\bw \det(J_{\Theta(t)})\,\rmd \by+ \int_\omega u\, (J_{\Theta(t)}^{-\top}\nabla v) \cdot \bw \det(J_{\Theta(t)})\, \rmd \by\Big)\nonumber\\& &  + \int_{\omega} (J_{\Theta(t)}^{-1}J_{\Theta(t)}^{-\top} \nabla q) \cdot \nabla v \det(J_{\Theta(t)}) \, \rmd \by- \lambda \int_{\omega}  q \, v \det(J_{\Theta(t)}) \rmd \by.
%\end{eqnarray}
In addition, performing an integration by parts, one gets that for all $v \in H^1(\omega)$ that there holds
\[
	\int_\omega v \nabla \psi_\omega \cdot \bw \, \rmd \by= - \int_\omega \nabla v\cdot \bw \psi_\omega\, \rmd \by+ \int_{\partial\omega}v \, \psi_\omega \bw \cdot \bn \, \rmd \sigma
\]
and one can check that
\begin{equation}\label{eqn:partialderadjointzero}
\frac{\partial \mathcal{L}_\bw}{\partial u}(0,\psi_\omega,q,\lambda_2^N(\omega))=0
\end{equation}
if and only if $q$ is a weak solution of the  adjoint problem \eqref{eqn:ajointpb1}, namely
\begin{equation*}
\begin{cases}
-\Delta q = \lambda_2^N(\omega) q, &\text{in }\omega,\\
\partial_\bn q = -2 \psi_\omega\,  \bn \cdot \bw, & \text{on }\partial\omega.
\end{cases}
\end{equation*}
Again, thanks to \eqref{eqn:Frechetderiv}, $\mathcal{L}_{\bw}$ admits a partial derivative with respect to its fourth entry which is given for any $\mu \in \R$
\begin{equation}\label{eq.derivpartielle-lambda}
\frac{\partial \mathcal{L}_{\bw}}{\partial \lambda}(0, \psi_\omega,q,\lambda_2^N(\omega))[\mu]= -\mu \int_\omega q \, u  \,  \rmd\by.
\end{equation}
%Finally, $\mathcal{L}_{\bw}$ admits a partial derivative with respect to $t \in (-t_\star,t_\star)$: it is nothing but remarking that one can apply Leibniz integral rule. Indeed, up to taking $t_\star$ smaller, one has for $t \in (-t_\star,t_\star)$:
%\[
%	\frac{d}{dt}\left(J_{\Theta(t)}^{-\top}\right) = J_{V}^{\top} \left(\sum_{n=0}^{+\infty} (-1)^{n+1} (n+1)  t^n (J_V^\top)^n\right), \quad \left(\frac{d}{dt}\det(J_{\Theta}(t))\right) = \Div V + 2 t \det(J_V) + \mathcal{O}(t^2),
%\]
%where the remainder has to be understood in the $L^\infty(\omega,\R)$-topology.
%Moreover, as $V \in W^{1,\infty}(\R^2,\R^2)$, note that there exists $C > 0$ such that
%\[
%	\max\left( \left\|\frac{d}{dt}(\det J_{\Theta(t)})\right\|_{L^\infty(\omega,\R)},\  \left\|\frac{d}{dt}(J_{\Theta(t)}^{-1})\right\|_{L^\infty(\omega,\R^{2\times 2})}, \ \left\|\frac{d}{dt} (J_{\Theta(t)}^{-\top})\right\|_{L^\infty(\omega,\R^{2\times 2})}\right) < C.
%\]
and  for all $q \in H^1(\omega)$
\begin{eqnarray}
\frac{\partial \mathcal{L}_\bw}{\partial t}(0,\psi_\omega,q,\lambda_2^N(\omega))[1]  &=&  2 \int_\omega\Big(\psi_\omega \, \nabla \psi_\omega \, \Div V - \psi_\omega \, J_V^\top \, \nabla \psi_\omega \Big)\cdot \bw \rmd \, \by  \nonumber \\[4pt]& &+ \int_\omega\Big(\nabla q \cdot \nabla \psi_\omega \Div V - (J_V^\top + J_V) \nabla q \cdot \nabla \psi_\omega \Big) \, \rmd \by \nonumber \\[4pt]& &- \lambda_2^N(\omega) \int_\omega q \, \psi_\omega \, \Div V \, \rmd \by.\label{eqn:exprfirstderlagrangianok}
\end{eqnarray}
{\bf  Step 3: Expression of the shape derivative $\left(\frac{d}{dt} \bX_t\right) \rvert_{t=0}$}\\[6pt]
\noindent Before calculating the shape derivative $\left(\frac{d}{dt} \bX_t\right) \rvert_{t=0}$ we start by noticing that by pulling back the expression of $X_t$ to the fixed domain $\omega$, one has
\begin{align*}
	\bX_t = \int_{\partial \omega_t}\bn_t |\psi_t|^2\,  \rmd \sigma(\by) = \int_{\omega_t} \nabla(|\psi_t|^2) \, \rmd \by&= 2 \int_{\omega_t} \psi_t \nabla \psi_t \, \rmd \by\\& = 2\int_{\omega} (\psi_t \circ \Theta(t))\,  (J_{\Theta(t)}^{-\top}\nabla(\psi_t \circ \Theta(t)) \det (J_{\Theta(t)})\, \rmd \by\\& = {J}_\bw(t, \psi_t \circ \Theta(t)).
\end{align*}
Thus by \eqref{eqn:lagrangiantoJ}, for all $q \in H^1(\omega)$ there holds
\[
\bX_t \cdot \bw  = \mathcal{L}_\bw(t, \psi_t \circ \Theta(t),q,\lambda_2^N(t)).
\]
By Lemma \ref{lem:implicit_modes}, the derivatives $\frac{d}{dt}(\psi_t\circ\Theta(t))|_{t=0}$ and $\frac{d}{dt} \lambda_2^N(t) \rvert_{t=0}$ exists. Thus, using that the Frechet derivatives \label{eqn:Frechetderive}  exists at $(0,\psi_{\omega},q, \lambda_2^N(\omega))$ for all $q\in H^1(\omega)$,  one can apply the chain rule to differentiate $(t\mapsto \bX_t \cdot \bw)$ at $t = 0$. This yields:
\begin{eqnarray}
\left(\frac{\rmd}{\rmd  t} \bX_t \cdot \bw \right)\rvert_{t = 0} &= & \displaystyle\frac{\partial \mathcal{L}_\bw}{\partial t}(0, \psi_\omega,q,\lambda_2^N(\omega))[1] + \frac{\partial \mathcal{L}_\bw}{\partial u}(0,\psi_\omega,q,\lambda_2^N(\omega))\left[\frac{d}{dt} (\psi_t\circ \Theta(t)) \rvert_{t=0}\right] \nonumber\\[4pt]
&& \displaystyle + \frac{\partial \mathcal{L}_\bw}{\partial \lambda}(0,\psi_\omega,q,\lambda_2^N(\omega))\left[ \frac{d}{dt} \lambda_2^N(t) \rvert_{t=0} \right].\label{eqn:chainrule}
\end{eqnarray}
Now we specify our choice for the function $q$. Indeed, using Lemma \ref{lem:solweakadjoint}, we take it to be the  unique weak solution of the adjoint problem \eqref{eqn:ajointpb1} orthogonal to $\psi_{\omega}$ and denote it $q^*$. Note that by \eqref{eqn:partialderadjointzero} there holds  $\frac{\partial \mathcal{L}_\bw}{\partial u}(0,\psi_\omega,q^*,\lambda_2^N(\omega))=0$. \medskip
\noindent
By virtue of \eqref{eq.derivpartielle-lambda}, as $q_*$ is orthogonal to $\psi_{\omega}$, one has
$$\frac{\partial \mathcal{L}_\bw}{\partial \lambda}(0,\psi_{\omega},q_*,\lambda_2^N(\omega))\left[ \frac{d}{dt} \lambda_2^N(t) \rvert_{t=0} \right]=- \left[ \frac{d}{dt} \lambda_2^N(t) \rvert_{t=0} \right] \int_\omega q_* \psi_{\omega}=0.$$
By \eqref{eqn:chainrule} and \eqref{eqn:exprfirstderlagrangianok} we get
\begin{eqnarray} 
	\left(\frac{\rmd}{\rmd t} \bX_t \cdot \bw \right)\rvert_{t = 0} &=& \displaystyle\frac{\partial \mathcal{L}_\bw}{\partial t}(0, \psi_\omega,q,\lambda_2^N(\omega))[1]  \nonumber \\
	&=& \displaystyle 2 \int_\omega\Big(\psi_\omega \nabla \psi_\omega \Div V - \psi_\omega J_V^\top\, \nabla \psi_\omega \Big)\cdot \bw \, \rmd \by  \label{der:Xtw0} \\& &+ \int_\omega\Big(\nabla q^* \cdot \nabla \psi_\omega \, \Div V - (J_V^\top + J_V) \nabla q^*\cdot \nabla \psi_\omega\Big) \, \rmd \by- \lambda_2^N(\omega) \int_\omega q^* \psi_\omega \Div V\,  \rmd \by.\nonumber
\end{eqnarray}
{\bf  Step 4: Derive an expression of  $\left(\frac{d}{dt} \bX_t\right) \rvert_{t=0}$ that involves boundary integrals.}\\[6pt]
Let us look first at the first line of the right-hand side of \eqref{der:Xtw0}:
\begin{equation*}
\begin{split}
-2\int_\omega \psi_\omega J_V^\top  \nabla \psi_\omega \cdot \bw \, \rmd \by=-\int_\omega J_V^\top \nabla(\psi_\omega^2) \cdot \bw \, \rmd \by= - \sum_{j,k=1}^2 \int_\omega \partial_j V_k \, \partial_k(\psi_\omega^2) \, w_j \, \rmd \by.
\end{split}
\end{equation*}
After integrating by parts we get
\begin{equation*}
-2\int_\omega \psi_\omega J_V^\top \nabla \psi_\omega \cdot \bw \, \rmd \by =  \Big(\sum_{j,k=1}^2 \int_\omega V_k \, \partial_{jk}^2 (\psi_\omega^2) \, w_j \, \rmd \by\Big) -  \int_{\partial \omega} V \cdot \nabla(\psi_\omega^2) \, \bw\cdot \bn \, \rmd \sigma(\by).
\end{equation*}
As we have assumed $\psi_\omega^2 \in H^2(\omega)$, exchanging the second-order derivatives and performing once again an integration by parts we get:
\begin{equation*}
\begin{split}
-2\int_\omega \psi_\omega\, J_V^\top \nabla \psi_\omega \cdot \bw \, \rmd \by &= \Big(\sum_{j,k=1}^2 \int_\omega V_k \ \partial_{kj}^2 (\psi_\omega^2) w_j \, \rmd \by\Big) -  \int_{\partial \omega} V \cdot \nabla(\psi_\omega^2) \bw\cdot \bn \, \rmd \sigma(\by)\\
 & = -\int_\omega \Div V  \ \nabla (\psi_\omega^2) \cdot \bw \, \rmd \by+ \int_{\partial \omega} \nabla (\psi_\omega^2)\cdot \bw \ (V \cdot \bn) \rmd \sigma(\by) \\
 & \, \quad - \int_{\partial \omega} V \cdot \nabla(\psi_\omega^2) \ \bw\cdot \bn \, \rmd \sigma(\by)
\end{split}
\end{equation*}
Thus, the first line of the right-hand side of \eqref{der:Xtw0} can be rewritten as follows:
\begin{equation}\label{eqn:firstlinecrazyL}
\begin{split}
2 \int_\omega\Big( \psi_\omega \, \nabla \psi_\omega \, \Div V - \psi_\omega \, J_V^\top \nabla  \psi_\omega\Big)\cdot \bw\,  \rmd \by=&  \int_{\partial \omega} \left(\nabla (\psi_\omega^2)\cdot \bw\right) (V \cdot \bn) \, \rmd \sigma(\by) \\& - \int_{\partial \omega} \left(V \cdot \nabla(\psi_\omega^2)\right) (\bw\cdot \bn) \, \rmd \sigma(\by).
\end{split}
\end{equation}
Now we look at the second line of the right-hand side of \eqref{der:Xtw0}. Using that $\partial_\bn \psi_\omega =0$ on $\partial \omega$, an integration by parts yields
\begin{equation*}
\begin{split}
&-\int_\omega (J_V^\top + J_V) \nabla q^* \cdot \nabla \psi_\omega \, \rmd \by= -\Big(\sum_{j,k=1}^2 \int_\omega \partial_k V_j\, \partial_k q^*\,  \partial_j \psi_\omega \, \rmd \by\Big) -\Big(\sum_{j,k=1}^2 \int_\omega \partial_j V_k \, \partial_k q^*\,  \partial_j \psi_\omega \, \rmd \by\Big) \\
&=\Big(\sum_{j,k=1}^2\int_\omega V_j \, \partial_{kk}^2 q^*\, \partial_j \psi_\omega \, \rmd \by\Big)+\Big(\sum_{j,k=1}^2\int_\omega V_j \, \partial_k q^* \partial_{kj}^2 \psi_\omega \, \rmd \by\Big) -\int_{\partial \omega} V \cdot \nabla \psi_\omega \ \partial_\bn q^* \, \rmd \sigma(\by) \\
&  \quad +\Big(\sum_{j,k=1}^2\int_\omega V_k\,  \partial_{jk}^2 q^* \partial_j \psi_\omega \, \rmd \by\Big)+\Big(\sum_{j,k=1}^2\int_\omega V_k \, \partial_k q^*\, \partial_{j}^2 \psi_\omega \, \rmd \by\Big) -\int_{\partial \omega} V \cdot \nabla q^* \ \partial_\bn \psi_\omega \, \rmd \sigma(\by) \\
& =  \int_\omega  V \cdot \nabla \psi_\omega \ \Delta q^* \, \rmd \by+\Big(\sum_{j,k=1}^2 \int_\omega  V_j\, \partial_{k} q^* \partial_{kj}^2 \psi_\omega \, \rmd \by\Big) -\int_{\partial\omega} V \cdot \nabla\psi_\omega \ \partial_\bn q^* \, \rmd \sigma(\by) \\
& \quad +\Big(\sum_{j,k=1}^2\int_\omega V_k \partial_{jk}^2 q^* \partial_j \psi_\omega \, \rmd \by\Big)+\int_\omega V \cdot \nabla q^* \ \Delta \psi_\omega \, \rmd \by.
\end{split}
\end{equation*}
As we assumed $\psi_\omega \in H^2(\omega)$, exchanging the second-order derivatives and performing once again an integration by parts we get:
\begin{equation*}
\begin{split}
&\sum_{j,k=1}^2 \int_\omega  V_j\, \partial_{k} q^* \, \partial_{kj}^2 \psi_\omega \, \rmd \by = \sum_{j,k=1}^2 \int_\omega  V_j \, \partial_{k} q^*\,  \partial_{jk}^2 \psi_\omega \, \rmd \by\\
& =-\int_\omega \Div V  (\nabla q^* \cdot \nabla \psi_\omega) \, \rmd \by- \Big(\sum_{j,k=1}^2  \int_\omega V_j \partial_{jk}^2 q^* \partial_k \psi_\omega \, \rmd \by\Big) + \int_{\partial\omega} \left(\nabla q^* \cdot \nabla \psi_\omega \right) (V \cdot \bn) \, \rmd \sigma(\by).
\end{split}
\end{equation*}
Therefore, since we assumed $q^* \in H^2(\omega)$ (so that $\partial_{jk}^2 q^* = \partial_{kj}^2 q^*$ in $L^2(\omega)$), and recalling that $q^*$ solves \eqref{eqn:ajointpb1} and $\psi_\omega$ is an eigenfunction of the Neumann Laplacian in $\omega$ we get
\begin{equation*}
\begin{split}
&-\int_\omega (J_V^\top + J_V) \nabla q^* \cdot \nabla \psi_\omega \, \rmd \by\\& =  \int_\omega  V \cdot \nabla \psi_\omega \ \Delta q^* \, \rmd \by+\int_\omega V \cdot \nabla q^* \ \Delta \psi_\omega \, \rmd \by-\int_\omega \Div V \ \nabla q^* \cdot \nabla \psi_\omega \, \rmd \by\\
& \quad   + \int_{\partial\omega} \nabla q^* \cdot \nabla \psi_\omega \ V \cdot \bn \, \rmd \sigma(\by)  -\int_{\partial\omega} V \cdot \nabla\psi_\omega \ \partial_\bn q^* \, \rmd \sigma(\by)  \\
&=  - \lambda_2^N(\omega) \int_\omega  V \cdot \nabla \psi_\omega \  q^* \, \rmd \by-\lambda_2^N(\omega) \int_\omega V \cdot \nabla q^* \  \psi_\omega \, \rmd \by -\int_\omega \Div V \ \nabla q^* \cdot \nabla \psi_\omega \, \rmd \by \\
& \quad + \int_{\partial\omega} \nabla q^* \cdot \nabla \psi_\omega \ V \cdot \bn \, \rmd \sigma(\by)  +2\int_{\partial\omega} V \cdot \nabla\psi_\omega \ \psi_\omega \   \bw \cdot \bn \, \rmd \sigma(\by).
\end{split}
\end{equation*}
Now, integrating by parts, one gets
\begin{equation*}
\begin{split}
-\lambda_2^N(\omega) \int_\omega V \cdot \nabla q^* \  \psi_\omega \, \rmd \by &= -\lambda_2^N(\omega) \left(\sum_{j=1}^2 \int_\omega V_j \partial_j q^* \, \psi_\omega \, \rmd \by\right) \\ &= \lambda_2^N(\omega)  \int_\omega q^* \psi_\omega \ \Div V  \, \rmd \by+\lambda_2^N(\omega) \int_\omega V\cdot \nabla \psi_\omega  \ q^* \,\rmd \by\\
&\quad - \lambda_2^N(\omega) \int_{\partial\omega} q^* \psi_\omega \ V\cdot \bn \, \rmd \sigma(\by).
\end{split}
\end{equation*}
Thus, one obtains
\begin{equation*}
\begin{split}
&-\int_\omega (J_V^\top + J_V) (\nabla q^*) \cdot (\nabla \psi_\omega) \, \rmd \by \\  &=\lambda_2^N(\omega)  \int_\omega q^* \psi_\omega \ \Div V  \, \rmd \by- \lambda_2^N(\omega) \int_{\partial\omega} q^* \psi_\omega \ V\cdot \bn \, \rmd \sigma(\by) -\int_\omega \Div V \ \nabla q^* \cdot \nabla \psi_\omega \, \rmd \by \\
& \quad  + \int_{\partial\omega} (\nabla q^* \cdot \nabla \psi_\omega) \ (V \cdot \bn) \, \rmd \sigma(\by)  +2\int_{\partial\omega} V \cdot \nabla\psi_\omega \ \psi_\omega  \bw \cdot \bn \, \rmd \sigma(\by).
\end{split}
\end{equation*}
Finally we get that the second line in the right-hand side term of \eqref{der:Xtw0} can be rewritten as follows:
\begin{equation} \label{eqn:secondlinecrazyL}
\begin{split}
&\int_\omega\left(\nabla q^*\cdot \nabla \psi_\omega\, \Div V - (J_V^\top + J_V)\nabla q^*\cdot \nabla \psi_\omega\right) \, \rmd \by- \lambda_2^N(\omega) \int_\omega q^* \psi_\omega \, \Div V \, \rmd \by\\
& = - \lambda_2^N(\omega) \int_{\partial\omega} q^* \psi_\omega \ (V\cdot \bn) \ \rmd \sigma(\by)+ \int_{\partial\omega} (\nabla q^* \cdot \nabla \psi_\omega) \ (V \cdot \bn) \, \rmd \sigma(\by)  +2\int_{\partial\omega} V \cdot \nabla\psi_\omega \ \psi_\omega  \bw \cdot \bn \, \rmd \sigma(\by).
\end{split}
\end{equation}
Gathering equations \eqref{eqn:firstlinecrazyL} and \eqref{eqn:secondlinecrazyL}, we arrive at
\begin{equation*}
\begin{split}
\left(\frac{\rmd}{\rmd t} \bX_t \cdot \bw \right)\rvert_{t = 0} &=  \int_{\partial \omega} \nabla (\psi_\omega^2)\cdot \bw \ (V \cdot \bn) \rmd \sigma(\by) - \lambda_2^N(\omega) \int_{\partial\omega} q^* \psi_\omega  (V\cdot \bn) \, \rmd \sigma(\by)\\
&\qquad + \int_{\partial\omega} \left(\nabla q^* \cdot \nabla \psi_\omega \right) (V \cdot \bn) \, \rmd \sigma(\by).
\end{split}
\end{equation*}
Thus, as $(t\mapsto \bX_t\cdot\bw)$ admits a derivative a $t = 0$, by definition we get that for $t$ sufficiently small
\begin{equation}\label{eq.derivecscal}
	\bX_t\cdot\bw = t \left(\frac{\rmd}{ \rmd t} \bX_t \cdot \bw \right)\rvert_{t = 0} + o(t)
\end{equation}
and the expression of $\left(\frac{\rmd}{\rmd t} \bX_t \cdot \bw \right)\rvert_{t = 0}$ is given by \eqref{eqn:defexpressionderivXt}. Since equation \eqref{eq.derivecscal} holds for any fixed $\bw \in \mathbb{S}^1$, it holds in particular for the two vectors of the canonical orthonormal basis of $\R^2$. This, in turn, implies equation \eqref{eq.derivee}, thereby proving Lemma \ref{prop:perturb}.
\end{proof}
\subsubsection{Instability of the geometric condition $\bX = 0$ for rectangles}\label{subsubsec:shapederivativerectangle}

In this paragraph, we focus on the specific case of $\omega$ being the rectangle $\omega = (0,\ell)\times(0,L)$ with $L > 0$ with $\ell > L$. Due to the separation of variables, it is easy to verify that $\lambda_2^N(\omega)$ is simple and satisfies $\lambda_2^N(\omega) = \pi^2/\ell^2$.  
Since the rectangle is invariant under a rotation by an angle $\pi$, Proposition~\ref{prop:symm-X0} implies that $\bX = 0$.  
Moreover, in this case, using separation of variables, one can also compute  $\bX$ explicitly  and show directly that $\bX=0$.

For a given vector field $V \in W^{1,\infty}(\R^2,\R^2)$ and a sufficiently small $t$, one can consider the domains $\omega_t$ defined as $\omega_t = \Theta(t)\omega$ where $\Theta$ is introduced in \eqref{def:Thethat}. It turns out, one can find a $t$ and a vector field $V$ for which the quantity $\bX_t$ introduced in \eqref{eqn:defXt} is non-zero. This is the purpose of the following Proposition.
\begin{Pro}
Let $\ell,L > 0$ with $\ell > L$ and consider the rectangle $\omega = (0,\ell)\times(0,L)$.  There exists $V \in W^{1,\infty}(\R^2,\R^2)$ and $t_0 > 0$ such that there holds $\bX_{t} \neq 0$ for all $t \in (0,t_0)$.
\label{prop:smalldefrect}
\end{Pro}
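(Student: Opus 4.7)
The strategy is to apply Lemma \ref{prop:perturb} in a judiciously chosen direction $\bw \in \mathbb{S}^1$ and to exhibit a vector field $V$ making the shape derivative at $t=0$ non-zero; the conclusion then follows from the expansion \eqref{eq.derivee}.

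First, since $\ell > L$, separation of variables in the rectangle $\omega=(0,\ell)\times(0,L)$ yields that $\lambda_2^N(\omega) = \pi^2/\ell^2$ is simple and that an associated normalized eigenfunction is
\[
\psi_\omega(y_1,y_2) = \sqrt{\tfrac{2}{\ell L}}\,\cos(\pi y_1/\ell).
\]
This function depends only on $y_1$, is smooth on $\overline{\omega}$, and so $\psi_\omega,\psi_\omega^2 \in H^2(\omega)$. Therefore the regularity assumptions of Lemma \ref{prop:perturb} are satisfied provided we also show $q^* \in H^2(\omega)$.

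Second, I would choose $\bw = \mathbf{e}_1=(1,0)^\top$. Since the boundary data $-2\psi_\omega(\bn\cdot\bw)$ appearing in \eqref{eqn:ajointpb1} depends only on $y_1$ (it vanishes on the horizontal sides $y_2 \in\{0,L\}$ because $\bn \cdot \bw=0$, and on $y_1=0, \ell$ it equals $2\sqrt{2/(\ell L)}$), I would look for $q^*$ of the form $q^*(y_1,y_2) = A\cos(\pi y_1/\ell) + B\sin(\pi y_1/\ell)$. The boundary conditions at $y_1=0,\ell$ determine $B = -\tfrac{2\ell}{\pi}\sqrt{2/(\ell L)}$, while $A$ parametrizes the kernel $\mathrm{span}(\psi_\omega)$. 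The orthogonality condition $\int_\omega q^*\psi_\omega\,\rmd\by=0$ (provided by Lemma \ref{lem:solweakadjoint}) forces $A=0$, so that
\[
q^*(y_1,y_2) = -\tfrac{2\ell}{\pi}\sqrt{\tfrac{2}{\ell L}}\,\sin(\pi y_1/\ell) \in \mathscr{C}^\infty(\overline{\omega}) \subset H^2(\omega).
\]

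Third, I would insert these explicit formulas into the boundary integral expression \eqref{eqn:defexpressionderivXt}. A direct computation shows that the three integrands $\partial_{y_1}(\psi_\omega^2)$, $q^*\psi_\omega$ and $\nabla q^*\cdot\nabla\psi_\omega$ all depend only on $y_1$, and that all three of them vanish at $y_1=0$ and $y_1=\ell$. Hence only the horizontal sides $y_2\in\{0,L\}$ contribute, and after a short calculation one obtains
\[
\left(\frac{\rmd}{\rmd t}\bX_t\cdot\mathbf{e}_1\right)\Big|_{t=0} = \frac{2\pi}{\ell^2 L}\int_0^\ell \sin\!\Big(\frac{2\pi y_1}{\ell}\Big)\big[V_2(y_1,L)+V_2(y_1,0)\big]\,\rmd y_1,
\]
where $V=(V_1,V_2)^\top$. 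Selecting, for instance, a compactly supported Lipschitz field $V$ of the form $V_2(y_1,y_2) = \chi(y_2)\sin(2\pi y_1/\ell)$ with $\chi\in \mathscr{C}^1_c(\R)$ satisfying $\chi(0)=1$ and $\chi(L)=0$ (and $V_1\equiv 0$) yields
\[
\left(\frac{\rmd}{\rmd t}\bX_t\cdot\mathbf{e}_1\right)\Big|_{t=0} = \frac{\pi}{\ell L} \neq 0.
\]

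Finally, by Lemma \ref{prop:perturb}, the expansion \eqref{eq.derivee} gives $\bX_t\cdot\mathbf{e}_1 = \tfrac{\pi}{\ell L}\,t + o(t)$ as $t\to 0$, so there exists $t_0>0$ such that $\bX_t\cdot\mathbf{e}_1\neq 0$, and in particular $\bX_t\neq 0$, for all $t\in(0,t_0)$. The main delicate point in carrying out this plan is checking the regularity hypotheses of Lemma \ref{prop:perturb} on the Lipschitz (but non-smooth) rectangle; this is however harmless here because the explicit formulas for $\psi_\omega$ and $q^*$ show that both functions and $\psi_\omega^2$ extend smoothly to $\R^2$ and in particular belong to $H^2(\omega)$, so all the integration-by-parts steps used in the derivation of \eqref{eqn:defexpressionderivXt} are justified on this polygonal domain.
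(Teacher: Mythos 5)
Your approach is correct and genuinely more direct than the paper's. Where the paper argues by contradiction — assuming $\left(\tfrac{\rmd}{\rmd t}\bX_t\cdot\bw\right)|_{t=0}=0$ for all $\bw$ and all $V$, invoking a density lemma (Lemma \ref{lem-density}) to conclude that the boundary integrand $\nabla(\psi_\omega^2)\cdot\bw -\lambda_2^N(\omega)q^*\psi_\omega+\nabla q^*\cdot\nabla\psi_\omega$ must vanish a.e.\ on $\partial\omega$, and then showing it does not — you instead fix $\bw=\mathbf{e}_1$, compute the integrand explicitly, and simply exhibit a $V$ making the integral nonzero. This avoids the density lemma entirely and is perfectly adequate for Proposition \ref{prop:smalldefrect}. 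The paper's more roundabout route does buy the stronger fact that for \emph{every} $\bw\in\mathbb{S}^1$ one can find such a $V$, which is then used for the localized version in Corollary \ref{cor:local:boundary}; you trade that generality for a shorter, self-contained argument.

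Two small points. First, there is a sign error: on the bottom edge $y_2=0$ the outward normal is $(0,-1)$ and on the top edge $y_2=L$ it is $(0,1)$, so the contribution of the horizontal sides is
\[
\left(\frac{\rmd}{\rmd t}\bX_t\cdot\mathbf{e}_1\right)\Big|_{t=0}=\frac{2\pi}{\ell^2 L}\int_0^\ell\sin\!\Big(\frac{2\pi y_1}{\ell}\Big)\big[V_2(y_1,L)-V_2(y_1,0)\big]\,\rmd y_1,
\]
with a minus, not a plus, between the two traces. With your choice $\chi(0)=1$, $\chi(L)=0$ this gives $-\pi/(\ell L)$ rather than $+\pi/(\ell L)$; still nonzero, so the conclusion stands, but the formula should be corrected. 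Second, the phrase "compactly supported Lipschitz field" is slightly off: with $V_2(y_1,y_2)=\chi(y_2)\sin(2\pi y_1/\ell)$ the field is not compactly supported in $\R^2$ (it is bounded and $\R$-periodic in $y_1$). This is harmless — it does lie in $W^{1,\infty}(\R^2,\R^2)$, which is all Lemma \ref{prop:perturb} requires — but the wording should be adjusted, or one could multiply by a smooth bump in $y_1$ as well.
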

Before proving Proposition \ref{prop:smalldefrect}, we need a useful lemma that justifies a density argument.
\begin{Lem}\label{lem-density}
Let $\omega=(0,\ell)\times (0,L)$ be a rectangle and $f \in L^2(\partial\omega)$. Let $\mathcal{U}\subset \partial\omega$ be an open subset of the boundary.
If $\int_{\partial\omega}  f (V\cdot n) =0$ for all $V \in W^{1,\infty}(\R^2,\R^2)$  having a trace  $\partial \omega$ supported on $\mathcal{U}$,  then $f =0$ a.e. on $\mathcal{U}$.
\end{Lem}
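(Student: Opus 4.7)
\medskip

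\noindent\textbf{Proof plan for Lemma \ref{lem-density}.}

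The plan is to localize the vanishing identity to each open edge of $\omega$ and exploit the fact that the outward normal $\bn$ is piecewise constant on $\partial\omega$. Let $\Gamma_1,\dots,\Gamma_4$ denote the four open sides of the rectangle, with constant outward unit normal $\bn_i$ on $\Gamma_i$, and set $\mathcal{U}_i:=\mathcal{U}\cap \Gamma_i$, which is an open subset of $\Gamma_i$. Since the four corners carry zero one-dimensional Hausdorff measure, it suffices to prove $f\equiv 0$ almost everywhere on each $\mathcal{U}_i$.

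Fix $i\in\{1,2,3,4\}$ and a point $\by_0\in \mathcal{U}_i$; since $\by_0$ is not a corner, there exists $r>0$ such that $B(\by_0,r)\cap \partial\omega \subset \Gamma_i\cap \mathcal{U}$. For any scalar test function $\varphi\in \mathfrak{D}(B(\by_0,r))$, define the vector field $V_\varphi:=\varphi\,\bn_i$. Then $V_\varphi\in W^{1,\infty}(\R^2,\R^2)$ and its trace on $\partial\omega$ is supported in $B(\by_0,r)\cap\partial\omega\subset \mathcal{U}$, so $V_\varphi$ is an admissible test field. On $\Gamma_i$ one has $V_\varphi\cdot \bn = \varphi$, while on $\Gamma_j$ for $j\neq i$ the trace of $V_\varphi$ vanishes identically because $B(\by_0,r)$ is disjoint from $\Gamma_j$. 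The hypothesis thus reduces to
\begin{equation*}
0 = \int_{\partial\omega} f\,(V_\varphi\cdot\bn)\,\rmd \sigma = \int_{B(\by_0,r)\cap \Gamma_i} f\,\varphi\,\rmd \sigma,
\qquad \forall\,\varphi\in \mathfrak{D}(B(\by_0,r)).
\end{equation*}

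Parametrizing $B(\by_0,r)\cap\Gamma_i$ as an open interval $I\subset\R$, the set of restrictions $\{\varphi|_{\Gamma_i}:\varphi\in \mathfrak{D}(B(\by_0,r))\}$ contains $\mathfrak{D}(I)$ (any compactly supported smooth function on $I$ extends to $B(\by_0,r)$ via tensor product with a cutoff in the transverse direction). By density of $\mathfrak{D}(I)$ in $L^2(I)$, the above identity yields $f=0$ almost everywhere on $B(\by_0,r)\cap \Gamma_i$. Covering $\mathcal{U}_i$ by such neighborhoods of its non-corner points and taking a countable subcover gives $f=0$ almost everywhere on $\mathcal{U}_i$, and letting $i$ vary concludes the proof.

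The argument is essentially a bookkeeping exercise: the only mild subtlety to address is checking that the piecewise-constant normal $\bn$ does not obstruct the construction of admissible test fields near—but not at—corners, which is why the localization to each edge interior and the use of $V_\varphi=\varphi\,\bn_i$ (rather than a more general field) is essential. Once this is set up, the conclusion is a standard application of the fundamental lemma of the calculus of variations.
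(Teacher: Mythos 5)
Your proof is correct and takes a genuinely different, more elementary route than the paper's. The paper's argument (Steps 1--2 of its proof) first prescribes compactly supported $\mathscr{C}^1$ boundary data on each side of the rectangle, then invokes the Whitney extension theorem to produce a global $\mathscr{C}^1$ vector field in $\R^2$ with that trace, and finally multiplies by a cutoff to land in $W^{1,\infty}$; Steps 3--4 then rewrite $\int_{\partial\omega} f\,(V\cdot\bn)\,\rmd\sigma$ side by side and conclude by density of compactly supported smooth functions in $L^2$ on an interval. You short-circuit all of the extension machinery by exploiting that $\bn$ is constant on each open side $\Gamma_i$: the ansatz $V_\varphi = \varphi\,\bn_i$ with $\varphi\in\mathfrak{D}(B(\by_0,r))$ is automatically smooth, compactly supported, lies in $W^{1,\infty}(\R^2,\R^2)$, and has trace supported in $B(\by_0,r)\cap\partial\omega\subset\mathcal{U}_i$, so admissibility is immediate and the Whitney theorem is never needed. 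The density step then proceeds exactly as in the paper, and the countable cover of $\mathcal{U}_i$ by such balls cleanly delivers the a.e.\ vanishing. The two approaches are morally equivalent (both reduce to the fundamental lemma of the calculus of variations on an interval), but yours is more economical; the paper's construction, by contrast, produces a vector field with an arbitrary prescribed boundary trace, which is more than is needed here but records a reusable extension statement. No gaps in your argument.
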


\begin{proof}[Proof of Lemma \ref{lem-density}]
\noindent {\bf Step 1: Construction of  a vector field $\tilde{\mathscr{V}} \in \mathscr{C}^{1}(\R^2,\R^2)$ by extension via an adapted function defined  on $\partial \omega$.
%whose support is contained outside of the four vertices of $\partial \omega$. 
\\[6pt]}
We denote by $\partial \omega_k$ for $k=1, \ldots, 4$ the four sides of $\partial \omega$, more precisely
\begin{equation*}
    \partial\omega_1 = [0,\ell] \times \{0\}, \quad \partial\omega_2 = \{\ell\} \times [0,L] , \quad \partial\omega_3 = [0,\ell] \times \{L\}, \quad \partial\omega_4 = \{0\} \times [0,L].
\end{equation*}
Let $\bv_1,\bv_3 \in \mathscr{C}^1(0,\ell)^2$ with compact support in $(0,\ell)$, and $\bv_2,\bv_4 \in \mathscr{C}^1(0,L)^2$ with compact support in $(0,L)$.
Then define 
$$\mathscr{V}(t,0)= \bv_1(t), \mathscr{V}(t,L)=\bv_3(t), \forall \in (0,\ell) \mbox{ and } \mathscr{V}(0,t)=\bv_4(t),\ \mathscr{V}(\ell,t)=\bv_2(t), \forall t\in (0,L).$$ We define $\mathscr{V}$ equal to $(0,0)$ on each vertex of the rectangle $\omega$.\smallskip\\
\noindent Now we extend this vector field $\mathscr{V}:\partial\omega \to \R^2$ to a vector field $\tilde{\mathscr{V}}$ defined on the whole of $\R^2$. To do so, we apply the Whitney extension theorem to each component of $\mathscr{V}$. The reader can easily check that by construction  the vector field $\mathscr{V}$ has the required assumptions of \cite[Theorem 2.3.6]{horm}. Fundamentally, this is due the fact that $\mathscr{V}$  is supported away from the vertex of the rectangle.
Thus $\tilde{\mathscr{V}} \in \mathscr{C}^1(\R^2,\R^2)$ is such that $\tilde{\mathscr{V}}\rvert_{\partial\omega}=\mathscr{V}$.
\\[10pt]
\noindent {\bf Step 2: Construction of a vector field in $W^{1,\infty}(\R^2,\R^2)$.\\[6pt]}
We consider now a smooth cut-off function $\chi \in \mathscr{C}^\infty(\R^2,\R)$ such that $\chi =1$ on $\bar\omega$ and $\chi=0$ outside of a ball strictly containing $\bar \omega$.
Then the  vector field $V:=\chi \tilde{\mathscr{V}} \in \mathscr{C}^1(\R^2,\R^2) \cap W^{1,\infty}(\R^2,\R^2)$  is such that $V\rvert_{\partial \omega}= \mathscr{V}$.\\[10pt]
\noindent {\bf Step 3:  The particular case where $\mathcal{U}=\partial \omega$ \\[6pt]}
For each vector field $\bv_i$ (with $i=1, \ldots,4$), one denotes by $\bv_i=(v_{i,1}, v_{i,2})$. Then, one can rewrite $\int_{\partial\omega} f (V \cdot n) \,\rmd \sigma(\by)$ as 
\begin{equation*}
    \begin{split}
        \int_{\partial\omega} f (V \cdot n) \,\rmd \sigma(\by)&= -\int_0^\ell  f(t,0) \, v_{1,2}(t,0) \, dt + \int_0^L f(\ell,t) \, v_{2,1}(\ell,t) \, dt \\ 
        &- \int_0^\ell f(\ell-t,L) \, v_{3,2}(\ell-t,L) \,\rmd t +\int_0^L f(0,L-t) \, v_{4,1}(0,L-t)\, \rmd t.
    \end{split}
\end{equation*}
If $\mathcal{U}= \partial\omega$, we can arbitrarily choose $v_{1,2}$  as any $\mathscr{C}^1$ function compactly supported in the interval $(0,\ell)$, and choose $v_{2,1}$, $v_{3,2}$  and $v_{4,1}$ to be equal to zero.
As this space is dense in $L^2(0,\ell)$ we conclude that $f=0$ almost everywhere on $\partial\omega_1$.
Analogously we can see that $f=0$ almost everywhere on $\partial\omega_k$ for $k=2,3,4$. Thus $f=0$ almost everywhere on $\partial\omega$.
\\[10pt]
\noindent {\bf Step 4: The general case \\[6pt]}
If $\mathcal{U}\subset \partial\omega$, we have that 
$\int_{\partial \omega} f (V \cdot n) \,\rmd \sigma(\by)= \int_{\mathcal{U}} f (V \cdot n) \,\rmd \sigma(\by)$ since the trace of $V$ on $\partial \omega$ is supported in $\mathcal{U}$. Then
\begin{equation*}
    \begin{split}
        \int_{\mathcal{U}} f (V \cdot n) \,\rmd \sigma(\by)&= -\int_{\mathbb{P}_1 \left(\partial \omega_1 \cap \mathcal{U} \right)}  f(t,0) \, v_{1,2}(t,0) \, dt + \int_{\mathbb{P}_2 \left(\partial \omega_2 \cap \mathcal{U} \right)} f(\ell,t) \, v_{2,1}(\ell,t) \, dt \\ 
        &-\int_{\mathbb{P}_1 \left(\partial \omega_3 \cap \mathcal{U} \right)} f(\ell-t,L) \, v_{3,2}(\ell-t,L) \,\rmd t +\int_{\mathbb{P}_2 \left(\partial \omega_4 \cap \mathcal{U} \right)} \, v_{4,1}(0,L-t)\, \rmd t.
    \end{split}
\end{equation*}
where $\mathbb{P}_1: \R^2 \to \R, (y_1,y_2) \mapsto y_1$ and $\mathbb{P}_2: \R^2 \to \R, (y_1,y_2) \mapsto y_2$ are the projections on the axes.
Reasoning as in Step 3, and using the density of $\mathscr{C}^1$ compactly supported functions in $\mathscr{O}$ in $L^2(\mathscr{O})$ where $\mathscr{O}$ is any open subset of $(0,\ell)$ or $(0,L)$, we conclude.
\end{proof}

\begin{proof}[Proof of Proposition \ref{prop:smalldefrect}]
The proof is organized in three steps. In the first step we compute explicitly the solution of the adjoint problem \eqref{eqn:ajointpb1} for $\omega$ being the rectangle $(0,\ell)\times(0,L)$. In the second step, we make use of Lemma \ref{prop:perturb} to prove that $\left(\frac{d}{dt}\bX_t\right)|_{t = 0}$ can not be zero for all vector fields $V \in W^{1,\infty}(\R^2,\R^2)$. The proof is concluded in the last step using classical tools from analysis.\\[10pt]
\noindent {\bf Step 1: The solution of the adjoint problem.\\[6pt]}
Let us look for a solution to the adjoint problem \eqref{eqn:ajointpb1} when $\omega = (0,\ell)\times(0,L)$. In this setting, the second Neumann eigenvalue in $\omega$ is simple, verifies $\lambda_2^N(\omega) = \frac{\pi^2}{\ell^2}$ and  an associated normalized eigenmode is given by
\[
    \psi_\omega : (y_1,y_2) \in \omega \mapsto \sqrt{\frac{2}{\ell L}} \cos(\frac{\pi}\ell y_1).
\]

As $\bX_0= 0$, for any fixed $\bw \in \mathbb{R}^2$ there exists a unique weak solution $q^*$ to the adjoint problem \eqref{eqn:ajointpb1} orthogonal in $L^2(\omega)$ to $\psi_\omega$. We compute it explicitly looking for a strong real-valued solution verifying

\begin{equation*}
\begin{cases}
-\Delta q^* = \frac{\pi^2}{\ell^2} q^*, &\text{in }\omega,\\
\partial_\bn q^* = -2 \psi_\omega \bn \cdot \bw, & \text{on }\partial\omega.
\end{cases}
\end{equation*}
We will solve the problem above for $\bw=\mathfrak{e}_j$ ($j=1,2$), yielding some $q_j$, where we have set $\mathfrak{e}_1=\begin{pmatrix}1 \\0 \end{pmatrix}$ and $\mathfrak{e}_2=\begin{pmatrix}0\\ 1 \end{pmatrix}$. Then, by linearity, the general solution  for $\bw =\begin{pmatrix}\alpha \\ \beta \end{pmatrix}$ will be given by $q^* = \alpha q_1 + \beta q_2$.

Let us start by solving
\begin{equation*}
\begin{cases}
-\Delta q_1 = \frac{\pi^2}{\ell^2} q_1, &\text{in }\omega,\\
\partial_\bn q_1 = -2 \psi_\omega \bn \cdot \begin{pmatrix} 1 \\ 0 \end{pmatrix} & \text{on }\partial\omega.
\end{cases}
\end{equation*}
We rewrite the boundary condition on each part of the boundary $\partial\omega$. Namely, on $(0,\ell)\times\{0\}$ and $(0,\ell)\times\{L\}$ it rewrites for almost all $y_1 \in (0,\ell)$ as
\[
(\partial_2 q_1) (y_1,0) = (\partial_2 q_1)(y_1,L) = 0 
\]
and on $\{0\}\times(0,L)$ and $\{\ell\}\times(0,L)$ for all $y_2 \in (0,L)$ one has
\[
   ( \partial_1 q_1)(0,y_2) = - 2 \sqrt{\frac{2}{\ell L}},\quad (\partial_1 q_1)(\ell,y_2) = 2 \sqrt{\frac{2}{\ell L}}.
\]
We can look for a solution which has separated variables and assume that $q_1(y_1,y_2) = q_1(y_1)$. In this case, such a solution would verify the following one-dimensional problem:
\begin{equation*}
\begin{cases}
-q_1''(y_1)=\frac{\pi^2}{\ell^2} q_1(y_1), & 0<y_1<\ell,\\
q_1'(0)=-2 \sqrt{\frac{2}{\ell L}},\\
q_1'(\ell)= 2 \sqrt{\frac{2}{\ell L}}.
\end{cases}
\end{equation*}
The general solution to this differential equation is given by
\[
    q_1(y_1) = A \cos\left(\frac{\pi}\ell y_1\right) + B \sin\left(\frac{\pi}\ell y_1\right),\quad \text{for some constants } A,B \in \R.
\]
In order to obtain a solution $q_1$ orthogonal to $\psi_\omega$ in $L^2(\omega)$ we immediately choose $A=0$.
Now, remark that there holds
\[
    q_1'(y_1) = B\frac\pi{\ell}  \cos\left(\frac{\pi}\ell y_1\right).
\]
The boundary conditions yields $B = - \frac{2\sqrt{2}}\pi \sqrt{\frac{\ell}L}$, hence one has  for $y_1 \in (0,\ell)$:
\[
    q_1(y_1) = - \frac{2\sqrt{2}}\pi \sqrt{\frac{\ell}L} \sin\left(\frac{\pi}\ell y_1\right).
\]
Now, we focus on $q_2$ which solves the problem
\begin{equation}
\begin{cases}
-\Delta q_2 = \frac{\pi^2}{\ell^2} q_2, &\text{in }\omega,\\
\partial_\bn q_2 = -2 \psi_\omega \bn \cdot \begin{pmatrix} 0 \\ 1 \end{pmatrix} & \text{on }\partial\omega.
\end{cases}
\label{eqn:solq_2}
\end{equation}
The boundary condition on each part of the boundary $\partial\omega$ rewrites
\[
(\partial_1 q_2) (0,y_2) = (\partial_1 q_2)(\ell,y_2) = 0,\quad \text{for almost all }y_2\in(0,L)
\]
and
\[
   ( \partial_2 q_2)(y_1,0) = ( \partial_2 q_2)(y_1,L)=  - 2 \sqrt{\frac{2}{\ell L}} \cos\left( \frac{\pi}{\ell} y_1 \right),\quad \text{for almost all } y_1 \in (0,\ell).
\]
We look for a solution of the form $q_2 (y_1,y_2) = f(y_1) g(y_2)$ with $f$ and $g$ verifying
\[
    \left\{\begin{array}{lcl}
         -f''(y_1)& = & \frac{\pi^2}{\ell^2} f(y_1), \quad 0 < y_1 < \ell, \\
         f'(0) &=&0,\\
          f'(\ell) &=&0
    \end{array}\right. \text{ and }\quad\quad \left\{\begin{array}{lcl}
         -g''(y_2)& = & 0, \quad 0 < y_2 < L,\\
         g'(0) &=&- 2 \sqrt{\frac{2}{\ell L}},\\
          g'(L) &=&- 2 \sqrt{\frac{2}{\ell L}}.
    \end{array}\right.
\]
In particular, the first equation yields that one can choose $f(y_1) = \cos(\frac\pi\ell y_1)$ and then, $g$ can be chosen of the form
\begin{equation*}
g(y_2) := - 2 \sqrt{\frac{2}{\ell L}} y_2 + K,
\end{equation*}
for some constant $K \in \R$ that will be determined later. Note that as defined, one has
\[
    - \Delta q_2 = -f'' g - f g'' = \frac{\pi^2}{\ell^2} fg = \frac{\pi^2}{\ell^2} q_2
\]
and that on the boundary condition one has for all $(y_1,y_2) \in \omega$
\begin{eqnarray*}
   (\partial_1 q_2)(0,y_2) &=& f'(0)g(y_2) = 0,\quad (\partial_1 q_2)(\ell,y_2) = f'(\ell)g(y_2) = 0\\[4pt]
   \partial_2 q_2(y_1,0) &=& f(y_1) g'(0) = - 2 \sqrt{\frac{2}{\ell L}}f(y_1) = - 2 \sqrt{\frac{2}{\ell L}}\cos(\frac{\pi}\ell y_1),\\[4pt] \partial_2 q_2(y_1,L) &=& f(y_1) g'(L) = - 2 \sqrt{\frac{2}{\ell L}}f(y_1) = - 2 \sqrt{\frac{2}{\ell L}}\cos(\frac{\pi}\ell y_1).
\end{eqnarray*}
Hence, a solution $q_2$ of \eqref{eqn:solq_2}  is given by
\[  
    q_2(y_1,y_2) = f(y_1)g(y_2) = \big(-2 \sqrt{\frac{2}{\ell L}} y_2+K\big) \, \cos(\frac{\pi}\ell y_1) .
\]
Note that there holds
\begin{align*}
    (q_2,\psi_\omega)_{L^2(\omega)}&= \int_0^L\Big(-2\sqrt{\frac{2}{\ell L}}y_2 + K\Big)\Big(\int_0^\ell \cos\left(\frac\pi\ell y_1\right)^2 \rmd y_1 \Big) \rmd y_2\\& = \frac\ell2\int_0^L\Big(-2\sqrt{\frac{2}{\ell L}}y_2 + K\Big) \rmd y_2\\
    & = \frac{\ell L}2\Big(- \sqrt{\frac{2 L}\ell} + K\Big).
\end{align*}
Thus, the unique solution $q_2$ of  \eqref{eqn:solq_2} orthogonal to $\psi_\omega$ is given for $K = \sqrt{\frac{2 L}\ell}$ which reads
\begin{equation} \label{q2:def}
    q_2(y_1,y_2)  = \sqrt{\frac{2} {\ell}}\left(- \frac{2}{\sqrt{L}} y_2 + \sqrt{L}\right) \cos(\frac\pi\ell y_1).
\end{equation}

{\noindent \bf Step 2 : For any  $\bw\in \mathbb{S}^1$,  there exists $\bV\in W^{1,\infty}(\R^2,\R^2)$ such that $(\frac{d}{dt} \bX_t \cdot \bw)|_{t= 0}\neq 0$.}\\[6pt] Let us make use of Lemma \ref{prop:perturb} and assume by contradiction that for all $\bw \in \mathbb{S}^1$ and all vector field $V \in W^{1,\infty}(\R^2,\R^2)$ one has $(\frac{d}{dt} \bX_t \cdot \bw)|_{t= 0} = 0$. As this is true for all vector field $V \in W^{1,\infty}(\R^2,\R^2)$,
by \eqref{eqn:defexpressionderivXt} and Lemma \ref{lem-density}, one necessarily has almost everywhere on $\partial\omega$
\begin{equation}\label{eq.compatibility}
    \nabla (\psi_\omega^2 )\cdot \bw - \lambda_2^N(\omega) q^* \psi_\omega + \nabla q^* \cdot \nabla \psi_\omega = 0.
\end{equation}
Let us compute this quantity for $\bw = \mathfrak{e}_j$ and $q^* = q_j$ (for $j \in \{1,2\}$).
Note that for $(y_1,y_2) \in \overline{\omega}$ there holds
\[
    \nabla(\psi_\omega^2)(y_1,y_2)= - \frac{4 \pi}{\ell^2 L}\begin{pmatrix}
    \cos(\frac\pi\ell y_1)\sin(\frac\pi\ell y_1)\\0
    \end{pmatrix}
\]
yielding
\[
     \nabla(\psi_\omega^2)(y_1,y_2)\cdot \bw_1 = - \frac{4 \pi}{\ell^2 L}\cos(\frac\pi\ell y_1)\sin(\frac\pi\ell y_1).
\]
Then, one obtains
\[
    - \lambda_2^N(\omega) \, q_1(y_1,y_2) \, \psi_\omega(y_1,y_2) =  \frac{4\pi}{\ell^2 L}\cos(\frac\pi\ell y_1)\sin(\frac\pi\ell y_1)
\]
as well as
\[
    \nabla q_1(y_1,y_2) \cdot \nabla \psi_\omega(y_1,y_2) = \frac{4 \pi}{\ell^2 L} \cos(\frac\pi\ell y_1)\sin(\frac\pi\ell y_1).
\]
It yields for  all $(y_1,y_2) \in \partial\omega$
\begin{equation}
     \nabla(\psi_\omega^2) \cdot \bw_1 - \lambda_2^N(\omega) q_1 \psi_\omega + (\nabla q_1) \cdot \nabla \psi_\omega = \frac{4 \pi}{\ell^2 L} \cos(\frac\pi\ell y_1)\sin(\frac\pi\ell y_1).
     \label{eqn:thisshouldbezero1}
\end{equation}
Hence,  $  \nabla(\psi_\omega^2) \cdot \bw_1 - \lambda_2^N(\omega) q_1 \psi_\omega + \nabla q_1 \cdot \nabla \psi_\omega \neq 0$ for   $(y_1,y_2)\in \big((0,\ell)\setminus \{\frac{\ell}{2}\}\big)\times (\{0\} \cup \{L\})$.\\ 

It turns out that the condition \eqref{eq.compatibility} is also not satisfied on $\partial \omega$ for $j=2$. Indeed, one gets for all $(y_1,y_2)\in \overline{\omega}$
\[
    \nabla(\psi_\omega^2)(y_1,y_2)\cdot \bw_2 = 0.
\]
Similarly, one has
\[
     - \lambda_2^N(\omega) q_2(y_1,y_2) \psi_\omega(y_1,y_2) = -\frac{2\pi^2}{\ell^3 }\Big(- \frac{2}{ L}y_2 +1\Big)\cos(\frac\pi\ell y_1)^2
\]
as well as
\[
\nabla q_2(y_1,y_2) \cdot \nabla \psi_\omega(y_1,y_2) = \frac{2\pi^2}{\ell^3} \Big(- \frac{2}{L} y_2 + 1\Big) \sin(\frac\pi\ell y_1)^2.
\]
It yields on $\partial\omega$:
\begin{equation}
     \nabla(\psi_\omega^2) \cdot \bw_2 - \lambda_2^N(\omega) q_2 \psi_\omega + \nabla q_2 \cdot \nabla \psi_\omega = \frac{2\pi^2}{\ell^3} \Big(- \frac{2}{L} y_2 + 1\Big) \left( \sin(\frac{\pi}{\ell} y_1)^2 - \cos(\frac{\pi}{\ell} y_1)^2  \right).
         \label{eqn:thisshouldbezero2}
\end{equation}
Now, take any $\bw = (\alpha,\beta)^\top \in \mathbb{S}^1$ and take $q^*$ the solution of the adjoint problem \eqref{eqn:ajointpb1}. Remark that then, for $(y_1,y_2) \in \partial\omega$ there holds
\[
    (\frac{d}{dt}\bX_t\cdot \bw)|_{t = 0}(y_1,y_2) =  \frac{2\pi}{\ell^2 }\left[ \frac{2\alpha}{L} \cos(\frac\pi\ell y_1)\sin(\frac{\pi}\ell y_1) + \frac{\beta \pi}{\ell} \big(- \frac{2}{ L}y_2 +1\big) \left( \sin(\frac{\pi}{\ell} y_1)^2 - \cos(\frac{\pi}{\ell} y_1)^2  \right)  \right].
\]
If this quantity is zero on $\partial\omega$, we get, selecting the lateral sides, namely choosing $y_1\in \{0,\ell\}$, that for all $y_2 \in (0,L)$:
\begin{equation*}
    0 = -\frac{\beta \pi}{\ell} \big(- \frac{2}{ L}y_2 +1\big).
\end{equation*}
This necessarily yields that $\beta = 0$.  Consequently we have that for  all $y_1 \in (0,\ell)$ there holds
\[
    0 = \alpha \frac{4\pi}{\ell^2 L} \cos(\frac\pi\ell y_1)\sin(\frac{\pi}\ell y_1).
\]
This necessarily implies that $\alpha = 0$. Hence  $\bw = (0,0)^\top$ which  is a contradiction since $\bw \in \mathbb{S}^1$. 
Therefore, for any $\bw$ there exists a vector field $V \in W^1(\R^2,\R^2)$ such that $(\frac{d}{dt}\bX_t \cdot \bw)|_{t = 0} \neq (0,0)^\top$.\medskip

\noindent{\bf Step 3: Existence of a deformation  field $V\in W^{1,\infty}(\bbR^2,\bbR^2)$ and a real $t_0>0$ such that $X_t\neq 0$ on $(0,t_0)$.}\\[6pt]
Now, fix $\bw \in \mathbb{S}^1$ and chose a vector field $V \in W^1(\R^2,\R^2)$ such that for $(\frac{d}{dt} \bX_t \cdot \bw) |_{t = 0} \neq0$. Note that this is always possible thanks to Step 2. By Lemma \ref{prop:perturb}, there holds
$\bX_t \cdot \bw=  t \left( \frac{d}{dt} \bX_t \cdot \bw \right)\rvert_{t = 0} + o(t)$. Thus, in particular, if $t_0 > 0$ is sufficiently small then $\bX_{t_0}\cdot \bw \neq 0$. It concludes the proof of Proposition \ref{prop:smalldefrect}.
\end{proof}

If one follows closely the steps of the proof of Proposition \ref{prop:smalldefrect}, one realizes that we actually have proven a better result. Indeed, the following corollary shows that for infinitely many vector fields $ V \in W^{1,\infty}(\mathbb{R}^2,\R^2)$ there exists $t_0 > 0 $ such that $\bX_t $ is non-zero on the interval $ (0, t_0)$.
More precisely, for any non-empty open subset $\mathcal{U} \subset \partial \omega $, there exists a vector field $V \in W^{1,\infty}(\mathbb{R}^2,\R^2) $ having trace on $\partial\omega$ supported in $\mathcal{U}$
for which the corresponding vector  $\bX_t $ is non-zero for all $t$ belonging to some interval of the form $ (0, t_0) $. Thus the condition $\bX_0=0$ for the rectangle $\omega$ is clearly not stable by shape deformation.

\begin{Cor} \label{cor:local:boundary}
Let $L > 0$ with $\ell > L$ and consider the rectangle $\omega = (0,\ell)\times(0,L)$. Let $\mathcal{U} \subset \partial \omega$ be an open subset of the boundary. The following hold: 
\begin{enumerate}[label=\normalfont(\roman*)]
\item \label{item:Ulateral} Set $\mathcal{U}_{\rm lat} :=\mathcal{U} \cap \left( \{0\} \times (0,L) \cup \{\ell\} \times (0,L) \right)  $ and assume that $\mathcal{U}_{\rm lat} \neq \emptyset$.  Then, there exists $V \in W^{1,\infty}(\R^2,\R^2)$ with trace on $\partial\omega$  supported in  $\mathcal{U}_{\rm lat}$ and $t_0 > 0$ such that the second component of $\bX_{t}$ is non-zero  on $(0,t_0)$; 
\item \label{item:Uflat}  Set $\mathcal{U}_{\rm hor} := \mathcal{U} \cap \left( (0,\ell) \times \{0\} \cup (0,\ell) \times \{L\} \right)  $ and assume that $\mathcal{U}_{\rm hor}\neq \emptyset$. Then, there exists $V \in W^{1,\infty}(\R^2,\R^2)$ with trace on $\partial\omega$ supported in $\mathcal{U}_{\rm hor}$ and $t_0 > 0$ such that the first component   of $\bX_{t}$ is non-zero on $(0,t_0)$.  Moreover, the same property holds also for the second component of $X_{t}$.
\end{enumerate}
\end{Cor}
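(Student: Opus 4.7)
The plan is to adapt the argument from Steps~2 and~3 of the proof of Proposition~\ref{prop:smalldefrect}, restricting the admissible deformation fields $V$ to those whose trace on $\partial\omega$ is supported in the prescribed open subset $\mathcal{U}_{\rm lat}$ or $\mathcal{U}_{\rm hor}$. The key observation is that the conclusion of Lemma~\ref{lem-density} is local: its contrapositive yields a suitable $V$ as soon as the boundary integrand produced by the shape-derivative formula~\eqref{eqn:defexpressionderivXt}, evaluated with $\bw = \mathfrak{e}_j$ and $q^* = q_j$, does not vanish almost everywhere on the prescribed open piece. Since the closed-form expressions for $q_1,q_2$ have already been derived in Step~1 of the proof of Proposition~\ref{prop:smalldefrect}, the identities \eqref{eqn:thisshouldbezero1}--\eqref{eqn:thisshouldbezero2} give the integrands explicitly, and it suffices to evaluate them on each side of the rectangle.

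For item~\ref{item:Ulateral}, I take $\bw = \mathfrak{e}_2$. On the lateral sides $\{0,\ell\} \times (0,L)$ one has $\sin(\pi y_1/\ell) = 0$ and $\cos^2(\pi y_1/\ell) = 1$, so~\eqref{eqn:thisshouldbezero2} reduces to
\[
\bigl(\nabla(\psi_\omega^2)\cdot \mathfrak{e}_2 - \lambda_2^N(\omega)\, q_2\,\psi_\omega + \nabla q_2 \cdot \nabla\psi_\omega\bigr)\bigl|_{\rm lateral} \;=\; -\frac{2\pi^2}{\ell^3}\left(1 - \frac{2 y_2}{L}\right),
\]
which is a nonzero affine function of $y_2$ and hence does not vanish a.e. on any nonempty open subset of the lateral sides. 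Applying Lemma~\ref{lem-density} with the open set $\mathcal{U}_{\rm lat}$ in place of $\mathcal{U}$ yields $V \in W^{1,\infty}(\R^2,\R^2)$ whose trace on $\partial\omega$ is supported in $\mathcal{U}_{\rm lat}$ and for which $\bigl(\tfrac{\rmd}{\rmd t}\bX_t\cdot \mathfrak{e}_2\bigr)\bigl|_{t=0} \neq 0$. For item~\ref{item:Uflat}, I evaluate on the horizontal sides $(0,\ell) \times \{0,L\}$: with $\bw = \mathfrak{e}_1$, the integrand in~\eqref{eqn:thisshouldbezero1} equals $\tfrac{2\pi}{\ell^2 L}\sin(2\pi y_1/\ell)$, while with $\bw = \mathfrak{e}_2$, the integrand in~\eqref{eqn:thisshouldbezero2} equals $\mp\tfrac{2\pi^2}{\ell^3}\cos(2\pi y_1/\ell)$ (the sign depending on whether $y_2=0$ or $y_2=L$). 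Both are nonzero a.e. on $(0,\ell)$, so Lemma~\ref{lem-density} applied to $\mathcal{U}_{\rm hor}$ produces, in each case, a vector field $V$ with trace supported in $\mathcal{U}_{\rm hor}$ and with the corresponding first-order derivative of $\bX_t\cdot \mathfrak{e}_j$ at $t=0$ nonzero.

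In every case, Lemma~\ref{prop:perturb} provides the Taylor expansion $\bX_t \cdot \bw = t \,\bigl(\tfrac{\rmd}{\rmd t}\bX_t\cdot \bw\bigr)\bigl|_{t=0} + o(t)$ as $t \to 0$; since the first-order term is nonzero, there exists $t_0 > 0$ such that the relevant component of $\bX_t$ does not vanish on $(0,t_0)$, which is the desired conclusion. The proof is therefore essentially bookkeeping on top of the machinery already established: the main (minor) obstacle is simply to verify that the explicit trigonometric integrands above do not vanish a.e. on the specified portion of $\partial\omega$, a direct computation. No additional analytical work is required, since the regularity hypotheses $\psi_\omega, q^*, \psi_\omega^2 \in H^2(\omega)$ needed to invoke Lemma~\ref{prop:perturb} are automatic for the rectangle by the explicit trigonometric expressions of $\psi_\omega$, $q_1$ and $q_2$.
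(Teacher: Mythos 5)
Your proof is correct and follows essentially the same route as the paper's: you evaluate the boundary integrands from equations \eqref{eqn:thisshouldbezero1}--\eqref{eqn:thisshouldbezero2} on the prescribed side of the rectangle, observe they are nonzero a.e. on any nonempty open subportion of that side, and then invoke Lemma~\ref{lem-density} together with the Taylor expansion from Lemma~\ref{prop:perturb}. The only cosmetic difference is that the paper phrases the application of Lemma~\ref{lem-density} as a proof by contradiction while you use its contrapositive directly; the computations and key lemmas are identical.
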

\begin{proof}[Proof of Corollary \ref{cor:local:boundary}]
Let us prove the first case \ref{item:Ulateral}.
Suppose that $\left( \frac{d}{dt}  \bX_{t} \cdot \begin{pmatrix}0 \\ 1 \end{pmatrix} \right)\rvert_{t = 0}=0$ for all vector fields $V$ having trace on $\partial\omega$  supported in $\mathcal{U}_{\rm lat}$. Then, recalling the expression \eqref{q2:def} of $q_2$, by \eqref{eqn:defexpressionderivXt} and Lemma \ref{lem-density}, one has
\[
    \nabla(\psi_\omega^2) \cdot \begin{pmatrix}0 \\ 1 \end{pmatrix} - \lambda_2^N(\omega) q_2 \psi_\omega + (\nabla q_2) \cdot \nabla \psi_\omega = 0 \quad \text{ on }  \mathcal{U}_{\rm lat}. 
\]
Since $\mathcal{U}$ is open in $\partial \omega$, then the set $\mathcal{U}_{\rm lat}$ has non-zero Hausdorff measure. Thus, reproducing the same calculations as in \eqref{eqn:thisshouldbezero2}, we get that
\begin{equation} \label{condition:U}
\left( - \frac{2}{L} y_2 + 1\right) \left( \sin(\frac{\pi}{\ell} y_1)^2 - \cos(\frac{\pi}{\ell} y_1)^2  \right)= -\left( - \frac{2}{L} y_2 + 1\right) =0,\quad \mbox{ for all }   (y_1,y_2) \in \mathcal{U}_{\rm lat}.
\end{equation}
It is a contradiction, therefore there exists a vector field $V \in W^{1,\infty}(\R^2,\R^2)$ with trace on $\partial\omega$ supported in $\mathcal{U}_{\rm lat}$ for which $\left( \frac{d}{dt}  \bX_{t}\cdot \begin{pmatrix}0 \\ 1 \end{pmatrix} \right)\rvert_{t = 0}\neq 0$.

The proof of the second case \ref{item:Uflat} goes the same way. In particular (for the first component) \eqref{eqn:thisshouldbezero1} gives
\[
 \cos(\frac\pi\ell y_1)\sin(\frac\pi\ell y_1)=0 \qquad  \text{for all }   (y_1,y_2) \in \mathcal{U}_{\rm hor}. 
\]
which again, is not possible. Moreover, as for the  case \ref{item:Ulateral} (for the  second component), one gets

\begin{equation} \label{condition:Ucase1}
\left(- \frac{2}{L} y_2 + 1\right) \left( \sin(\frac{\pi}{\ell} y_1)^2 - \cos(\frac{\pi}{\ell} y_1)^2  \right)=0,\quad \mbox{ for all }   (y_1,y_2) \in \mathcal{U}_{\rm hor}.
\end{equation}
The first factor of the expression on the left never vanishes since the only possible values of $y_2$ are $0$ or $L$. In addition, the second factor vanishes only for $y_1=\ell/4$ or $y_1= 3\ell/4$.
Thus \eqref{condition:Ucase1} is not satisfied on the whole open subset $\mathcal{U}_{\rm hor}$ of the boundary, which leads to a contradiction.
%We now prove the second case \ref{item:Uflat}. Proceeding by contradiction we have that 
%\begin{equation*}
%\alpha \sin(2\frac{\pi}\ell y_1) + 2 \beta \left(- \frac{2}{\sqrt{\ell L}}y_2 + \sqrt{\frac{L}\ell}\right)  =0 \qquad \text{a.e. on } \mathcal{U}.
%\end{equation*}
%Restricting the above equality to $\mathcal{U} \cap \left( (0,\ell) \times \{0\} \cup (0,\ell) \times \{L\} \right)$ we get that
%\begin{equation*}
%\alpha \sin(2\frac{\pi}\ell y_1) + 2 \beta  \sqrt{\frac{L}\ell}  =0 \qquad \text{for a.a. } y_1 \in \mathcal{U}  \cap \left( (0,\ell) \times \{0\} \right),
%\end{equation*}
%\begin{equation*}
%\alpha \sin(2\frac{\pi}\ell y_1) - 2 \beta  \sqrt{\frac{L}\ell}  =0 \qquad \text{for a.a. } y_1 \in \mathcal{U}  \cap \left( (0,\ell) \times \{L\} \right).
%\end{equation*}
%Again reasoning as above, we see that for any choice of $(\alpha,\beta)^\top \in \mathbb{S}^1$ we get a contradiction.
%Thus, for any $\bw \in  \mathbb{S}^1$ there exists a vector field $V \in W^{1,\infty}(\R^2,\R^2)$ with $\operatorname{supp} V \subset \mathcal{U}$ for which $\left( \frac{d}{dt} {\color{red} X_{t,V}} \cdot \bw \right)\rvert_{t = 0}\neq 0$.
\end{proof}

\subsection{Numerical examples of cross-section $\omega$ satisfying the condition $\bX\neq0$} \label{section:numerics}
In this paragraph we provide numerical evidences that the quantity $\bX$ defined in \eqref{eqn:defX} is non-zero for several domains $\omega$. The computations are implemented with finite elements library FreeFEM++ \cite{hect} and displayed with Matlab. 
%The codes for this section can be found on the following webpage:

\subsubsection{A numerical example for which $\bX \neq 0$}
The first numerical example is an uneven dumbbell domain whose mesh is represented in Figure \ref{dumbellpic}. It is built starting from two disks, the left one of radius 1 and centered in $(-2,0)$, while the right one has radius 2 and center $(2,0)$. The angles at which the connecting strip meets the left circle are $\pm\pi/15$.
Numerically, we obtain
\[
	\bX_{\rm dumbbell} \simeq \begin{pmatrix}-0.1257\\0\end{pmatrix}.
\]
\begin{figure}[H]
\begin{center}
\includegraphics[width=0.6\textwidth]{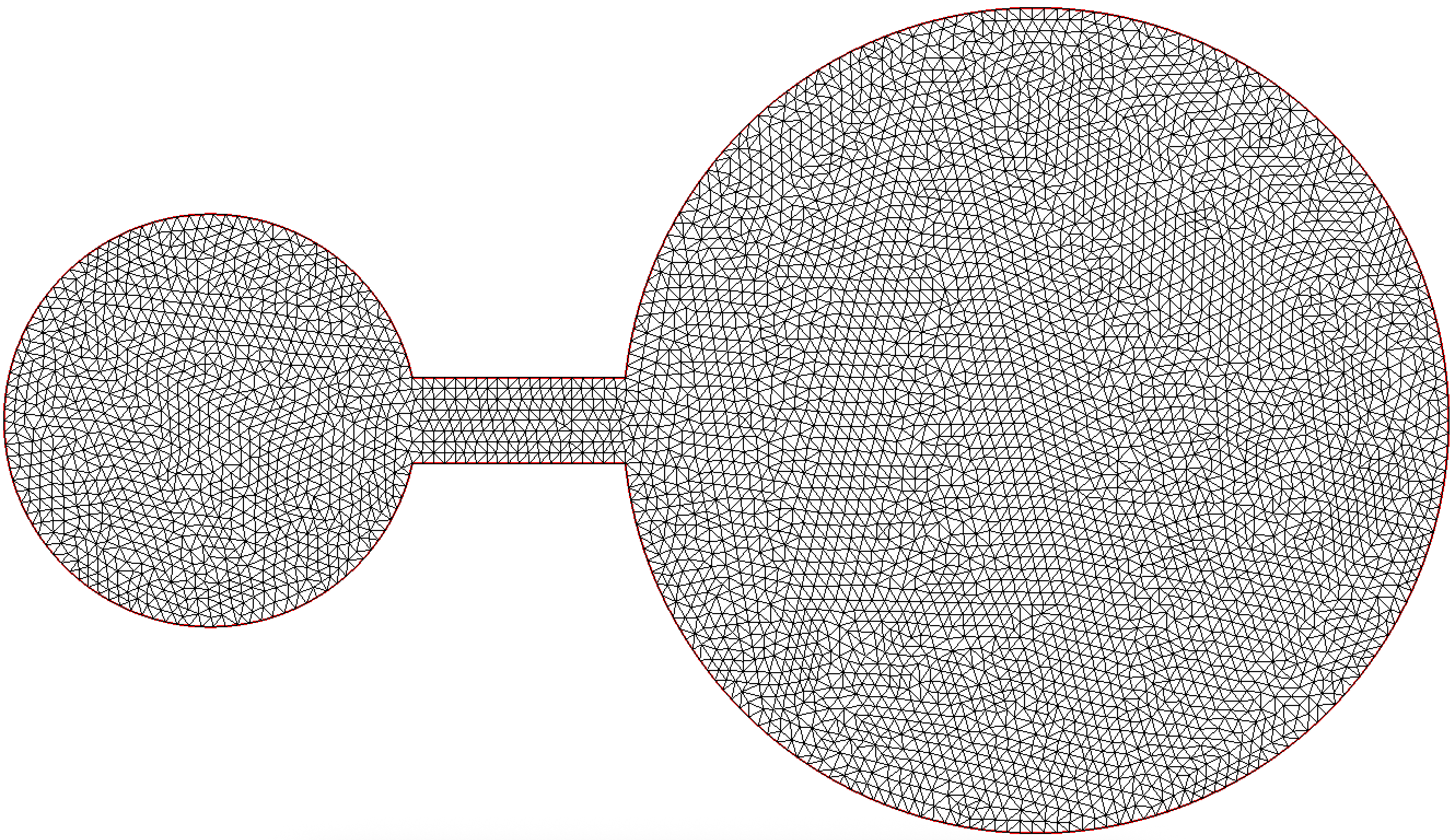}
\end{center}
\caption{Triangular mesh with P2 Lagrange elements. Number of  triangles: 9742; number of vertices: 5042.}
\label{dumbellpic}
\end{figure}

\subsubsection{Bumps in rectangle}
In this second example we are concerned with looking at rectangle deformed with a single bump. In Figure \ref{fig:rectbump} several considered bumps are plotted in different colors. We obtained
\[
\begin{split}
	\bX_{\rm blue} \simeq \begin{pmatrix}0.00315511\\0.00126525\end{pmatrix}&,\quad \bX_{\rm yellow} \simeq \begin{pmatrix}-0.000798985\\0.00301619\end{pmatrix},\\ \bX_{\rm red} \simeq \begin{pmatrix}-0.0152116\\0.00285829\end{pmatrix}&,\quad \bX_{\rm green} \simeq \begin{pmatrix}0.0000322362\\0.000374581\end{pmatrix}.
	\end{split}
\]
Since the bumps represent small perturbations of the initial domain $\omega$, the values of $\bX$ in the perturbed domains are non-zero, although they remain small. Nevertheless, these numerical simulations illustrate the Corollary \ref{cor:local:boundary} of \S \ref{subsubsec:shapederivativerectangle}, namely that different localized perturbations of the boundary of $\omega$ lead to a non-zero $\bX$.
\begin{figure}[H]
\begin{center}
\includegraphics[width=0.6\textwidth]{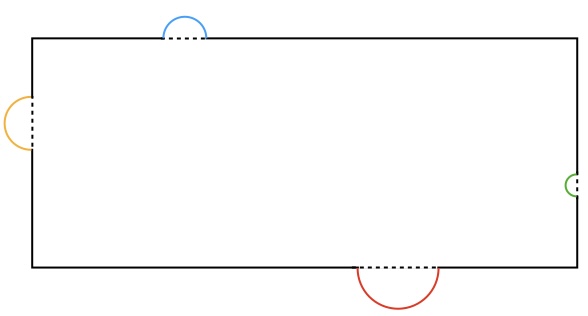}
\end{center}
\caption{Rectangle of base $2\pi$ and height $\pi$ with bumps. The bumps are half-disks of radius given as follows: $r_{\rm blue}= 0.2$, $r_{\rm yellow}= 0.3$, $r_{\rm red}= 0.5$, $r_{\rm green}= 0.1$. The distance of their center to the closest corner of the rectangles are as follows: $d_{\rm blue} = 1.7$, $d_{\rm yellow} = \pi - 2$, $d_{\rm red} = 2\pi - 4$, $d_{\rm green} = 1.1$.}
\label{fig:rectbump}
\end{figure}

\subsubsection{Bump's graph}
In this third example, we focus on bumps on the rectangle keeping  the center of the half-disk fixed (see Figure \ref{fig:rectsevbumpsameradius}).
This example provides a numerical illustration of formula \eqref{eq.derivee}, showing that, in the vicinity of $t = 0$, the norm of $X_t$ increases with respect to $t$.

\begin{figure}[H]
\begin{center}
    \includegraphics[width=0.6\textwidth]{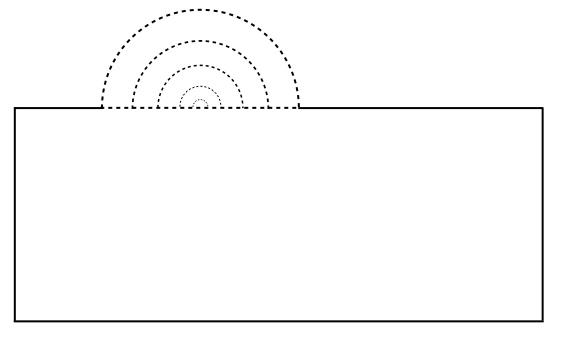}
\end{center}
\caption{Deformation of the rectangle of base $2\pi$ and height $\pi$ for a given vector field $V$ and different values of $t$.}
\label{fig:rectsevbumpsameradius}
\end{figure}

In the graph down below (cf. Figure \ref{fig:rectsevbumpsameradius}) we plotted 89 values of the deformation $\omega_t$ of the rectangle, where the dependence of $\bX_t$ with respect to $t$ can be observed.
The deformed rectangles $\omega_t$ are obtained by adding bumps to the upper side of the rectangle $[0,2\pi]\times[0,\pi]$ consisting of a half-disk with the center lying 1.7 on the right of the upper left corner and with increasing radius $r_t=t$, starting from $t=0.2$ up to $t=0.64$ with increments of 0.05.
\begin{figure}[H]
\begin{center}
    \includegraphics[width=\textwidth]{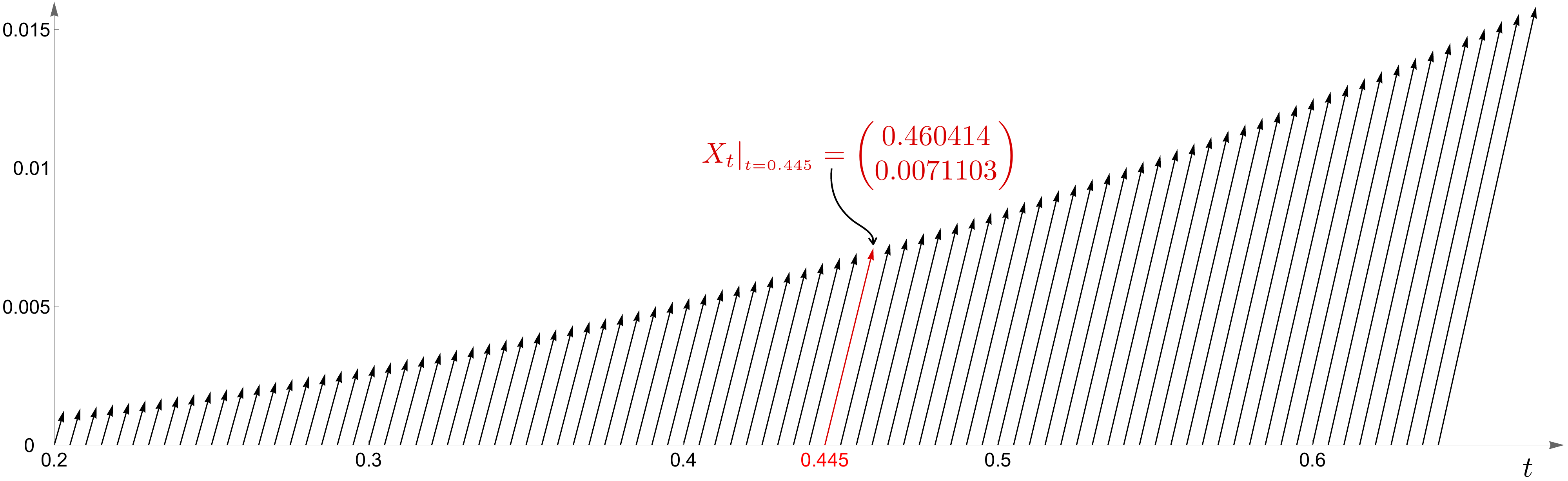}
\end{center}
\caption{The value of $\bX_t$ as the parameter $t$ changes. Note that in this graph, the horizontal axis  denotes both the radius $r_t=t$ of the bump of the perturbed rectangle $\omega_t$, as well as the axis for the first coordinate of $\bX_t$. Each vector $\bX_t$ is represented in the plane with the origin in the point $(t,0)$ for each $t \in [0.2,0.64]$. The vector $\bX_t$ at $t=0.445$ is highlited in red as an example.
As one can observe, the bigger the  deformation, the bigger the vector $\bX_t$ becomes. The direction in which each $\bX_t$ points does not seem to change much.}
\label{fig:graphofX}
\end{figure}

\section*{Acknowledgements \& Funding} 
This work received support from the french government under the France 2030 investment plan, as part of the Initiative d’Excellence d’Aix-Marseille Université – A*MIDEX AMX-21-RID-012.\\[6pt]
\noindent The authors would like to thank their colleague  Anne-Sophie Bonnet-Ben Dhia for the reference \cite{filonov21}. The authors are also grateful to their colleague   David Krej\v{c}i\v{r}\'ik for introducing them to the notion of relatively adapted parallel frame with the references \cite{bishop,Krej-12}.

\appendix

\section{Construction of relatively adapted parallel frame}\label{appendix:relatparallel}
This appendix closely follows the construction explained in \cite{bishop,Krej-12}. For the sake of completeness, we recall it here in the specific case where $\Gamma \subset \R^3$ is a curve with a $\mathscr{C}^2$, injective and arc-length parametrization $\gamma : \R \to \R^3$.\\

By definition, a  frame is an orthonormal basis field of $\R^3$. Moreover a frame is positively (resp. negatively) oriented  if it is a positively oriented (resp.negatively oriented)  orthonormal basis  and we need a few more definitions.  

\begin{Def}
A frame $(\mathbf{e}_1,\mathbf{e}_2,\mathbf{e}_3)$ is said to be \emph{adapted} to the curve $\Gamma$ if for any $s\in \R$ and $j\in\{1,2,3\}$, $\mathbf{e}_j(s)$ is either colinear to $\gamma'(s)$ or perpendicular to it.
\end{Def}

\begin{Def}\label{Def-relatively-parallel}
A vector field $\mathbf{f}$ is said to be \emph{normal} to the curve $\Gamma$ if for any $s \in \R$
\[
\mathbf{f}(s)\cdot \gamma'(s) = 0.
\]
Such a vector field is said to be \emph{relatively parallel} to $\Gamma$ if for all $s\in \R$, $\mathbf{f}'(s)$ is colinear to $\gamma'(s)$.
\end{Def}

\begin{Def}
A frame $(\mathbf{e}_1,\mathbf{e}_2,\mathbf{e}_3)$ is said to be a \emph{relatively adapted parallel frame} to $\Gamma$ if it is an adapted frame to $\Gamma$ with $\mathbf{e}_1 = \gamma'$ and $\mathbf{e}_2$ as well as $\mathbf{e}_3$ are relatively parallel to $\Gamma$.
\end{Def}

\begin{Rem}
All the above definitions can be generalized to local vector fields and frames, i.e. vector fields and frames which are only defined on a bounded interval of $\R$.
\end{Rem}

\begin{Rem} \label{rem:constlength}
Let $\mathbf{f}$ be a (local) vector field normal to $\Gamma$. Assume also that $\mathbf{f}$ is  relatively parallel to $\Gamma$. Then $\bV$ has  constant length, because 
\begin{equation*}
\left(|\mathbf{f}|^2\right)' = (\mathbf{f} \cdot \mathbf{f})' = 2 \mathbf{f} \cdot \mathbf{f}' =0
\end{equation*}
by definition.
\end{Rem}

\begin{Lem} \label{lem:uniqframe}
Let ${\rm I},{\rm J} \subset \R$ be two bounded intervals of $\R$. Let  $(\mathbf{e}_1,\mathbf{e}_2,\mathbf{e}_3)$ be a local relatively adapted parallel frame to $\Gamma$ on ${\rm I}$ and $(\xi_1,\xi_2,\xi_3)$ a local relatively adapted parallel frame to $\Gamma$ on ${\rm J}$.
If  $(\mathbf{e}_1(s_\star),\mathbf{e}_2(s_\star),\mathbf{e}_3(s_\star))= (\xi_1(s_\star),\xi_2(s_\star),\xi_3(s_\star))$ for some $s_\star \in {\rm I} \cap {\rm J}$, then $(\mathbf{e}_1,\mathbf{e}_2,\mathbf{e}_3)\equiv (\xi_1,\xi_2,\xi_3)$ on $I \cap J$.
\end{Lem}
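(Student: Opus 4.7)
The plan is to exploit the fact that the derivatives of relatively parallel normal vector fields are forced to lie along $\gamma'$, so that certain scalar products of the two frames are constant. Since $I \cap J$ is an interval containing $s_\star$, it is connected, which allows us to propagate the coincidence from $s_\star$ to all of $I \cap J$.

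First, observe that by the adapted property, $\mathbf{e}_1(s) = \gamma'(s) = \xi_1(s)$ for all $s \in I \cap J$. Hence the whole problem reduces to showing that $\mathbf{e}_j(s) = \xi_j(s)$ for $j=2,3$ on $I \cap J$. To this end, I would introduce, for $i,j \in \{2,3\}$, the scalar functions
\[
    a_{ij}(s) := \mathbf{e}_i(s)\cdot \xi_j(s), \quad s \in I \cap J.
\]
By the initial condition $(\mathbf{e}_1(s_\star),\mathbf{e}_2(s_\star),\mathbf{e}_3(s_\star)) = (\xi_1(s_\star),\xi_2(s_\star),\xi_3(s_\star))$, one has $a_{ij}(s_\star) = \delta_{ij}$.

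The key computation is then to differentiate $a_{ij}$. Since both $\mathbf{e}_i$ and $\xi_j$ are relatively parallel normal vector fields (cf. Definition \ref{Def-relatively-parallel}), their derivatives $\mathbf{e}_i'$ and $\xi_j'$ are colinear to $\gamma' = \mathbf{e}_1 = \xi_1$. But $\mathbf{e}_i$ and $\xi_j$ are normal to $\gamma'$, so
\[
    \mathbf{e}_i'(s)\cdot \xi_j(s) = 0 = \mathbf{e}_i(s)\cdot \xi_j'(s),
\]
giving $a_{ij}'(s) = 0$ on $I \cap J$. Combined with the initial condition, this shows $a_{ij} \equiv \delta_{ij}$.

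It remains to conclude. At any $s \in I \cap J$, the vector $\mathbf{e}_i(s)$ (for $i=2,3$) is orthogonal to $\gamma'(s) = \xi_1(s)$, so its decomposition in the orthonormal basis $(\xi_1(s),\xi_2(s),\xi_3(s))$ only involves the components along $\xi_2(s)$ and $\xi_3(s)$; these components are precisely $a_{i2}(s)$ and $a_{i3}(s)$, which equal $\delta_{i2}$ and $\delta_{i3}$. Hence $\mathbf{e}_i(s) = \xi_i(s)$, concluding the proof. The only subtlety that must be handled carefully is that $I \cap J$ is connected (as an intersection of intervals containing $s_\star$), which allows the constancy of $a_{ij}$ to transfer from $s_\star$ to all of $I \cap J$; there is no real obstacle here.
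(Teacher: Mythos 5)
Your proof is correct, but it takes a slightly different route from the paper's. The paper observes that for $j \in \{2,3\}$ the difference $\mathbf{e}_j - \xi_j$ is itself a relatively parallel normal vector field (its derivative lies along $\gamma'$ since both $\mathbf{e}_j'$ and $\xi_j'$ do), hence has constant length by Remark~\ref{rem:constlength}; since that length vanishes at $s_\star$, it vanishes identically on $I \cap J$. You instead track the Gram-type scalar products $a_{ij} = \mathbf{e}_i \cdot \xi_j$, show $a_{ij}' = 0$ by the same mechanism (a derivative along $\gamma'$ paired with a vector normal to $\gamma'$), and then reconstruct $\mathbf{e}_i$ by expanding in the orthonormal basis $(\xi_1,\xi_2,\xi_3)$. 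Both arguments hinge on exactly the same vanishing of the cross-term in the product rule; the paper's version is a bit shorter because it can quote an already-established remark, while yours is more explicit and self-contained. Your remark about connectedness of $I \cap J$ is correct and worth keeping — it is the implicit hypothesis that lets the constancy of $a_{ij}$ propagate from $s_\star$.
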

\begin{proof}[Proof of Lemma~\ref{lem:uniqframe}]
Obviously we have $\mathbf{e}_1=\xi_1=\gamma'$  on ${\rm I} \cap {\rm J}$. 
Fix $j \in \{2,3\}$. As the (normal) vectors  $e_j$ and $\xi_j$ are relatively parallel to $\Gamma$, one  observe that their difference  $e_j -\xi_j$ is also relatively parallel to $\Gamma$. Thus, by Remark \ref{rem:constlength}, the vector field $e_j -\xi_j$ has constant length. As $|(e_j -\xi_j)(s_\star)|=0$, it follows that $e_j -\xi_j=0$ on ${\rm I} \cap {\rm J}$. This concludes the proof.
\end{proof}

The main goal of this appendix is to prove the following proposition.
\begin{Pro} \label{ex:un:RAPF}
Let $(\gamma'(0),e_{0,2},e_{0,3})$ be a positively oriented orthonormal basis of $\R^3$. There exists a unique $\mathscr{C}^1$-regular relatively adapted parallel frame $(\mathbf{e}_1,\mathbf{e}_2,\mathbf{e}_3)$ to $\Gamma$ verifying the initial condition $(\mathbf{e}_1(0),\mathbf{e}_2(0),\mathbf{e}_3(0)) = (\gamma'(0),\mathbf{e}_{0,2}, \mathbf{e}_{0,3})$. Moreover, there exists two (unique) continuous functions $k_1,k_2:\R \to \R$  such that:
\begin{equation} \label{diffeq:parframe}
    \left\{
        \begin{array}{lcl}
            \displaystyle\frac{d \mathbf{e}_1}{ds}(s) & = & k_1(s) \mathbf{e}_2(s) + k_2(s) \mathbf{e}_3(s),  \vspace{3pt} \\
            \vspace{3pt}
            \displaystyle\frac{d \mathbf{e}_2}{ds}(s) & = & - k_1(s) \mathbf{e}_1(s),\\
            \displaystyle\frac{d \mathbf{e}_3}{ds}(s) & = & -k_2(s) \mathbf{e}_1(s).
        \end{array}
    \right.
\end{equation}
\label{prop:relatparallelframe}
and the frame $(\mathbf{e}_1(s),\mathbf{e}_2(s),\mathbf{e}_3(s))$  is positively oriented for all $s\in \mathbb{R}$.
\end{Pro}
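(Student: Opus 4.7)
\medskip

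\textbf{Plan.} The key observation is that being a relatively adapted parallel frame is equivalent to satisfying a linear ODE system that can be solved globally by Cauchy–Lipschitz. Indeed, if $\mathbf{e}_j$ ($j=2,3$) is normal to $\Gamma$ and relatively parallel, then by Definition \ref{Def-relatively-parallel} there exists a scalar $c_j(s)$ with $\mathbf{e}_j'(s) = c_j(s)\gamma'(s)$, and differentiating the identity $\mathbf{e}_j(s)\cdot\gamma'(s) = 0$ yields $c_j(s) = -\mathbf{e}_j(s)\cdot\gamma''(s)$. Consequently the sought vector fields should solve the linear ODE
\begin{equation*}
\mathbf{e}_j'(s) = -\bigl(\mathbf{e}_j(s)\cdot\gamma''(s)\bigr)\,\gamma'(s), \qquad j=2,3.
\end{equation*}

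\medskip

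\textbf{Step 1: Global existence and uniqueness.} Since $\gamma$ is $\mathscr{C}^2$, the map $(s,\mathbf{v})\mapsto -(\mathbf{v}\cdot \gamma''(s))\gamma'(s)$ is continuous in $s$ and linear in $\mathbf{v}$. The Cauchy–Lipschitz theorem for linear ODEs then provides, for any $\mathbf{v}_0\in\R^3$, a unique global $\mathscr{C}^1$ solution defined on all of $\R$. In particular, starting from the initial conditions $\mathbf{e}_2(0)=\mathbf{e}_{0,2}$ and $\mathbf{e}_3(0)=\mathbf{e}_{0,3}$, we obtain $\mathscr{C}^1$ vector fields $\mathbf{e}_2,\mathbf{e}_3$ on $\R$. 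Setting $\mathbf{e}_1 := \gamma'$ (which is $\mathscr{C}^1$ as $\gamma\in\mathscr{C}^2$), the uniqueness of the triplet will then follow from the uniqueness of each $\mathbf{e}_j$.

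\medskip

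\textbf{Step 2: Conservation of the orthonormality structure.} I will verify, via standard first-integral arguments, that the algebraic relations encoding ``orthonormal basis normal to $\gamma'$'' are preserved under the flow. Concretely:
\begin{itemize}
\item For $\mathbf{e}_j\cdot\gamma' \equiv 0$: differentiating gives $(\mathbf{e}_j\cdot\gamma')' = \mathbf{e}_j'\cdot\gamma' + \mathbf{e}_j\cdot\gamma'' = -(\mathbf{e}_j\cdot\gamma'')|\gamma'|^2 + \mathbf{e}_j\cdot\gamma'' = 0$, since $|\gamma'|^2=1$. Since it vanishes at $s=0$, it vanishes everywhere.
\item For $|\mathbf{e}_j|\equiv 1$: by the previous point $\mathbf{e}_j$ is normal to $\Gamma$, and $\mathbf{e}_j'$ is colinear to $\gamma'$ by construction, so $\mathbf{e}_j$ is relatively parallel; Remark \ref{rem:constlength} then gives constant length.
\item For $\mathbf{e}_2\cdot\mathbf{e}_3\equiv 0$: since $\mathbf{e}_j'$ is colinear to $\gamma'$ while $\mathbf{e}_k\perp\gamma'$, we have $(\mathbf{e}_2\cdot\mathbf{e}_3)' = \mathbf{e}_2'\cdot\mathbf{e}_3 + \mathbf{e}_2\cdot\mathbf{e}_3' = 0$.
\item Positive orientation: $\det(\mathbf{e}_1,\mathbf{e}_2,\mathbf{e}_3)$ is continuous, takes values in $\{\pm 1\}$ on an orthonormal frame, and equals $+1$ at $s=0$, hence $+1$ everywhere.
\end{itemize}

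\medskip

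\textbf{Step 3: Construction of $k_1,k_2$ and verification of \eqref{diffeq:parframe}.} Since $(\mathbf{e}_1,\mathbf{e}_2,\mathbf{e}_3)(s)$ is an orthonormal basis of $\R^3$ and $\mathbf{e}_1'(s)=\gamma''(s)$ is orthogonal to $\mathbf{e}_1(s)$ (from $|\gamma'|^2=1$), I define the continuous functions
\begin{equation*}
k_1(s) := \gamma''(s)\cdot\mathbf{e}_2(s), \qquad k_2(s) := \gamma''(s)\cdot\mathbf{e}_3(s).
\end{equation*}
The decomposition of $\mathbf{e}_1'=\gamma''$ in the basis $(\mathbf{e}_2,\mathbf{e}_3)$ of $\mathbf{e}_1^\perp$ then yields the first equation of \eqref{diffeq:parframe}. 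Plugging these definitions into the ODE from Step 1 gives $\mathbf{e}_j'(s) = -k_{j-1}(s)\,\mathbf{e}_1(s)$ for $j=2,3$, which are the remaining two equations. Uniqueness of $k_1,k_2$ is immediate from the first equation by taking inner products with $\mathbf{e}_2,\mathbf{e}_3$.

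\medskip

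\textbf{Main obstacle.} There is no deep obstacle: the proof is essentially an application of Cauchy–Lipschitz combined with conservation of first integrals. The only point requiring a little care is ensuring that the linear ODE derived from the defining conditions is indeed equivalent to the geometric requirements (normality, relative parallelism, orthonormality), which is handled by the conservation arguments of Step 2. Global existence on all of $\R$ is automatic from the linearity of the ODE and the continuity of $\gamma''$, and does not require the bound $\|\kappa\|_{L^\infty(\R)}<\infty$ assumed earlier in the paper.
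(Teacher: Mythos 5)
Your proof is correct and takes a genuinely different and cleaner route than the paper's. The paper proceeds by constructing an explicit auxiliary $\mathscr{C}^1$ frame $(\mathbf{f}_1,\mathbf{f}_2,\mathbf{f}_3)$ on a small interval around each point, using Lemma~\ref{lemma:mininter} to guarantee one coordinate of $\gamma'$ stays bounded away from zero so that $\mathbf{f}_2$ is well-defined; it then rotates this frame by an angle $\varphi$ solving $\varphi'=p_{2,3}$ to get a local relatively adapted parallel frame, and finally glues the local pieces together using the uniqueness Lemma~\ref{lem:uniqframe}. You instead observe that normality plus relative parallelism forces each $\mathbf{e}_j$ ($j=2,3$) to solve the linear ODE $\mathbf{e}_j'=-(\mathbf{e}_j\cdot\gamma'')\gamma'$, apply global existence and uniqueness for linear ODEs with continuous coefficients, and then recover the orthonormality and orientation as first integrals of the flow. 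This avoids Lemma~\ref{lemma:mininter}, the auxiliary frame, and the angle integration entirely, and it even dispenses with the standing assumption $\kappa\in L^\infty(\R)$ (only continuity of $\gamma''$ is needed for global existence of a linear ODE). The only place one must be careful is the direction of the logic, which you handle: first deduce from the definitions that any $\mathscr{C}^1$ relatively adapted parallel frame must solve this ODE (giving uniqueness), and separately verify that the ODE solution with the prescribed initial data actually satisfies the defining geometric constraints (giving existence). The conservation computations and the construction of $k_1,k_2$ from $\gamma''=k_1\mathbf{e}_2+k_2\mathbf{e}_3$ are all correct.
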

Before going through the proof of Proposition \ref{prop:relatparallelframe} we need the following lemma whose proof is postponed at the end of this section.
\begin{Lem} \label{lemma:mininter}
Let $\gamma: \R \to \R^d$ be a $\mathscr{C}^2$-regular curve parametrized by arc-length, with $d \geq 2$, and assume that $\gamma'' \in L^\infty(\R)^d$ and denote by $\kappa=|\gamma''|$. 
Let $j \in \{1,\dots,d\}$. Let $s \in \R$ and $C=\max(\|\kappa\|_{L^\infty(\R)},1)>0$. Assume that $|\gamma'_j(s)| \geq 1/\sqrt{d}$, then one has $$
|\gamma'(s)|>0, \quad  \forall s\in \big[s-\frac{1}{2\, C\, \sqrt {d}}, s+ \frac{1}{2\, C\,  \sqrt{d}}\big].
$$
\end{Lem}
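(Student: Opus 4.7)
\medskip

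\noindent\textbf{Proof proposal for Lemma~\ref{lemma:mininter}.} First, a brief interpretive remark: since $\gamma$ is parametrized by arc-length, $|\gamma'(\tilde s)|=1$ identically, so the conclusion as literally written is automatic. The intended statement is almost certainly that the $j$-th coordinate $\gamma'_j$ does not vanish on the interval, i.e.\ $|\gamma'_j(\tilde s)|>0$ for all $\tilde s\in\bigl[s-\tfrac{1}{2C\sqrt d},\,s+\tfrac{1}{2C\sqrt d}\bigr]$. This is the natural quantity for the subsequent construction of a relatively adapted parallel frame: at any $s\in\R$, the identity $\sum_{j=1}^{d}\gamma'_j(s)^2=1$ forces, by pigeonhole, at least one index $j$ to satisfy $|\gamma'_j(s)|\ge 1/\sqrt d$, and the lemma then yields a controlled neighbourhood on which that component stays away from zero. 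This is exactly what is needed to build a local normal frame (e.g.\ via an explicit formula using $\gamma'$ and the coordinate vector $e_j$) and then patch local frames together using the uniqueness Lemma~\ref{lem:uniqframe}.

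The proof itself is a one-line application of the fundamental theorem of calculus combined with the uniform bound on the curvature. The plan is as follows. First, I would note that since $\gamma\in \mathscr{C}^2(\R,\R^d)$, for any $\tilde s,s\in\R$ one has componentwise
\[
\gamma'_j(\tilde s)\;=\;\gamma'_j(s)+\int_s^{\tilde s}\gamma''_j(u)\,\rmd u.
\]
Second, using $|\gamma''_j(u)|\le |\gamma''(u)|=\kappa(u)\le \|\kappa\|_{L^\infty(\R)}\le C$ together with the reverse triangle inequality, this yields
\[
|\gamma'_j(\tilde s)|\;\ge\;|\gamma'_j(s)|\,-\,C\,|\tilde s-s|.
\]
Third, under the hypothesis $|\gamma'_j(s)|\ge 1/\sqrt d$ and for $\tilde s$ in the claimed symmetric interval of radius $1/(2C\sqrt d)$ around $s$, one obtains
\[
|\gamma'_j(\tilde s)|\;\ge\;\frac{1}{\sqrt d}-C\cdot\frac{1}{2C\sqrt d}\;=\;\frac{1}{2\sqrt d}\;>\;0,
\]
which is the desired conclusion.

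There is no genuine obstacle here: the estimate is sharp up to a factor of $2$ and the role of the constant $C=\max(\|\kappa\|_{L^\infty(\R)},1)$ is just to guarantee that the length of the neighbourhood is bounded above (through the factor $1$) even when $\kappa$ is very small, so that in the proof of Proposition~\ref{prop:relatparallelframe} one obtains a uniform size of local charts on which a continuous and then $\mathscr{C}^1$ normal-frame construction can be carried out. The only minor subtlety worth flagging in the write-up is the mild abuse of notation in the statement (the variable $s$ is overloaded as both the fixed centre and the running argument of $\gamma'$), which I would disambiguate by introducing $s_0$ for the fixed point and reserving $s$ for the running variable.
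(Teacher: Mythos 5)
Your proof is correct and takes essentially the same route as the paper: a Lipschitz bound on $\gamma'$ coming from the curvature bound, followed by the reverse triangle inequality to deduce that the $j$-th component stays above $1/(2\sqrt d)$ on the stated interval. You also correctly flag the notational slip in the statement (the conclusion should read $|\gamma'_j(\tilde s)|>0$, not $|\gamma'(\tilde s)|>0$); the paper's own proof carries over the same overloaded notation, applying the reverse triangle inequality to $|\gamma'(\cdot)|$ where $|\gamma'_j(\cdot)|$ is meant, whereas your componentwise version is cleaner and matches how the lemma is actually invoked in the proof of Proposition~\ref{prop:relatparallelframe}.
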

\begin{Rem} \label{rem:1sqrtd}
Let $s \in \R$. Since $\gamma$ is parametrized by arc-legth, then $|\gamma'(s)|^2= \sum_{j=1}^d |\gamma'_j(s)|^2=1$.
Hence for at least one $j \in \{1,\dots,d\}$ one has that $|\gamma'_j(s)|^2 \geq 1/d$.
\end{Rem}
\begin{proof}[Proof of Proposition \ref{prop:relatparallelframe}]
We first notice that the global uniqueness of the $\mathscr{C}^1$-smooth relatively adapted parallel frame on the whole real line is proved as Lemma \ref{lem:uniqframe}  (for the local uniqueness) by taking ${\rm I} = {\rm J} =\mathbb{R}$ in the corresponding proof. The proof of the existence of  a  $\mathscr{C}^1$-regular positively oriented relatively parallel frame is divided in two steps. In a first step we construct locally such a frame and then glue each piece together using the fact that $|\gamma'(s)| = 1$ for all $s \in \R$. 
\paragraph{Step 1: local construction}  We consider the origin $s=0$ and we assume without loss of generality that $|\gamma_1'(0)| \geq 1/\sqrt{3}$ (see Remark \ref{rem:1sqrtd}). By Lemma~\ref{lemma:mininter} with $d=3$, for $s_0=1/(\sqrt{2} \, C\, d) >0$ (with $C=\max(\|\kappa\|_{\infty},1)$), one has  $|\gamma_1'(s)| > 0$ for all $s \in [-s_0,s_0]$.
For $s \in [-s_0,s_0]$, we can define the local direct frame:
\[
    \mathbf{f}_1(s) = \gamma'(s),\quad \mathbf{f}_2(s) =\frac1{\sqrt{(\gamma_1'(s))^2 + (\gamma_2'(s))^2}}(-\gamma_2'(s),\gamma_1'(s),0),\quad \mathbf{f}_3(s) := \mathbf{f}_1(s) \times \mathbf{f}_2(s).
\]
Note that $\mathbf{f}_2$ is well defined and non-zero because $\gamma_1'$ is non-zero on $[-s_0,s_0]$.
Moreover, $(\mathbf{f}_1,\mathbf{f}_2,\mathbf{f}_3)$ is an adapted frame and for all $j \in \{1,2,3\}$, $\mathbf{f}_j$ is of class $\mathscr{C}^1$ on $[-s_0,s_0]$. Note, that as defined, for all $s\in [-s_0,s_0]$, one has $|\mathbf{f}_j(s)| = 1$, in particular for $j\in \{1,2,3\}$, there holds
\[
    \mathbf{f}_j'(s) \cdot \mathbf{f}_j(s) = 0.
\]
Thus, for all $j\in \{1,2,3\}$, $\mathbf{f}_j' \in \mathrm{span}\{\mathbf{f}_k\}_{k\in \{1,2,3\}\setminus\{j\}}$ which rewrites
\begin{equation}\label{eqn:derpropframe}
    \mathbf{f}_1' = (\mathbf{f}_1'\cdot \mathbf{f}_2) \mathbf{f}_2 + (\mathbf{f}_1'\cdot \mathbf{f}_3)\mathbf{f}_3,\quad \mathbf{f}_2' = (\mathbf{f}_2'\cdot \mathbf{f}_1)\mathbf{f}_1 + (\mathbf{f}_2'\cdot \mathbf{f}_3)\mathbf{f}_3,\quad \mathbf{f}_3' = (\mathbf{f}_3'\cdot \mathbf{f}_1)\mathbf{f}_1 + (\mathbf{f}_3'\cdot \mathbf{f}_2) \mathbf{f}_2.
\end{equation}
Moreover, since $\mathbf{f}_j\cdot \mathbf{f}_k = \delta_{j,k}$, differentiating with respect to $s$, one gets 
\[
    (\mathbf{f}_1'\cdot \mathbf{f}_2) = - (\mathbf{f}_2'\cdot \mathbf{f}_1),\quad (\mathbf{f}_1'\cdot \mathbf{f}_3) = -(\mathbf{f}_3'\cdot \mathbf{f}_1),\quad (\mathbf{f}_2'\cdot \mathbf{f}_3) = -(\mathbf{f}_3'\cdot \mathbf{f}_2).
\]
Defining the $\mathscr{C}^0$ maps $p_{1,2} := (\mathbf{f}_1'\cdot \mathbf{f}_2)$, $p_{1,3} := (\mathbf{f}_1'\cdot \mathbf{f}_3)$ and $p_{2,3} := (\mathbf{f}_2'\cdot \mathbf{f}_3)$, \eqref{eqn:derpropframe} rewrites
\begin{equation}\label{eq.derivfframe}
        \mathbf{f}_1' = p_{1,2}\ \mathbf{f}_2 + p_{1,3}\ \mathbf{f}_3,\quad \mathbf{f}_2' = -p_{1,2}\ \mathbf{f}_1 + p_{2,3}\ \mathbf{f}_3,\quad \mathbf{f}_3' = -p_{1,3}\ \mathbf{f}_1 -p_{2,3}\ \mathbf{f}_2.
\end{equation}
Now we would like to find a rotation of angle $\varphi \in \mathscr{C}^1([-s_0,s_0])$, so
that on $[-s_0,s_0]$, the direct adapted parallel frame $(\mathbf{e}_1, \mathbf{e}_2, \mathbf{e}_3)$ is defined in term of the direct frame  $(\mathbf{f}_1, \mathbf{f}_2, \mathbf{f}_3)$ by
\begin{equation}\label{eq.defe}
\mathbf{e}_1=\mathbf{f}_1, \quad     \mathbf{e}_2 = \cos(\varphi) \mathbf{f}_2 - \sin(\varphi)\mathbf{f}_3,\quad \mathbf{e}_3 = \sin(\varphi) \mathbf{f}_2 +\cos(\varphi)\mathbf{f}_3.
\end{equation}
Thus, if such  a function $\varphi$ exists, one has
\begin{eqnarray} \label{eq.cauchysystem2}
    \mathbf{e}_2' &=& (-p_{1,2} \cos(\varphi) + p_{1,3}\sin(\varphi)) \mathbf{e}_1 + (-\varphi' + p_{2,3})\mathbf{e}_3,\\[4pt] \mathbf{e}_3'& =& -(p_{1,2}\sin(\varphi) + p_{1,3}\cos(\varphi))\mathbf{e}_1 + (\varphi' - p_{2,3})\mathbf{e}_2.
\end{eqnarray}
To obtain a relatively adapted parallel frame to $\Gamma$ we need to chose $\varphi' = p_{2,3}$. Hence, for $s\in [s_0,s_0]$, one chooses $\varphi = \varphi_0 + \int_0^sp_{2,3}(\tau)\rmd \tau$ for some constant $\varphi_0$. Now, to satisfy the initial condition, one needs to chose the angle $\varphi_0$ such that
\[
 \mathbf{e}_{0,2} = \cos(\varphi_0) \mathbf{f}_2(0) - \sin(\varphi_0)\mathbf{f}_3(0),\quad \mathbf{e}_{0,3} = \sin(\varphi_0) \mathbf{f}_2(0) +\cos(\varphi_0)\mathbf{f}_3(0)
\]
which is always possible because of the chosen orientation. In particular, one defines the maps $k_1,k_2$ on the line segment $[s_0,-s_0]$ as
\begin{equation}\label{def.k1-k2}
    k_1 = p_{1,2}\cos(\varphi) - p_{1,3}\sin(\varphi),\quad k_2 = p_{1,2}\sin(\varphi) + p_{1,3}\cos(\varphi)
\end{equation}
and remark that they are continuous on $[-s_0,s_0]$.\\[4pt]
One can now easily check that constructed frame $(\mathbf{e}_1, \mathbf{e}_2, \mathbf{e}_3)$ defined by \eqref{eq.defe} is a $\mathscr{C}^1-$smooth positively oriented adapted parallel frame on $[-s_0,s_0]$. In addition, combining the two first equations of \eqref{eq.cauchysystem2} and \eqref{def.k1-k2} yields the second and the third equations of \eqref{diffeq:parframe} on $[-s_0,s_0]$. Furthermore, by virtue of \eqref{eq.defe}, \eqref{eq.derivfframe} and \eqref{def.k1-k2}, one deduces that on $[-s_0,s_0]$:
\begin{eqnarray}\label{eq.cauchysystem1}
\mathbf{e}_1'&=&\mathbf{f}_1'= p_{1,2}\ \mathbf{f}_2 + p_{1,3}\ \mathbf{f}_3 \nonumber \\
&=& p_{1,2}\ (\cos(\phi) \mathbf{e}_2+ \sin(\phi) \mathbf{e}_3 )+ p_{1,3}\ (\cos(\phi) \mathbf{e}_3-\sin(\phi) \mathbf{e}_2 ) \nonumber\\
&=& k_1 \mathbf{e}_2+k_2 \, \mathbf{e}_3, 
\end{eqnarray}
which is precisely the first equation of \eqref{diffeq:parframe}. Moreover, one gets directly from \eqref{eq.cauchysystem1} that
\[
    |\mathbf{e}_1'|^2 = \kappa^2=k_1^2 + k_2^2 .
\]
\paragraph{Step 2: ``from local to global".} Considering the change of variables $s \mapsto s+s_0$ we set $\tilde{\gamma}(s) := \gamma(s+s_0)$. Reproducing the same argument as in Step 1 for the new curve $\tilde \gamma$ with the new initial condition $(\mathbf{e}_1(s_0),\mathbf{e}_2(s_0),\mathbf{e}_3(s_0))$ yields a local relatively adapted parallel frame to $\tilde \gamma$  on $[-s_0,s_0]$, denoted by $(\tilde\xi_1,\tilde\xi_2,\tilde\xi_3)$.
Considering $\xi_j(s):= \tilde\xi_j(s-s_0)$ for $j=1,2,3$ and for $s \in [0,2s_0]$, yields a local $\mathscr{C}^1$-smooth frame $(\xi_1,\xi_2,\xi_3)$ on $[0,2s_0]$. It is easy to check that $(\xi_1,\xi_2,\xi_3)$ is a local relatively adapted parallel positively oriented frame to $\gamma$. 
Moreover we have that $(\xi_1(s_0),\xi_2(s_0),\xi_3(s_0)) = (\mathbf{e}_1(s_0),\mathbf{e}_2(s_0),\mathbf{e}_3(s_0))$. Hence by Lemma \ref{lem:uniqframe} we have that $(\xi_1,\xi_2,\xi_3) \equiv (\mathbf{e}_1,\mathbf{e}_2,\mathbf{e}_3)$ on $[0,s_0]$.
Reproducing the same argument using now the change of variables $s \mapsto s-s_0$, we can extend the local relatively adapted parallel frame $(\mathbf{e}_1,\mathbf{e}_2,\mathbf{e}_3)$ from $[-s_0,s_0]$ to $[-2s_0,2s_0]$, and by iteration, to $[-n s_0,n s_0]$ for any $n \in \mathbb{N}$, thus to the whole of $\R$.

Finally, observe that once we are given a relatively  adapted parallel frame and its  evolution along $s \in \R$ is of the  form in \eqref{diffeq:parframe} for some unique  continuous functions $k_1,k_2:\R \to \R$.
%{\noindent  \bf On the positive  orientation of the constructed  frame:}\\[4pt]
%The constructed frame $({\bf e}_1,{\bf e_2,{\bf e_3)$ satisfies the linear non-autonomous ordinary equations system \eqref{diffeq:parframe}, whose unknown are function valued in the euclidean space $E=\bbR^3$. The linear operator $T: s\in \bbR \mapsto T(s)\in \mathcal{L}(E^3, E^3)$, associated to \eqref{diffeq:parframe},  is given for all $s\in \bbR$ by $$T(s)[{\bf u}_1,{\bf u}_2,{\bf u}_3]=  [k_1(s){\bf u}_2+k_2(s){\bf u}_3, -k_1(s){\bf u}_1, -k_2(s){\bf u}_1], \quad  \forall ({\bf u}_1,{\bf u}_2,{\bf u}_3) \in E^3.$$  Its trace $\operatorname{Tr}(T(s))$ vanishes for all $s\in \bbR$. Thus, it implies (\textcolor{red}{see lemma 3.11} of \cite{Teschl}) that the Wronskian $W: s\to \operatorname{det}({\bf e}_1(s),{\bf e}_2(s), {\bf e}_3(s) )$ satisfies $W'(s)=\operatorname{Tr}(T(s)) W(s)=0$. Hence,  $W(s)=W(0)=1$, $\forall s \in \bbR$ (since the orthornormal basis $(\gamma'(0),e_{0,2},e_{0,3})$ is  positively oriented) and this permits to conclude.
\end{proof}

\begin{proof}[Proof of Lemma~\ref{lemma:mininter}]
Observe that since $\gamma'$ is $\mathscr{C}^1$ and $\gamma''$ is bounded, by the mean-value inequality, $\gamma'$ is  uniformly Lipschitz continuous with a Lipschitz constant $\|\kappa\|_{L^\infty(\R)}$. Thus, in particular, one has for   $C=\max(\|\kappa\|_{L^\infty(\R)},1)>0$:
$$
 |\gamma'(\tilde{s})- \gamma'(s)|\leq \|\kappa\|_{L^\infty(\R)} |s-\tilde{s}|\leq  C\, \frac{1}{2 \sqrt{d} \, C}=\frac{1}{2 \sqrt{d}}, \quad \forall \tilde{s}\in {\rm I}_s:=\Big[s-\frac{1}{2 \sqrt{d} \, C}, s+\frac{1}{2 \sqrt{d} \, C} \Big].
$$
Thus, since $|\gamma_j'(s)|\geq 1/\sqrt{d}$, by the reverse triangle inequality, one gets that  for all  $\tilde{s}\in {\rm I}_{\tilde{s}}$:
$$
|\gamma'(\tilde{s})|=|\gamma'(s)-(\gamma'(s)-\gamma'(\tilde{s}))|\geq \big| |\gamma'(s)|- |\gamma'(s)-\gamma'(\tilde{s})|\big| =|\gamma'(s)|- |\gamma'(s)-\gamma'(\tilde{s})|\geq \frac{1}{2 \sqrt{d}}>0. 
$$
This concludes the proof.

%is continuous then $\mathfrak{m}_j(s)<s<\mathfrak{M}_j(s)$.
%Fix $j \in \{1,\dots,d\}$ and proceed by contradiction. Without loss of generality we assume that there exists a sequence $(t_n)_{n \in \mathbb{N}}$ such that $|\gamma_j'(t_n)| \geq 1/\sqrt{d}$ and $\mathfrak{M}_j(t_n) -t_n \leq 1/n$ for all $n \in \mathbb{N}$. The proof considering $t_n - \mathfrak{m}_j(t_n)$ is analogous.

%Since $\gamma'$ is $\mathscr{C}^1$-regular,  by the mean value theorem we have that for each $n \in \mathbb{N}$ there exists $c_n \in (t_n, \mathfrak{M}_j(t_n))$ such that 
%\begin{equation*}
%\gamma''_j(c_n) = \frac{\gamma_j'(\mathfrak{M}_j(t_n)) - \gamma_j'(t_n)}{\mathfrak{M}_j(t_n) - t_n} = -\frac{\gamma_j'(t_n)}{\mathfrak{M}_j(t_n) - t_n}.
%\end{equation*}
%In particular, since $\gamma''$ is bounded there exists a $C>0$ such that
%\begin{equation*}
%\frac{n}{ \sqrt{d}}\leq \left|-\frac{\gamma_j'(t_n)}{\mathfrak{M}_j(t_n) - t_n}\right| = |\gamma''_j(c_n)| \leq C \qquad \text{for all }n \in \mathbb{N}.
%\end{equation*}
%Since the left-hand side of the above inequality diverges to $+\infty$ as  $n \to +\infty$, we get a contradiction.
%This proves that there exists a $\delta_j>0$ independent of $s \in \R$ for which \eqref{equa:mindelta} holds.

%Finally, to conclude,  we consider $\displaystyle \delta := \min_{j \in \{1,\dots, d\} } \delta_j$.\\

\end{proof}

\section{Some results about  functional spaces} \label{appendix}

Let $\mathscr{O}$  be a (non-empty) open subset of $\R^3$.
The first proposition shows that defining the space $H_0^1(\mathscr{O})$ as in \eqref{eqn:def_H10} is equivalent with the standard definition of this space as the closure of $\mathfrak{D}(\mathscr{O})$ with respect to the $H^1(\mathscr{O})$-norm.
\begin{Lem}\label{prop:defH1coincide} 
Let $\mathscr{O}$ be an open subset of $\R^3$. The closure of $\mathfrak{D}(\mathscr{O})$ with respect to the $H^1(\mathscr{O})$-norm is $H_0^1(\mathscr{O})$ as defined in \eqref{eqn:def_H10}.
\end{Lem}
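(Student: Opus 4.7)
The plan is to prove the two set-theoretic inclusions separately. Denote by $\tilde H_0^1(\mathscr{O})$ the closure of $\mathfrak{D}(\mathscr{O})$ in the $H^1(\mathscr{O})$-norm, and by $H_0^1(\mathscr{O})$ the space defined by the weak identity \eqref{eqn:def_H10}.

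For the easy inclusion $\tilde H_0^1(\mathscr{O}) \subset H_0^1(\mathscr{O})$, I would first check the identity on test functions: for $\varphi \in \mathfrak{D}(\mathscr{O})$ and $\bu \in H(\Div, \mathscr{O})$, the product $\varphi \bu$ is compactly supported in $\mathscr{O}$, and the distributional identity $\Div(\varphi \bu) = \varphi \, \Div \bu + \nabla \varphi \cdot \bu$, integrated over $\mathscr{O}$, gives the desired $(\nabla \varphi, \bu)_{\bL^2(\mathscr{O})} = -(\varphi, \Div \bu)_{L^2(\mathscr{O})}$ (the boundary term vanishing by compact support). Then, for $f \in \tilde H_0^1(\mathscr{O})$, an approximating sequence $\varphi_n \to f$ in $H^1(\mathscr{O})$ allows passing to the limit in each side of this identity by continuity of the duality products, yielding $f \in H_0^1(\mathscr{O})$.

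For the converse inclusion $H_0^1(\mathscr{O}) \subset \tilde H_0^1(\mathscr{O})$, the strategy is to transport the problem to $\R^3$ via zero extension. Given $f \in H_0^1(\mathscr{O})$, denote by $\tilde f$ its extension by zero to $\R^3$ and, for $j = 1, 2, 3$, by $\tilde g_j$ the extension by zero of $\partial_j f$. The first step is to show that $\tilde f \in H^1(\R^3)$ with distributional gradient $(\tilde g_1, \tilde g_2, \tilde g_3)^\top$. To this end, I would test against an arbitrary $\bu \in \bbD(\R^3)$: the restriction $\bu|_{\mathscr{O}}$ is smooth and belongs to $H(\Div, \mathscr{O})$, so the weak identity \eqref{eqn:def_H10} applies and, after accounting for the zero extensions, yields
\[
    \int_{\R^3} \tilde f \, \Div \bu \, \rmd \bx \;=\; \int_{\mathscr{O}} f \, \Div (\bu|_{\mathscr{O}}) \, \rmd \bx \;=\; -\int_{\mathscr{O}} \nabla f \cdot \bu|_{\mathscr{O}} \, \rmd \bx \;=\; -\int_{\R^3} (\tilde g_1, \tilde g_2, \tilde g_3)^\top \cdot \bu \, \rmd \bx.
\]
The second step is to approximate $\tilde f \in H^1(\R^3)$, which vanishes a.e. outside $\mathscr{O}$, by elements of $\mathfrak{D}(\mathscr{O})$. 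The natural procedure combines a truncation of $\tilde f$ along a compact exhaustion of $\mathscr{O}$ with a mollification on a scale smaller than the distance from the truncated support to $\partial \mathscr{O}$, producing a sequence in $\mathfrak{D}(\mathscr{O})$ converging to $\tilde f$ in $H^1(\R^3)$, hence to $f$ in $H^1(\mathscr{O})$.

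The main obstacle is this final approximation step for a fully general open set $\mathscr{O}$, without any regularity of its boundary. Under mild regularity (Lipschitz, or even the segment condition) one can additionally translate inwards before mollifying, and the argument is classical. In the unqualified case, the key is to produce approximants whose supports lie strictly inside $\mathscr{O}$: a standard way is to truncate the values of $\tilde f$ via $T_\varepsilon(\tilde f) := \mathrm{sign}(\tilde f) \, (|\tilde f| - \varepsilon)^+$, so that $\{T_\varepsilon(\tilde f) \neq 0\} \subset \{|\tilde f| \geq \varepsilon\} \subset \mathscr{O}$, combined with a compact cutoff in space, and then to mollify at a sufficiently small scale. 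I expect the careful verification that this two-parameter family converges to $f$ in $H^1(\mathscr{O})$ to be the most technical part of the proof.
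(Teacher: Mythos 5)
Your approach is genuinely different from the paper's, which identifies both spaces as the domain of the second adjoint of $\mathscr{L}=\nabla$ defined on $\mathfrak{D}(\mathscr{O})$ and invokes $\overline{\mathscr{L}}=(\mathscr{L}^*)^*$; your first inclusion is correct. But there is a real gap in the converse inclusion, and it originates already in your Step~1, not only in the technical difficulty you flag at the end. Testing only against $\bu|_{\mathscr{O}}$ with $\bu\in\bbD(\R^3)$ discards most of the hypothesis: the identity \eqref{eqn:def_H10} must hold against \emph{all} of $H(\Div,\mathscr{O})$, which for a general open set is strictly larger than the class of restrictions of smooth fields on $\R^3$. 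What Step~1 leaves you with --- ``$\tilde f\in H^1(\R^3)$ with $\tilde f=0$ a.e.\ outside $\mathscr{O}$'' --- does \emph{not} imply $f\in\overline{\mathfrak{D}(\mathscr{O})}^{H^1}$ when $\partial\mathscr{O}$ is irregular.

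For a concrete obstruction, take $\mathscr{O}=B(0,1)\setminus D\subset\R^3$ with $D$ a closed flat disk inside $B(0,1)$, and $g\in\mathfrak{D}(B(0,1))$ with $g\equiv 1$ near $D$. Then $g|_{\mathscr{O}}\in H^1(\mathscr{O})$ and its zero extension equals $g$ a.e.\ (so lies in $H^1(\R^3)$, since $D$ has Lebesgue measure zero), yet $g|_{\mathscr{O}}$ cannot be approximated in $H^1(\mathscr{O})$ by $\mathfrak{D}(\mathscr{O})$: any such limit vanishes quasi-everywhere on $D$, which has positive $H^1$-capacity in $\R^3$, whereas $g=1$ there. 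This $g|_{\mathscr{O}}$ is in fact excluded by \eqref{eqn:def_H10} --- the identity fails against fields in $H(\Div,\mathscr{O})$ with a jump of the normal component across $D$ --- but your Step~1 cannot detect this. Relatedly, $\overline{\{|\tilde g|>\varepsilon\}}$ meets $D\subset\partial\mathscr{O}$ for small $\varepsilon$, so the truncate-then-mollify step does not produce test functions supported inside $\mathscr{O}$. The paper's adjoint argument circumvents all of this because $D(\mathscr{L}^*)=H(\Div,\mathscr{O})$ by construction, so passing to the second adjoint retains the full weak identity; your route would succeed under a boundary-regularity assumption (Lipschitz, or the segment condition), but the lemma is stated and used for arbitrary open $\mathscr{O}\subset\R^3$.
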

\begin{proof}[Proof of Lemma \ref{prop:defH1coincide}]
Let us start by proving that the operator $\mathscr{L} := \nabla$ defined on $D(\mathscr{L}) = \mathfrak{D}(\mathscr{O})$ is closable. To this aim, consider the operator $\mathscr{T} := \nabla$ defined on $D(\mathscr{T}) := H_0^1(\mathscr{O})$. Let $(f_n,\nabla f_n) \in H_0^1(\mathscr{O}) \times \bL^2(\mathscr{O})$ be a sequence which converges in $L^2(\mathscr{O}) \times \bL^2(\mathscr{O})$ to $(f,\bv) \in  L^2(\mathscr{O}) \times \bL^2(\mathscr{O})$. Let $\bu \in H(\Div,\mathscr{O})$, by definition of the space $H(\Div,\mathscr{O})$, there holds
\[
    (\nabla f_n, \bu)_{\bL^2(\mathscr{O})} = -(f_n,\Div \bu)_{L^2(\mathscr{O})}.
\]
Taking the limit as $n\to +\infty$ there holds
\[
    (\bv, \bu)_{\bL^2(\mathscr{O})} = -(f,\Div \bu)_{L^2(\mathscr{O})}.
\]
Taking $\bu \in \bbD(\mathscr{O})$, in the sense of distributions it implies that $\nabla f = \bv \in \bL^2(\mathscr{O})$. It gives that for all $\bu \in H(\Div,\mathscr{O})$ one has
\[
    (\nabla f, \bu)_{\bL^2(\mathscr{O})} = -(f,\Div \bu)_{L^2(\mathscr{O})}.
\]
which is precisely saying that $f \in H_0^1(\mathscr{O})$. Hence $\mathscr{T}$ is a closed operator and as $\mathscr{T}|_{\mathfrak{D}(\mathscr{O})} = \mathscr{L}$ then $\mathscr{L}$ is a closable operator.

Now, remark that by \eqref{eqn:def_H10}, $H(\Div,\mathscr{O})\subset D(\mathscr{L}^*) $ and  using distribution theory, one proves easily the reverse inclusion. Moreover,  by \eqref{eqn:def_H10}, one has for  $\bu \in  D(\mathscr{L}^*)=H(\Div,\mathscr{O})$, $\mathscr{L}^* u = - \Div \bu$. Similarly, one can show that $(\mathscr{L}^*)^* = \mathscr{T}$. Finally, thanks to \cite[Theorem 2.4.]{kaas}, one gets $\mathscr{T} = (\mathscr{L}^*)^* = \overline{\mathscr{L}}$ which yields the result.
\end{proof}

Similarly one can prove a density results for the space $H(\Div, \mathscr{O})$.
\begin{Lem} \label{prop:defHdiv:density} 
Let $\mathscr{O}$ be an open subset of $\R^3$. The closure of $\bbD(\mathscr{O})$ with respect to the $H(\Div,\mathscr{O})$-norm is $H_0(\Div, \mathscr{O})$ as defined in \eqref{DEF:H0div} with $\Sigma=\mathds{1}_3$.
\end{Lem}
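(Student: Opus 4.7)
The plan is to mimic the operator-theoretic strategy used in the proof of Lemma~\ref{prop:defH1coincide}, with $\nabla$ replaced by $\Div$ (and with the roles of scalar and vector spaces swapped). Consider the operator $\mathscr{L} := \Div$ defined on $D(\mathscr{L}) := \bbD(\mathscr{O}) \subset \bL^2(\mathscr{O})$ with values in $L^2(\mathscr{O})$, and the operator $\mathscr{T} := \Div$ defined on $D(\mathscr{T}) := H_0(\Div,\mathscr{O})$. The goal is to show: (i) $\mathscr{T}$ is closed; (ii) $\mathscr{T}$ extends $\mathscr{L}$, so that $\mathscr{L}$ is closable; (iii) $(\mathscr{L}^*)^* = \mathscr{T}$. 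The conclusion then follows from the fact that for a densely defined closable operator one has $\overline{\mathscr{L}} = (\mathscr{L}^*)^*$ (see \cite[Theorem 2.4.]{kaas}), which expresses that the closure of $\bbD(\mathscr{O})$ in the $H(\Div,\mathscr{O})$-norm (which is nothing but the graph norm of $\mathscr{L}$) coincides with $D(\mathscr{T}) = H_0(\Div,\mathscr{O})$.

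For (i)--(ii): any $\bu \in \bbD(\mathscr{O})$ clearly belongs to $H_0(\Div,\mathscr{O})$ (the defining identity \eqref{DEF:H0div} with $\Sigma = \mathds{1}_3$ is just integration by parts on a compactly supported smooth field), so $\mathscr{T}$ extends $\mathscr{L}$; hence once $\mathscr{T}$ is proved closed, $\mathscr{L}$ is closable. To check that $\mathscr{T}$ is closed, I would take a sequence $(\bu_n, \Div \bu_n)$ in the graph of $\mathscr{T}$ converging in $\bL^2(\mathscr{O}) \times L^2(\mathscr{O})$ to $(\bu, \bv)$. Passing to the limit in the defining identity $(\bu_n, \nabla f)_{\bL^2(\mathscr{O})} = -(\Div \bu_n, f)_{L^2(\mathscr{O})}$ (valid for all $f \in H^1(\mathscr{O})$) gives $(\bu, \nabla f)_{\bL^2(\mathscr{O})} = -(\bv, f)_{L^2(\mathscr{O})}$. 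Testing this against $f \in \mathfrak{D}(\mathscr{O})$ identifies $\bv = \Div \bu$ in the distributional sense, whence $\bu \in H(\Div,\mathscr{O})$, and the same identity now gives $\bu \in H_0(\Div,\mathscr{O})$.

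For (iii): by definition of the adjoint, $f \in D(\mathscr{L}^*)$ iff there exists $\bg \in \bL^2(\mathscr{O})$ with $(\Div \bu, f)_{L^2(\mathscr{O})} = (\bu, \bg)_{\bL^2(\mathscr{O})}$ for every $\bu \in \bbD(\mathscr{O})$; this says exactly that $\nabla f = -\bg$ in the distributional sense with $\nabla f \in \bL^2(\mathscr{O})$, so $D(\mathscr{L}^*) = H^1(\mathscr{O})$ and $\mathscr{L}^* f = -\nabla f$. Taking the adjoint again, $\bu \in D((\mathscr{L}^*)^*)$ iff there is $h \in L^2(\mathscr{O})$ such that $(-\nabla f, \bu)_{\bL^2(\mathscr{O})} = (f, h)_{L^2(\mathscr{O})}$ for all $f \in H^1(\mathscr{O})$; testing against $f \in \mathfrak{D}(\mathscr{O})$ yields $h = \Div \bu$ in distributions (so $\bu \in H(\Div,\mathscr{O})$), and the resulting identity on the whole of $H^1(\mathscr{O})$ is precisely the zero normal trace condition \eqref{DEF:H0div} with $\Sigma = \mathds{1}_3$. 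Hence $D((\mathscr{L}^*)^*) = H_0(\Div,\mathscr{O})$ and $(\mathscr{L}^*)^* = \mathscr{T}$, which concludes the proof.

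The only point that is not completely routine is the bookkeeping of the two successive adjoints: one must be careful that the \emph{first} adjoint puts no boundary condition on $f$ (because the test fields $\bu \in \bbD(\mathscr{O})$ are compactly supported inside $\mathscr{O}$), while the \emph{second} adjoint then encodes the vanishing normal trace on $\bu$. This mirrors exactly the asymmetry in the proof of Lemma~\ref{prop:defH1coincide}, so no new difficulty is expected.
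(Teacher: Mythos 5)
Your proof is correct, and it follows exactly the same operator-theoretic strategy that the paper itself uses in its (brief) proof sketch for this lemma and, more fully, in the proof of Lemma~\ref{prop:defH1coincide}: take $\mathscr{L}:=\Div$ on $\bbD(\mathscr{O})$ and $\mathscr{T}:=\Div$ on $H_0(\Div,\mathscr{O})$, show $\mathscr{T}$ is a closed extension of $\mathscr{L}$, identify $\mathscr{L}^*$ with $-\nabla$ on $H^1(\mathscr{O})$ and $(\mathscr{L}^*)^*$ with $\mathscr{T}$, and conclude via $\overline{\mathscr{L}}=(\mathscr{L}^*)^*$. You have simply filled in the computational details the paper leaves to the reader, and your closing remark about where the boundary condition enters (only in the second adjoint) correctly identifies the mechanism.
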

\begin{Rem} \label{remark:sigmasmooth}
It is straightforward to see that $H_0(\Div \Sigma,\mathscr{O})=\{\Sigma^{-1} \bu : \bu \in H_0(\Div,\mathscr{O})\}$ given Definition \eqref{DEF:H0div}. Hence, observing  that $\|\cdot \|_{\bL^2_\Sigma(\mathscr{O})}$ and $\|\Sigma \cdot \|_{\bL^2(\mathscr{O})}$ are equivalent norms (cf. \eqref{equiv:topspaces}), if $\Sigma$ is $\mathscr{C}^\infty$-smooth (and thus so is $\Sigma^{-1}$ by assumptions \eqref{hyp:sigma}),  Lemma \ref{prop:defHdiv:density}  implies that  the closure of $\bbD(\mathscr{O})$ with respect to the $H(\Div\Sigma,\mathscr{O})$-norm is $H_0(\Div \Sigma, \mathscr{O})$.
\end{Rem}
\begin{proof}[Proof of Lemma \ref{prop:defHdiv:density}]
We give a sketch of the proof.

Consider the operators $\mathscr{L}:=\Div$ defined on $D(\mathscr{L})=\bbD(\mathscr{O})$ and $\mathscr{T}:=\Div$ defined on $D(\mathscr{T})=H_0(\Div,\mathscr{O})$. One first verifies that $\mathscr{T}$ is an extension of $\mathscr{L}$, and it is closed (thus $\mathscr{L}$ is closable).
Then one shows that $(\mathscr{L}^*)^* = \mathscr{T}$. Using the fact that $(\mathscr{L}^*)^*=\overline{\mathscr{L}}$ one concludes.
\end{proof}

The following Lemma is standard in the study of Maxwell operators but we recall it here because the waveguide $\Omega$ as defined in \S \ref{section:waveguides} is an unbounded domain.
\begin{Lem} The space $\bbD(\mathscr{O})$ is dense in $H_0(\curlvec,\mathscr{O})$ for the $H(\curlvec,\mathscr{O})$-norm.
\label{lem:55}
\end{Lem}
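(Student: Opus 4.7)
The plan is to mimic the abstract adjoint-based strategy used for Lemmas \ref{prop:defH1coincide} and \ref{prop:defHdiv:density}, which has the advantage of working uniformly on any open set $\mathscr{O} \subset \R^3$ without requiring boundedness, Lipschitz regularity, or cut-off/truncation arguments. First, I would introduce the two densely defined operators in $\bL^2(\mathscr{O})$:
\[
  \mathscr{L} := \curlvec \ \text{ on }\ D(\mathscr{L}) := \bbD(\mathscr{O}), \qquad
  \mathscr{T} := \curlvec \ \text{ on }\ D(\mathscr{T}) := H_0(\curlvec,\mathscr{O}),
\]
and aim to establish $\overline{\mathscr{L}} = \mathscr{T}$, which is exactly the density statement in the $H(\curlvec,\mathscr{O})$-graph norm.

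The first step is to verify that $\mathscr{T}$ is closed. If $(\bu_n) \subset H_0(\curlvec,\mathscr{O})$ with $\bu_n \to \bu$ and $\curlvec \bu_n \to \bw$ in $\bL^2(\mathscr{O})$, then for any $\bV \in H(\curlvec,\mathscr{O})$ one may pass to the limit in the defining identity \eqref{eq.Hcurl0} applied to $\bu_n$ to obtain $(\bw,\bV)_{\bL^2} = (\bu,\curlvec \bV)_{\bL^2}$, which both identifies $\bw = \curlvec \bu$ (by testing against $\bV \in \bbD(\mathscr{O})$ and using distributional curl) and shows that $\bu \in H_0(\curlvec,\mathscr{O})$. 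Since $\mathscr{T}$ extends $\mathscr{L}$ and is closed, $\mathscr{L}$ is closable.

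The second step is to compute the adjoint. For $\bV \in \bL^2(\mathscr{O})$, the condition $\bV \in D(\mathscr{L}^*)$ means that $\bu \mapsto (\curlvec \bu, \bV)_{\bL^2}$ is continuous on $\bbD(\mathscr{O})$, equivalently that $\curlvec \bV \in \bL^2(\mathscr{O})$ in the distributional sense, with $\mathscr{L}^* \bV = \curlvec \bV$. Thus $\mathscr{L}^*$ is exactly $\curlvec$ with domain $H(\curlvec,\mathscr{O})$. The key step is then to identify $(\mathscr{L}^*)^*$: by the symmetric form of the condition \eqref{eq.Hcurl0} defining $H_0(\curlvec,\mathscr{O})$ (testing against all of $H(\curlvec,\mathscr{O}) = D(\mathscr{L}^*)$), one directly reads off that $D((\mathscr{L}^*)^*) = H_0(\curlvec,\mathscr{O})$ and $(\mathscr{L}^*)^* \bu = \curlvec \bu$, i.e. $(\mathscr{L}^*)^* = \mathscr{T}$.

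Finally, invoking the standard abstract result $\overline{\mathscr{L}} = (\mathscr{L}^*)^*$ (see e.g. \cite[Theorem 2.4]{kaas}, which is quoted in the proof of Lemma \ref{prop:defH1coincide}), we conclude $\overline{\mathscr{L}} = \mathscr{T}$. Since closures of operators correspond to closures of their graphs in $\bL^2(\mathscr{O}) \times \bL^2(\mathscr{O})$, and the graph norm here is precisely the $H(\curlvec,\mathscr{O})$-norm, this yields that $\bbD(\mathscr{O})$ is dense in $H_0(\curlvec,\mathscr{O})$ for $\|\cdot\|_{H(\curlvec,\mathscr{O})}$. The only delicate point is the identification $(\mathscr{L}^*)^* = \mathscr{T}$, which requires that the weak definition of the tangential-trace condition given by \eqref{eq.Hcurl0} is tested against \emph{all} of $H(\curlvec,\mathscr{O})$ and not merely a smaller dense subspace; this is how $H_0(\curlvec,\mathscr{O})$ is defined in the paper, so no further work is needed.
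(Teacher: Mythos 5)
Your proposal is correct and follows essentially the same adjoint-based strategy as the paper's proof: define $\mathscr{L} = \curlvec$ on $\bbD(\mathscr{O})$, identify $\mathscr{L}^*$ as the maximal curl on $H(\curlvec,\mathscr{O})$, and show $(\mathscr{L}^*)^* = \overline{\mathscr{L}}$ is the curl on $H_0(\curlvec,\mathscr{O})$. The only cosmetic difference is that the paper establishes closability by noting $\mathscr{L}$ is symmetric (integration by parts with no boundary terms), whereas you establish it by exhibiting the closed extension $\mathscr{T}$; both are valid.
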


\begin{proof}[Proof of Lemma \ref{lem:55}]
Let us consider the operator $\mathscr{L}$ acting in $\bL^2(\mathscr{O})$ defined on $D(\mathscr{L}) := \bbD(\mathscr{O})$ and acting for $\bu \in D(\mathscr{L})$ as
\[
    \mathscr{L}\bu := \curlvec \bu.
\]
We remark also that $\mathscr{L}$ is a symmetric and densely defined operator. Hence, it is closable. By definition, the adjoint $\mathscr{L}^*$ of $\mathscr{L}$ has the following domain
\begin{equation*}
    D(\mathscr{L}^*) = \{\bv \in \bL^2(\mathscr{O}) : \exists\ \bw \in \bL^2(\mathscr{O}) \mbox{ such that } (\mathscr{L}\bu, \bv)_{\bL^2(\mathscr{O})} = (\bu, \bw)_{\bL^2(\mathscr{O})}, \forall  \bu \in \bbD(\mathscr{O})\}
\end{equation*}
and for $\bv \in D(\mathscr{L}^*)$, we set $\mathscr{L}^* \bv = \bw$. Note that for such a $\bv$, for all $\bu \in \bbD(\mathscr{O})$, there holds
\begin{align*}
    (\overline{\bw},\bu)_{\bbD'(\mathscr{O}),\bbD(\mathscr{O})} = (\bu,\bw)_{\bL^2(\mathscr{O})} = (\mathscr{L}\bu, \bv)_{\bL^2(\mathscr{O})} = (\overline{\bv},\mathscr{L}\bu)_{\bbD'(\mathscr{O}),\bbD(\mathscr{O})}& = (\overline{\bv},\curlvec \bu)_{\bbD'(\mathscr{O}),\bbD(\mathscr{O})}\\ &=(\overline{\curlvec\bv}, \bu)_{\bbD'(\mathscr{O}),\bbD(\mathscr{O})}.
\end{align*}
Hence, in $\bbD'(\mathscr{O})$, one has $\curlvec \bv = \bw \in \bL^2(\mathscr{O})$. It follows that $\bv \in H(\curlvec,\mathscr{O})$ proving that $D(\mathscr{L}^*) \subset H(\curlvec,\mathscr{O})$. The reverse inclusion holds by definition of the space $H(\curlvec,\mathscr{O})$. This proves that
\[
    D(\mathscr{L}^*) = H(\curlvec,\mathscr{O}),\ \mbox{ and } \ \mathscr{L}^* \bu = \curlvec \bu \quad  \text{for all } u\in D(\mathscr{L}^*).
\]
$\mathscr{L}^*$ is called the maximal $\curlvec$ operator. Now,  the minimal $\curlvec$ operator is $\overline{\mathscr{L}}$, the smallest closed extension of $\mathscr{L}$. Note that there holds $
    (\mathscr{L}^*)^* = \overline{\mathscr{L}}
$
yielding
\begin{eqnarray*}
    D(\overline{\mathscr{L}})& =&D\left((\mathscr{L}^*)^*\right) \\&=& \{\bu \in \bL^2(\mathscr{O}) :  \exists\ \bw \in \bL^2(\mathscr{O})  \mbox{ such that } (\mathscr{L}^*\bv, \bu)_{\bL^2(\mathscr{O})} = (\bv, \bw)_{\bL^2(\mathscr{O})}, \, \forall \, \bv \in H(\curlvec,\mathscr{O})\},
\end{eqnarray*}
and for $\bu \in D(\overline{\mathscr{L}})$ we set $\overline{\mathscr{L}}\bu = (\mathscr{L}^*)^*\bu := \bw$. Now take $\bu \in D(\overline{\mathscr{L}})$ and let $\bw = \overline{\mathscr{L}} \bu$. Note that for such a $\bu$, for all $\bv \in \bbD(\mathscr{O})\subset H(\curlvec,\mathscr{O})$, as $\mathscr{L}^*\bv = \mathscr{L}\bv$, there holds
\begin{align*}
    (\overline{\bw},\bv)_{\bbD'(\mathscr{O}),\bbD(\mathscr{O})} = (\bv,\bw)_{\bL^2(\mathscr{O})} = (\mathscr{L}^*\bv, \bu)_{\bL^2(\mathscr{O})} = (\overline{\bu},\mathscr{L}\bv)_{\bbD'(\mathscr{O}),\bbD(\mathscr{O})}& = (\overline{\bu},\curlvec \bv)_{\bbD'(\mathscr{O}),\bbD(\mathscr{O})}\\ &=(\overline{\curlvec\bu}, \bv)_{\bbD'(\mathscr{O}),\bbD(\mathscr{O})}.
\end{align*}
It proves that in $\bbD'(\mathscr{O})$ one has $\overline{\mathscr{L}}\bu = \curlvec \bu = \bw \in \bL^2(\mathscr{O}) $. In particular, $\bu \in H(\curlvec,\mathscr{O})$ and for all $\bv \in H(\curlvec,\mathscr{O})$ one has
\[
    (\curlvec \bv , \bu)_{\bL^2(\mathscr{O})} = ( \bv , \curlvec \bu)_{\bL^2(\mathscr{O})}
\]
which by definition means that $\bu \in H_0(\curlvec,\mathscr{O})$. This yields that  $D(\overline{\mathscr{L}})\subset H_0(\curlvec,\mathscr{O})$. The converse inclusion is true by definition of $H_0(\curlvec,\mathscr{O})$. Hence, one gets
\[
    D(\overline{\mathscr{L}}) = H_0(\curlvec,\mathscr{O}),\quad \overline{\mathscr{L}} \bu = \curlvec \bu \quad  \text{for all } \bu\in D(\overline{\mathscr{L}}). 
\]
Now, as $\overline{\mathscr{L}}$ is the closure of $\mathscr{L}$ it means that
\[
    \overline{\{(\bu,\curlvec u) \in \bL^2(\mathscr{O})^2 : \bu \in \bbD(\mathscr{O})\}} = \{(\bu,\curlvec u) \in \bL^2(\mathscr{O})^2 : \bu \in H_0(\curlvec,\mathscr{O})\}.
\]
In particular, the set $\bbD(\mathscr{O})$ is dense in $H_0(\curlvec,\mathscr{O})$ for the $H(\curlvec,\mathscr{O})$-norm.

%\textcolor{red}{Now, define the operator $\mathscr{D}$ with domain $D(\mathscr{D}) = \bbD(\overline{\Omega})$ and acting for $\bu \in D(\mathscr{D})$ as
%\[
   % \mathscr{D} \bu := \curlvec\bu.
%\]
%Note that $\mathscr{D} =\mathscr{L}^*|_{\bbD(\overline{\Omega})}$ and that
%\begin{multline*}
   % D(\mathscr{D}^*) = \{\bv \in \bL^2(\Omega) : \exists\ \bw \in \bL^2(\Omega) \mbox{ such that } (\mathscr{D}\bu, \bv)_{\bL^2(\Omega)} = (\bu, \bw)_{\bL^2(\Omega)}, \forall  \bu \in \bbD(\overline{\Omega})\}.
%\end{multline*}
%As before one can show that for $\bv \in D(\mathscr{D}^*)$, in $\bbD'(\Omega)$, one has $\mathscr{D}^* \bv = \curlvec \bv$ and thus one gets for all $\bu \in \bbD(\overline{\Omega})$
%\[
  %  (\curlvec \bu, \bv)_{\bL^2(\Omega)} = (\bu, \curlvec \bv)_{\bL^2(\Omega)}.
%\]
%Note that $D(\mathscr{L})\subset D(\mathscr{D}^*)$ and that $\mathscr{D}^*$ is a closed operator. Hence, by minimality of $\overline{\mathscr{L}}$, one gets $D(\overline{\mathscr{L}})\subset D(\mathscr{D}^*)$.}
\end{proof}

\section{Classical spectral properties of the Maxwell operator}\label{appendix:specMaxwell}
The first proposition is well known and characterizes the kernel of the operator $\hat{\mathcal{A}}$ and its orthogonal by means of solenoidal spaces.

\begin{Pro}\label{prop:kerneldescription} There holds
\[
	\ker(\hat{\mathcal{A}}) = \overline{\mathcal{G}(\Omega_0)},\quad \ker(\hat{\mathcal{A}})^\perp = \mathcal{J}_{\beps,\bmu}(\Omega_0).
\]
Here, $\beps$ and $\bmu$ are defined in Proposition \ref{prop:unitequivpiola} and the solenoidal subspace $\mathcal{J}_{\beps,\bmu}(\Omega_0)$ is defined in \eqref{solenoidal:set}.
\end{Pro}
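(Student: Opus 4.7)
The plan is to deduce both identities simultaneously from the orthogonal splitting
\[
    \bL_{\beps,\bmu}^2(\Omega_0) = \overline{\mathcal{G}(\Omega_0)} \oplus \mathcal{J}_{\beps,\bmu}(\Omega_0),
\]
which I would first establish by unwinding the weak definitions. Indeed, testing $(\bE,\bH) \in \bL_{\beps,\bmu}^2(\Omega_0)$ against $(\nabla f, 0)$ with $f \in H_0^1(\Omega_0)$ recovers exactly the weak identity $\Div(\beps \bE) = 0$ in the sense of \eqref{eqn:def_H10}, while testing against $(0, \nabla g)$ with $g \in H^1(\Omega_0)$ encodes both $\Div(\bmu \bH) = 0$ and the vanishing weak normal trace $(\bmu \bH)\cdot \bn|_{\partial\Omega_0} = 0$, as in \eqref{DEF:H0div}. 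This gives $\mathcal{G}(\Omega_0)^\perp = \mathcal{J}_{\beps,\bmu}(\Omega_0)$ in $\bL^2_{\beps,\bmu}(\Omega_0)$, and with the above splitting in hand the two claimed identities become mutually equivalent by taking orthogonal complements.

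\textbf{Easy inclusion.} Next I would prove $\overline{\mathcal{G}(\Omega_0)} \subseteq \ker(\hat{\mathcal{A}})$. Because $\curlvec \nabla = 0$ and $\nabla g \in H(\curlvec,\Omega_0)$ automatically for $g \in H^1(\Omega_0)$, the only point requiring care is $\nabla f \in H_0(\curlvec,\Omega_0)$ for $f \in H_0^1(\Omega_0)$. To verify it, I would observe that any $\bV \in H(\curlvec,\Omega_0)$ satisfies $\curlvec \bV \in H(\Div,\Omega_0)$ with $\Div(\curlvec \bV) = 0$, so applying the definition \eqref{eqn:def_H10} of $H_0^1$ with $\bu = \curlvec \bV$ gives $(\nabla f, \curlvec \bV)_{\bL^2} = -(f, \Div \curlvec \bV)_{L^2} = 0 = (\curlvec \nabla f, \bV)_{\bL^2}$, which is exactly the defining identity of $H_0(\curlvec, \Omega_0)$. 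The closure follows from the closedness of the kernel of the self-adjoint operator $\hat{\mathcal{A}}$.

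\textbf{Reduction to absence of trivial harmonic fields.} The harder direction reduces, via the splitting, to showing
\[
    \ker(\hat{\mathcal{A}}) \cap \mathcal{J}_{\beps,\bmu}(\Omega_0) = \{0\},
\]
as then any $(\bE,\bH) \in \ker(\hat{\mathcal{A}})$ Helmholtz-decomposes with vanishing solenoidal part. The two components decouple. For $\bE$ lying in the intersection, $\curlvec \bE = 0$ combined with simple connectedness of $\Omega_0 = \R \times \omega$ gives $\bE = \nabla \phi$ for some $\phi \in H^1_{\mathrm{loc}}(\Omega_0)$. The vanishing tangential trace $\bE \times \bn|_{\partial\Omega_0} = 0$ forces the tangential gradient of $\phi$ to vanish on the connected boundary $\R \times \partial \omega$ (connected because $\omega$ is simply connected bounded Lipschitz in $\R^2$, so $\partial\omega$ is a Jordan curve); after subtracting the resulting boundary constant, the slice-wise Poincar\'e inequality $\|\phi(s, \cdot)\|_{L^2(\omega)} \leq C_\omega \|\nabla_\by \phi(s, \cdot)\|_{L^2(\omega)}$ on the bounded cross-section integrates in $s$ to $\phi \in L^2(\Omega_0)$, so $\phi \in H_0^1(\Omega_0)$. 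Testing $\Div(\beps \bE) = 0$ against $\phi$ yields $\int_{\Omega_0} \beps \nabla \phi \cdot \overline{\nabla \phi}\,\rmd \bx = 0$, and uniform ellipticity of $\beps$ (Remark \ref{rem:Ginvertibleunifo}) gives $\bE = 0$.

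\textbf{Main obstacle: the magnetic (Neumann) case.} The analogue for $\bH$ is the delicate step I expect to be most technical. Again $\bH = \nabla \psi$ for some $\psi \in H^1_{\mathrm{loc}}(\Omega_0)$, but the Neumann-type condition $(\bmu \bH)\cdot\bn|_{\partial \Omega_0} = 0$ carries no Dirichlet information on $\psi$, which typically fails to lie in $L^2(\Omega_0)$. My resolution is to work with the slice-corrected potential
\[
    \tilde\psi(s,\by) := \psi(s,\by) - \bar\psi(s), \qquad \bar\psi(s) := \frac{1}{|\omega|}\int_\omega \psi(s, \by')\,\rmd \by',
\]
for which the Poincar\'e-Wirtinger inequality on the bounded $\omega$ yields $\|\tilde\psi(s,\cdot)\|_{L^2(\omega)} \leq C_\omega \|\nabla_\by \psi(s,\cdot)\|_{L^2(\omega)}$; integrating in $s$ places $\tilde\psi$ in $H^1(\Omega_0)$ and makes it an admissible test function. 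Expanding $0 = \int_{\Omega_0} \bmu \bH \cdot \nabla \overline{\tilde\psi}\,\rmd \bx$ produces $\int_{\Omega_0} \bmu \bH \cdot \overline{\bH}\,\rmd \bx$ minus a cross-term of the form $\int_\R \overline{\bar\psi'(s)}\bigl(\int_\omega (\bmu \bH)^{(1)}(s, \by)\,\rmd \by\bigr)\rmd s$. Testing the weak identity $\Div(\bmu \bH) = 0$ against a function depending only on $s$ and using the normal-trace condition shows that the slice integral $s \mapsto \int_\omega (\bmu \bH)^{(1)}(s,\cdot)\,\rmd \by$ has zero distributional derivative in $s$, hence is constant; Cauchy-Schwarz and $\bH \in \bL^2(\Omega_0)$ place it in $L^2(\R)$, forcing this constant to vanish. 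The cross-term drops out, leaving $\int_{\Omega_0} \bmu \bH \cdot \overline{\bH}\,\rmd \bx = 0$, and uniform ellipticity of $\bmu$ forces $\bH = 0$. Combining both pieces completes the argument.
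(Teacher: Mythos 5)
Your proposal is correct and reaches the result by a genuinely different route from the paper. Both arguments share the same skeleton: the orthogonality computation $\mathcal{G}(\Omega_0)^\perp = \mathcal{J}_{\beps,\bmu}(\Omega_0)$ in $\bL^2_{\beps,\bmu}(\Omega_0)$, the easy inclusion $\overline{\mathcal{G}(\Omega_0)} \subseteq \ker(\hat{\mathcal{A}})$, and the reduction (via Lemma \ref{lemma:ortodeco} in the paper, via the Helmholtz splitting in yours) to proving that $\ker(\hat{\mathcal{A}}) \cap \mathcal{J}_{\beps,\bmu}(\Omega_0) = \{0\}$, i.e.\ the absence of nontrivial harmonic fields. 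The paper dispatches this last step externally: it first observes that the kernel condition $\curlvec\bE = \curlvec\bH = 0$ on $D(\hat{\mathcal{A}}) = D(\mathcal{A}_0)$ does not see the material coefficients, so $\ker(\hat{\mathcal{A}}) = \ker(\mathcal{A}_0)$ as sets, and then cites \cite[Point 3), Theorem~1.3]{filonov19}, whose content is exactly the triviality of $\ker(\mathcal{A}_0) \cap \mathcal{G}(\Omega_0)^\perp$ for the constant-coefficient straight waveguide. You instead supply a direct, self-contained PDE argument, kept in the variable-coefficient setting: simple connectedness of $\Omega_0$ produces scalar potentials, the Dirichlet-type boundary condition on $\bE$ yields an $H_0^1$ potential via slice-wise Poincar\'e, and the Neumann case is handled by the slice-corrected potential $\tilde\psi = \psi - \bar\psi(s)$ together with the key observation that $s \mapsto \int_\omega (\bmu\bH)^{(1)}(s,\cdot)\,\rmd\by$ is a constant $L^2(\R)$ function, hence zero. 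What the paper's approach buys is brevity and modularity; what yours buys is transparency and independence from Filonov's result, at the cost of two technical points that would need to be spelled out in a full write-up: (i) that an $\bL^2$ curl-free field on the unbounded Lipschitz cylinder admits a potential with $H^1$ regularity up to the boundary on every finite cylinder $(a,b)\times\omega$, and (ii) the passage from the weak tangential-trace identity defining $H_0(\curlvec,\Omega_0)$ to the assertion that the potential has constant Dirichlet trace on the connected boundary $\R\times\partial\omega$, followed by the identification of the trace-zero subspace with $H_0^1(\Omega_0)$ on this unbounded domain. None of this is an obstruction, but it is precisely what the Filonov citation packages away, so your argument is a valid elementary replacement rather than a shortcut.
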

\begin{proof}[Proof of Proposition \ref{prop:kerneldescription}]~

\noindent
{\bf Step 1:} In the first step we work with the operator $\mathcal{A}_0$ defined in \eqref{A0:deffi}.\\[6pt]
\noindent One can check that $\mathcal{G}(\Omega_0) \subset \ker(\mathcal{A} _0)$, and since $\mathcal{A} _0$ is a closed operator then $\overline{\mathcal{G}(\Omega_0)}\subset \mathrm{ker}(\mathcal{A}_0)$. Therefore $\mathrm{ker}(\mathcal{A}_0)^\perp \subset \mathcal{G}(\Omega_0)^\perp$.
Using Lemma \ref{lemma:ortodeco} (see below) with $B=\mathrm{ker}(\mathcal{A}_0)^\perp$ and $Z= \mathcal{G}(\Omega_0)^\perp$, we get that 
\begin{equation*}
    \mathcal{G}(\Omega_0)^\perp = \mathrm{ker}(\mathcal{A}_0)^\perp \overset{\bL^2_0(\Omega_0)}{\oplus} \left( \mathrm{ker}(\mathcal{A}_0) \cap \mathcal{G}(\Omega_0)^\perp \right)
\end{equation*}
since $\mathrm{ker}(\mathcal{A}_0)$ is closed.

Note that the space $\mathrm{ker}(\mathcal{A}_0) \cap \mathcal{G}(\Omega_0)^\perp \subset D(\mathcal{A}_0) \cap \mathcal{G}(\Omega_0)^\perp$ and the latter coincides with  the domain of the Maxwell operator defined in \cite[Definition 1.2]{filonov19}. By \cite[Point 3), Theorem 1.3.]{filonov19}, there holds $\mathrm{ker}(\mathcal{A}_0) \cap \mathcal{G}(\Omega_0)^\perp = \{0\}$ and thus $\mathrm{ker}(\mathcal{A}_0)= \overline{\mathcal{G}(\Omega_0)}$.\\[12pt]
\noindent
{\bf Step 2:} Now we consider the operator $\hat{\mathcal{A}}$.\\[6pt]
Note that there holds  $D(\mathcal{A}_0)=D(\hat{\mathcal{A}})=H_0(\curlvec,\Omega_0)\times H(\curlvec,\Omega_0)$  coincide as sets. 
Hence by Step 1
\begin{equation*}
\begin{split}
        \mathrm{ker}(\hat{\mathcal{A}})&= \bigg\{\begin{pmatrix}
    \bE \\ \bH 
    \end{pmatrix} \in D(\hat{\mathcal{A}}) : \operatorname{curl}\bE=\operatorname{curl}\bH=0\bigg\} \\
    &=
     \bigg\{\begin{pmatrix}
    \bE \\ \bH 
    \end{pmatrix} \in D(\mathcal{A}_0) : \operatorname{curl}\bE=\operatorname{curl}\bH=0\bigg\} 
    = \mathrm{ker}(\mathcal{A}_0)=  \overline{\mathcal{G}(\Omega_0)}.
\end{split}
\end{equation*}
Here, the closure can be taken either in $\bL_0^2(\Omega_0)$ or $\bL^2_{\beps,\bmu}(\Omega_0)$ because by \eqref{relaxed:hyp} and \eqref{eqn:defjacobian!}, we have that $J_\Phi, (J_\Phi)^{-1}\in L^\infty(\Omega_0)^{3 \times 3}$ and therefore the inner product of $\bL^2_{\beps,\bmu}(\Omega_0)$ and the one of $\bL^2_0(\Omega_0)$ are equivalent. Remark that $\ker(\hat{\mathcal{A}})^\perp = (\overline{\mathcal{G}(\Omega_0)})^\perp = {\mathcal{G}(\Omega_0)}^\perp = \mathcal{J}_{\beps,\bmu}(\Omega_0)$.
\end{proof}

\begin{Rem} Note that thanks to the unitary map defined in \eqref{bbU:def}, Proposition \ref{prop:kerneldescription} also yields the structure of $\ker(\mathcal{A})$ and of its orthogonal where $\mathcal{A}$ is defined in \eqref{eqn:defAMaxwell}. Actually, for such a result to hold, one could allow more general waveguides than the one constructed through the map $\Phi$ defined in \eqref{diffeo:Phi} only requiring that $J_\Phi, (J_\Phi)^{-1}\in L^\infty(\Omega_0)^{3 \times 3}$ instead of \eqref{relaxed:hyp}.
\end{Rem}

This second proposition is well known and deals with the symmetry of the spectrum of the Maxwell operator with respect to the origin.
\begin{Pro}\label{prop.eigensym}
The spectrum, the point spectrum  and the discrete spectrum of the Maxwell operator $\mathcal{A}_{\varepsilon,\mu}$ defined in \eqref{eqn:defmaxgen} are symmetric with respect to the origin.
\label{prop:symspecmaxwell}
\end{Pro}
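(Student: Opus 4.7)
The plan is to exhibit a unitary involution $J$ on $\bL^2_{\varepsilon,\mu}(\Omega)$ preserving the domain of $\mathcal{A}_{\varepsilon,\mu}$ and satisfying the intertwining relation
\begin{equation*}
    J\,\mathcal{A}_{\varepsilon,\mu}\,J^{-1} = -\,\mathcal{A}_{\varepsilon,\mu}.
\end{equation*}
From this identity, the symmetry of all three spectra with respect to the origin will follow by standard arguments.

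Concretely, I would define
\begin{equation*}
    J: \bL^2_{\varepsilon,\mu}(\Omega) \to \bL^2_{\varepsilon,\mu}(\Omega), \qquad J\begin{pmatrix}\bE\\ \bH\end{pmatrix} := \begin{pmatrix}\bE\\ -\bH\end{pmatrix}.
\end{equation*}
First, $J$ is an isometric involution: looking at the inner product \eqref{eq.energy}, the block-diagonal structure of $(\varepsilon,\mu)$ shows immediately that $\|J\bU\|_{\bL^2_{\varepsilon,\mu}(\Omega)} = \|\bU\|_{\bL^2_{\varepsilon,\mu}(\Omega)}$, and $J^2 = I$, hence $J$ is unitary with $J^{-1}=J$. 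Second, since the two components of $\bU$ are treated separately, $J$ maps the product space $H_0(\curlvec,\Omega)\times H(\curlvec,\Omega) = D(\mathcal{A}_{\varepsilon,\mu})$ onto itself. Third, a direct computation using \eqref{eqn:defmaxgen} yields, for any $\bU=(\bE,\bH)^\top \in D(\mathcal{A}_{\varepsilon,\mu})$,
\begin{equation*}
    J\,\mathcal{A}_{\varepsilon,\mu}\,J\,\bU = J\begin{pmatrix} \rmi\varepsilon^{-1}\curlvec(-\bH) \\ -\rmi\mu^{-1}\curlvec \bE\end{pmatrix} = \begin{pmatrix}-\rmi\varepsilon^{-1}\curlvec\bH\\ \rmi\mu^{-1}\curlvec\bE\end{pmatrix} = -\mathcal{A}_{\varepsilon,\mu}\bU,
\end{equation*}
which establishes the intertwining relation on $D(\mathcal{A}_{\varepsilon,\mu})$.

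From there, the three symmetry statements follow readily. For the full spectrum: $J$ being unitary gives $\sigma(J\,\mathcal{A}_{\varepsilon,\mu}\,J^{-1}) = \sigma(\mathcal{A}_{\varepsilon,\mu})$, but the intertwining relation yields $\sigma(J\,\mathcal{A}_{\varepsilon,\mu}\,J^{-1})=\sigma(-\mathcal{A}_{\varepsilon,\mu}) = -\sigma(\mathcal{A}_{\varepsilon,\mu})$, so $\sigma(\mathcal{A}_{\varepsilon,\mu}) = -\sigma(\mathcal{A}_{\varepsilon,\mu})$. For the point spectrum: if $\mathcal{A}_{\varepsilon,\mu}\bU = \lambda\bU$ with $\bU\neq 0$, applying $J$ and using the intertwining identity gives $\mathcal{A}_{\varepsilon,\mu}(J\bU) = -\lambda(J\bU)$ with $J\bU\neq 0$; thus $J$ restricts to a bijection between $\ker(\mathcal{A}_{\varepsilon,\mu}-\lambda I)$ and $\ker(\mathcal{A}_{\varepsilon,\mu}+\lambda I)$. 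For the discrete spectrum: these two kernels then have the same (finite) dimension, and since $J$ maps the resolvent set of $\mathcal{A}_{\varepsilon,\mu}$ to itself (by unitary invariance combined with the intertwining identity), an eigenvalue $\lambda$ is isolated in $\sigma(\mathcal{A}_{\varepsilon,\mu})$ if and only if $-\lambda$ is.

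There is no real obstacle here: the argument is essentially the verification that the chiral (or block-sign) symmetry of the Maxwell operator is compatible with the weighted inner product and the mixed boundary conditions encoded in $D(\mathcal{A}_{\varepsilon,\mu})$. The only point that deserves a moment of care is checking that $J$ preserves the asymmetric domain $H_0(\curlvec,\Omega)\times H(\curlvec,\Omega)$, but this is immediate because $J$ acts on each component separately and trivially preserves $H_0(\curlvec,\Omega)$ and $H(\curlvec,\Omega)$.
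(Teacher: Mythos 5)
Your proof is correct and takes essentially the same approach as the paper: both introduce the block-sign unitary involution $(\bE,\bH)^\top\mapsto(\bE,-\bH)^\top$ (denoted $\mathbb{T}$ in the paper), verify that it is unitary, preserves the domain, and anticommutes with $\mathcal{A}_{\varepsilon,\mu}$, and then deduce the symmetry of the spectrum, point spectrum and discrete spectrum exactly as you do. Your version spells out the domain-preservation and eigenvalue-correspondence steps a bit more explicitly, but the argument is identical.
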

\begin{proof}[Proof of Proposition \ref{prop:symspecmaxwell}]
As $\mathcal{A}_{\varepsilon,\mu}$ is self-adjoint, one has $\sigma(\mathcal{A}_{\varepsilon,\mu}) \subset \R$. Consider  the  map $$\mathbb{T}: \bL^2_{\varepsilon,\mu}(\Omega) \to \bL^2_{\varepsilon,\mu}(\Omega), \quad  \mathbb{T}\begin{pmatrix}
\bE \\ \bH 
\end{pmatrix} 
\mapsto
\begin{pmatrix}
\bE \\ -\bH
\end{pmatrix}.$$ Observe that $\mathbb{T}$ is a symmetry (i.e. $\mathbb{T}^2=I$) and it is unitary.  Thus, one has $\mathbb{T}^* = \mathbb{T}^{-1}= \mathbb{T}$.
Then defining $\breve{\mathcal{A}}_{\varepsilon,\mu} :=\mathbb{T} \mathcal{A}_{\varepsilon,\mu} \mathbb{T}^*$, one notices using \eqref{eqn:defmaxgen} that  $\breve{\mathcal{A}}_{\varepsilon,\mu} :=-\mathcal{A}_{\varepsilon,\mu}$. Hence, $ \mathcal{A}_{\varepsilon,\mu}$ and $- \mathcal{A}_{\varepsilon,\mu}$  are unitarily-equivalent and therefore $$\sigma(\mathcal{A}_{\varepsilon,\mu})= \sigma(\breve{\mathcal{A}}_{\varepsilon,\mu})=\sigma(-\mathcal{A}_{\varepsilon,\mu})=\{ \lambda \in \bbR :- \lambda\in \sigma(\mathcal{A}_{\varepsilon,\mu}) \},$$ which  implies  that $\sigma(\mathcal{A}_{\varepsilon,\mu})$  is symmetric with respect to the origin. Moreover,  from the relations: $- \mathcal{A}_{\varepsilon,\mu} =\mathbb{T} \mathcal{A}_{\varepsilon,\mu} \mathbb{T}^*$  and $\mathbb{T}^* = \mathbb{T}^{-1}= \mathbb{T}$, one deduces that $\mathcal{A}_{\varepsilon,\mu}$ and $\mathbb{T}$ anti-commute, i.e. $\mathcal{A}_{\varepsilon,\mu}\mathbb{T}=-\mathbb{T}\mathcal{A}_{\varepsilon,\mu}$. Thus,
 $\big(\lambda, (\bE, \bH)^\top \big)$ is an eigenpair of $\mathcal{A}_{\varepsilon,\mu}$ if only if $(-\lambda, \mathbb{T}(\bE, \bH)^\top)$ is an eigenpair of $\mathcal{A}_{\varepsilon,\mu}$. Hence,  the point and discrete spectra of  $\mathcal{A}_{\varepsilon,\mu}$ are also symmetric with respect to $0$.

%nd it is easily checked that $R(\zeta)= -\breve{R}(-\zeta)$ for any $\zeta \in \rho(\mathcal{A}_{\varepsilon,\mu})$.
%Therefore, taking $\lambda \in \sigma(\mathcal{A}_{\varepsilon,\mu})$, we have that 
%\begin{equation*}
%    \lim_{\delta \to 0} \|R(\lambda+i\delta)\|= \lim_{\delta \to 0}\|\breve{R}(-\lambda-i\delta)\|=+\infty.
%\end{equation*}
%Thus
%\begin{equation}
%    \mathrm{dist}\big(-\lambda,\sigma(\mathcal{A}_{\varepsilon,\mu})\big)=\mathrm{dist} \big(-\lambda,\sigma(\breve{\mathcal{A}}_{\varepsilon,\mu})\big)=\lim_{\delta \to 0 }\|\breve{R}(-\lambda-i\delta)\|^{-1}=0
%\end{equation}
%that is $-\lambda \in \sigma(\mathcal{A}_{\varepsilon,\mu})$. Moreover, one easily check  that if  that $\lambda,  \bU=(\bE,\BH )$
\end{proof}

The following lemma is necessary in order to characterize the kernel of the operator $\hat{\mathcal{A}}$ in Proposition \ref{prop:kerneldescription}.
\begin{Lem} \label{lemma:ortodeco}
Let $\mathcal{H}$ be a Hilbert space and $Z,B \subset \mathcal{H}$ be two closed subspaces.
Suppose $B \subset Z$.
Then
\begin{equation*}
    Z=B \overset{\perp}{\oplus} (B^{\perp_\mathcal{H}} \cap Z).
\end{equation*}
\end{Lem}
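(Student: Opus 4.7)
The plan is to exploit the standard orthogonal decomposition of the ambient Hilbert space $\mathcal{H}$ with respect to the closed subspace $B$, and then restrict it to $Z$, using the hypothesis $B \subset Z$ to ensure that both summands land inside $Z$.

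First, since $B$ is a closed subspace of the Hilbert space $\mathcal{H}$, the classical orthogonal projection theorem gives $\mathcal{H} = B \oplus B^{\perp_\mathcal{H}}$. Hence every $z \in Z$ admits a unique decomposition $z = b + b^{\perp}$ with $b \in B$ and $b^{\perp} \in B^{\perp_\mathcal{H}}$. The key observation is that, by hypothesis, $B \subset Z$, so $b \in Z$; since $Z$ is a linear subspace, it follows that $b^{\perp} = z - b \in Z$, and therefore $b^{\perp} \in B^{\perp_\mathcal{H}} \cap Z$. This shows the inclusion $Z \subset B + (B^{\perp_\mathcal{H}} \cap Z)$, while the reverse inclusion is trivial because both summands are already contained in $Z$.

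Next, the orthogonality of the sum is immediate: any element of $B$ is, by construction, orthogonal to any element of $B^{\perp_\mathcal{H}} \cap Z \subset B^{\perp_\mathcal{H}}$. Uniqueness of the decomposition then follows from the usual argument: if $z = b_1 + b_1^{\perp} = b_2 + b_2^{\perp}$ with $b_i \in B$ and $b_i^{\perp} \in B^{\perp_\mathcal{H}} \cap Z$, then $b_1 - b_2 = b_2^{\perp} - b_1^{\perp} \in B \cap B^{\perp_\mathcal{H}} = \{0\}$.

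There is no real obstacle here; the only point worth care is to check that the subspace $B^{\perp_\mathcal{H}} \cap Z$ is itself closed (as the intersection of two closed subspaces), so that the direct sum is an orthogonal decomposition in the Hilbert-space sense, and that the hypothesis $B \subset Z$ is used precisely to ensure that the orthogonal component of $z$ with respect to $B$ stays inside $Z$.
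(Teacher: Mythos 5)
Your proof is correct. It differs slightly in organization from the paper's: the paper applies the orthogonal projection theorem to the closed subspace $B$ \emph{inside the Hilbert space $Z$}, obtaining directly $Z = B \oplus B^{\perp_Z}$, and then identifies $B^{\perp_Z}$ with $B^{\perp_\mathcal{H}} \cap Z$ by unwinding the definition; you instead apply the projection theorem to $B$ \emph{inside the ambient space $\mathcal{H}$} and then verify, using $B\subset Z$ and the linearity of $Z$, that the orthogonal component of any $z\in Z$ remains in $Z$. Both routes rest on the same core fact (the projection theorem for a closed subspace) and are essentially equivalent in length and difficulty; the paper's version is marginally slicker because it never has to check that the complementary piece lies in $Z$ --- that is automatic when one works inside $Z$ from the start --- whereas your version makes this containment explicit, which some readers may find clearer.
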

\begin{proof}[Proof of Lemma \ref{lemma:ortodeco}]
In what follows, $\overset{\mathcal{H}}{\oplus}$ denotes orthogonality with respect to the inner product of $\mathcal{H}$, while for the expression ${}^{\perp_Z}$ denotes the orthogonal (with respect to the inner product of $\mathcal{H}$) in the subspace $Z$.

Since $Z$ is also a Hilbert space (with the same inner product), and $B \subset Z$, then 
\begin{equation*}
    Z = B \overset{\mathcal{H}}{\oplus} B^{\perp_Z}.
\end{equation*}
Then it is readily seen that $B^{\perp_Z}= \{x \in Z : (x,b)_\mathcal{H}=0 \ \ \forall b \in B\} = B^{\perp_\mathcal{H}} \cap Z$, and the proof is concluded.
\end{proof}

\section{A useful PDE} \label{appendix:PDE}
The proof of Theorem \ref{thm:discspec} is by constructing an adequate trial function. To do so, one needs the following result. We recall that  $\lambda_2^N(\omega)>0$ denotes the first non-trivial eigenvalue of the Neumann Laplacian on $\omega$.
\begin{Lem}\label{lem:projsolenoidaleortho}
Assume the same hypotheses as in  Theorem \ref{thm:discspec}.
Let $\bF$ be a vector field of the form $\bF(s,\by) = \varphi(s) (0, \partial_{y_3}\psi,-\partial_{y_2}\psi)^\top$ where $\psi$ is a normalized eigenfunction associated with $\lambda_2^N(\omega)$ and $\varphi\in L^\infty(\R) $ with $\|\varphi\|_{L^\infty(\R)}\leq 1$. Let $\beps$ be the $3 \times 3$ diagonal matrix given by  
$$\beps =\varepsilon_0 \operatorname{diag}\left((1-\mathbf{k^\theta} \cdot \by)^{-1},1-\mathbf{k^\theta} \cdot \by,1-\mathbf{k^\theta} \cdot \by\right).$$
Then one has $\bF \in H(\Div \beps, \Omega_0)$ and there exists a unique weak solution  $u \in H^1_0(\Omega_0)$  to the following Dirichlet problem:
\begin{equation} \label{dirichlet:prob:eps}
\begin{cases}
\Div \beps \nabla u = \Div \beps \bF, & \text{in }\Omega_0,\\
u=0 & \text{on } \partial\Omega_0,
\end{cases}
\end{equation}
which satisfies the following estimate: 
\begin{equation} \label{estimate:nablau:eps}
\|\nabla u \|_{\bL^2_\beps(\Omega_0)}^2 \leq \varepsilon_0 \lambda_2^N(\omega)\frac{b^2\|\kappa\|_{L^\infty(\R)}}{1 - b \|\kappa\|_{L^\infty(\R)}} \|\kappa\|_{L^1(\R)}.
\end{equation}
\end{Lem}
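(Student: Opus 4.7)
The plan is to organize the proof into three stages: verifying $\bF \in H(\Div\beps,\Omega_0)$, invoking Lax--Milgram for existence and uniqueness, and then proving the estimate via a clever rewriting that exploits the classical divergence-freeness of $\bF$.

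First, I would explicitly compute $\Div(\beps\bF)$. Since $\beps$ is diagonal with entries depending only on $\by$, and since $\bF$ has vanishing first component, one has $\beps\bF = \varepsilon_0(1-\mathbf{k^\theta}\cdot\by)\bF$. The two mixed partials $\partial_{y_2 y_3}\psi$ and $\partial_{y_3 y_2}\psi$ then cancel in the distributional sense, leaving
\[
\Div(\beps\bF)(s,\by) = \varepsilon_0\varphi(s)\bigl(-k_1^\theta(s)\partial_{y_3}\psi(\by) + k_2^\theta(s)\partial_{y_2}\psi(\by)\bigr).
\]
A Cauchy--Schwarz bound then gives $|\Div(\beps\bF)| \leq \varepsilon_0 |\varphi(s)|\kappa(s)|\nabla_\by\psi(\by)|$, whence $\|\Div(\beps\bF)\|_{L^2(\Omega_0)}^2 \leq \varepsilon_0^2 \|\kappa\|_{L^2(\R)}^2 \, \lambda_2^N(\omega) < \infty$ since $\|\nabla\psi\|_{L^2(\omega)}^2 = \lambda_2^N(\omega)$ and $\kappa \in L^1 \cap L^\infty \subset L^2$. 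Hence $\bF \in H(\Div\beps,\Omega_0)$ (modulo the usual $L^2_\beps$-integrability of $\bF$, which follows in the intended application where $\varphi$ is compactly supported).

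Second, for existence and uniqueness, I would set up the weak formulation
\[
a(u,v) := \int_{\Omega_0}\beps\nabla u\cdot\overline{\nabla v}\,\rmd \bx = -\int_{\Omega_0}\Div(\beps\bF)\,\overline{v}\,\rmd\bx =: \ell(v),\quad \forall v\in H_0^1(\Omega_0).
\]
Uniform positivity and boundedness of $\beps$ together with the Poincaré inequality on $\Omega_0 = \R\times\omega$ (which holds in the transverse direction thanks to $u|_{\R\times\partial\omega}=0$) yield coercivity and continuity of $a$ on $H_0^1(\Omega_0)$. Continuity of $\ell$ follows from the $L^2$ bound on $\Div(\beps\bF)$ already obtained. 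Lax--Milgram then delivers the unique $u\in H_0^1(\Omega_0)$ satisfying $\Div(\beps\nabla u) = \Div(\beps\bF)$.

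The last and most delicate step is the estimate \eqref{estimate:nablau:eps}: the main obstacle here is that a naive Cauchy--Schwarz bound only produces a factor $\|\kappa\|_{L^\infty}\|\kappa\|_{L^1}$, missing the geometric factor $b^2$. The trick is to exploit the decomposition $\beps\bF = \varepsilon_0\bF - \varepsilon_0(\mathbf{k^\theta}\cdot\by)\bF$ together with the classical identity $\Div\bF = 0$, which holds since $\bF = \varphi(s)(0,\partial_{y_3}\psi,-\partial_{y_2}\psi)^\top$ is a transverse rotated gradient independent of $s$ in its transverse components. Testing the weak formulation with $v=u$ and integrating by parts (boundary term vanishes since $u\in H_0^1(\Omega_0)$) gives
\[
\|\nabla u\|_{\bL^2_\beps}^2 = \int_{\Omega_0}\beps\bF\cdot\overline{\nabla u}\,\rmd\bx = -\varepsilon_0 \int_{\Omega_0}(\mathbf{k^\theta}\cdot\by)\,\bF\cdot\overline{\nabla u}\,\rmd\bx,
\]
where the $\varepsilon_0\bF$ contribution vanishes by integration by parts against $\nabla u$. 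Using $|\mathbf{k^\theta}\cdot\by|\leq \kappa(s) b$, Cauchy--Schwarz in $\by$, then in $s$, and $\|\nabla\psi\|_{L^2(\omega)} = \sqrt{\lambda_2^N(\omega)}$ yields
\[
\|\nabla u\|_{\bL^2_\beps}^2 \leq \varepsilon_0 b\sqrt{\lambda_2^N(\omega)\|\kappa\|_{L^\infty}\|\kappa\|_{L^1}}\,\|\nabla_\by u\|_{L^2(\Omega_0)}.
\]
Finally, the uniform transverse lower bound $\beps_{y_2 y_2},\beps_{y_3 y_3}\geq \varepsilon_0(1-b\|\kappa\|_{L^\infty})$ gives $\|\nabla_\by u\|_{L^2(\Omega_0)}^2 \leq (\varepsilon_0(1-b\|\kappa\|_{L^\infty}))^{-1}\|\nabla u\|_{\bL^2_\beps}^2$. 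Substituting, simplifying, and squaring produces exactly the claimed bound \eqref{estimate:nablau:eps}.
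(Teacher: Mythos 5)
Your proof is correct and follows essentially the same route as the paper: both arguments test the weak formulation with $v=u$, exploit the classical identity $\Div\bF=0$ to kill the $\varepsilon_0\mathds{1}_3$ contribution of $\beps$, and then estimate the remaining perturbative term via Cauchy--Schwarz. The only organizational difference in the key estimate is that the paper carries out a weighted Cauchy--Schwarz directly in $\bL^2_\beps(\Omega_0)$ and then evaluates the resulting weight $\varepsilon_0^2\,\mathfrak{M}\,\beps^{-1}\mathfrak{M} = \varepsilon_0\frac{(\mathbf{k^\theta}\cdot\by)^2}{1-\mathbf{k^\theta}\cdot\by}\,\mathds{1}$ acting on $\bF$, whereas you apply an ordinary $L^2$ Cauchy--Schwarz and then convert $\|\nabla_\by u\|_{L^2}$ to $\|\nabla u\|_{\bL^2_\beps}$ via the uniform lower bound $\varepsilon_0(1-b\|\kappa\|_{L^\infty(\R)})$ on the transverse entries of $\beps$ — this unfolds the same computation into elementary steps and yields the identical bound. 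You also usefully fill in the Lax--Milgram argument (which the paper takes for granted) and correctly flag that the membership $\bF\in\bL^2(\Omega_0)$, needed both for $\bF\in H(\Div\beps,\Omega_0)$ and for the integration by parts against $\nabla u$, requires $\varphi$ to be square-integrable (e.g.\ compactly supported, as in the application to the trial functions $\varphi_n$) rather than merely $L^\infty$; the lemma statement is slightly loose on this point.
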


\begin{Rem}
Note that being a weak solution to \eqref{dirichlet:prob:eps} means that
\begin{equation}
(\nabla u, \nabla v)_{\bL^2_\beps(\Omega_0)} = -(\Div (\beps \bF), v)_{L^2(\Omega_0)}
\end{equation}
for all $v \in H^1_0(\Omega_0)$.
\end{Rem}

\begin{proof}[Proof of Lemma \ref{lem:projsolenoidaleortho}]
As $|\mathbf{k^\theta}| = \kappa$ and by assumptions $\kappa \in L^1(\R) \cap L^\infty(\R)$ one has  that $\varphi \, \mathbf{k^\theta} \in L^1(\R) \cap L^\infty(\R) \subset L^2(\R)$.
Thus, as by construction $\Div \bF=0$, working in the sense of distributions, one notices   that
\begin{equation*}
\Div \beps \bF = -\varepsilon_0 \varphi \mathbf{k^\theta} \cdot 
\begin{pmatrix}
\partial_{y_3} \psi\\
-\partial_{y_2} \psi
\end{pmatrix} \in L^2(\Omega_0),
\end{equation*}
and therefore  $\bF \in H(\Div \beps, \Omega_0)$.
%
%Note that if $\lambda_1^D(\omega) > 0$ is the first Dirichlet eigenvalue in $\omega$, the following Poincare inequality holds in $\Omega_0$:
%\begin{equation}
%\|v\|^2_{L^2(\Omega_0)} \leq  \frac{1}{\lambda_1^D(\omega)} \|\nabla v\|^2_{\bL^2(\Omega_0)}, \text{ for all } v\in H_0^1(\Omega_0).
%\label{eqn:poincareomega_0}
%\end{equation}
%This is easily seen by writing the integral over $\R\times\omega$ and using the min-max principle in the cross-section $\omega$. In particular, we can apply the Lax-Milgram theorem which yields the existence and the uniqueness of $u$.
Remark that  there holds
\[
    \beps = \varepsilon_0 (\mathds{1}_3 + \mathfrak{M}),\quad\bmu = \mu_0 (\mathds{1}_3 + \mathfrak{M})
\]
where the perturbation matrix $\mathfrak{M}$ (with respect to the vacuum)  is defined as follows
\begin{equation}
    \mathfrak{M} := \begin{pmatrix}\frac{\mathbf{k^\theta} \cdot \by}{1 - \mathbf{k^\theta} \cdot \by} &0 & 0\\ 0 & - \mathbf{k^\theta} \cdot \by & 0\\ 0&0&-\mathbf{k^\theta} \cdot \by\end{pmatrix}.
    \label{def:remainderbeps}
\end{equation}
As $\Div\bF =0$, one obtains also that $\Div(\beps \bF) = \varepsilon_0\Div(\mathfrak{M} \bF) \in L^2(\Omega_0)$ where $\mathfrak{M}$ is defined in \eqref{def:remainderbeps}. Since $u$ is a weak solution to \eqref{dirichlet:prob:eps}, an integration by parts yields 
\[
    \|\nabla u\|_{\bL_{\beps}^2(\Omega_0)}^2 = - \varepsilon_0\int_{\Omega_0}\Div(\mathfrak{M} \bF) \overline{u} \rmd s \rmd \by= \varepsilon_0\int_{\Omega_0}(\mathfrak{M}\bF)\cdot(\overline{\nabla u}) = \varepsilon_0(\beps^{-1}(\mathfrak{M}\bF),\nabla u)_{\bL_{\beps}^2(\Omega_0)}.
\]
By the Cauchy-Schwarz inequality, one gets
\begin{equation}\label{eq.cauch-schw}
    \|\nabla u\|_{\bL_{\beps}^2(\Omega_0)} \leq \varepsilon_0\|\beps^{-1}(\mathfrak{M}\bF)\|_{\bL_{\beps}^2(\Omega_0)}.
\end{equation}
Note that one has
\[
    \varepsilon_0^2\|\beps^{-1}(\mathfrak{M}\bF)\|_{\bL_{\beps}^2(\Omega_0)}^2 = \varepsilon_0^2\int_{\Omega_0}\beps^{-1}(\mathfrak{M} \bF)\cdot (\mathfrak{M} \bF) \rmd s \rmd \by= \varepsilon_0^2\int_{\Omega_0}(\mathfrak{M}\beps^{-1}\mathfrak{M} \bF)\cdot \bF \rmd s \rmd \by.
\]
But one remarks that there holds
\[
    \varepsilon_0^2(\mathfrak{M}\beps^{-1}\mathfrak{M} \bF) =   \varepsilon_0 \frac{(\mathbf{k^\theta} \cdot \by)^2}{1-\mathbf{k^\theta} \cdot \by} \bF.
\]
This implies with \eqref{eq.cauch-schw} that
\[
    \|\nabla u\|_{\bL_{\beps}^2(\Omega_0)}^2 \leq  \varepsilon_0 \frac{b^2\|\kappa\|_{L^\infty(\R)}}{1 - b \|\kappa\|_{L^\infty(\R)}}\int_{\Omega_0}|\kappa| |\varphi|^2|\nabla \psi|^2 \rmd s \rmd \by\leq \varepsilon_0  \lambda_2^N(\omega)\frac{b^2\|\kappa\|_{L^\infty(\R)}}{1 - b \|\kappa\|_{L^\infty(\R)}} \|\kappa\|_{L^1(\R)},
\]
where we have used the pointwise inequality $|\mathbf{\mathbf{k^\theta}} \cdot \by| \leq b |\kappa|$ and that $\|\varphi\|_{L^\infty(\R)} \leq 1$.
It concludes the proof of the lemma.
\end{proof}
\section{Proof of Lemma \ref{Lem-Lipschitz} on the Lipschitz regularity of $t\mapsto \lambda_j^N(\omega_t)$}\label{Sec-reg-Lips-eigen-Neumann}
\begin{proof}[Proof of Lemma \ref{Lem-Lipschitz}]
Let $j \in \N$ and $t\in [-T,T]$.  $\lambda_j^N(\omega_t)$ be the $j$-th eigenvalue of the Neumann Laplacian $L^{N}_{\omega_t}$ in $\omega_t$. Remark that as $L^{N}_{\omega_t}$ is self-adjoint, positive semi-definite and has a compact resolvent, $\lambda_j^N(\omega_t)$ coincides with the $j$-th min max-level of the Rayleigh quotient
\begin{equation*}
\lambda_j^N(\omega_t)=\min_{\widetilde{W} \subset H^1(\omega_t) \atop \dim \widetilde{W} = j} \max_{\tilde{u} \in \widetilde{W} \setminus\{0\}} \tilde{R}_q(\tilde{u};t) \ \mbox{ with } \ \tilde{R}_q(\tilde{u};t):=   \frac{\int_{\omega_t}|\nabla \tilde{u}|^2 \rmd \by}{\int_{\omega_t}|\tilde{u}|^2 \rmd \by},\ \forall \tilde{u} \in H^1(\omega_t)\setminus\{0\}.
\end{equation*}
To Pull-back this expression in the fixed domain $\omega$, one introduce the unitary transforms (indexed by $t$) $$\bbV_{\Theta_t}:  L^2(\omega_t) \to L^2(\omega), \quad  \tilde{u} \mapsto u=\tilde{u} \circ \Theta_t.$$
One checks  that $\bbV_{\Theta_t}(H^1(\omega_t))=H^1(\omega)$ and that actually $\lambda_j^N(\omega_t)$ is  also given by the following $j$-th min-max level of the  Rayleigh quotient $R_q(u,\cdot)$ defined as follows:
\begin{eqnarray}\label{label:minmax}
 &&\lambda_j^N(\omega_t)=\min_{W \subset H^1(\omega) \atop \dim W = j} \max_{u \in W \setminus\{0\}} R_q(u;t) \\[6pt] \mbox{ with } && 
R_q(u;t)=   \frac{\int_\omega|J_{\Theta(t)}^{-\top}\nabla u|^2\det(J_{\Theta(t)})\rmd \by}{\int_{\omega}|u|^2 \det(J_{\Theta(t)})\rmd \by},\ \forall u \in H^1(\omega)\setminus\{0\}.
\label{label:rayleigh}
\end{eqnarray}
\noindent The remainder of the proof is partially inspired by the approach developed in the Appendix A of \cite{FW:14}, which we adapt to our context. \medskip

First,  one has  (see Section \S \ref{subsubsec:regularityeigenpair}) that $t\in[-T,T]\to J_{\Theta(t)}^{-\top}$ is $\mathscr{C}^{1}\left( [-T,T],L^\infty(\R^2,\R^{2\times 2})\right)$ and  $t\in[-T,T]\mapsto \det (J_{\Theta(t)})$ is
$\mathscr{C}^{1}\left([-T,T],L^\infty(\R^2,\R)\right)$  (and they are also  Lipschitz continuous on $[-T,T]$). One has also that $t\in[-T,T]\mapsto \det (J_{\Theta(t)})^{-1}$  
is  bounded in $t$  with respect to the norm of $L^\infty(\R^2,\R)$.  Therefore it implies (by some straightforward computations) that  $R_q(u;\cdot)$  (as a function of $t$)
is  Lipschitz continuous (uniformly with respect to $u\in H^1(\omega)$).
More precisely, there exists $C>0$ such that for all $t,t' \in [-T,T]$:
\begin{equation} \label{lip:cont:Rayleigh}
| R_q(u;t)-R_q(u;t')|\leq C \, |t-t'| \ \frac{\|\nabla u\|_{L^2(\omega)}^2}{\|u\|_{L^2(\omega)}^2}, \ \forall u \in H^1(\omega).
\end{equation}

Let  $t,t'\in [-T,T]$. Combining  \eqref{label:minmax} and \eqref{lip:cont:Rayleigh} gives that for any subspace $W$ of $H^1(\omega)$ of dimension $j$, we have
 \begin{equation*}
\max_{u \in W \setminus\{0\}} R_q(u;t) \leq \max_{u \in W \setminus\{0\}} R_q(u;t')+ C |t-t'| \ \max_{u \in W \setminus\{0\}}  \frac{\|\nabla u\|_{L^2(\omega)}^2}{\|u\|_{L^2(\omega)}^2} . 
 \end{equation*}
Thus, lowering the left-hand side by the minimum among all subspaces of $H^1(\omega)$ of dimension $j$, we get
 \begin{equation}\label{eq.minmaxbis}
 \lambda_j^N(\omega_t) \leq \max_{u \in W \setminus\{0\}} R_q(u;t')+ C |t-t'| \ \max_{u \in W \setminus\{0\}}  \frac{\|\nabla u\|_{L^2(\omega)}^2}{\|u\|_{L^2(\omega)}^2} . 
 \end{equation}
Then, one chooses for $W$  a $j-$dimensional subspace $W_{t',j}$ of $H^1(\omega)$ for which  the min in \eqref{label:minmax}  is reached for $t=t'$.
Thus, together with \eqref{eq.minmaxbis}, it yields that
\begin{equation}\label{eq.estimate-min-max}
\lambda_j^N(\omega_t)\leq \lambda_j^N(\omega_{t'})+C \, |t-t'|  \max_{u \in W_{t',j} \setminus\{0\}}  \frac{\|\nabla u\|_{L^2(\omega)}^2}{\|u\|_{L^2(\omega)}^2}.
\end{equation}
In order to control the last term in the right-hand side of \eqref{eq.estimate-min-max}, one  shows  that  there exists $\tilde{C}>0$ such that  for all $u \in H^1(\omega)\setminus \{0\}$ and for all $s\in[-T,T]$:
\begin{equation}\label{eq.estimate-min-max2}
\frac{\|\nabla u\|^2_{L^2(\omega)}}{\|u\|^2_{L^2(\omega)}} =   \frac{\int_\omega| J_{\Theta(s)}^{\top} J_{\Theta(s)}^{-\top}\nabla u|^2 \det(J_{\Theta(s)})^{-1} \det(J_{\Theta(s)})\rmd \by}{\int_{\omega}|u|^2 \det(J_{\Theta(s)})^{-1}  \det(J_{\Theta(s)})\rmd \by} \leq  \tilde{C} R_q(u;s), 
\end{equation}
where we have  bounded $\|J_{\Theta(s)}^{\top}\|_{L^\infty(\R^2,\R^{2 \times 2})}$, $\|\det(J_{\Theta(s)})^{-1}\|_{L^\infty(\R^2,\R)}$ and $\|\det(J_{\Theta(s)})\|_{L^\infty(\R^2,\R)}$ uniformly for $ s \in [-T,T]$.
Thus, up to changing the constant $C>0$, combining \eqref{eq.estimate-min-max} and \eqref{eq.estimate-min-max2}  for $s=t'$ gives that
\begin{equation}\label{eq.estimatelambdat}
\lambda_j^N(\omega_t)\leq \lambda_j^N(\omega_{t'})+C |t-t'|  \, \max_{u \in W_{t',j}  \setminus\{0\}} R_q(u;t') = \lambda_j^N(\omega_{t'})+C |t-t'|  \, \lambda_j^N(\omega_{t'}).
\end{equation}
In a similar manner, reversing the role of $t$ and $t'$, one gets
\begin{equation}\label{eq.estimatelambdatp}
\lambda_j^N(\omega_{t'})\leq \lambda_j^N(\omega_{t'})+C \, |t-t'|  \, \lambda_j^N(\omega_{t}).
\end{equation}
Thus, by virtue of \eqref{eq.estimatelambdat} and \eqref{eq.estimatelambdatp},  one gets that
\begin{eqnarray*}
|\lambda_j^N(\omega_{t'})-\lambda_j^N(\omega_t)|&\leq & C |t-t'|  \, (\lambda_j^N(\omega_{t'})+ \lambda_j^N(\omega_{t}) ) \\
&= &C |t-t'|  \, ( 2\lambda_j^N(\omega_{t})  +\lambda_j^N(\omega_{t'}) - \lambda_j^N(\omega_{t}))  \\
&\leq & 2C |t-t'|  \, \lambda_j^N(\omega_{t})+C |t-t'|  \, |\lambda_j^N(\omega_{t'})- \lambda_j^N(\omega_{t})|
\end{eqnarray*}
Thus, for $t,t'\in [-T,T]$ satisfying $|t-t'|< 1/(2\, C)$, it  follows that
\begin{equation}\label{eq.Lipsch}
\frac{1}{2} |\lambda_j^N(\omega_{t'})-\lambda_j^N(\omega_t) |\leq 2 \,C  \, \lambda_j^N(\omega_{t})\, |t-t'| .
\end{equation}
This proves that $t\mapsto \lambda_j(\omega_t)$ is Lipschitz pointwise on $[-T,T]$.\medskip

To conclude that $t\mapsto \lambda_j(\omega_t)$ is  Lipschitz continuous on $[-T,T]$, one proves that $t\mapsto
\lambda_j^N(\omega_t)$ is bounded on  $[-T,T]$. Indeed, similarly as in \eqref{eq.estimate-min-max2}, using that $\|J_{\Theta(s)}^{-\top}\|_{L^\infty(\R^2,\R^{2 \times 2})}$, $\|\det(J_{\Theta(s)})^{-1}\|_{L^\infty(\R^2,\R)}$ and $\|\det(J_{\Theta(s)})\|_{L^\infty(\R^2,\R)}$ are  uniformly bounded in $ s$ for $s \in [-T,T]$, one shows that there exists a constant $C_{1}>0$ such that  
$$
R_q(u;s)\leq C_{1} \, \frac{\|\nabla u\|_{L^2(\omega)}^2}{\|u\|_{L^2(\omega)}^2}, \, \forall u \in H^1(\omega)\setminus \{0\} \mbox{ and } \forall s\in[-T,T].
$$
Thus, using the min-max principle, it implies that 
\begin{equation}\label{eq.boundlambdaN}
0 \leq \lambda_j^N(\omega_s) \leq  C_{1} \, \lambda_j^N(\omega).
\end{equation}
Combining \eqref{eq.Lipsch} and \eqref{eq.boundlambdaN} yields that
for $t,t'\in [-T,T]$ satisfying $|t-t'|< 1/(2\, C)$, 
\begin{equation}\label{eq.Lipsch2}
 |\lambda_j^N(\omega_{t'})-\lambda_j^N(\omega_t) |\leq 4 \,C  \, C_{1} \, \lambda_j^N(\omega) \, |t-t'| .
\end{equation}
This last inequality clearly implies that there eixsts a constant $C_{2}>0$ such that 
$$
|\lambda_j^N(\omega_{t'})-\lambda_j^N(\omega_t) |\leq C_{2}\ |t-t'|,\quad \forall t, t'\in [-T,T],
$$
or in other words that $t \mapsto\lambda_j^N(\omega_t) $ is Lipschitz continuous on $[-T,T]$.
\end{proof}

\end{document}